\newtheorem{formula}{}[section]
\newtheorem{proposition}[formula]{Proposition}
\newtheorem{corollary}[formula]{Corollary}
\newtheorem{lemma}[formula]{Lemma}
\newtheorem{theorem}[formula]{Theorem}
\theoremstyle{definition}
\newtheorem{definition}[formula]{Definition}
\newtheorem{notation}[formula]{Notation}
\theoremstyle{remark}
\newtheorem{remark}{Remark}
\begin{document}

\title[$B$-rigidity of the property to be an almost Pogorelov polytope]{$B$-rigidity of the property to be an almost Pogorelov polytope}
\author[N.Yu.~Erokhovets]{Nikolai~Erokhovets}
\address{Department of Mechanics and Mathematics, Moscow State University, 119992 Moscow, Russia}
\email{erochovetsn@hotmail.com}

\def\sgn{\mathrm{sgn}\,}
\def\bideg{\mathrm{bideg}\,}
\def\tdeg{\mathrm{tdeg}\,}
\def\sdeg{\mathrm{sdeg}\,}
\def\grad{\mathrm{grad}\,}
\def\ch{\mathrm{ch}\,}
\def\sh{\mathrm{sh}\,}
\def\th{\mathrm{th}\,}

\def\mod{\mathrm{mod}\,}
\def\In{\mathrm{In}\,}
\def\Im{\mathrm{Im}\,}
\def\Ker{\mathrm{Ker}\,}
\def\Hom{\mathrm{Hom}\,}
\def\Tor{\mathrm{Tor}\,}
\def\rk{\mathrm{rk}\,}
\def\codim{\mathrm{codim}\,}

\def\ko{{\mathbf k}}
\def\sk{\mathrm{sk}\,}
\def\RC{\mathrm{RC}\,}
\def\gr{\mathrm{gr}\,}

\def\R{{\mathbb R}}
\def\C{{\mathbb C}}
\def\Z{{\mathbb Z}}
\def\A{{\mathcal A}}
\def\B{{\mathcal B}}
\def\K{{\mathcal K}}
\def\M{{\mathcal M}}
\def\N{{\mathcal N}}
\def\E{{\mathcal E}}
\def\G{{\mathcal G}}
\def\D{{\mathcal D}}
\def\F{{\mathcal F}}
\def\L{{\mathcal L}}
\def\V{{\mathcal V}}
\def\H{{\mathcal H}}



\thanks{The research was supported by the RFBR grant No 18-51-50005}

\subjclass[2010]{
05C40, 
05C75, 
05C76,  
13F55, 
52B05, 
52B10, 
52B70, 
57R19, 
57R91 
}

\keywords{Thee-dimensional polytope, toric topology, almost Pogorelov polytope, $B$-rigidity, cyclic $k$-edge-connectivity,   fullerene, right-angled polytope.}

\begin{abstract}
Toric topology assigns to each $n$-dimensional combinatorial simple convex polytope $P$ with $m$ facets an $(m+n)$-dimensional moment-angle manifold $\mathcal{Z}_P$ with an action of a compact torus $T^m$ such that $\mathcal{Z}_P/T^m$ is a convex polytope of combinatorial type $P$. We study the notion of $B$-rigidity. A property of a polytope $P$ is called $B$-rigid, if any isomorphism of graded rings $H^*(\mathcal{Z}_P,\mathbb Z)= H^*(\mathcal{Z}_Q,\mathbb Z)$ for a simple $n$-polytope $Q$ implies that it also has this property. We study families of $3$-dimensional polytopes defined by their cyclic $k$-edge-connectivity. These families include flag polytopes and Pogorelov polytopes, that is polytopes realizable as bounded right-angled polytopes in Lobachevsky space $\mathbb L^3$. Pogorelov polytopes include fullerenes -- simple polytopes with only pentagonal and hexagonal faces. It is known that the properties to be a flag $3$-dimensional polytope or a Pogorelov polytope are $B$-rigid. We focus on almost Pogorelov polytopes, which are strongly cyclically $4$-edge-connected polytopes. They correspond to right-angled polytopes of finite volume in $\mathbb L^3$. There is a subfamily of ideal almost Pogorelov polytopes corresponding to ideal right-angled polytopes. We prove that the properties to be an almost Pogorelov polytope and an ideal almost Pogorelov polytope are $B$-rigid. As a corollary we obtain that $3$-dimensional associahedron $As^3$ and permutohedron $Pe^3$ are $B$-rigid.
We generalize methods known for Pogorelov polytopes. We obtain results on $B$-rigidity of subsets in $H^*(\mathcal{Z}_P,\mathbb Z)$ and prove an analog of the so-called separable circuit condition (SCC). As an example we consider the ring $H^*(\mathcal{Z}_{As^3},\mathbb Z)$.
\end{abstract}

\maketitle

\setcounter{section}{0}

\section*{Introduction}
Consider a unit circle $S^1\subset\mathbb C$ and a unit disk $D^2\subset\mathbb C$.
Toric topology (see \cite{BP15}) assigns to each simple convex $n$-polytope $P$ with $m$ faces $F_1,\dots,F_m$ an $(m+n)$-dimensional {\it moment-angle manifold}  $\mathcal{Z}_P$ with an action of a compact $m$-dimensional torus $T^m=(S^1)^m$ such that $\mathcal{Z}_P/T^m=P$. One of the ways to define the space $\mathcal{Z}_P$ is 
$$
\mathcal{Z}_P= T^m\times P^n/\sim,\text{ where }(t_1,p_1)\sim(t_2,p_2)\text{ if and only if }p_1=p_2,\text{ and }t_1t_2^{-1}\in T^{G(p_1)},
$$ 
where $T^{G(p)}=\{(t_1,\dots,t_m)\in T^m\colon t_i=1\text{ for }F_i\not\ni p\}$.

It can be shown that topological type of the space $\mathcal{Z}_P$ depends only on combinatorial type of $P$ and $\mathcal{Z}_P$ has a smooth structure such that the action of $T^m$ obtained from its action on the first factor, is smooth.

\begin{definition}
A simple $n$-polytope $P$ is called {\it $B$-rigid}, if for any simple $n$-polytope $Q$ any isomorphism of graded rings 
$H^*(\mathcal{Z}_P,\mathbb Z)\simeq H^*(\mathcal{Z}_Q,\mathbb Z)$ implies that $P$ and $Q$ are combinatorially equivalent.
\end{definition}


We study families of combinatorial simple $3$-polytopes defined by the condition of a {\it cyclic edge $k$-connectivity} ({\it $ck$-connectivity} for short). For $3$-polytopes by {\it faces} we mean facets. Two faces are {\it adjacent}, if they have a common edge.
\begin{definition}
A {\it $k$-belt} is a cyclic sequence of $k$ faces with the property that faces are adjacent if and only if they follow each other, and no three faces have a common vertex.  A $k$-belt is {\it trivial}, if it surrounds a face. 
\end{definition}
A simple $3$-polytope different from the {\it simplex} $\Delta^3$ is $ck$-connected, if it has no  $l$-belts for $l<k$, and is  {\it strongly $ck$-connected ($c^*k$-connected)}, if in addition any its $k$-belt is trivial. By definition $\Delta^3$ is $c^*3$-connected but not $c4$-connected. 

All simple $3$-polytopes (family $\mathcal{P}_s$) are $c3$-connected.  We obtain a chain of nested families:
$$
\mathcal{P}_s\supset \mathcal{P}_{aflag}\supset\mathcal{P}_{flag}\supset\mathcal{P}_{aPog}\supset\mathcal{P}_{Pog}\supset\mathcal{P}_{Pog^*}
$$

The family of  $c4$-connected polytopes coincides with the family $\mathcal{P}_{flag}$ of {\it flag} $3$-polytopes defined by the property that any set of pairwise adjacent faces has a nonempty intersection. Each face of a flag polytope is surrounded by a belt. The Euler formula implies that any simple polytope has a $3$-, $4$- or $5$-gonal face. Hence it can not be more than  $c^*5$-connected. 

The family of $c^*3$-connected polytopes we call {\it almost flag} polytopes and denote $\mathcal{P}_{aflag}$. 

Results by  A.V.~Pogorelov  \cite{P67} and E.M.~Andreev \cite{A70a} imply that $c5$-connected polytopes  (family $\mathcal{P}_{Pog}$) are exactly polytopes realizable in the Lobachevsky space $\mathbb L^3$ as bounded polytopes with right dihedral angles. Moreover, the realization is unique up to isometries. These polytopes were called {\it Pogorelov polytopes}, since the paper \cite{P67} is devoted to exactly this class of polytopes. The paper  \cite{A70a} implies that flag polytopes are exactly polytopes realizable in $\mathbb L^3$ as polytopes with equal nonobtuse dihedral angles. Examples of Pogorelov polytopes are given by {\it $k$-barrels} $B_k$, $k\geqslant 5$, ({\it L\"obell polytopes} in terminology of A.Yu.\,Vesnin~\cite{V17}, or {\it truncated trapezohedra}), see Fig. \ref{7DF}a). Results by T.~D\v{o}sli\'c \cite{D98,D03} imply that the family $\mathcal{P}_{Pog}$ contains  {\it fullerenes}, that is simple $3$-polytopes with only pentagonal and hexagonal faces. Mathematical fullerenes model spherical carbon atoms. In 1996   R.~Curl, H.~Kroto, and R.~Smalley obtained the Nobel Prize in chemistry ``for their discovery of fullerenes''. They  synthesized  {\it Buckminsterfullerene} $C_{60}$ (see Fig. \ref{7DF}b), which has the form of the truncated icosahedron (and also of a soccer ball). W.P.~Thurston \cite{T98} built a parametrisation of the fullerene family, which implies that the number of fullerenes with  $n$ carbon atoms grows like~$n^9$ when $n$ tends to infinity. 

\begin{figure}
  \centering
  \includegraphics[height=4cm]{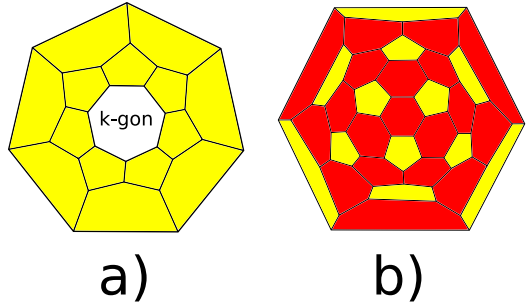}
  \caption{a) $k$-barrel $B_k$; b) fullerene $C_{60}$}\label{7DF}
\end{figure}

The family $\mathcal{P}_{aPog}$ of $c^*4$-connected polytopes we call {\it almost Pogorelov} polytopes, and the family $\mathcal{P}_{Pog^*}$ of $c^*5$-connected polytopes  -- {\it strongly Pogorelov} polytopes.
In \cite[Section 6]{FMW20} the simplicial $3$-polytopes dual to almost Pogorelov polytopes form the family $\mathcal{Q}$.
G.D.~Birkhoff \cite{B1913} reduced the $4$-colour problem to the task to colour in $4$ colours faces of polytopes of the family, which turned out to be the family $\mathcal{P}_{Pog^*}$.

T.E.~Panov remarked that results by E.~M.~Andreev \cite{A70a, A70b} should imply that almost Pogorelov polytopes correspond to right-angled polytopes of finite volume in  $\mathbb L^3$. Such polytopes may have $4$-valent vertices on the absolute, while all proper vertices have valency $3$. It was proved in \cite{E19} that  cutting of $4$-valent vertices defines a bijection between classes of congruence of right-angled polytopes of finite volume in $\mathbb {L}^3$ and almost Pogorelov polytopes different from the cube $I^3$ and the pentagonal prism $M_5\times I$.

In the paper \cite{V87} A.Yu.~Vesnin introduced a construction of a  $3$-dimensional compact hyperbolic manifold 
$R(P,\Lambda_2)$ corresponding to a mapping $\Lambda_2$ of the set of faces of a polytope $P \in\mathcal{P}_{Pog}$ to 
$\mathbb Z_2^3=(\mathbb Z/2\mathbb Z)^3$ such that for any vertex the images of the faces containing it form a basis.
The manifold is obtained by gluing $8$ copies of the polytope along faces. The existence of a mapping follows from the  $4$ colour theorem. In toric topology  (see \cite{BP15, DJ91}) such manifolds are obtained as  {\it small covers} over right-angled polytopes. For a simple $n$-polytope $P$ and a mapping  $\Lambda\colon\{F_1,\dots, F_m\}\to\mathbb Z^n$ such that for any vertex the images of facets containing it form a basis in $\mathbb Z^n$ toric topology associates a $2n$-dimensional smooth {\it quasitoric manifold} $M(P,\Lambda)$ with an action of  $T^n$ such that  $M(P,\Lambda)/T^n=P$.   

In \cite{FMW15} F.~Fan, J.~Ma and X.~Wang proved that any Pogorelov polytope is $B$-rigid.
In \cite{B17} F.~Bosio presented a construction of flag $3$-polytopes, which are not $B$-rigid.

In \cite{BEMPP17} the following result was proved
\begin{theorem}[\cite{BEMPP17}]\label{C6th}For polytopes  $P, P'\in\mathcal{P}_{Pog}$ with functions $\Lambda_2$, $\Lambda$, and $\Lambda_2'$, $\Lambda'$ the manifolds $M(P,\Lambda)$ and $M(P',\Lambda')$ are diffeomorphic if and only if their cohomology rings over $\mathbb Z$ are isomorphic as graded rings,  and if and only if there is a combinatorial equivalence  $\varphi$ between   $P$ and $P'$ and a change of coordinates $A\in Gl_3(\mathbb Z)$ such that for any face $F$ of $P$ we have $\Lambda'(\varphi(F))=\pm A\Lambda(F)$. The manifolds $R(P,\Lambda_2)$ and $R(P',\Lambda'_2)$ are diffeomorphic if and only if their cohomology rings over $\mathbb Z_2$ are isomorphic as graded rings,  and if and only if there is a combinatorial equivalence  $\varphi$ between   $P$ and $P'$ and a change of coordinates $A_2\in Gl_3(\mathbb Z_2)$ such that for any face $F$ of $P$ we have $\Lambda_2'(\varphi(F))=\pm A_2\Lambda_2(F)$.
\end{theorem}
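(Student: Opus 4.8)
The plan is to prove the equivalence of the three conditions by running the cycle
(combinatorial equivalence $\varphi$ with $\Lambda'\circ\varphi=\pm A\Lambda$) $\Rightarrow$ (diffeomorphism) $\Rightarrow$ (isomorphism of cohomology rings) $\Rightarrow$ (combinatorial equivalence $\varphi$ with $\Lambda'\circ\varphi=\pm A\Lambda$). Throughout I treat the quasitoric case; the statement about small covers $R(P,\Lambda_2)$ is proved in exactly the same way with $\mathbb Z$ replaced by $\mathbb Z_2$ and $Gl_3(\mathbb Z)$ by $Gl_3(\mathbb Z_2)$, where the ambiguity by signs disappears. For the first implication, recall from the Davis--Januszkiewicz construction (see \cite{BP15,DJ91}) that $M(P,\Lambda)=\mathcal Z_P/K(\Lambda)$ for the freely acting subtorus $K(\Lambda)=\Ker(\exp\Lambda\colon T^m\to T^3)$: a combinatorial equivalence $\varphi$ induces a $T^m$-equivariant diffeomorphism $\mathcal Z_P\to\mathcal Z_{P'}$, a change of sign in a column of $\Lambda$ corresponds to the automorphism $z\mapsto\bar z$ of the respective circle factor, which leaves $\mathcal Z_P$ invariant, and the matrix $A\in Gl_3(\mathbb Z)$ together with these signs guarantees that this equivariant diffeomorphism carries $K(\Lambda)$ onto $K(\Lambda')$; passing to quotients gives the required diffeomorphism. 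The implication ``diffeomorphism $\Rightarrow$ isomorphism of cohomology rings'' is trivial.

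The whole content is therefore that an isomorphism of graded rings $H^*(M(P,\Lambda);\mathbb Z)\cong H^*(M(P',\Lambda');\mathbb Z)$ forces the combinatorial data to agree, and I would prove this in two steps. \emph{Step 1: reduction to the $B$-rigidity of Pogorelov polytopes.} One has $H^*(M(P,\Lambda);\mathbb Z)=\mathbb Z[v_1,\dots,v_m]/(\mathcal I_P+\mathcal J_\Lambda)$, where $\mathcal I_P$ is the Stanley--Reisner ideal of the nerve $\partial P^*$ and $\mathcal J_\Lambda$ is generated by $\theta_j=\sum_{i=1}^m\lambda_{ji}v_i$, $j=1,2,3$; this ring is generated in degree $2$, and in degree $2$ it equals $\mathbb Z^m/\langle\theta_1,\theta_2,\theta_3\rangle$, which is canonically $H^2(BK(\Lambda);\mathbb Z)$. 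Since $\Lambda$ is a characteristic function, $\theta_1,\theta_2,\theta_3$ form a regular sequence both in $\mathbb Z[v_1,\dots,v_m]=H^*(BT^m;\mathbb Z)$ (they extend to a coordinate system) and in $\mathbb Z[P]$ (which is Cohen--Macaulay over $\mathbb Z$, being the face ring of a polytopal sphere), so the change-of-rings spectral sequence for $\mathrm{Tor}$ degenerates and yields, as graded rings,
$$
H^*(\mathcal Z_P;\mathbb Z)=\mathrm{Tor}_{\mathbb Z[v_1,\dots,v_m]}\bigl(\mathbb Z[P],\mathbb Z\bigr)\cong\mathrm{Tor}_{H^*(BK(\Lambda);\mathbb Z)}\bigl(H^*(M(P,\Lambda);\mathbb Z),\mathbb Z\bigr),
$$
where $H^*(BK(\Lambda);\mathbb Z)$ is the polynomial ring on $H^2(M(P,\Lambda);\mathbb Z)$ with its canonical surjection onto $H^*(M(P,\Lambda);\mathbb Z)$ (the latter being generated in degree $2$). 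Hence the graded ring $H^*(\mathcal Z_P;\mathbb Z)$ is a functor of the graded ring $H^*(M(P,\Lambda);\mathbb Z)$, so the given isomorphism produces an isomorphism $H^*(\mathcal Z_P;\mathbb Z)\cong H^*(\mathcal Z_{P'};\mathbb Z)$, and \cite{FMW15} supplies a combinatorial equivalence $\varphi\colon P\to P'$.

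\emph{Step 2: recovery of the characteristic function.} After identifying $P'$ with $P$ along $\varphi$, it remains to show that an isomorphism $\psi\colon H^*(M(P,\Lambda);\mathbb Z)\to H^*(M(P,\Lambda');\mathbb Z)$ of graded rings must send each facet class $v_i\in H^2(M(P,\Lambda);\mathbb Z)$ to $\pm v_i'$; since the row lattice of $\Lambda$ is precisely $\Ker\bigl(\mathbb Z^m\to H^2(M(P,\Lambda);\mathbb Z),\ e_i\mapsto v_i\bigr)$, this is equivalent to $\Lambda'(\varphi(F_i))=\pm A\Lambda(F_i)$ for some $A\in Gl_3(\mathbb Z)$. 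As $H^2(\mathcal Z_P;\mathbb Z)=0$, the classes $v_i$ cannot be read off $\mathcal Z_P$ directly; instead I would track how the isomorphism of Step 1 interacts with the $\mathbb Z^m$-multigrading of $H^*(\mathcal Z_P;\mathbb Z)$. The facet classes occur linearly in the Koszul resolution of $\mathbb Z$ over $\mathbb Z[v_1,\dots,v_m]$, hence in the multigraded components $H^3(\mathcal Z_P;\mathbb Z)_{\{i,k\}}$ (indexed by non-adjacent facets) and in $H^4(\mathcal Z_P;\mathbb Z)$, so that the multigraded ring $H^*(\mathcal Z_P;\mathbb Z)$ together with these descriptions lets one locate the set $\{\pm v_1,\dots,\pm v_m\}$ inside $H^2(M(P,\Lambda);\mathbb Z)$. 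For a Pogorelov polytope the argument of \cite{FMW15} reconstructs not only $\partial P^*$ but the multigraded structure of $H^*(\mathcal Z_P;\mathbb Z)$ up to the relabeling $\varphi$ and signs; feeding this back through the functorial passage of Step 1 forces $\psi$ to respect the classes $v_i$ up to sign, which completes the proof.

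The genuinely hard point is Step 2, and it is here that the Pogorelov hypothesis is essential: the characteristic function is not in general a cohomological invariant (already the even and odd Hirzebruch surfaces are diffeomorphic over the square), and the only leverage available is the $B$-rigidity of $c5$-connected polytopes, in the strong form in which the proof of \cite{FMW15} recovers the multigrading, which must be transported across the change-of-rings identification of Step 1 and then analysed at the lattice level to pin down each $v_i$ up to sign (and not merely up to a scalar). One should also check that the identification of Step 1 is multiplicative with no multiplicative extension problems --- which holds, the relevant spectral sequence being concentrated on a single line --- and, for the statement about small covers, that the mod-$2$ analogue of the $B$-rigidity of Pogorelov polytopes is available, after which the signs in $\Lambda_2'=\pm A_2\Lambda_2$ are automatic.
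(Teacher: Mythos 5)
This theorem is quoted by the paper from \cite{BEMPP17} without proof, so there is no in-paper argument to compare against; I will therefore assess your proposal on its own merits.

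Your easy directions and your Step~1 are sound. The change-of-rings spectral sequence
$$\mathrm{Tor}^{H^*(BK(\Lambda))}_p\bigl(\mathrm{Tor}^{\mathbb Z[v]}_q(\mathbb Z[P],H^*(BK(\Lambda))),\mathbb Z\bigr)\Rightarrow \mathrm{Tor}^{\mathbb Z[v]}_{p+q}(\mathbb Z[P],\mathbb Z)$$
does collapse onto the line $q=0$: $\theta_1,\theta_2,\theta_3$ is regular on $\mathbb Z[P]$ because $\mathbb Z[P]$ is Cohen--Macaulay over $\mathbb Z$ and $\Lambda$ is characteristic, and it extends to a coordinate system of $\mathbb Z[v]$, so $\mathrm{Tor}^{\mathbb Z[v]}_q(\mathbb Z[P],\mathbb Z[v]/(\theta))$ is concentrated in degree $0$. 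Concentration on one line kills multiplicative extension problems, so the resulting identification
$H^*(\mathcal Z_P)\cong \mathrm{Tor}_{\mathrm{Sym}(H^2(M))}(H^*(M),\mathbb Z)$ is one of graded rings and is functorial in the graded ring $H^*(M)$. Feeding the $B$-rigidity of Pogorelov polytopes into this produces the combinatorial equivalence $\varphi$.

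The genuine gap is your Step~2, exactly where you say ``the multigraded ring $H^*(\mathcal Z_P;\mathbb Z)$ together with these descriptions lets one locate the set $\{\pm v_1,\dots,\pm v_m\}$ inside $H^2(M)$.'' That sentence contains no mechanism. The $\mathbb Z^m$-multigrading is not intrinsic to the functorial Tor of Step~1: $\mathrm{Tor}_{\mathrm{Sym}(H^2(M))}(H^*(M),\mathbb Z)$ is computed over a polynomial ring on $m-3$ generators, and its Koszul complex carries at best a $\mathbb Z^{m-3}$-grading, not one indexed by facets. The multigrading comes from the presentation $\mathbb Z[v_1,\dots,v_m]\twoheadrightarrow\mathbb Z[P]$, i.e.\ from the classes $v_i$ themselves, so invoking it to ``locate the $v_i$'' is circular. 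What the argument from \cite{FMW15} actually gives you is that the induced map $\widetilde\psi$ on the abstract graded ring $H^*(\mathcal Z_P)$ respects the multigrading up to $\varphi$ and signs; one still has to prove that this forces the original $\psi$ on $H^*(M)$ to send $\bar v_i\mapsto \pm\bar v_{\varphi(i)}'$ modulo the row lattice of $\Lambda$. That is a nontrivial lemma which must use the Pogorelov hypothesis through the rigid pattern of Stanley--Reisner vanishings $v_iv_j=0$ (you correctly observe that without some such leverage the statement is false even for Bott manifolds over the square), and it needs to be carried out either by a direct characterization of the facet classes inside $H^2(M;\mathbb Z)$ by annihilator conditions, or by an explicit comparison of the two Koszul complexes. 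As written, Step~2 identifies the obstacle and the ingredients but does not close the gap; that is precisely the content of the theorem and cannot be left as a gesture.
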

Recent results of this type in the context of toric varieties for $n$-dimensional cubes see in \cite{CLMP20}.

In this paper we develop methods from \cite{FMW15} (see also \cite{BE17S}) to almost Pogorelov polytopes. 

\begin{figure}
\begin{center}
\includegraphics[height=4cm]{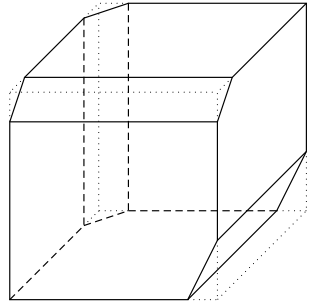}
\caption{A $3$-dimensional associahedron  (Stasheff polytope) $As^3$ as a truncated cube}\label{AsFig}
\end{center}
\end{figure}

The cube and the pentagonal prism belong to $\mathcal{P}_{aPog}$.  All the other polytopes from $P_{aPog}$ have no adjacent quadrangles. The simplest polytope of this type is the $3$-dimensional {\it Stasheff polytope} (or {\it associahedron})  $As^3$ (see Fig. \ref{AsFig} and Fig. \ref{As-sh}), which is the cube with three pairwise disjoint orthogonal edges cut. Result by D.~Barnette \cite{B74} imply that a simple polytope belongs to $\mathcal{P}_{aPog}\setminus\{I^3,M_5\times I\}$ if and only if it can be obtained from  $As^3$ by a sequence of operations of cutting off an edge not lying in quadrangles, and cutting off a pair of adjacent edges of at least hexagonal face by one plane. On Fig. \ref{f*poly-sch} we draw all possible ways to construct almost Pogorelov polytopes with at most $m=11$ faces. It should be mentioned that it follows from \cite{K69} (see also \cite{V15,BE15}) that a simple $3$-polytope is flag if and only if it can be obtained from the cube $I^3$ by a sequence of operations of cutting off an edge and cutting off a pair of adjacent edges of at least hexagonal face.

\begin{figure}
\begin{center}
\includegraphics[height=8cm]{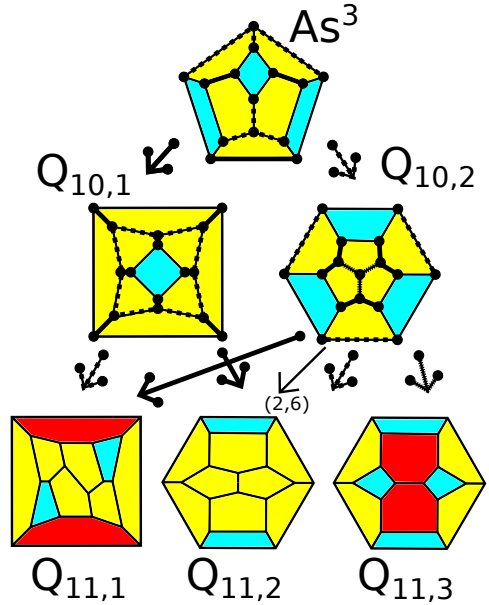}\\
\caption{Almost Pogorelov polytopes with at most $11$ faces}\label{f*poly-sch}
\end{center}
\end{figure}

A polytope in $\mathbb{L}^3$ is called {\it ideal}, if all its vertices lie on the absolute. An ideal polytope has a finite volume. 
Its congruence class is uniquely defined by the combinatorial type (see \cite{R96}).
We will call almost Pogorelov polytopes corresponding to ideal right-angled polytopes {\it ideal almost Pogorelov polytopes} 
and denote their family $\mathcal{P}_{IPog}$. 
It is known (see more details in \cite{E19}) that graphs of ideal right-angled $3$-polytopes  are exactly {\it medial graphs} 
of $3$-dimensional (not necessarily simple) polytopes.  A medial graph of the $k$-pyramid is known as $k$-antiprism, see Fig. \ref{k-antiprism} b). 
The boundaries of polytopes dual to the polytopes in $\mathcal{P}_{IPog}$ 
appeared in \cite[Example 6.3(2)]{FMW20} as examples of flag triangulations such that any $4$-circuit 
full subcomplex is simple.

\begin{figure}
  \centering
  \includegraphics[height=4cm]{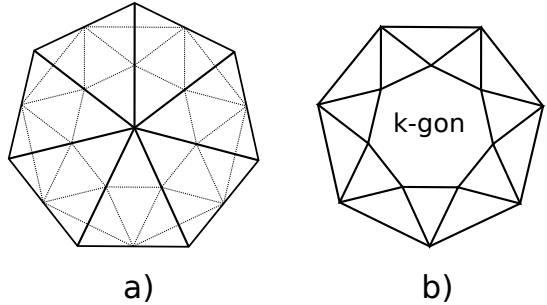}
  \caption{a) Medial graph of the $k$-gonal pyramid; b) The $k$-antiprism}
  \label{k-antiprism}
\end{figure}

An operation of an {\it edge-twist} is drawn on Fig. \ref{E-twist}. Two edges on the left lie in the same face and are disjoint. Let us call an edge-twist {\it restricted}, if both edges are adjacent to an edge of the same face. It follows from \cite{V17,BGGMTW05, E19} that  a polytope is realizable as an ideal right-angled polytope if and only if either it is a $k$-antiprism, $k\geqslant 3$, or it can be obtained from the $4$-antiprism by a sequence of restricted edge-twists.  

\begin{figure}
  \centering
  \includegraphics[width=200pt]{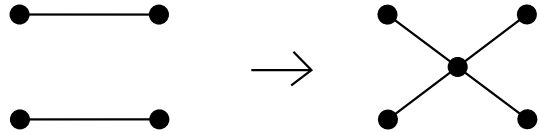}
  \caption{An edge-twist}
  \label{E-twist}
\end{figure}

Thus, the simplest ideal right-angled polytope is a $3$-antiprism, which coincides with the octahedron.
The corresponding ideal almost Pogorelov polytope is known as $3$-dimensional permutohedron $Pe^3$, see Fig. \ref{Pe3}. This is a unique ideal almost Pogorelov polytope with minimal number $m$ of faces ($14$ faces). It is easy to see that 
for ideal almost Pogorelov polytopes $m=2(p_4+1)$, where $p_4$ is the number of quadrangles. It follows from the above construction that for $m=16$ there are no polytopes in $\mathcal{P}_{IPog}$, and for $m=18$ and $20$ there are unique polytopes. First of the them corresponds to the $4$-antiprism, and the second corresponds to a polytope obtained from it by an edge-twist. Recent results on volumes of ideal right-angled polytopes see in \cite{VE20}.

\begin{figure}
\begin{center}
\includegraphics[height=3cm]{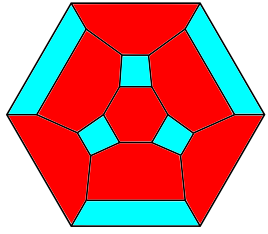}
\caption{Three-dimensional permutohedron $Pe^3$}\label{Pe3}
\end{center}
\end{figure}

\begin{definition}
A property of an $n$-polytope $P$ is called {\it $B$-rigid}, if any isomorphism of graded rings $H^*(\mathcal{Z}_P,\mathbb Z)= H^*(\mathcal{Z}_Q,\mathbb Z)$ for a simple $n$-polytope $Q$ implies that it also has this property. 
\end{definition}
It follows from \cite{FMW15} that the properties to be flag, Pogorelov, and strongly Pogorelov $3$-polytope are $B$-rigid.

Main result of our paper is the fact that the properties to be an almost Pogorelov polytope and an ideal 
almost Pogorelov polytopes are $B$-rigid (Theorem \ref{BaPog} and Corollary \ref{BiaPog}). This implies that 
polytopes of these families with $m\leqslant 11$ and $m\leqslant 20$ respectively, in particular $As^3$ and $Pe^3$, 
are $B$-rigid (Corollaries \ref{11cor} and \ref{18cor}). 

Below in the plan of the paper.

In Section \ref{Cohsec} we give a brief description of the cohomology ring of the moment-angle
manifold of a simple $3$-polytope.

In Section \ref{Bsec} we give a scheme of the proof of $B$-rigidity of any Pogorelov polytope from \cite{FMW15}.
Also in Proposition \ref{newflag} we simplify a criterion for a simple $3$-polytope $P\ne \Delta^3$ to be flag, namely 
$$
{\rm rk }\,H^4(\mathcal{Z}_P)=\frac{(m-2)(m-4)(m-6)}{3}.
$$

In Section \ref{BaPsec} we prove that the property to be an almost Pogorelov polytope and an ideal 
almost Pogorelov polytopes are $B$-rigid (Theorem \ref{BaPog} and Corollary \ref{BiaPog}). This implies that 
polytopes of these families with $m\leqslant 11$ and $m\leqslant 20$ respectively, in particular $As^3$ and $Pe^3$, 
are $B$-rigid (Corollaries \ref{11cor} and \ref{18cor}). 

In Section \ref{HA3sec} we give another characterization of  
almost Pogorelov and ideal almost Pogorelov polytopes in terms of the quotient ring of $H^*(\mathcal{Z}_P)$.

In Section \ref{SCCsec} we prove an analog of the so-called {\it separable circuit condition} (SCC), 
which plays a crucial role in toric topology 
of Pogorelov polytopes. The same result in the dual setting for
simplicial polytopes was earlier proved in \cite[Proposition F.1]{FMW20}. 

Denote by $|\mathcal{B}|$ the union of faces of the belt $\mathcal{B}$.
\begin{lemma}[SCC, \cite{FMW15}] \label{SCClem}
For any Pogorelov polytope $P$ and any three pairwise different faces $\{F_i,F_j,F_k\}$ 
with $F_i\cap F_j=\varnothing$  there exist $l\geqslant 5$ and an $l$-belt $\mathcal{B}_l$ such that 
$F_i,F_j\in \mathcal{B}_l$, $F_k\notin \mathcal{B}_l$, 
and $F_k$ does not intersect at least one of the two connected components of $|\mathcal{B}_l|\setminus(F_i\cup F_j)$. 
\end{lemma}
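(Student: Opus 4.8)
The plan is to argue entirely combinatorially, exploiting the two defining properties of a Pogorelov polytope: it has no $3$-belts and no $4$-belts (so every belt has length $\geqslant 5$), and in particular any two non-adjacent faces are not ``too close''. First I would recall the standard structural fact (used already in the proof of $B$-rigidity of Pogorelov polytopes) that for a Pogorelov polytope $P$ and two disjoint faces $F_i, F_j$, the faces adjacent to $F_i\cup F_j$, together with $F_i$ and $F_j$ themselves, organize the boundary sphere $\partial P$ into a ``thick cycle'': removing the open stars of $F_i$ and $F_j$ from the sphere leaves an annulus, and the faces meeting this annulus that touch both ``ends'' trace out belts through $F_i$ and $F_j$. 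More precisely, I would consider the subcomplex of $\partial P$ (as a simple polytope, working with the dual simplicial sphere $K_P$) consisting of $F_i$, $F_j$ and everything not adjacent to either; its boundary circles on the two sides give candidate belts containing $F_i$ and $F_j$.

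The key step is to produce, among all belts $\mathcal{B}$ with $F_i, F_j\in\mathcal{B}$, one that avoids $F_k$ and, moreover, such that $F_k$ misses one of the two arcs of $|\mathcal{B}|$ cut off by $F_i\cup F_j$. Here is the mechanism I expect to use. The face $F_k$ is either disjoint from at least one of $F_i, F_j$, or adjacent to one or both; in all cases $F_k$ occupies a bounded region of the sphere, and its union of closed faces with $F_i\cup F_j$ cannot separate the annulus around $F_i\cup F_j$ into more than a controlled number of pieces — this is exactly where the absence of short belts enters, forcing enough ``room'' on at least one side. I would then pick the belt through $F_i, F_j$ that runs entirely on the side of $F_k$ away from which there is room; the two connected components of $|\mathcal{B}_l|\setminus(F_i\cup F_j)$ are the two arcs of adjacent faces, and the construction guarantees $F_k$ meets at most one of them. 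Showing $l\geqslant 5$ is then automatic since $P$ is Pogorelov, and showing $F_k\notin\mathcal{B}_l$ requires checking that the chosen arc can be routed to stay adjacent to $F_i\cup F_j$ without being forced through $F_k$; again this uses $c5$-connectivity to rule out the degenerate configurations (a $3$- or $4$-belt) that would obstruct the detour.

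The main obstacle, I expect, is the case analysis for the position of $F_k$ relative to $F_i$ and $F_j$: when $F_k$ is adjacent to both $F_i$ and $F_j$ it already sits ``between'' them on one side, and one must verify that the opposite side still admits a belt through $F_i, F_j$ not using $F_k$ and not touching the $F_k$-side arc. I would handle this by a minimality argument: among all cyclic sequences of faces each adjacent to $F_i\cup F_j$ and connecting the neighborhood of $F_i$ to that of $F_j$ on the chosen side, take a shortest one; shortness prevents repeated faces and prevents a chord that would create a sub-belt of length $3$ or $4$, and prevents $F_k$ from appearing since replacing an $F_k$-segment by a shorter detour is possible unless doing so creates a forbidden short belt — which cannot happen in $\mathcal{P}_{Pog}$. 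Finally I would double-check the boundary behaviour: that this shortest cycle is genuinely a belt (no three of its faces share a vertex), which follows because a shared vertex of three consecutive faces would again manifest as a $3$-belt, contradicting $c5$-connectivity. The hardest bookkeeping is therefore translating the geometric picture ``$F_k$ lies on one side'' into the combinatorial statement ``$F_k$ misses one component of $|\mathcal{B}_l|\setminus(F_i\cup F_j)$'', and I would make this rigorous using the fact that for a flag sphere the link of any face is a cycle and the complement of two disjoint faces' stars is an annulus with two well-defined boundary circles.
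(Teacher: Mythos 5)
A point of context first: the paper does not prove Lemma~\ref{SCClem} itself --- it is quoted from \cite{FMW15} --- so the natural in-paper comparison is the proof of the analogous Lemma~\ref{APb-lemma} for almost Pogorelov polytopes, which is a long belt-surgery argument. Your outline has the right spirit (start from a belt through $F_i,F_j$ avoiding $F_k$, which exists by flagness, then exploit the absence of $4$-belts to push one arc away from $F_k$), but the central step is left as a black box, and some of what you do say is not correct. You never explain why, given an initial belt $\mathcal{B}_1$ through $F_i,F_j$ along which $F_k$ touches \emph{both} arcs, one can modify $\mathcal{B}_1$ so that $F_k$ misses one of them; ``$F_k$ occupies a bounded region, the annulus has room on one side, take a shortest cyclic sequence of faces each adjacent to $F_i\cup F_j$'' does not compile into a verifiable argument. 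In particular, restricting to faces adjacent to $F_i\cup F_j$ is not legitimate: for two combinatorially distant disjoint faces of a large Pogorelov polytope, \emph{every} belt through them must contain faces adjacent to neither, so the class of cyclic sequences you minimize over is too small to contain the required belt. The annulus picture (disjoint open stars) also breaks down when $F_i$ and $F_j$ share a neighbour, which is exactly where the delicate configurations live.

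What the argument actually requires --- and what the proof of Lemma~\ref{APb-lemma} carries out in detail --- is a segment-by-segment replacement: wherever one arc of $\mathcal{B}_1$ runs along $F_k$, replace that segment by the thick path of faces on the opposite side of $F_k$, and then check that the resulting loop can be trimmed to a belt. The step where ``no $4$-belts'' earns its keep is the verification that a detour face $F_w$ does not also meet the opposite arc: such a meeting would produce a $4$-belt $(F_w, F_u, F_k, F_{u'})$ with $F_u, F_{u'}$ on $\mathcal{B}_1$, contradicting $P\in\mathcal{P}_{Pog}$. Your plan does not articulate this mechanism, and it is the substance of SCC; the rest of the lemma (existence of some belt, $l\geqslant 5$) is the easy part. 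One further slip: ``a shared vertex of three consecutive faces would manifest as a $3$-belt'' is backwards --- three pairwise adjacent faces with a common vertex is precisely \emph{not} a $3$-belt. What actually fails in that situation is that $F_{i_l}$ and $F_{i_{l+2}}$ are adjacent but non-successive in the cycle, and it is the minimality of the cycle (delete $F_{i_{l+1}}$), not $c5$-connectivity, that eliminates it.
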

We prove (Lemma \ref{APb-lemma}) that for an almost Pogorelov polytope $P$ for three pairwise different 
faces $\{F_i,F_j,F_k\}$ such that $F_i\cap F_j=\varnothing$ there exists an $l$-belt ($l\geqslant 4$) $\mathcal{B}_l$ 
such that $F_i,F_j\in \mathcal{B}_l$, $F_k\notin \mathcal{B}_l$, and $F_k$ does not intersect at least one of the two 
connected components of $|\mathcal{B}_l|\setminus(F_i\cup F_j)$ if and only if $F_k$ does not intersect quadrangles 
among the faces $F_i$ and $F_j$. 
Moreover, we prove (Proposition \ref{apbprop}) that this condition is characteristic for a flag $3$-polytope to be an 
almost Pogorelov polytope or the polytope $P_8$ obtained from the cube by cutting off two nonadjacent orthogonal edges.

In Section \ref{Annsec} we prove (Lemma \ref{h3lemma}) an analog of the other crucial tool for Pogorelov polytopes  
-- an {\it annihilator lemma} (which is not valid for almost Pogorelov polytopes, see Proposition \ref{anneq}), 
and analogs of its corollaries (Corollaries \ref{n4cor} and \ref{Bcor}, see also Proposition \ref{BwProp}), 
and study the arising notion of good and bad pairs of disjoint faces. 

In Section \ref{Brssec} 
we prove $B$-rigidity in the class $\mathcal{P}_{aPog}\setminus\{I^3,M_5\times I\}$ (see Definition \ref{Bdef}) 
of 
\begin{itemize}
\item the sets of elements corresponding to $4$-belts (Lemma \ref{4bbr});
\item the cosets corresponding to pairs  of disjoint faces lying in $4$-belts (Lemma \ref{wb4lemma});
\item the sets of elements corresponding to pairs of adjacent quadrangles (Proposition \ref{BwProp}),
\item to $(2k)$-belts containing $k$ quadrangles (Lemma \ref{2k4klemma}), 
\item and to trivial belts of this type (Lemma \ref{2k4ktrivlemma}).
\end{itemize}

In Section \ref{Exas} we consider the ring $H^*(\mathcal{Z}_P,\mathbb Z)$ for $P=As^3$. In particular, we prove (Proposition \ref{anneq}) that  the annihilator lemma is not valid for almost Pogorelov polytopes.

\begin{remark}
In paper \cite{FMW20} the authors generalize Theorem \ref{C6th} (\cite[Theorem 5.1]{FMW20}) and 
result on $B$-rigidity (\cite[Theorems 3.12 and 3.14]{FMW20})  for flag polytopes of dimension of higher than $3$ 
satisfying a generalization of the SCC condition (see Lemma \ref{SCClem}). 
In particular, products of Pogorelov polytopes satisfy these conditions. Namely, they prove that if $P$ is a flag $n$-polytope 
satisfying generalized SCC condition,  $Q$ is a flag $n$-polytope, and there is an isomorphism of graded rings
$H^*(\mathcal{Z}_P)\simeq H^*(\mathcal{Z}_Q)$, then $P$ and $Q$ are combinatorially equivalent.  Note that
at this moment for $n>3$ it is not known if the property to be a flag $n$-polytope is determined by a graded ring 
$H^*(\mathcal{Z}_P)$. Also it is announced without any details, that in the next paper \cite{FMW20b} the authors will prove 
$B$-rigidity of any almost Pogorelov polytope and an analog of Theorem \ref{C6th} for them. 
But their Definition 2.16 of $B$-rigidity assumes isomorphism of bigraded rings, which is a more restrictive condition.
\end{remark}
\begin{remark}
Results of our paper imply (see Remark \ref{BIR-rem}) that any ideal almost Pogorelov is $B$-rigid. This 
gives cohomologically rigid families of $3$-dimensional and $6$-dimensional manifolds corresponding
to a unique (up to a permutation of colours) colouring of faces of an ideal almost Pogorelov polytope into 
$3$ colours. They are "pullbacks from the linear model" from \cite[Example 1.15(1)]{DJ91}.
Details see in \cite{E20b}. 
\end{remark}
\section{Cohomology ring of a moment-angle manifold of a simple $3$-polytope}\label{Cohsec}
Details on cohomology of a moment-angle manifolds see in \cite{BP15, BE17S}.
If not specified, we study cohomology over $\mathbb Z$ and omit the coefficient ring.

The ring $H^*(\mathcal{Z}_P)$  has a multigraded structure:
$$
H^*(\mathcal{Z}_P)=\bigoplus\limits_{i\geqslant0,\omega\subset[m]}H^{-i,2\omega}(\mathcal{Z}_P),
\text{ where }H^{-i,2\omega} (\mathcal{Z}_P)\subset H^{2|\omega|-i} (\mathcal{Z}_P),
$$
and $[m]=\{1,\dots,m\}$. There is a canonical isomorphism 
$H^{-i,2\omega}(\mathcal{Z}_P)\simeq \widetilde{H}^{|\omega|-i-1}(P_{\omega})$,
where $P_{\omega}=\bigcup_{i\in \omega}F_i$, and $\widetilde{H}^{-1}(\varnothing)=\mathbb Z$.

The multiplication between components is nonzero, only if $\omega_1\cap \omega_2=\varnothing$. The mapping
$$
H^{-i,2\omega_1}(\mathcal{Z}_P)\otimes H^{-j,2\omega_2}(\mathcal{Z}_P)\to 
H^{-(i+j),2(\omega_1\sqcup\omega_2)}(\mathcal{Z}_P)
$$
via the Poincare-Lefschets duality $H^i(P_{\omega})\simeq H_{n-1-i}(P_{\omega},\partial P_{\omega})$ 
up to signs is induced by intersection of faces of $P$. 
Denote by $\widehat{H}_{n-i-1}(P_{\omega},\partial P_{\omega})$ the subgroup corresponding to 
$\widetilde{H}^i(P_{\omega})$.

For any simple $3$-polytope $P$ and $\omega\ne\varnothing$ the set 
$P_{\omega}$ is a $2$-dimensional manifold, perhaps with a boundary. Each connected component of $P_{\omega}$ 
is a sphere with holes, and its boundary is a disjoint union of simple edge-cycles. Then $\widetilde{H}^k(P_{\omega})$
is nonzero only for $(k,\omega)\in \{(-1,\varnothing),(0,*),(1,*),(2,[m])\}$. In particular, $P$ has no torsion. There is a multigraded Poincare duality, which means that the bilinear form  
$$
H^{-i,2\omega}(\mathcal{Z}_P)\times H^{-(m-3-i),2([m]\setminus\omega)}(\mathcal{Z}_P)\to H^{-(m-3),2[m]}(\mathcal{Z}_P)=\mathbb Z
$$
has in some basis matrix with determinant $\pm1$. In particular, for a basis $\{e_{\alpha}\}$ in 
$H^{-i,2\omega}(\mathcal{Z}_P)$ there is a dual basis $\{e_{\beta}^*\}$ in 
$H^{-(m-3-i),2([m]\setminus\omega)}(\mathcal{Z}_P)$ such that 
$e_{\alpha}\cdot e_{\beta}^*=\delta_{\alpha,\beta}[\mathcal{Z}_P]$, 
where $[\mathcal{Z}_P]$ is a fundamental class in cohomology. We have 
\begin{gather*}
H^0(\mathcal{Z}_P)=\widetilde{H}^{-1}(\varnothing)=\mathbb Z=\widetilde{H}^2(\partial P)=H^{m+3}(\mathcal{Z}_P);\\
H^1(\mathcal{Z}_P)=H^2(\mathcal{Z}_P)=0=H^{m+1}(\mathcal{Z}_P)=H^{m+2}(\mathcal{Z}_P);\\
H^k(\mathcal{Z}_P)=\bigoplus\limits_{|\omega|=k-1}\widetilde{H}^0(P_{\omega})\oplus\bigoplus\limits_{|\omega|=k-2}\widetilde{H}^1(P_{\omega}),\quad 3\leqslant k\leqslant m.
\end{gather*}

Nontrivial multiplication occurs only for
\begin{enumerate}
\item $\widetilde{H}^{-1}(P_{\varnothing})\otimes \widetilde{H}^{k}(P_{\omega})\to \widetilde{H}^{k}(P_{\omega})$. This 
corresponds to multiplication by $1$ in $H^*(\mathcal{Z}_P)$;
\item  $\widetilde{H}^{0}(P_{\omega})\otimes \widetilde{H}^{1}(P_{[m]\setminus\omega})\to \widetilde{H}^{2}(\partial P)$.
This corresponds to the Poincare duality pairing.
\item $\widetilde{H}^{0}(P_{\omega_1})\otimes \widetilde{H}^{0}(P_{\omega_2})\to \widetilde{H}^{1}(P_{\omega_1\sqcup\omega_2})$. This corresponds to the mapping 
$$
\widehat{H}_2(P_{\omega_1},\partial P_{\omega_1})\otimes \widehat{H}_2(P_{\omega_2},\partial P_{\omega_2})\to H_1(P_{\omega_1\sqcup\omega_2},\partial P_{\omega_1\sqcup\omega_2}).
$$
\end{enumerate}
Each group on the left has a basis with elements corresponding to consistently 
oriented connected components of $P_{\omega_i}$  with a unique relation that the sum of these elements is zero. 
For connected components in $P_{\omega_1}$ and $P_{\omega_2}$ 
the product up to a sign is the intersection of them, 
which is a sum of classes $[\gamma_1]$, $\dots$, $[\gamma_r]$ of disjoint oriented edge paths on 
$P_{\omega_1\sqcup \omega_2}$ connecting components of 
$\partial P_{\omega_1\sqcup\omega_2}$ (if the intersection is not a boundary cycle of each component), see Fig. \ref{PO12}. 
Moreover, for the product to be nonzero at least one component of $\partial P_{\omega_1\sqcup\omega_2}$ 
should have one common point with some path $\gamma_i$.
For each connected component on the right a basis in the first homology group corresponds to any set of oriented 
edge paths connecting one boundary cycle to all the other. 

\begin{figure}
\begin{center}
\includegraphics[height=6cm]{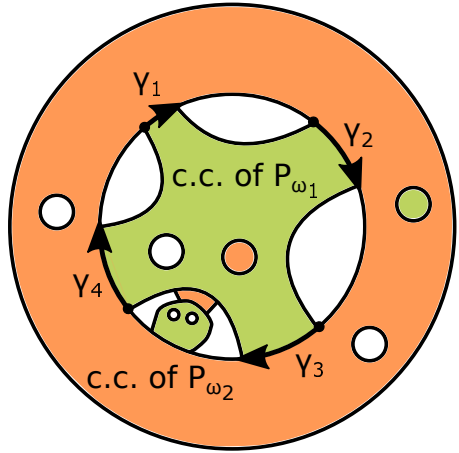}
\caption{Intersection of two connected components of $P_{\omega_1}$ and $P_{\omega_2}$}\label{PO12}
\end{center}
\end{figure}

\section{$B$-rigidity of any Pogorelov polytope}\label{Bsec}
\begin{definition}\label{Bdef}
Let $\mathfrak{P}$ be some set of $3$-polytopes.

We call a number $n(P)$, a set $\mathfrak{S}_P\subset H^*(\mathcal{Z}_P)$ or a collection of such sets defined for any polytope $P\in\mathfrak{P}$ {\em $B$-rigid in the class} $\mathfrak{P}$ if for any isomorphism $\varphi$ of graded rings $H^*(\mathcal{Z}_P)\simeq H^*(\mathcal{Z}_Q)$, $P,Q\in \mathfrak{P}$, we have $n(P)=n(Q)$,  $\varphi(\mathfrak{S}_P)=\mathfrak{S}_Q$, or each set from the collection for $P$ is mapped bijectively to some set from the collection for $Q$ respectively.

To be short, $B$-rigidity in the class of all simple $3$-polytopes we call {\it $B$-rigidity}.
\end{definition}

In \cite{FW15} it was proved that the property to be a flag polytope is $B$-rigid. Namely, 
it is equivalent to the fact that the ring $\widetilde{H}^*(\mathcal{Z}_P)/([\mathcal{Z}_P])$ is a (nonzero) indecomposable ring. Also it was proved that for flag $3$-polytopes 
$$
\widetilde{H}^1(P_{\omega})=\bigoplus_{\omega_1\sqcup\omega_2}\widetilde{H}^0(P_{\omega_1})\cdot\widetilde{H}^0(P_{\omega_2}).
$$
This result was based on the fact that for any flag $3$-polytope and {\it any three pairwise different faces $\{F_i,F_j,F_k\}$ with $F_i\cap F_j=\varnothing$ there exist $l\geqslant 4$ and an $l$-belt $\mathcal{B}_l$ such that $F_i,F_j\in \mathcal{B}_l$ and $F_k\notin \mathcal{B}_l$}. This fact is characteristic for a polytope $P\ne\Delta^3$ to be flag. 
Indeed, if $P$ has a $3$-belt, then for the faces $F_i$ and $F_j$ lying in different connected components of the complement to this belt there are no belts containing $F_i$ and $F_j$.   

On the base of these facts in \cite{BE17S} and \cite{BEMPP17} a simple characterization of flag polytopes was given: a simple 
$3$-polytope $P\ne\Delta^3$ is flag if and only if 
$$
H^{m-2}(\mathcal{Z}_P)\subset(\widetilde{H}^*(\mathcal{Z}_P))^2.
$$

Moreover, it turned out that there is another criterion when a $3$-simple polytope is flag.

\begin{proposition}\label{newflag}
A simple $3$-polytope $P\ne \Delta^3$ with $m$ faces is flag if and only if 
$$
{\rm rk }\,H^4(\mathcal{Z}_P)=\frac{(m-2)(m-4)(m-6)}{3}.
$$
\end{proposition}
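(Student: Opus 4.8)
The plan is to compute $\operatorname{rk} H^4(\mathcal{Z}_P)$ combinatorially and compare it with the stated polynomial, showing equality is equivalent to the absence of $3$-belts. By the description in Section~\ref{Cohsec}, $H^4(\mathcal{Z}_P)=\bigoplus_{|\omega|=3}\widetilde{H}^0(P_\omega)\oplus\bigoplus_{|\omega|=2}\widetilde{H}^1(P_\omega)$. For a simple $3$-polytope, $\widetilde{H}^1(P_\omega)$ with $|\omega|=2$ is nonzero only when the two faces $F_i,F_j$ are non-adjacent in a way that $P_\omega=F_i\sqcup F_j$ is still connected-complementary \dots\ in fact for $|\omega|=2$ the set $P_\omega$ is either two disjoint disks (if $F_i\cap F_j=\varnothing$), a disk (if they share exactly one edge), or an annulus-type piece (if they share two edges) — so $\widetilde{H}^1(P_\omega)=0$ always for $|\omega|=2$ unless $F_i$ and $F_j$ share more than one edge, which for simple polytopes means they bound a "bigon" situation; generically this term vanishes. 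Thus the first step is to argue $\bigoplus_{|\omega|=2}\widetilde{H}^1(P_\omega)=0$ for all simple $3$-polytopes without such degeneracies (and handle the degenerate cases separately), reducing the computation to $\operatorname{rk}H^4(\mathcal{Z}_P)=\sum_{|\omega|=3}\operatorname{rk}\widetilde{H}^0(P_\omega)$.

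Next I would compute $\operatorname{rk}\widetilde{H}^0(P_\omega)$ for $\omega=\{i,j,k\}$: it equals $(\text{number of connected components of }P_\omega)-1$. There are essentially a few combinatorial patterns for a triple of faces in a simple $3$-polytope: all three pairwise adjacent with a common vertex; all three pairwise adjacent forming a $3$-belt (common vertex absent); exactly one adjacent pair; exactly two adjacent pairs forming a path; or no two adjacent. The only pattern giving $P_\omega$ disconnected with an \emph{extra} component beyond what a "generic" count predicts is the $3$-belt: there $P_\omega$ is an annular band, still connected, so $\widetilde{H}^0=0$ — wait, one must be careful, a $3$-belt gives a connected band. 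The relevant point is instead that a $3$-belt separates $\partial P$, which affects counts of $\widetilde H^0(P_\omega)$ for $\omega$ of size $3$ involving faces on opposite sides; so the cleanest route is: (i) express $\sum_{|\omega|=3}\operatorname{rk}\widetilde H^0(P_\omega)$ via $\sum_{|\omega|=3}(c(P_\omega)-1)$ where $c$ counts components; (ii) show that for a flag polytope every triple $\{i,j,k\}$ has $P_\omega$ connected when the faces are "sufficiently adjacent" and has exactly two components (so rank $1$) precisely when \dots, and sum these; (iii) observe that the resulting sum, for flag $P$, evaluates to $\tfrac{(m-2)(m-4)(m-6)}{3}$ by a direct count using $\sum (\text{pentagons-and-up data})$ — or more slickly, invoke the known formula for the bigraded Betti numbers $\beta^{-1,2\omega}$ summed over $|\omega|=3$, which for flag polytopes is a classical polynomial in $m$ (this is the $h$-vector / $\gamma$-vector computation à la Buchstaber–Panov).

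For the converse and the sharpness, I would argue that if $P$ has a $3$-belt $\{F_a,F_b,F_c\}$, then the complement of the band $F_a\cup F_b\cup F_c$ in $\partial P$ has two components, each containing at least one face; taking $F_i$ in one component, $F_j$ in the other, and $\omega=\{i,j\}$ or enlarging to size $3$, one gets disconnected $P_\omega$ contributing \emph{strictly more} to $\operatorname{rk}H^4(\mathcal{Z}_P)$ than the flag value — equivalently, the rank is not purely a function of $m$ but jumps up. Concretely: the simplex $\Delta^3$ ($m=4$) has $H^4(\mathcal{Z}_{\Delta^3})=0$ while the formula gives $0$, which is why $\Delta^3$ is excluded — so one must check that a non-flag $P\ne\Delta^3$ genuinely has strictly larger $\operatorname{rk}H^4$; this follows because such $P$ is obtained from $\Delta^3$ by vertex/edge cuts but still has a $3$-belt, and the $3$-belt forces an extra $\widetilde H^0$ class (two faces separated by the belt give a $P_\omega$ with $\ge 2$ components) that a flag polytope with the same $m$ would not have, so the inequality $\operatorname{rk}H^4(\mathcal{Z}_P)>\tfrac{(m-2)(m-4)(m-6)}{3}$ is strict.

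\textbf{Main obstacle.} The hard part will be step (iii): showing that the combinatorial sum $\sum_{|\omega|=3}(c(P_\omega)-1)$ for a flag $3$-polytope evaluates to exactly $\tfrac{(m-2)(m-4)(m-6)}{3}$, and simultaneously that \emph{any} $3$-belt contributes a positive defect. The first half is a bookkeeping computation best done by relating $\operatorname{rk}H^4(\mathcal{Z}_P)$ to the already-known expression $\operatorname{rk}H^{m-2}(\mathcal{Z}_P)$ or to bigraded Betti numbers of flag complexes (using that flagness already controls $\widetilde H^1(P_\omega)=\bigoplus\widetilde H^0\cdot\widetilde H^0$, cited above, plus Poincaré duality $\operatorname{rk}H^4=\operatorname{rk}H^{m-1}$), turning it into a known polynomial identity rather than a fresh count. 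The second half — strict positivity of the $3$-belt defect — needs the observation that a $3$-belt is "essential" (its band does not bound a disk on one side in the flag-comparison sense), which is exactly the failure of the characteristic flag property recalled before Proposition~\ref{newflag}, so it can be leveraged directly rather than reproved.
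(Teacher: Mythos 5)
Your proposal correctly reduces $\operatorname{rk}H^4(\mathcal{Z}_P)$ to $\sum_{|\omega|=3}\operatorname{rk}\widetilde H^0(P_\omega)$ (the $|\omega|=2$ contribution does vanish: on a simple $3$-polytope two distinct facets share at most one edge, so $P_\omega$ is a disjoint union of disks). But the step you flag as the main obstacle --- evaluating that sum, and showing each $3$-belt forces a strictly positive excess --- is precisely the heart of the matter, and you leave it unfinished. As written, this is not a proof.

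The paper takes the route you mention only in passing, and it dissolves the obstacle in two lines. From \cite[Theorem 4.6.2]{BP15}, for a simple $3$-polytope with $h=m-3$ one has the generating-function identity $(1-t^2)^h(1+ht^2+ht^4+t^6)=\sum_{i,j}(-1)^i\beta^{-i,2j}t^{2j}$. Comparing coefficients of $t^6$ gives
$$
\beta^{-2,6}-\beta^{-1,6}=\frac{(h^2-1)(h-3)}{3}=\frac{(m-2)(m-4)(m-6)}{3},
$$
a polynomial identity valid for \emph{every} simple $3$-polytope, flag or not. Since $\beta^{-1,6}$ counts $|\omega|=3$ with $\widetilde H^1(P_\omega)\ne 0$, i.e.\ exactly the $3$-belts, and $\operatorname{rk}H^4(\mathcal{Z}_P)=\beta^{-2,6}$, you immediately get $\operatorname{rk}H^4(\mathcal{Z}_P)=\#\{3\text{-belts}\}+\tfrac{(m-2)(m-4)(m-6)}{3}$, and the proposition follows because for $P\ne\Delta^3$ flagness is equivalent to the absence of $3$-belts. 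Note two slips in your sketch: you describe the needed Betti number as $\beta^{-1,2\omega}$ summed over $|\omega|=3$, but $H^4$ lives in bidegree $(-2,6)$, and the role of $\beta^{-1,6}$ is to count $3$-belts through $\widetilde H^1$ (not $\widetilde H^0$); also, the polynomial is not ``a classical polynomial for flag polytopes'' --- the point is precisely that $\beta^{-2,6}-\beta^{-1,6}$ is polynomial in $m$ \emph{unconditionally}, so the $3$-belt count drops out as the defect. Your alternate plan of directly classifying triples and summing $c(P_\omega)-1$ would require controlling how $3$-belts separate $\partial P$ across all triples, which is considerably messier; the Dehn--Sommerville-type identity sidesteps that bookkeeping entirely.
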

\begin{proof}
Indeed, \cite[Theorem 4.6.2]{BP15} states that for a simple $n$-polytope
$$
(1-t^2)^{m-n}(h_0+h_1t^2+\dots+h_n t^{2n})=\sum\limits_{-i,2j}(-1)^i\beta^{-i,2j}t^{2j},
$$
where $h_0+h_1t+\dots+h_nt^n=(t-1)^n+f_{n-1}(t-1)^{n-1}+\dots+f_0$.

In particular, for $3$-polytopes we have
\begin{multline*}
(1-t^2)^h(1+ht^2+ht^4+t^6)=\\
1-\beta^{-1,4}t^4+\sum\limits_{j=3}^{h}(-1)^{j-1}(\beta^{-(j-1),2j}-\beta^{-(j-2),2j})t^{2j}+\\
(-1)^{h-1}\beta^{-(h-1),2(h+1)}t^{2(h+1)}+(-1)^ht^{2(h+3)},
\end{multline*}
where $h=m-3$.

Then 
\begin{multline*}
(1-ht^2+\frac{h(h-1)}{2}t^4-\frac{h(h-1)(h-2)}{6}t^6+\dots)(1+ht^2+ht^4+t^6)=\\
1-\beta^{-1,4}t^4+(\beta^{-2,6}-\beta^{-1,6})t^6+\dots.
\end{multline*}

For any simple $3$-polytope $P$ we have:
\begin{itemize}
\item ${\rm rk }\,H^3(\mathcal{Z}_P)=\beta^{-1,4}=|N_2(P)|=\frac{h(h-1)}{2}$;
\item $\beta^{-1,6}$ = \#\{$3$-belts\};
\item $\beta^{-2,6}-\beta^{-1,6}=\frac{(h^2-1)(h-3)}{3}$.
\end{itemize}
Then 
$$
{\rm rk }\,H^4(\mathcal{Z}_P)=\beta^4=\beta^{-2,6}=\beta^{-1,6}+\frac{(h^2-1)(h-3)}{3}=\text{\#\{$3$-belts\}}+\frac{(h^2-1)(h-3)}{3}
$$
\end{proof}

It is easy to see that a polytope has no $4$-belts if and only if the multiplication
$$
H^3(\mathcal{Z}_P)\otimes H^3(\mathcal{Z}_P)\to H^6(\mathcal{Z}_P)
$$
is trivial.
Thus, the property to be a Pogorelov polytope is also $B$-rigid.

In \cite{FMW15} (details see also in \cite{BE17S}) it was proved that any Pogorelov polytope is 
$B$-rigid.

The proof consisted of several steps. Denote 
$$
N_2(P)=\{\{i,j\}\subset [m]\colon F_i\cap F_j=\varnothing\}.
$$
For any $\omega\in N_2(P)$ we have $\widetilde H^0(P_{\omega})=\mathbb Z$. Denote by $\widetilde{\omega}$ 
a generator of this group (it is defined up to a sign, we choose one of them arbitrarily). Then for any $3$-polytope $H^3(\mathcal{Z}_P)$ is a free abelian
group with the basis $\{\widetilde\omega\colon\omega\in N_2(P)\}$. 

First is was proved that the set of elements 
$$
\{\pm \widetilde{\omega}\colon \omega\in N_2(P)\}\subset H^3(\mathcal{Z}_P)
$$ 
corresponding to sets $\omega\in N_2(P)$ is $B$-rigid in the class of Pogorelov polytopes. 
The proof was based on the so-called {\it separable circuit condition} (SCC): {\it for any  
Pogorelov polytope $P$ and any three different faces $\{F_i,F_j,F_k\}$ with $F_i\cap F_j=\varnothing$ 
there exist $l\geqslant 5$ and an $l$-belt $\mathcal{B}_l$ such that $F_i,F_j\in \mathcal{B}_l$, $F_k\notin \mathcal{B}_l$, 
and $F_k$ does not intersect at least one of the two connected components of $|\mathcal{B}_l|\setminus(F_i\cup F_j)$}.
In fact,  this condition is characteristic for a polytope $P\ne\Delta^3$ to be Pogorelov. 
Indeed, if $P$ has a $4$-belt, then for the faces $F_i$ and $F_j$ lying in different connected components of the complement to this belt and $F_k$ lying on the belt any belt containing $F_i$ and $F_j$ intersects $F_k$ by both components.   
Later, in Lemma \ref{APb-lemma} we will prove an analog of SCC for almost Pogorelov polytopes. It will be also characteristic to be an almost Pogorelov polytope with one exception, see Proposition \ref{apbprop}.

Now let us give some details of the proof, which we will need in what follows.

\begin{definition}
An {\em annihilator}\index{annihilator} of an element $r$ in a ring $R$ is defined as 
$$
{\rm Ann}_R(r)=\{s\in R\colon rs=0\}
$$
\end{definition}

Since the group $H^*(\mathcal{Z}_P)$ has no torsion, we have the isomorphism $H^*(\mathcal{Z}_P,\mathbb Q)\simeq H^*(\mathcal{Z}_P)\otimes\mathbb Q$ and the embedding $H^*(\mathcal{Z}_P)\subset H^*(\mathcal{Z}_P)\otimes\mathbb Q$. For polytopes $P$ and $Q$ the isomorphism $H^*(\mathcal{Z}_P)\simeq H^*(\mathcal{Z}_Q)$ implies the isomorphism over $\mathbb Q$. For the cohomology over $\mathbb Q$ all  theorems about structure of $H^*(\mathcal{Z}_P,\mathbb Q)$ are still valid.

It was proved that for a Pogorelov polytope $P$ and an element  $\alpha$ in $H=H^*(\mathcal{Z}_P,\mathbb Q)$: 
$$
\alpha=\sum\limits_{\omega\in N_2(P)}r_{\omega}\widetilde\omega\quad\text{with }|\{\omega\colon r_{\omega}\ne 0\}|\geqslant 2
$$ 
we have 
$$
\dim {\rm Ann}_H (\alpha)<\dim {\rm Ann}_H (\widetilde \omega),\text { if }r_{\omega}\ne 0.
$$
We call this result the {\it annihilator lemma}.

For almost Pogorelov polytopes the annihilator lemma 
is not valid, see Proposition \ref{anneq}. In Lemma \ref{h3lemma} we will modify it 
to be valid for all simple $3$-polytopes.  Corollary \ref{n4cor} shows how this lemma can be applied to almost Pogorelov polytopes.
Namely, under any isomorphism of graded rings $H^*(\mathcal{Z}_P)\to H^*(\mathcal{Z}_Q)$ 
the element $\widetilde\omega$ for $\omega\in N_2(P)$ is mapped to $\pm\widetilde{\omega'}$ for 
$\omega'\in N_2(Q)$, if $\omega=\{p,q\}$, and both $F_p$ and $F_q$ are not adjacent to quadrangles. 
In Definition \ref{good-bad} for $\omega\in N_2(P)$ we introduce the notion of {\it good} and {\it bad} elements in $N_2(P)$, and in
Corollary \ref{gbcor} we prove that for any simple 
$3$-polytopes $P$ and $Q$ 
the element $\widetilde\omega, \omega\in N_2(P)$ is mapped to a linear combination of an element
$\widetilde{\omega'}$, $\omega'\in N_2(Q)$ and elements $\widetilde\omega''$, $\omega''\in N_2(Q)$, where $\omega''$ 
are bad for $\omega'$.

On the second step in \cite{FMW15} it was proved that the set
$$
\{\pm \widetilde{\mathcal{B}_k}\colon \mathcal{B}_k - \text{ a $k$-belt}\}\subset H^{k+2}(\mathcal{Z}_P)
$$ 
is $B$-rigid in the class of Pogorelov polytopes. 
Here $\widetilde{\mathcal{B}_k}$ is the element in $H^{k+2}(\mathcal{Z}_P)$ corresponding to a generator in 
$\widetilde{H}^1(P_{\omega(\mathcal{B}_k)})\simeq\mathbb Z$, where 
$\omega(\mathcal{B}_k)=\{i\in[m]\colon F_i\in\mathcal{B}_k\}$. 
In Corollary \ref{4bbr}  we will prove that the set of elements corresponding to $4$-belts is 
$B$-rigid in the class of almost Pogorelov polytopes. Also in Corollary \ref{Bcor} we will prove that if 
$P\in\mathcal{P}_{aPog}\setminus\{I^3,M_5\times I\}$ and all the faces of a $k$-belt $\mathcal{B}_k$ do not have 
common points with quadrangles, then under any isomorphism of graded rings $H^*(\mathcal{Z}_P)\to H^*(\mathcal{Z}_Q)$ 
for a simple $3$-polytope $Q$ the element $\widetilde{\mathcal{B}_k}$ is mapped to $\pm\widetilde{\mathcal{B}_k'}$ for 
some $k$-belt $\mathcal{B}_k'$ of $Q$.

Also from the arguments in \cite{FMW15} it follows (see the detailed proof in \cite{BE17S}) that for any simple $3$-polytope $P$ the 
subgroup $\boldsymbol{B}_k\subset H^{k+2}(\mathcal{Z}_P)$, $4\leqslant k\leqslant m-2$, with the basis $\{\widetilde{\mathcal{B}_k}\colon \mathcal{B}_k - \text{ a $k$-belt}\}$ is $B$-rigid. In particular, the number of $k$-belts, $k\geqslant 4$, is $B$-rigid.

Then it was proved that the set of elements 
$$
\{\pm \widetilde{\mathcal{B}_k}\colon \mathcal{B}_k - \text{ a $k$-belt around a face}\}\subset H^{k+2}(\mathcal{Z}_P)
$$ 
is $B$-rigid in the class of Pogorelov polytopes. 
\begin{corollary}\label{BrPog*}
The property to be a strongly Pogorelov polytope is $B$-rigid. 
\end{corollary}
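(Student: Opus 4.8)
**Proof proposal for Corollary \ref{BrPog*}.**

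The plan is to show that among the families in the nested chain, the property of being strongly Pogorelov is singled out by a $B$-rigid piece of data, namely the already-established $B$-rigid set
$\{\pm \widetilde{\mathcal{B}_k}\colon \mathcal{B}_k \text{ a } k\text{-belt around a face}\}$
together with the $B$-rigid subgroups $\boldsymbol{B}_k$ spanned by all $k$-belts. First I would recall that $P$ is strongly Pogorelov ($c^*5$-connected) exactly when $P$ is Pogorelov ($c5$-connected) and, in addition, every $5$-belt is trivial, i.e.\ surrounds a pentagonal face. Since the property to be a Pogorelov polytope is already $B$-rigid (it is equivalent to triviality of the multiplication $H^3(\mathcal{Z}_P)\otimes H^3(\mathcal{Z}_P)\to H^6(\mathcal{Z}_P)$, combined with flagness, which is $B$-rigid by Proposition \ref{newflag}), it suffices to work inside the class of Pogorelov polytopes and show there that triviality of every $5$-belt is $B$-rigid.

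Next I would compare, for a Pogorelov polytope, the subgroup $\boldsymbol{B}_5\subset H^7(\mathcal{Z}_P)$ spanned by all $5$-belts with the sub\emph{set} $\{\pm\widetilde{\mathcal{B}_5}\colon \mathcal{B}_5 \text{ a } 5\text{-belt around a face}\}$ of elements coming from $5$-belts around faces. Both are $B$-rigid in the class of Pogorelov polytopes by the results of \cite{FMW15} quoted just above the corollary. A Pogorelov polytope is strongly Pogorelov if and only if every $5$-belt is trivial, equivalently if and only if every basis element $\widetilde{\mathcal{B}_5}$ of $\boldsymbol{B}_5$ already belongs to the distinguished set of face-belts; in other words, iff the $B$-rigid set of face-surrounding $5$-belts has exactly as many elements (up to sign) as $\mathrm{rk}\,\boldsymbol{B}_5$, i.e.\ iff $|\{5\text{-belts around a face}\}| = \dim \boldsymbol{B}_5\otimes\mathbb Q$. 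Both quantities are $B$-rigid: the right-hand side because $\boldsymbol{B}_5$ is a $B$-rigid subgroup, the left-hand side because the set of face-surrounding $5$-belts is a $B$-rigid set, so its cardinality is an invariant of the graded ring. Hence the equality of these two numbers is preserved by any graded ring isomorphism $H^*(\mathcal{Z}_P)\simeq H^*(\mathcal{Z}_Q)$ between Pogorelov polytopes.

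Finally, putting the pieces together: given a simple $3$-polytope $Q$ with $H^*(\mathcal{Z}_P)\simeq H^*(\mathcal{Z}_Q)$ and $P$ strongly Pogorelov, first $B$-rigidity of flagness and of the Pogorelov property force $Q$ to be Pogorelov; then the numerical criterion above, applied to the isomorphism, forces every $5$-belt of $Q$ to be trivial, so $Q$ is strongly Pogorelov. The main obstacle — and the only genuinely nontrivial input — is the $B$-rigidity in the class of Pogorelov polytopes of the set of $5$-belts surrounding a face; but this is precisely the last statement quoted from \cite{FMW15} in the excerpt, which I am entitled to assume, so the corollary follows by the bookkeeping above.
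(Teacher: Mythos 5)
Your proof is correct and matches the paper's implicit argument: the corollary is stated in the paper without a separate proof, immediately after quoting from \cite{FMW15} that both the rank of $\boldsymbol{B}_k$ and the set $\{\pm\widetilde{\mathcal{B}_k}\colon \mathcal{B}_k\text{ a $k$-belt around a face}\}$ are $B$-rigid in the class of Pogorelov polytopes, which are precisely the ingredients you invoke. Your reduction --- flagness and the Pogorelov property are $B$-rigid, and within Pogorelov polytopes being strongly Pogorelov is equivalent to the $B$-rigid numerical equality between the number of face-surrounding $5$-belts and $\mathrm{rk}\,\boldsymbol{B}_5$ --- is exactly the intended deduction.
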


This induces a bijection between the sets of faces of polytopes.

Then it was proved that images of adjacent faces are adjacent. This finishes the proof of $B$-rigidity of any Pogorelov polytope.

\section{$B$-rigidity of the property to be an almost Pogorelov polytope}\label{BaPsec}
Our aim is to prove that the property to be an almost Pogorelov polytope is $B$-rigid.

First the cube and the pentagonal prism are the only flag polytopes with $m=6$ and $m=7$ respectively. Hence, they are 
$B$-rigid.

Now consider all the other almost Pogorelov polytopes. They have no adjacent quadrangles, since a pair of adjacent quadrangles
should be surrounded by a nontrivial $4$-belt.

The image of the mapping $H^3(\mathcal{Z}_P)\otimes H^3(\mathcal{Z}_P)\to H^6(\mathcal{Z}_P)$ is 
the subgroup $\boldsymbol{B}_4$ generated  by the elements $\widetilde{\mathcal{B}_4}$ corresponding to $4$-belts.

There is a $B$-rigid subgroup $A_3\subset H^3(\mathcal{Z}_P)$:
$$
A_3=\{x\in H^3(\mathcal{Z}_P)\colon x\cdot y=0\text{ for all } y\in H^3(\mathcal{Z}_P)\}.
$$
It is generated by elements $\widetilde{\omega}$ such that the set $\omega\in N_2(P)$ does not 
belong to $\omega(\mathcal{B}_4)$ for any $4$-belt $\mathcal{B}_4$.  Denote this set $N_2^0(P)$.

The subgroup $A_3$ is a direct summand in $H^3(\mathcal{Z}_P)$, therefore 
the quotient group $\boldsymbol{H}_3=H^3(\mathcal{Z}_P)/A_3$  is also a free abelian group.
We have the mapping
$\boldsymbol{H}_3\otimes \boldsymbol{H}_3\to H^6(\mathcal{Z}_P)$. Also
$\boldsymbol{H}_3\otimes\mathbb {Q}\simeq H^3(\mathcal{Z}_P,\mathbb Q)/(A_3\otimes \mathbb Q)$
and there is an embedding $\boldsymbol{H}_3\subset\boldsymbol{H}_3\otimes\mathbb {Q}$.

\begin{lemma}\label{2unlemma}
We have $2\,\rk \boldsymbol{B}_4\geqslant \rk\boldsymbol{H}_3$, and the equality holds
if and only if each set $\omega\in N_2(P)$
can be included in at most one set $\omega(\mathcal{B}_4)$.
\end{lemma}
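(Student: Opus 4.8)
\emph{Proof proposal.} The plan is to reduce the statement to a purely combinatorial double count, using only identifications of ranks that have already been made above. First I would record the two rank computations. Since $A_3$ is a direct summand of $H^3(\mathcal Z_P)$ generated by $\{\widetilde\omega\colon\omega\in N_2^0(P)\}$, the free abelian group $\boldsymbol H_3=H^3(\mathcal Z_P)/A_3$ has
$$
\rk\boldsymbol H_3=|N_2(P)|-|N_2^0(P)|=\bigl|N_2(P)\setminus N_2^0(P)\bigr|,
$$
i.e.\ the number of pairs $\omega\in N_2(P)$ contained in $\omega(\mathcal B_4)$ for at least one $4$-belt $\mathcal B_4$. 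On the other side, $\boldsymbol B_4$ has basis $\{\widetilde{\mathcal B_4}\colon\mathcal B_4\text{ a }4\text{-belt}\}$, so $\rk\boldsymbol B_4$ is exactly the number of $4$-belts of $P$.

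Next I would establish the dictionary between these two sets. For a $4$-belt $\mathcal B_4=(F_{i_1},F_{i_2},F_{i_3},F_{i_4})$ the two ``diagonal'' pairs $d_1(\mathcal B_4)=\{i_1,i_3\}$ and $d_2(\mathcal B_4)=\{i_2,i_4\}$ both lie in $N_2(P)$: consecutive faces of a belt are adjacent, hence not disjoint, whereas if two non-consecutive faces of the belt shared a vertex then, by simplicity of $P$, they would share an edge through it (impossible for non-adjacent faces) or that vertex would belong to three faces of the belt, contradicting the definition of a belt. These are the only two $2$-subsets of $\omega(\mathcal B_4)$ in $N_2(P)$, the remaining four being pairs of adjacent faces, and plainly $d_1(\mathcal B_4)\ne d_2(\mathcal B_4)$. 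Conversely, if $\omega\in N_2(P)$ and $\omega\subset\omega(\mathcal B_4)$, then $\omega$ is one of these two diagonals. Hence
$$
N_2(P)\setminus N_2^0(P)=\bigcup_{\mathcal B_4}\bigl\{d_1(\mathcal B_4),d_2(\mathcal B_4)\bigr\},
$$
the union being over all $4$-belts of $P$.

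The inequality then drops out of the union bound:
$$
\rk\boldsymbol H_3=\Bigl|\bigcup_{\mathcal B_4}\{d_1(\mathcal B_4),d_2(\mathcal B_4)\}\Bigr|\leqslant\sum_{\mathcal B_4}2=2\cdot\#\{4\text{-belts}\}=2\,\rk\boldsymbol B_4.
$$
For the equality case, a union of $2$-element sets has cardinality equal to twice the number of sets precisely when those sets are pairwise disjoint; this fails exactly when some $\omega\in N_2(P)$ is a common diagonal of two distinct $4$-belts, equivalently when some $\omega\in N_2(P)$ is contained in two distinct sets $\omega(\mathcal B_4)$. So the equality holds if and only if each $\omega\in N_2(P)$ lies in at most one $\omega(\mathcal B_4)$, as asserted.

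There is no serious obstacle here: the substantive inputs (that $\{\widetilde{\mathcal B_4}\}$ is a basis of $\boldsymbol B_4$ and that $A_3$ is the direct summand spanned by $\{\widetilde\omega\colon\omega\in N_2^0(P)\}$) are already available in the text, and the rest is bookkeeping with ranks of free abelian groups. The only point I would take care over is the elementary claim that the two non-consecutive pairs of a $4$-belt are disjoint and are the only disjoint pairs inside $\omega(\mathcal B_4)$, which rests on simplicity of $P$ together with the ``no three faces share a vertex'' clause in the definition of a belt.
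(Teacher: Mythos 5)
Your proof is correct and takes essentially the same route as the paper: the paper also argues by double-counting the incidences between pairs $\omega\in N_2(P)$ and $4$-belts $\mathcal B_4$ with $\omega\subset\omega(\mathcal B_4)$, noting that each $4$-belt contributes exactly two such pairs (its diagonals) while each $\omega$ outside $N_2^0(P)$ contributes at least one. You spell out a couple of small points the paper leaves implicit (that the diagonals do lie in $N_2(P)$, via simplicity and the no-common-vertex clause in the belt definition, and the rank computations for $\boldsymbol H_3$ and $\boldsymbol B_4$), but the underlying argument is identical.
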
 
\begin{proof}
Each element $\widetilde{\mathcal{B}_4}$  can be represented uniquely up to a change of order
as a product of two elements $\widetilde{\omega}_1\cdot\widetilde{\omega}_2$, $\omega_i\in N_2(P)$.
On the other hand, an element $\widetilde{\omega}$ can be a factor of several elements, corresponding
to $4$-belts, if the faces $F_p,F_q$ can be included into several $4$-belts, where $\omega=\{p,q\}$.
Let us calculate the number of pairs $(\omega, \mathcal{B}_4)$, where $\omega\in N_2(P)$ and 
$\omega\subset \omega(\mathcal{B}_4)$. On the one hand, it is equal to $2\,\rk \boldsymbol{B}_4$, on the other hand,
it is at least the number of elements in $N_2(P)$ that can be included into a $4$-belt, and the equality holds if and only if 
each this element can be included in exactly one set $\omega(\mathcal{B}_4)$.
\end{proof}

\begin{lemma}\label{44lemma}
Let $P$ be a flag $3$-polytope. If $2\,\rk \boldsymbol{B}_4= \rk\boldsymbol{H}_3$, then $P$ has no adjacent quadrangles.
\end{lemma}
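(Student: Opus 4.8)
The plan is to argue by contrapositive: assuming $P$ is flag and has a pair of adjacent quadrangles $F_p, F_q$, I will exhibit a set $\omega \in N_2(P)$ that is contained in $\omega(\mathcal{B}_4)$ for at least two distinct $4$-belts $\mathcal{B}_4$; by Lemma \ref{2unlemma} this forces the strict inequality $2\,\rk\boldsymbol{B}_4 > \rk\boldsymbol{H}_3$, contradicting the hypothesis. So the whole lemma reduces to a local combinatorial construction around two adjacent quadrangles in a flag polytope.

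First I would set up the local picture. Let $F_p$ and $F_q$ be adjacent quadrangles sharing an edge $e$. Since $P$ is flag and $\ne \Delta^3$, each quadrangle is surrounded by a (trivial) $4$-belt, and no three faces meet along $e$ in a vertex other than its two endpoints; label the four faces adjacent to $F_p$ other than $F_q$, and likewise for $F_q$, and identify the (at most two) faces adjacent to both $F_p$ and $F_q$ at the endpoints of $e$. Because $P$ is flag, one checks that $F_p$ and $F_q$ have no common vertex outside $e$ and that the faces meeting $F_p \cup F_q$ are arranged so that the union $F_p \cup F_q$ (a hexagonal ``domino'') is surrounded by a $6$-belt. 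The key observation is that this hexagon decomposes as two quadrangles, and therefore it is cut in two different ways: one can find two faces $F_i, F_j$ on opposite sides of the domino with $F_i \cap F_j = \varnothing$, $\{i,j\} \in N_2(P)$, such that $\{i,j\}$ together with $\{p,q\}$-type pairs of bridging faces completes to a $4$-belt in (at least) two combinatorially distinct ways — essentially the two ``short'' $4$-belts running across $F_p$ and across $F_q$ respectively share the pair of faces adjacent to $F_p\cup F_q$ at the two ends of $e$.

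Concretely, I expect to produce $\omega = \{i,j\} \in N_2(P)$ consisting of the two faces that touch both $F_p$ and $F_q$ (the ``end'' faces at the two vertices of $e$), or alternatively one such end face together with a face across the hexagon; then the $4$-belt $\{F_i, F_p, F_j, \cdot\}$ passing through $F_p$ and the $4$-belt $\{F_i, F_q, F_j, \cdot\}$ passing through $F_q$ are two distinct $4$-belts with $\omega \subset \omega(\mathcal{B}_4)$ for both. Flagness is used twice: to guarantee these are genuine $4$-belts (no three faces sharing a vertex, so the cyclic adjacency sequence has no chords collapsing it), and to guarantee $F_i, F_j$ are actually disjoint rather than adjacent. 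The main obstacle is the careful case analysis verifying that the two end faces at the vertices of $e$ are distinct, are not adjacent to each other, and that the two candidate $4$-belts are genuinely different — one must rule out degenerations where the hexagonal domino ``wraps around'' a small polytope. Handling those degenerate small cases (which among flag polytopes with adjacent quadrangles are few — and in fact, since we have already excluded the cube and pentagonal prism from consideration, one can enumerate the remaining possibilities or invoke that adjacent quadrangles in any larger flag polytope sit inside enough room for the construction) is the delicate point; everything else is a direct counting argument feeding into Lemma \ref{2unlemma}.
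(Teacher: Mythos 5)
Your core idea coincides with the paper's: produce a pair $\omega=\{a,b\}\in N_2(P)$, where $F_a,F_b$ are the two faces meeting the shared edge of two adjacent quadrangles $F_p,F_q$ at its endpoints, observe that $\omega$ lies in the two distinct $4$-belts surrounding $F_p$ and $F_q$ respectively, and invoke Lemma \ref{2unlemma}. That is exactly the paper's one-sentence argument, so the approach is right.

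Two things in your write-up are off, however. First, the domino $F_p\cup F_q$ has six boundary edges but is met by only \emph{four} faces: the end faces $F_a$ and $F_b$ each take up two boundary edges (one on $F_p$ and one on $F_q$), and one more face lies opposite $e$ in each quadrangle. So ``surrounded by a $6$-belt'' is incorrect; when it is a belt at all it is a $4$-belt. This slip is harmless to your actual construction (which does not use the belt around the domino, only the belts around $F_p$ and $F_q$), but it signals that the local count was not done carefully. Second, and more substantively, your final paragraph asserts that $I^3$ and $M_5\times I$ ``have already been excluded from consideration.'' They have not: the lemma is stated for arbitrary flag $3$-polytopes. The paper handles the cube by simply noting $\rk\boldsymbol{B}_4=\rk\boldsymbol{H}_3=3$, so the hypothesis $2\rk\boldsymbol{B}_4=\rk\boldsymbol{H}_3$ fails and the implication is vacuous there; no special treatment of $M_5\times I$ is needed because your construction already works for it (and, for that matter, for the cube). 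Flagness alone guarantees $F_a\neq F_b$ and $F_a\cap F_b=\varnothing$ --- if $F_a\cap F_b\neq\varnothing$ then $F_a,F_b,F_p$ would be pairwise adjacent without a common vertex, since $F_a\cap F_p$ and $F_b\cap F_p$ are opposite edges of the quadrangle $F_p$ --- and the two belts are distinct since one contains $F_q$ and the other contains $F_p$. There are no degenerate small cases to enumerate; your hedging there is the only genuine gap, and it disappears once the flagness checks above are written down.
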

\begin{proof}
If $P$ is the cube $I^3$, then $\rk \boldsymbol{B}_4= \rk\boldsymbol{H}_3=3$. Else if $P$ has two adjacent quadrangles, then 
these quadrangles
are surrounded by a $4$-belt such that the two faces intersecting both these quadrangles are included into two 
$4$-belts surrounding the quadrangles. 
\end{proof}

\begin{figure}
\begin{center}
\includegraphics[height=4cm]{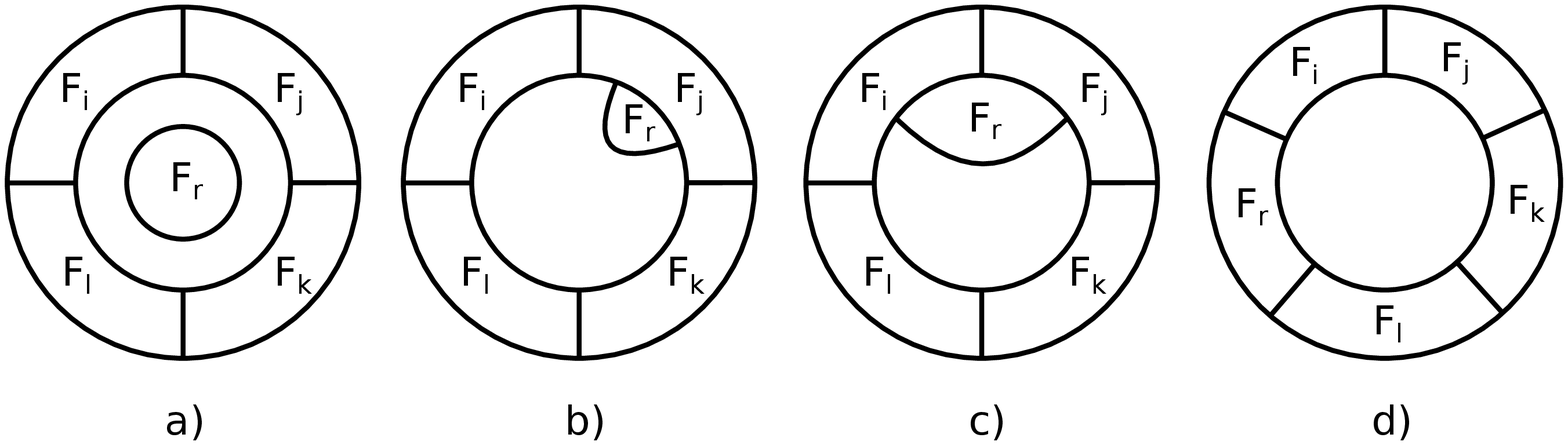}
\caption{a) $4$-belt $(F_i,F_j,F_k,F_l)$ disjoint from $F_r$; b) $4$-belt $(F_i,F_j,F_k,F_l)$, and $F_r$ intersects only $F_j$; c) $4$-belt $(F_i,F_j,F_k,F_l)$ and $F_r$ intersects only $F_i$ and $F_j$; d) $5$-belt $(F_i,F_j,F_k,F_l,F_r)$}\label{SetsH7}
\end{center}
\end{figure}

\begin{lemma}\label{H7lemma}
Let $P$ be a flag $3$-polytope. 

If $\widetilde{H}_1(P_{\omega})\ne 0$, then $|\omega|\geqslant 4$. Moreover, if $|\omega|=4$, then $\omega=\omega(\mathcal{B}_4)$ for some $4$-belt.

If $2\,\rk \boldsymbol{B}_4= \rk\boldsymbol{H}_3$, and $|\omega|=5$, then $\widetilde{H}_1(P_{\omega})\ne 0$ if and only if $P_{\omega}$ has one of the $4$ types drawn on Fig. \ref{SetsH7}.
\end{lemma}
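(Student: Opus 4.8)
The plan is to reduce everything to a local statement about small unions of faces, using throughout the two elementary facts that in a simple $3$-polytope two distinct faces are either disjoint or meet in a single edge (the word ``single'' forced by flagness), and that in a flag polytope any set of pairwise adjacent faces has a common vertex — so three pairwise adjacent faces meet at a vertex $v$ and their union is the ``corner'' disk around $v$ (in particular there is no $3$-belt). I will freely use $\widetilde H_1(P_\omega)\cong\widetilde H^1(P_\omega)$, that $\widetilde H_1(P_\omega)$ is the sum of $\widetilde H_1(C_i)$ over the connected components $C_i$ of $P_\omega$, and that each $C_i$ is a sphere with holes, so $\widetilde H_1(C_i)\ne 0$ iff $C_i$ is not a disk.

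The first step is a local lemma: \emph{a connected union $D$ of at most four faces of a flag $3$-polytope is a disk unless $D=|\mathcal B_4|$ for some $4$-belt.} This is a short case-check on the adjacency graph $\Gamma(D)$ on the $\le 4$ faces. If $\Gamma(D)$ is a tree, $D$ is built by gluing disks along single edges, hence is a disk. If $\Gamma(D)$ contains a triangle, those three faces bound a corner disk and any further face can only be glued along a connected arc (again by the common-vertex fact), so $D$ stays a disk; the only apparent exception, a fourth face adjacent to all three, forces $\Gamma(D)=K_4$, impossible since four pairwise adjacent faces would share a vertex. The graph $K_4$ minus an edge is handled the same way and gives a disk. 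The only remaining connected triangle-free graph on four vertices with a cycle is the $4$-cycle, and its four faces satisfy the belt axioms — the two diagonal pairs are disjoint and no three of the four faces share a vertex, because every triple of them contains a disjoint pair — so $D=|\mathcal B_4|$, an annulus. From this, Part 1 is immediate: a component $C$ with $\widetilde H_1(C)\ne 0$ is not a disk, so $|C|\ge 4$, hence $|\omega|\ge 4$; and if $|\omega|=4$ then $C=P_\omega$ has exactly four faces and is not a disk, so $\omega=\omega(\mathcal B_4)$.

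For Part 2 the direction ``$\Leftarrow$'' is a direct computation: in configurations (a)--(d) the space $P_\omega$ is, respectively, an annulus $|\mathcal B_4|$ plus a disjoint disk, $|\mathcal B_4|$ with a disk glued along one edge, $|\mathcal B_4|$ with a disk glued along the arc $(F_r\cap F_i)\cup(F_r\cap F_j)$ (the two edges meet at the common vertex of $F_r,F_i,F_j$), and the annulus $|\mathcal B_5|$; in each case $P_\omega\simeq S^1$, so $\widetilde H_1(P_\omega)=\mathbb Z$ (no rank hypothesis is used here). For ``$\Rightarrow$'' assume $\widetilde H_1(P_\omega)\ne 0$ and pick a component $C$ with $\widetilde H_1(C)\ne 0$, so $4\le|C|\le 5$. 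If $|C|=4$ then $C=|\mathcal B_4|$ and the fifth face lies in another component, so it is non-adjacent, hence disjoint, from all belt faces — configuration (a). If $|C|=5$ then $P_\omega=C$ is a connected non-disk union of five faces; I claim it either contains a $4$-belt or equals $|\mathcal B_5|$. Granting this, the $5$-belt case is configuration (d) (and such a $C$ contains no $4$-belt among its faces, so the cases do not overlap). In the $4$-belt case write the belt as $(F_i,F_j,F_k,F_l)$, let $F_r$ be the fifth face and $S$ the set of belt faces adjacent to $F_r$. If $|S|=1$ we get (b); if $|S|=2$ with the two faces adjacent in the belt we get (c). If $|S|=2$ with the two faces opposite, say $S=\{F_i,F_k\}$, then $(F_i,F_r,F_k,F_j)$ is again a $4$-belt (consecutive pairs adjacent; $F_i\cap F_k=\varnothing$; $F_r\cap F_j=\varnothing$ as $F_j\notin S$ is non-adjacent to $F_r$; no three of these faces share a vertex, since each triple contains a disjoint pair), so the disjoint pair $\{i,k\}\in N_2(P)$ lies in the two distinct sets $\omega(\mathcal B_4)=\{i,j,k,l\}$ and $\{i,r,k,j\}$, contradicting Lemma \ref{2unlemma} together with $2\,\rk\boldsymbol B_4=\rk\boldsymbol H_3$. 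If $|S|=3$, say $S=\{F_i,F_j,F_k\}$, the same argument with the $4$-belt $(F_i,F_r,F_k,F_l)$ again places $\{i,k\}$ in two $4$-belts — contradiction. If $|S|=4$ then the common-vertex fact forces $F_r$ to be a quadrilateral bounding one complementary disk of the annulus $|\mathcal B_4|$, so $P_\omega$ is a disk and $\widetilde H_1(P_\omega)=0$, contrary to assumption. Hence $P_\omega$ is one of (a)--(d).

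The main obstacle is exactly the claim used in the $|C|=5$ case: \emph{a connected non-disk union of five faces of a flag $3$-polytope contains a $4$-belt or equals $|\mathcal B_5|$.} I would prove it by the standard device of taking a shortest edge-cycle $\delta$ in $C$ that is non-contractible in $C$: the faces of $C$ incident to $\delta$ on the side meeting $C$, after collapsing repeated runs, form a cyclic sequence with consecutive faces adjacent, and minimality of $\delta$ together with the two elementary facts and the absence of $3$-belts upgrades this sequence to a genuine belt, necessarily of length $4$ or $5$ because $|C|=5$. The delicate points are faces appearing with multiplicity along $\delta$ and verifying that the ``inside'' disk bounded by $\delta$ really lies in $C$; this is where flagness and the minimality of $\delta$ do the work. (Alternatively one can bypass $\delta$ and argue by a brute enumeration of connected adjacency graphs on five vertices using the two elementary facts, but that is longer.)
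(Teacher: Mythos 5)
Your overall plan is sound and in places cleaner than the paper's own argument. The ``local lemma'' (a connected union of at most four faces of a flag $3$-polytope is a disk unless it is $|\mathcal B_4|$ for a $4$-belt) is correct and isolates Part~1 nicely; the $\Leftarrow$ direction of Part~2 is complete; and in $\Rightarrow$ your analysis of the set $S$ of belt faces adjacent to $F_r$ is a transparent reorganization of the paper's case split. In particular, ruling out $|S|=2$ with opposite faces and $|S|=3$ by exhibiting a second $4$-belt through the same disjoint pair, in contradiction with $2\,\rk\boldsymbol B_4=\rk\boldsymbol H_3$ via Lemma~\ref{2unlemma}, is exactly what the paper does (``\emph{\dots for otherwise the pair $\{F_{i_l},F_{i_{l+2}}\}$ can be included into two $4$-belts}\dots'').

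The genuine gap is the claim you yourself flag as ``the main obstacle'': \emph{a connected non-disk union of five faces of a flag $3$-polytope contains a $4$-belt among its faces or equals $|\mathcal B_5|$}. Your sketch --- shortest non-contractible edge-cycle $\delta$, walk along one side, collapse repeated runs --- is precisely the device the paper spends the bulk of its proof on (walking along a boundary component $\gamma$ and carefully handling the degeneracies $F_{i_l}=F_{i_{l+2}}$, $F_{i_l}\cap F_{i_{l+2}}$ an edge, $F_{i_l}=F_{i_{l+3}}$, illustrated in Fig.~\ref{Cyclered}). Saying that minimality of $\delta$ ``upgrades this sequence to a genuine belt'' does not yet deal with those degeneracies, and that is the entire content of the step; as written it is a plan, not a proof. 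The brute-enumeration alternative you mention would in fact close this: the faces form a good closed cover, so $P_\omega$ is homotopy equivalent to the nerve, which (since no four faces of a simple $3$-polytope are pairwise adjacent) is exactly the flag complex of the adjacency graph on the five vertices; a chordal graph has contractible flag complex, so a non-disk $P_\omega$ forces an induced $C_4$ or $C_5$, and your local lemma (and its evident $5$-vertex analogue) converts that induced cycle into a $4$- or $5$-belt. That would complete your argument by a route genuinely different from the paper's boundary-walk.
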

\begin{proof}
As we mentioned above, $P_{\omega}$ is a disjoint union of spheres with holes.  
If $\widetilde{H}_1(P_{\omega})\ne 0$, then one sphere $P_{\omega'}$, 
$\omega'\subset\omega$, has at least two holes. 
This is impossible for $|\omega'|=1\text{ or }2$. Hence, $|\omega'|\geqslant 3$.
Consider the boundary component $\gamma$ of $P_{\omega'}$. 
Walking along this simple edge-cycle we obtain a cyclic sequence of faces 
$\mathcal{L}=(F_{i_1},\dots,F_{i_p})$, where the index $i_l$ lies in $\mathbb Z_p=\mathbb Z/p\mathbb Z$. 
If $p=1$, then $P_{\omega'}$ consists of one face and is contractible. 
If $p=2$, then $P_{\omega'}$ consists of two adjacent faces and also is contractible. 
If $p=3$, then $F_{i_1}$, $F_{i_2}$ and $F_{i_3}$ are different 
pairwise adjacent faces. Since $P$ has no $3$-belts, they have a common vertex  and $P_{\omega'}$ consists of these three 
faces and is contractible. Thus, $p\geqslant 4$. 

We have $F_{i_l}\ne F_{i_{l+1}}$ for any $l$. If $F_{i_l}=F_{i_{l+2}}$ for some $l$, then we have the configuration on Fig. \ref{Cyclered}a), and $P_{\omega'}$ is homeomorphic to $P_{\omega'\setminus\{i_{l+1}\}}$. 
If $F_{i_l}\cap F_{i_{l+1}}$ is an edge, then we have the configuration on Fig. \ref{Cyclered}b), 
and $P_{\omega'}$ is homeomorphic to $P_{\omega'\setminus\{i_{l+1}\}}$. If $F_{i_l}= F_{i_{l+3}}$, then $F_{i_l}$, $F_{i_{l+1}}$, $F_{i_{l+2}}$ are three different pairwise adjacent faces.
Since $P$ has no $3$-belts, the edge $F_{i_{l+1}}\cap F_{i_{l+2}}$ intersects $F_{i_l}$, and we have the configuration on Fig. \ref{Cyclered}c). Then $P_{\omega'}$ is homeomorphic to $P_{\omega'\setminus\{i_{l+1},i_{l+2}\}}$.

\begin{figure}
\begin{center}
\includegraphics[height=4cm]{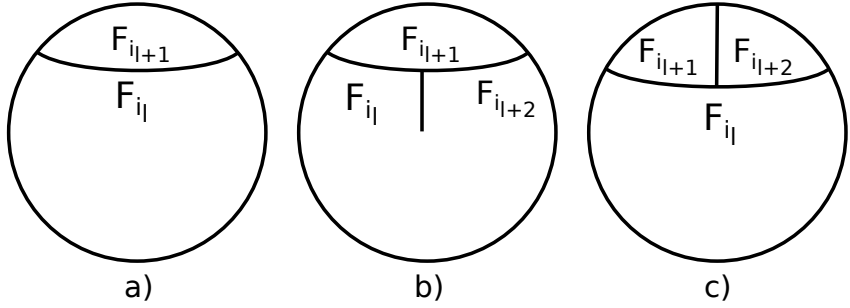}
\caption{a) $F_{i_l}=F_{i_{l+2}}$; b) $F_{i_l}\cap F_{i_{l+2}}$ is an edge, c) $F_{i_l}=F_{i_{l+3}}$}\label{Cyclered}
\end{center}
\end{figure}

If  $|\omega'|=3$, then $F_{i_l}\ne F_{i_{l+1}}$, $F_{i_l}\ne F_{i_{l+2}}$, $F_{i_l}\ne F_{i_{l+3}}$ for all $l$ by the previous argument. A contradiction. Hence, $|\omega|\geqslant |\omega'|\geqslant 4$.

If $|\omega'|=4$, then $F_{i_l}\ne F_{i_{l+1}}$, $F_{i_l}\ne F_{i_{l+2}}$, 
$F_{i_l}\cap F_{i_{l+2}}=\varnothing$, $F_{i_l}\ne F_{i_{l+3}}$ for all $l$ by the previous argument. If $p=4$, then $\mathcal{L}$
is a $4$-belt and $\omega=\omega'=\omega(\mathcal{L})$. Let $p\geqslant 5$. Then $F_{i_1}=F_{i_5}\ne F_{i_6}$, in particular, 
$p\geqslant 6$. But $F_{i_6}$ is
separated from $F_{i_2}$, $F_{i_3}$, $F_{i_4}$ by $F_{i_1}=F_{i_5}$, which is a contradiction. Thus, $p=4$. 

Let $|\omega|=5$ and $2\,\rk \boldsymbol{B}_4= \rk\boldsymbol{H}_3$. 
If $\omega'\ne\omega$, then $\omega'=\omega(\mathcal{B}_4)$ for some $4$-belt, and $P_{\omega}$ has
the structure drawn on Fig. \ref{SetsH7}a). Now let $\omega'=\omega$. If $F_{i_l}=F_{i_{l+2}}$ for some $l$, 
then by the previous argument $P_{\omega}$ has the structure drawn on Fig. \ref{SetsH7}b). If $F_{i_l}\cap F_{i_{l+2}}$ 
is an edge for some $l$, then  $P_{\omega}$ has the structure drawn on Fig. \ref{SetsH7}c). If $F_{i_l}= F_{i_{l+3}}$, then 
$P_{\omega}$ is contractible, which is a contradiction. Now assume that  $F_{i_l}\ne F_{i_{l+1}}$, $F_{i_l}\ne F_{i_{l+2}}$, 
$F_{i_l}\cap F_{i_{l+2}}=\varnothing$, $F_{i_l}\ne F_{i_{l+3}}$ for all $l$. If $F_{i_l}\cap F_{i_{l+3}}$ is an edge for some $l$, then
$\mathcal{B}_4=(F_{i_l},F_{i_{l+1}},F_{i_{l+2}},F_{i_{l+3}})$ is a $4$-belt.
Let $i=\omega\setminus\{i_l,i_{l+1},i_{l+2},i_{l+3}\}$. 
The face $F_i$ lies in one of the connected components of the complement to this belt in $\partial P$. 
If $F_i$ intersects two opposite faces of the belt, say $F_{i_l}$ and $F_{i_{l+2}}$, then it should also intersect $F_{i_{l+1}}$ 
and $F_{i_{l+3}}$ for otherwise the pair $\{F_{i_l},F_{i_{l+2}}\}$ can be included into two $4$-belts, which contradicts the condition $2\,\rk \boldsymbol{B}_4= \rk\boldsymbol{H}_3$. Since $P$ is flag, 
this means, that $\mathcal{B}_4$ is a belt around $F_i$ and $P_{\omega}$ is contractible. Thus, $F_i$ intersects at most two faces and they should be successive in the belt. We obtain the cases b) and c) of Fig. \ref{SetsH7} again.
Now consider the case when $F_{i_l}\cap F_{i_{l+3}}=\varnothing$ for all $l$. Then $p\geqslant 5$,  $F_{i_l}\ne F_{i_{l+4}}$ and
$\omega=\{i_l,i_{l+1},i_{l+2},i_{l+3},i_{l+4}\}$ for any $l$. If $p=5$, then $\mathcal{L}$ is a $5$-belt (Fig. \ref{SetsH7}d). Else $F_{i_l}=F_{i_{l+5}}$, in particular, $p\geqslant 7$. Then $F_{i_{l+6}}$ is separated from $F_{i_{l+1}}$, $F_{i_{l+2}}$, $F_{i_{l+3}}$, $F_{i_{l+4}}$ by $F_{i_l}=F_{i_{l+5}}$, which is a contradiction. Thus, $p=5$. Now we have considered all the possible cases and the proof is finished.
\end{proof}

\begin{corollary}
Let $P$ be a flag $3$-polytope with $2\,\rk \boldsymbol{B}_4= \rk\boldsymbol{H}_3$. Then the image $I_7$ of the mapping
$$
H^3(\mathcal{Z}_P)\otimes H^4(\mathcal{Z}_P)\to H^7(\mathcal{Z}_P)
$$
is a free abelian group with a basis corresponding to elements $\widetilde{\mathcal{B}_5}$, where $\mathcal{B}_5$ is a $5$-belt, 
and generators of the groups $\widetilde{H}^1(P_{\omega})\simeq\mathbb Z$ for $P_{\omega}$ drawn 
on Fig. \ref{SetsH7} a)-c). 
\end{corollary}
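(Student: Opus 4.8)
The plan is to identify $I_7$ with the part of $H^7(\mathcal Z_P)$ concentrated in the summands $\widetilde H^1(P_\omega)$ with $|\omega|=5$, and then to read off a basis using Lemma~\ref{H7lemma}. First I would record which summands of $H^3(\mathcal Z_P)$ and $H^4(\mathcal Z_P)$ are nonzero: $H^3(\mathcal Z_P)=\bigoplus_{|\omega|=2}\widetilde H^0(P_\omega)$ with basis $\{\widetilde\omega:\omega\in N_2(P)\}$, and $H^4(\mathcal Z_P)=\bigoplus_{|\omega|=3}\widetilde H^0(P_\omega)\oplus\bigoplus_{|\omega|=2}\widetilde H^1(P_\omega)$, where the last sum vanishes because for $|\omega|\le 2$ the set $P_\omega$ is a single face, two adjacent faces, or two disjoint faces, hence a disjoint union of disks with $\widetilde H^1(P_\omega)=0$; thus $H^4(\mathcal Z_P)=\bigoplus_{|\omega|=3}\widetilde H^0(P_\omega)$. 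Consequently every product of a class of $H^3(\mathcal Z_P)$ with a class of $H^4(\mathcal Z_P)$ is a sum of products $\widetilde H^0(P_{\omega_1})\cdot\widetilde H^0(P_{\omega_2})$ with $|\omega_1|=2$, $|\omega_2|=3$, $\omega_1\cap\omega_2=\varnothing$, landing in $\widetilde H^1(P_{\omega_1\sqcup\omega_2})$ with $|\omega_1\sqcup\omega_2|=5$. Hence $I_7\subseteq\bigoplus_{|\omega|=5}\widetilde H^1(P_\omega)$, and since this is a direct summand of the torsion-free group $H^7(\mathcal Z_P)$, it is free abelian.

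For the reverse inclusion I would invoke the flag decomposition $\widetilde H^1(P_\omega)=\sum_{\omega_1\sqcup\omega_2=\omega}\widetilde H^0(P_{\omega_1})\cdot\widetilde H^0(P_{\omega_2})$ recalled in Section~\ref{Bsec} (from \cite{FW15}). When $|\omega|=5$, a splitting of $\omega$ into two nonempty parts has sizes $(2,3)$ or $(1,4)$; in the latter case $P_{\omega_1}$ is a single face, hence connected, so $\widetilde H^0(P_{\omega_1})=0$ and that summand is zero. Therefore $\widetilde H^1(P_\omega)$ is spanned by the products of size $(2,3)$, i.e.\ by products of classes of $H^3(\mathcal Z_P)$ with classes of $H^4(\mathcal Z_P)$, so the $2\omega$-component of $I_7$ is all of $\widetilde H^1(P_\omega)$. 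Together with the first step this gives $I_7=\bigoplus_{|\omega|=5}\widetilde H^1(P_\omega)$.

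It remains to exhibit a basis. By Lemma~\ref{H7lemma}, under the hypothesis $2\,\rk\boldsymbol{B}_4=\rk\boldsymbol{H}_3$ and for $|\omega|=5$, one has $\widetilde H^1(P_\omega)\ne 0$ precisely when $P_\omega$ is one of the four configurations of Fig.~\ref{SetsH7}: in case~d), $P_\omega=|\mathcal B_5|$ is the annulus of a $5$-belt and $\widetilde H^1(P_\omega)\simeq\mathbb Z$ is generated by $\widetilde{\mathcal B_5}$; in cases~a)--c), $P_\omega$ is obtained from the annulus $|\mathcal B_4|$ of a $4$-belt by adjoining a disjoint disk or by gluing a disk along a contractible boundary arc, so again $\widetilde H^1(P_\omega)\simeq\mathbb Z$. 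Taking the classes $\widetilde{\mathcal B_5}$ over all $5$-belts of $P$, together with one chosen generator of $\widetilde H^1(P_\omega)$ for each $\omega$ with $P_\omega$ of type~a)--c), yields the claimed basis of $I_7$. The main obstacle is the middle step: one must be sure that products of the restricted shape $H^3(\mathcal Z_P)\otimes H^4(\mathcal Z_P)$ --- rather than arbitrary products of lower-degree classes --- already exhaust each $\widetilde H^1(P_\omega)$ with $|\omega|=5$, and this is exactly what the vanishing of the $(1,4)$-contribution in the flag decomposition ensures.
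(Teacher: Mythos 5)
Your proof is correct, and it is essentially the argument the paper leaves implicit. You correctly identify the two ingredients needed in addition to Lemma~\ref{H7lemma}: (i) $\widetilde H^1(P_\omega)=0$ for $|\omega|\le 3$, so $H^4(\mathcal Z_P)=\bigoplus_{|\omega|=3}\widetilde H^0(P_\omega)$ and $I_7\subseteq\bigoplus_{|\omega|=5}\widetilde H^1(P_\omega)$; and (ii) the flag decomposition $\widetilde H^1(P_\omega)=\sum_{\omega_1\sqcup\omega_2=\omega}\widetilde H^0(P_{\omega_1})\cdot\widetilde H^0(P_{\omega_2})$ from \cite{FW15}, which together with the vanishing of the $(1,4)$-type summands gives the reverse inclusion, so that $I_7=\bigoplus_{|\omega|=5}\widetilde H^1(P_\omega)$; Lemma~\ref{H7lemma} then reads off the basis.
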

\begin{corollary}\label{rki7cor}
Let $P$ be a flag $3$-polytope with $2\,\rk \boldsymbol{B}_4= \rk\boldsymbol{H}_3$. 
$$
\rk I_7=\#\{\text{$5$-belts}\}+(m-5)\#\{\text{trivial $4$-belts}\}+(m-4)\#\{\text{nontrivial $4$-belts}\}.
$$
\end{corollary}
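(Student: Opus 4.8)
The plan is to derive the formula from the preceding Corollary and the structural description in Lemma~\ref{H7lemma}, turning it into a count. By the preceding Corollary, $I_7$ is free abelian with a basis given by the classes $\widetilde{\mathcal{B}_5}$ of the $5$-belts together with one generator of $\widetilde{H}^1(P_\omega)\simeq\mathbb{Z}$ for each $\omega$ with $|\omega|=5$ such that $P_\omega$ is of type a), b), or c) on Fig.~\ref{SetsH7}. These two families of sets $\omega$ are disjoint: for a $5$-belt $P_\omega$ is connected and no $4$-element subset of $\omega$ is a $4$-belt, whereas a set of type a) gives a disconnected $P_\omega$ and a set of type b) or c) contains a $4$-belt among four of its five faces. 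Hence
$$
\rk I_7=\#\{\text{$5$-belts}\}+N,\qquad N=\#\{\omega\colon |\omega|=5,\ P_\omega\text{ of type a), b) or c)}\},
$$
and it remains to show $N=(m-5)\#\{\text{trivial $4$-belts}\}+(m-4)\#\{\text{nontrivial $4$-belts}\}$.

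Next I would reorganize the count of $N$ according to $4$-belts. Every $\omega$ of type a), b), c) is of the form $\omega=\omega(\mathcal{B}_4)\sqcup\{r\}$ for a $4$-belt $\mathcal{B}_4$ and a face $F_r\notin\mathcal{B}_4$, and I would first check that this $\mathcal{B}_4$ is unique: in all three types $F_r$ meets at most two faces of $\mathcal{B}_4$, so a short inspection of the adjacency graph of $\omega$ (read off from Fig.~\ref{SetsH7}) shows that $\omega(\mathcal{B}_4)$ is the only $4$-element subset of $\omega$ inducing a $4$-cycle, hence the only one that could be a $4$-belt. Consequently $N$ equals the number of pairs $(\mathcal{B}_4,F_r)$ with $F_r\notin\mathcal{B}_4$ and $P_{\omega(\mathcal{B}_4)\cup\{r\}}$ of type a), b), c), and it suffices to count, for each $4$-belt $\mathcal{B}_4$, the number of ``admissible'' faces $F_r$ outside it, i.e. those for which $P_{\omega(\mathcal{B}_4)\cup\{r\}}$ has one of these three types.

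The heart of the argument is then the local analysis at a fixed $4$-belt $\mathcal{B}_4=(F_i,F_j,F_k,F_l)$. By Lemma~\ref{H7lemma} with $|\omega|=5$, a face $F_r\notin\mathcal{B}_4$ is admissible iff $\widetilde{H}_1(P_{\omega(\mathcal{B}_4)\cup\{r\}})\neq 0$ (it cannot be a $5$-belt, since it contains a $4$-belt), and this holds iff $F_r$ meets at most two faces of $\mathcal{B}_4$ and, when two, successive ones. If $F_r$ is inadmissible, then $F_r$ meets two opposite faces of $\mathcal{B}_4$ (this covers $|F_r\cap\mathcal{B}_4|\geqslant 3$), and then the argument in the proof of Lemma~\ref{H7lemma} --- where $2\,\rk\boldsymbol{B}_4=\rk\boldsymbol{H}_3$ is used to forbid a pair of faces from lying in two distinct $4$-belts --- forces $F_r$ to meet all four faces of $\mathcal{B}_4$, and flagness then forces $\mathcal{B}_4$ to be the belt around $F_r$; conversely, if $\mathcal{B}_4$ is the belt around $F_r$ it meets all four faces and is inadmissible. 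So the inadmissible faces are exactly the faces around which $\mathcal{B}_4$ is a (trivial) belt. If $\mathcal{B}_4$ is nontrivial there are none, giving $m-4$ admissible faces; if $\mathcal{B}_4$ is trivial around $G$, then $G$ is inadmissible and any other $F_r\notin\mathcal{B}_4$ lies in the component of $\partial P\setminus|\mathcal{B}_4|$ different from $G$, and were it inadmissible $\mathcal{B}_4$ would also be the belt around it, forcing $m=6$ and $P=I^3$, impossible since for the cube $2\,\rk\boldsymbol{B}_4=6\neq 3=\rk\boldsymbol{H}_3$ by Lemma~\ref{44lemma}; so a trivial $4$-belt has exactly $m-5$ admissible faces. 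Summing over all $4$-belts gives $N=(m-5)\#\{\text{trivial $4$-belts}\}+(m-4)\#\{\text{nontrivial $4$-belts}\}$, hence the formula. The only delicate point is the implication ``$F_r$ meets two opposite faces of $\mathcal{B}_4\Rightarrow\mathcal{B}_4$ is the belt around $F_r$'' above, but this is exactly what is carried out inside the proof of Lemma~\ref{H7lemma}, so here it amounts to careful bookkeeping.
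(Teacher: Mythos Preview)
Your proposal is correct and follows essentially the same approach as the paper: fix a $4$-belt $\mathcal{B}_4$, use the hypothesis $2\,\rk\boldsymbol{B}_4=\rk\boldsymbol{H}_3$ together with flagness to show that a face $F_r\notin\mathcal{B}_4$ fails to give type a)--c) only when $\mathcal{B}_4$ surrounds $F_r$, and then count admissible $F_r$ as $m-5$ or $m-4$ according to triviality. You are in fact slightly more careful than the paper on two points it leaves implicit: you verify that each $\omega$ of type a)--c) determines its $4$-belt uniquely (so the count over pairs $(\mathcal{B}_4,F_r)$ does not overcount), and you explicitly rule out the cube, where a $4$-belt would surround two faces.
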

\begin{proof}
For any $4$-belt $\mathcal{B}=(F_i,F_j,F_k,F_l)$ and any face $F_r\notin \mathcal{B}$ if  
$F_r$ intersects opposite faces of the belt, say $F_i$ and $F_k$, then $F_r\cap F_j\ne\varnothing$, for otherwise
$(F_i,F_j,F_k,F_r)$ is a $4$-belt, and $F_r\cap F_l\ne\varnothing$ for the same reason. Since $P$ is flag, this implies that 
$\mathcal{B}$ surrounds $F_r$. Otherwise, $F_r$ intersects at most two faces, and they should be successive. 
Then $\mathcal{B}\cup F_r$ has the form drawn on Fig. \ref{SetsH7}a)-c). For a trivial belt we have $m-5$ such faces $F_r$, 
and for a nontrivial belt we have $m-4$ such faces. This finishes the proof.
\end{proof}

\begin{theorem}\label{BaPog}
A flag simple $3$-polytope $P$ belongs to $\mathcal{P}_{aPog}\setminus\{I^3,M_5\times I\}$ if and only if 
$$
2\,\rk \boldsymbol{B}_4= \rk\boldsymbol{H}_3, \text{ and }\,\rk I_7=\rk \boldsymbol{B}_5+(m-5)\rk\,\boldsymbol{B}_4,
$$
where $\rk \boldsymbol{B}_5=\#\{\text{$5$-belts}\}$ is a $B$-rigid number.
In particular, the property to be an almost Pogorelov polytope is $B$-rigid. 
\end{theorem}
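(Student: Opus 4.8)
The plan is to translate the two cohomological equalities into combinatorial conditions on $4$- and $5$-belts and then to recognise their conjunction as the defining property of $\mathcal{P}_{aPog}\setminus\{I^3,M_5\times I\}$ for flag $P$. First I would observe that every rank in the statement is $B$-rigid in the class of all simple $3$-polytopes: $\rk\boldsymbol{B}_4$, because $\boldsymbol{B}_4$ is the image of the product $H^3(\mathcal{Z}_P)\otimes H^3(\mathcal{Z}_P)\to H^6(\mathcal{Z}_P)$; $\rk I_7$, because $I_7$ is the image of $H^3(\mathcal{Z}_P)\otimes H^4(\mathcal{Z}_P)\to H^7(\mathcal{Z}_P)$; $\rk\boldsymbol{H}_3$, because $A_3=\{x\in H^3(\mathcal{Z}_P)\colon xH^3(\mathcal{Z}_P)=0\}$ is defined from the ring structure and $\boldsymbol{H}_3=H^3(\mathcal{Z}_P)/A_3$; $\rk\boldsymbol{B}_5=\#\{5\text{-belts}\}$, by the $B$-rigidity of the subgroups $\boldsymbol{B}_k$ recalled above; and $m$, because $H^{m+3}(\mathcal{Z}_P)$ is the top nonzero group. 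Together with the $B$-rigidity of flagness (Proposition \ref{newflag}) this will yield the final assertion once the equivalence is proved: the cube and the pentagonal prism are the only flag polytopes with $m=6$ and $m=7$ and so are $B$-rigid, hence $\mathcal{P}_{aPog}$ is the disjoint union of the $B$-rigid family $\mathcal{P}_{aPog}\setminus\{I^3,M_5\times I\}$ with these two polytopes.

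Next I would fix a flag $P$ with $m$ faces and unwind the equalities. By Lemma \ref{2unlemma}, $2\,\rk\boldsymbol{B}_4=\rk\boldsymbol{H}_3$ is equivalent to the condition $(\ast)$ that each $\omega\in N_2(P)$ lies in at most one set $\omega(\mathcal{B}_4)$. Granting $(\ast)$, Corollary \ref{rki7cor} evaluates $\rk I_7$ as $\#\{5\text{-belts}\}+(m-5)\#\{\text{trivial }4\text{-belts}\}+(m-4)\#\{\text{nontrivial }4\text{-belts}\}$, whereas $\rk\boldsymbol{B}_5+(m-5)\rk\boldsymbol{B}_4$ equals $\#\{5\text{-belts}\}+(m-5)\#\{4\text{-belts}\}$; subtracting, the second equality of the theorem becomes, under $(\ast)$, the assertion that $P$ has no nontrivial $4$-belt. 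Hence the two equalities together amount to ``$(\ast)$ holds and every $4$-belt of $P$ is trivial''. Since ``flag and every $4$-belt trivial'' is exactly the condition of being $c^*4$-connected, i.e. membership in $\mathcal{P}_{aPog}$, and since $(\ast)$ forbids adjacent quadrangles by Lemma \ref{44lemma} and therefore excludes the cube and the pentagonal prism, this already proves one direction: the two equalities imply $P\in\mathcal{P}_{aPog}\setminus\{I^3,M_5\times I\}$.

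For the converse, the only thing left is to verify $(\ast)$ for every almost Pogorelov polytope without adjacent quadrangles --- equivalently, for every member of $\mathcal{P}_{aPog}\setminus\{I^3,M_5\times I\}$ --- and I expect this combinatorial step to be the main obstacle. I would argue by contradiction: if $\omega=\{p,q\}\in N_2(P)$ lies in two distinct $4$-belts, then each of them has the form $(F_p,X,F_q,Y)$ with $X,Y$ disjoint and both adjacent to $F_p$ and $F_q$, and, being trivial, surrounds a face adjacent to precisely $F_p,X,F_q,Y$ --- hence a quadrangle. Writing the two belts as $(F_p,X_i,F_q,Y_i)$ with surrounded quadrangles $Z_i$ ($i=1,2$), and relabelling so that $X_2\notin\{X_1,Y_1\}$, I would show $X_2$ is disjoint from $Z_1$: if $X_2=Z_1$ then $Z_2$ is adjacent to the quadrangle $Z_1$; and if $X_2\ne Z_1$ but $X_2\cap Z_1\ne\varnothing$ then $X_2$ shares an edge or a vertex with the quadrangle $Z_1$, forcing $X_2$ to be one of the four faces $F_p,X_1,F_q,Y_1$, which is impossible. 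But then $(F_p,X_2,F_q,Z_1)$ is again a $4$-belt, necessarily trivial, hence surrounds a quadrangle adjacent to $Z_1$; in all cases we obtain two adjacent quadrangles, a contradiction. This proves $(\ast)$, so by Lemma \ref{2unlemma} the first equality holds, and since every $4$-belt of an almost Pogorelov polytope is trivial, Corollary \ref{rki7cor} gives the second equality; the equivalence is complete, and the $B$-rigidity of the listed ranks and of flagness then gives that the property to be an almost Pogorelov polytope is $B$-rigid.
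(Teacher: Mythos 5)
Your proof is correct and follows essentially the same route as the paper: reduce the first equality to condition~$(\ast)$ via Lemma~\ref{2unlemma}, reduce the second (given~$(\ast)$) to triviality of all $4$-belts via Corollary~\ref{rki7cor}, and verify~$(\ast)$ combinatorially for $P\in\mathcal{P}_{aPog}\setminus\{I^3,M_5\times I\}$. The only deviation is in the combinatorial verification of~$(\ast)$: the paper takes the two quadrangles $F_r,F_t$ surrounded by the two hypothetical $4$-belts through $\{F_i,F_k\}$, notes that $(F_i,F_r,F_k,F_t)$ is a $4$-belt (disjointness of $F_r$ and $F_t$ being immediate from the absence of adjacent quadrangles), and gets adjacent quadrangles from the face it surrounds; you instead form $(F_p,X_2,F_q,Z_1)$ with one face from the second belt and the surrounded quadrangle of the first, which requires the small extra check that $X_2\cap Z_1=\varnothing$. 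Both variants are valid, the paper's being marginally shorter.
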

\begin{proof}
Corollary \ref{rki7cor} implies that if the conditions of the theorem hold, then every $4$-belt of $P$ is trivial, 
hence $P$ is an almost Pogorelov polytope. Lemma \ref{44lemma} implies that $P$ has no adjacent quadrangles. 
Hence, $P$ is different from the cube and the pentagonal prism.

Now let $P\in\mathcal{P}_{aPog}\setminus\{I^3,M_5\times I\}$. It has no adjacent quadrangles,
for otherwise these quadrangles should be surrounded by a nontrivial $4$-belt.
Consider a pair of nonadjacent faces $F_i$ and $F_k$  that can be included into a $4$-belt $(F_i,F_j,F_k,F_l)$. 
The belt surrounds some quadrangle $F_r$. 
Let $F_i$ and $F_k$ be included into the other $4$-belt $(F_i,F_p,F_k,F_q)$. This belt surrounds some other
quadrangle $F_t\ne F_r$. Then $(F_i,F_r,F_k,F_t)$ is a $4$-belt. It surrounds some quadrangle adjacent to both
$F_t$ and $F_r$. A contradiction. Thus, any pair of nonadjacent faces of $P$ can be included in at most
one $4$-belt, and $2\,\rk \boldsymbol{B}_4= \rk\boldsymbol{H}_3$ by Lemma \ref{2unlemma}. Then 
$\rk I_7=\rk \boldsymbol{B}_5+(m-5)\rk\,\boldsymbol{B}_4$ by Corollary \ref{rki7cor}. This finishes the proof.
\end{proof}
\begin{corollary}\label{11cor}
Any almost Pogorelov polytope with at most $11$ faces is $B$-rigid. 
\end{corollary}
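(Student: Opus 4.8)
The plan is to reduce the statement, via Theorem~\ref{BaPog}, to the known classification of almost Pogorelov polytopes with at most $11$ faces, and then to separate the finitely many resulting combinatorial types by numerical invariants that are already known to be $B$-rigid.

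First I would set up the reduction. The number $m$ of faces is $B$-rigid, since $H^{m+3}(\mathcal Z_P)\simeq\mathbb Z$ is the top non-vanishing cohomology group of $\mathcal Z_P$ (equivalently $\rk H^3(\mathcal Z_P)=\binom{m-3}{2}$). The property to be a flag $3$-polytope is $B$-rigid (Proposition~\ref{newflag}, or the criterion $H^{m-2}(\mathcal Z_P)\subset(\widetilde H^*(\mathcal Z_P))^2$). Hence, if $P$ is an almost Pogorelov polytope with $m\leqslant 11$ faces and $Q$ is a simple $3$-polytope with $H^*(\mathcal Z_P)\simeq H^*(\mathcal Z_Q)$, then $Q$ is flag and has the same number of faces. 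If $m\in\{6,7\}$ then $P\in\{I^3,M_5\times I\}$, which are the only flag $3$-polytopes with $6$ and with $7$ faces, so $Q\cong P$. If $m\geqslant 8$ then $P\in\mathcal P_{aPog}\setminus\{I^3,M_5\times I\}$, so $P$ satisfies the two equalities of Theorem~\ref{BaPog}; as $\rk\boldsymbol B_4$, $\rk\boldsymbol H_3$, $\rk\boldsymbol B_5$ and $\rk I_7$ are all defined ring-theoretically and are therefore $B$-rigid (Section~\ref{Bsec}), $Q$ satisfies the same equalities, hence $Q\in\mathcal P_{aPog}\setminus\{I^3,M_5\times I\}$ as well.

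It remains to prove that for each $m\leqslant 11$ any two almost Pogorelov polytopes $P,Q$ with $m$ faces and $H^*(\mathcal Z_P)\simeq H^*(\mathcal Z_Q)$ are combinatorially equivalent. By Barnette's theorem \cite{B74} (the construction recalled before Figure~\ref{f*poly-sch}) every polytope of $\mathcal P_{aPog}\setminus\{I^3,M_5\times I\}$ is obtained from $As^3$ by a sequence of the two edge operations, each of which raises $m$ by one; carrying this out yields the explicit finite list drawn in Figure~\ref{f*poly-sch}. In particular there is no almost Pogorelov polytope with $m=8$, for $m=9$ there is only $As^3$, and for $m=10,11$ there are only finitely many. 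To separate them I would use that in an almost Pogorelov polytope every $4$-belt is trivial and surrounds a quadrilateral, so $\rk\boldsymbol B_4$ equals the number $p_4$ of quadrilateral faces and is $B$-rigid; together with the $B$-rigid numbers $m$, $\rk\boldsymbol B_k$ for $k\geqslant 5$ (in particular $\rk\boldsymbol B_5=\#\{5\text{-belts}\}$), the relation $\rk I_7=\#\{5\text{-belts}\}+(m-5)p_4$ (Corollary~\ref{rki7cor}), and the identity $\sum_k(6-k)p_k=12$, one checks directly on the list of Figure~\ref{f*poly-sch} that any two almost Pogorelov polytopes with the same $m\leqslant 11$ differ in at least one of these invariants. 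This gives $P\cong Q$, and hence the corollary.

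The only non-formal step is this last finite verification: one must make sure that no two of the (few) almost Pogorelov polytopes with $m=11$ share all of the available $B$-rigid invariants. I expect $m$ and $p_4=\rk\boldsymbol B_4$ to separate most pairs and the number of $5$-belts the remaining ones; should a pair still coincide on all of them, the finer distinction would come from the characterization of almost Pogorelov polytopes through the quotient ring of $H^*(\mathcal Z_P)$ developed in Section~\ref{HA3sec}. In any event the whole argument is elementary once Theorem~\ref{BaPog} is available, which is why the corollary can be stated immediately after it.
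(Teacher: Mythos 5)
Your argument is essentially the paper's: reduce via Theorem~\ref{BaPog} and the $B$-rigidity of $m$ and of flagness, dispose of $m=6,7$ by uniqueness of flag polytopes with so few faces, note that $m=8$ has no almost Pogorelov polytopes and $m=9$ only $As^3$, and separate the finitely many cases $m=10,11$ by the $B$-rigid invariant $\rk\boldsymbol{B}_4=p_4$. The only difference is that the paper asserts outright that $p_4$ already distinguishes all almost Pogorelov polytopes with $m=10$ and $m=11$ (which is what the Barnette-construction list in Figure~\ref{f*poly-sch} shows), whereas you hedge and propose backup invariants (number of $5$-belts, the quotient-ring data of Section~\ref{HA3sec}); that hedge is harmless but unnecessary here.
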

\begin{proof}
Indeed, as we mentioned above for $m=6$ the cube, and for $m=7$ the pentagonal prism is a unique flag $3$-polytope.
For $m=8$ there are no almost Pogorelov polytopes. For $m=9$ the associahedron $As^3$ is a unique almost Pogorelov polytope.
For $m=10$ and $11$ different almost Pogorelov polytopes have different number of quadrangles, which is equal to $ \rk\boldsymbol{B}_4$. 
\end{proof}
\begin{corollary}\label{BiaPog}
A flag simple $3$-polytope $P$ is an ideal almost Pogorelov polytope 
if and only if 
$$
2\,\rk \boldsymbol{B}_4= \rk\boldsymbol{H}_3=m-2, \text{ and }\,\rk I_7=\rk \boldsymbol{B}_5+\frac{(m-5)(m-2)}{2}.
$$
In particular, the property to be an ideal almost Pogorelov polytope is $B$-rigid. 
\end{corollary}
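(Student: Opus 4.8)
The plan is to deduce this from Theorem \ref{BaPog} together with a combinatorial description of which almost Pogorelov polytopes are ideal. The key preliminary remark is that for $P\in\mathcal{P}_{aPog}\setminus\{I^3,M_5\times I\}$ one has $\rk\boldsymbol{B}_4=p_4$, where $p_4$ is the number of quadrangular faces: such a $P$ has no adjacent quadrangles, so the four faces meeting a quadrangle form a $4$-belt, while conversely every $4$-belt of an almost Pogorelov polytope is trivial and surrounds a $4$-valent face, i.e. a quadrangle; distinct quadrangles give distinct $4$-belts, whose classes $\widetilde{\mathcal{B}_4}$ sit in pairwise distinct summands $\widetilde{H}^1(P_{\omega(\mathcal{B}_4)})$ of $H^6(\mathcal{Z}_P)$ and are therefore linearly independent. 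Now the system in Corollary \ref{BiaPog} is, by pure arithmetic, equivalent to the system of Theorem \ref{BaPog} supplemented by the single equation $\rk\boldsymbol{H}_3=m-2$ (indeed, together with $2\rk\boldsymbol{B}_4=\rk\boldsymbol{H}_3$ the equality $\rk\boldsymbol{H}_3=m-2$ gives $\rk\boldsymbol{B}_4=\frac{m-2}{2}$, whence $(m-5)\rk\boldsymbol{B}_4=\frac{(m-5)(m-2)}{2}$, and back). Thus a flag polytope $P$ satisfies the conditions of Corollary \ref{BiaPog} if and only if $P\in\mathcal{P}_{aPog}\setminus\{I^3,M_5\times I\}$ (Theorem \ref{BaPog}) and in addition $\rk\boldsymbol{H}_3=m-2$; and for such $P$, since $\rk\boldsymbol{H}_3=2\rk\boldsymbol{B}_4=2p_4$, this extra equation reads $m=2(p_4+1)$.

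It therefore remains to show that an almost Pogorelov polytope $P\ne I^3,M_5\times I$ is ideal precisely when $m=2(p_4+1)$. One direction is the observation recorded in the Introduction. For the converse I would use \cite{E19}: such a $P$ is obtained by truncating all the $4$-valent vertices of a uniquely determined right-angled polytope $R$ of finite volume in $\mathbb L^3$, the quadrangular faces of $P$ being exactly the truncation faces, so the number $V_4$ of $4$-valent (ideal) vertices of $R$ equals $p_4$, the remaining $V_3$ vertices of $R$ are $3$-valent, and $R$ is ideal if and only if $V_3=0$. Applying Euler's formula to $R$ --- which has $V_3+V_4$ vertices, $\frac{3V_3+4V_4}{2}$ edges and $m-V_4$ faces --- yields $m=2(p_4+1)+\frac{V_3}{2}$, so $m=2(p_4+1)$ forces $V_3=0$ and hence $R$ ideal; the same characterization is obtained intrinsically in Section \ref{HA3sec}. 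Combining this with the previous paragraph, a flag $3$-polytope satisfies the conditions of Corollary \ref{BiaPog} if and only if it is an ideal almost Pogorelov polytope.

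For the $B$-rigidity statement, let $Q$ be a simple $3$-polytope with $H^*(\mathcal{Z}_P)\simeq H^*(\mathcal{Z}_Q)$ as graded rings, where $P$ is an ideal almost Pogorelov polytope. The isomorphism preserves $m$ (the highest degree in which the cohomology is nonzero equals $m+3$; in particular $m(Q)=m(P)\geqslant 14$, so $Q\notin\{I^3,M_5\times I\}$), preserves flagness (Proposition \ref{newflag}), and preserves each of the $B$-rigid numbers $\rk\boldsymbol{B}_4$, $\rk\boldsymbol{B}_5$, $\rk\boldsymbol{H}_3=\rk H^3-\rk A_3$ and $\rk I_7$, all of which are defined purely from the ring structure. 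Hence $Q$ satisfies the same numerical conditions and is an ideal almost Pogorelov polytope. I do not expect a real difficulty in any of this; the only points that require care are the exclusion of $I^3$ and $M_5\times I$ --- needed so that $P$ has no adjacent quadrangles, which is what makes $\rk\boldsymbol{B}_4=p_4$ and allows $R$ to be recovered --- and the appeal to the finite-volume version of Andreev's theorem (via \cite{E19}) for the identification $p_4=V_4$.
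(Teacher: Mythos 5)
Your proof is correct and takes essentially the same route as the paper: both reduce the corollary to Theorem \ref{BaPog} supplemented by the single numerical constraint $\rk\boldsymbol{H}_3=m-2$, and both then identify this constraint with ideality via the relation $m=2(p_4+1)$ together with $\rk\boldsymbol{B}_4=p_4$. The only (minor) difference is in how the equivalence $m=2(p_4+1)\Leftrightarrow$ ideal is established: the paper argues directly on $P$, observing that $P$ is ideal iff every vertex of $P$ lies on a (necessarily unique) quadrangle, which is equivalent to $4p_4=f_0=2(m-2)$; you instead pass to the untruncated right-angled polytope $R$ of \cite{E19} and apply Euler's formula to $R$, getting $m=2(p_4+1)+\tfrac{V_3}{2}$. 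Both are one-step Euler computations, and both implicitly rely on the same combinatorial input (that all quadrangles of $P$ are truncation faces, i.e. $p_4=V_4$), so there is no real divergence in substance.
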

\begin{proof}
Indeed, an almost Pogorelov polytope is ideal if and only if any its vertex lies on a unique quadrangle, 
which is equivalent to the fact that the number of quadrangles $p_4=\rk \boldsymbol{B}_4$ multiplied by $4$ 
is equal to the number of vertices $f_0$, which is $2(m-2)$ for any simple $3$-polytope.
\end{proof}
\begin{corollary}\label{18cor}
The $3$-dimensional permutohedron $Pe^3$ is $B$-rigid.
Moreover, any ideal almost Pogorelov polytope with $m\leqslant 20$ is $B$-rigid.
\end{corollary}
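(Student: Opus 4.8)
The plan is to derive this from the $B$-rigidity of the property to be an ideal almost Pogorelov polytope (Corollary~\ref{BiaPog}) together with the combinatorial classification of ideal almost Pogorelov polytopes with few facets. First I would observe that the number $m$ of facets is itself a $B$-rigid invariant: an isomorphism of graded rings $H^*(\mathcal{Z}_P)\simeq H^*(\mathcal{Z}_Q)$ preserves the top nonzero degree, which equals $m+3$ (equivalently, $m$ is recovered from $\rk H^3(\mathcal{Z}_P)=\frac{(m-3)(m-4)}{2}$). Hence, starting from an ideal almost Pogorelov polytope $P$ with $m\leqslant 20$ facets and a simple $3$-polytope $Q$ with $H^*(\mathcal{Z}_P)\simeq H^*(\mathcal{Z}_Q)$, I obtain that $Q$ has the same number $m\leqslant 20$ of facets; and since the conditions $2\,\rk\boldsymbol{B}_4=\rk\boldsymbol{H}_3=m-2$ and $\rk I_7=\rk\boldsymbol{B}_5+\frac{(m-5)(m-2)}{2}$ are $B$-rigid (as is flagness), Corollary~\ref{BiaPog} forces $Q$ to be again an ideal almost Pogorelov polytope.

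It then remains to invoke the enumeration. For an ideal almost Pogorelov polytope one has $m=2(p_4+1)$, so $m$ is even and at least $14$; moreover the value $m=16$ does not occur, and for each of $m=14,18,20$ there is a unique ideal almost Pogorelov polytope: for $m=14$ it is the permutohedron $Pe^3$ (corresponding to the octahedron viewed as the $3$-antiprism), for $m=18$ the one corresponding to the $4$-antiprism, and for $m=20$ the one obtained from it by a single restricted edge-twist. This is exactly the content of the edge-twist description of ideal right-angled $3$-polytopes recalled above from \cite{V17,BGGMTW05,E19}. Consequently $Q$ is combinatorially equivalent to $P$, which proves that every ideal almost Pogorelov polytope with $m\leqslant 20$, and in particular $Pe^3$, is $B$-rigid.

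The point I would stress is that no new cohomological input is needed beyond Corollary~\ref{BiaPog}: the only step requiring care is the finite enumeration, i.e.\ verifying uniqueness for $m\in\{14,18,20\}$ and non-existence for $m=16$ and $m<14$, which is a bounded check via the restricted edge-twist construction. So the real obstacle lives in the earlier Theorem~\ref{BaPog} and Corollary~\ref{BiaPog} rather than in this corollary.
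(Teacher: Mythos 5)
Your proof is correct and follows essentially the same route as the paper: both reduce the corollary to Corollary~\ref{BiaPog} plus the finite enumeration of ideal almost Pogorelov polytopes with $m\leqslant 20$ (unique for $m=14,18,20$, non-existent for $m=16$, $m$ even $\geqslant 14$). The only difference is that you make explicit the (true but unstated in the paper's one-line proof) observation that $m$ is a $B$-rigid invariant, which the paper leaves implicit.
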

\begin{proof}
Indeed, $Pe^3$ is a unique ideal almost Pogorelov polytope with minimal number of faces  $m=14$. Since $m=2+2p_4$, this number is even. For $m=16$ there are no ideal almost Pogorelov polytopes. For $m=18$ and $m=20$ there are unique ideal almost Pogorelov polytopes. 
\end{proof}

\section{The quotient ring of $H^*(\mathcal{Z}_P)$ modulo the ideal generated by $A_3$.}
\label{HA3sec}
Now we will give equivalent characterizations of almost Pogorelov and ideal almost Pogorelov polytopes
in terms of the quotient ring of $H^*(\mathcal{Z}_P)$ modulo the ideal generated by $A_3$. 

\begin{lemma}\label{52lemma}
Let $P$ be a flag $3$-polytope.Then for $k\geqslant 5$ any its $k$-belt $\mathcal{B}$ 
different from the belt surrounding the base of a $k$-gonal prism, 
contains a pair of nonadjacent faces that can not be included into any $4$-belt. 
\end{lemma}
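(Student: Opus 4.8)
\emph{Plan and setup.} I argue by contraposition: assuming every pair of nonadjacent faces of $\mathcal{B}$ can be included into some $4$-belt, I show that $\mathcal{B}$ surrounds the base of a $k$-gonal prism, i.e. $P=M_k\times I$ and $\mathcal{B}$ is its equatorial $k$-belt of quadrangles. Since $P$ is flag it has no $3$-belt, so $|\mathcal{B}|$ is an annulus separating $\partial P$ into two open disks; let $\overline{D}_1,\overline{D}_2$ be their closures and $\gamma_a=\partial\overline{D}_a$. As no three faces of $\mathcal{B}$ share a vertex, each rung $e_i=F_i\cap F_{i+1}$ has one endpoint on $\gamma_1$ and one on $\gamma_2$, every $F_i$ meets each $\gamma_a$ in a nonempty edge-path, and $\gamma_a$ is the cyclic concatenation of these $k$ paths. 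For a face $H$ of $\overline{D}_a$ let $R(H)$ be the set of belt faces adjacent to $H$; if $H$ meets $\gamma_a$ in a single arc then $R(H)$ is a cyclically consecutive block, equal to $\{1,\dots,k\}$ only if $H=\overline{D}_a$, and then $\overline{D}_a$ is a single $k$-gonal face. In particular $\mathcal{B}$ surrounds the base of a $k$-gonal prism if and only if both $\overline{D}_1$ and $\overline{D}_2$ are single faces.

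\emph{Key observation.} If a nonadjacent pair $\{F_i,F_j\}\subset\mathcal{B}$ lies in a $4$-belt $(F_i,X,F_j,Y)$, then $X$ and $Y$ are disjoint common neighbours of $F_i$ and $F_j$; since for $k\geqslant5$ the only face of $\mathcal{B}$ adjacent to two nonadjacent faces of $\mathcal{B}$ is the one strictly between them (and only when they are at distance $2$), at least one of $X,Y$ lies in $\overline{D}_1\cup\overline{D}_2$ and is adjacent to both $F_i$ and $F_j$. Hence if neither $\overline{D}_1$ nor $\overline{D}_2$ has a face adjacent to both $F_i$ and $F_j$, the pair lies in no $4$-belt. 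If moreover $\overline{D}_2$ is a single face $G_2$ (adjacent to all belt faces), the connecting face of the $4$-belt other than $G_2$ is neither $G_2$ nor a belt face, so it lies in $\overline{D}_1$; in this case a pair lying in a $4$-belt must have a face of $\overline{D}_1$ adjacent to both of its members.

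\emph{The selection.} Now suppose $\mathcal{B}$ is not a prism belt; we may assume $\overline{D}_1$ has at least two faces, hence (one face of $\overline{D}_1$ meeting all of $\gamma_1$ would be the whole disk) at least two faces of $\overline{D}_1$ meet $\gamma_1$, giving at least two proper cyclically consecutive blocks covering $\{1,\dots,k\}$. The aim is to find a nonadjacent pair $\{F_i,F_j\}\subset\mathcal{B}$ such that no face of $\overline{D}_1$, and (unless $\overline{D}_2$ is a single $k$-gon) no face of $\overline{D}_2$, is adjacent to both; by the Key observation such a pair lies in no $4$-belt, which is the desired contradiction. One reduces to blocks coming from faces meeting $\gamma_a$ in a single arc, since a face meeting $\gamma_a$ in several arcs cuts off proper sub-disks of $\overline{D}_a$ whose boundary arcs on $\gamma_a$ already realise suitable blocks; one then chooses $F_i,F_j$ in distinct blocks on the $\overline{D}_1$-side, and, when $\overline{D}_2$ also has at least two faces, simultaneously in distinct blocks on the $\overline{D}_2$-side, with $F_i$ and $F_j$ nonadjacent. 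The finitely many degenerate configurations left over — notably when every block has length at most $2$, which forces $k$ to be small, and the case $k=5$ — are checked directly by inspecting the possibilities.

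\emph{Main obstacle.} The crux is precisely this last step: controlling the faces of $\overline{D}_1$ and $\overline{D}_2$ that meet a boundary circle in more than one arc, and eliminating the small configurations, so that a nonadjacent pair ``bad on both sides'' can actually be produced. The flag hypothesis (absence of $3$-belts) is what constrains how the faces of $\overline{D}_1$ and $\overline{D}_2$ attach along $\gamma_1$ and $\gamma_2$, and is used throughout.
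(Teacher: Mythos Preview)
Your setup and ``Key observation'' are correct, and your overall strategy---find a nonadjacent pair $\{F_i,F_j\}$ such that no face of $\overline{D}_1$ and no face of $\overline{D}_2$ is adjacent to both---would indeed finish the argument. But the proof is not complete: the entire content of the lemma lies in the ``Selection'' step, and that step is not carried out. Saying that one ``chooses $F_i,F_j$ in distinct blocks on the $\overline{D}_1$-side and simultaneously in distinct blocks on the $\overline{D}_2$-side'' is exactly what needs to be proved possible, and it is not obvious. You have two independent coverings of $\mathbb{Z}_k$ by cyclic arcs (one from each side), and you must produce two nonadjacent indices that are not covered by a common arc in \emph{either} covering; this double constraint is a genuine combinatorial problem, and the phrases ``one reduces to\ldots'', ``the finitely many degenerate configurations\ldots are checked directly'' do not constitute an argument. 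Your own ``Main obstacle'' paragraph essentially acknowledges this.

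The paper's proof avoids this global selection problem entirely by a sequential walk around the belt. It only uses the hypothesis for distance-$2$ pairs $(F_{i_l},F_{i_{l+2}})$ (and one distance-$3$ pair when $k$ is even): for each $l$ there is a $4$-belt $(F_{i_l},F_{p_l},F_{i_{l+2}},F_{q_l})$, and by a Jordan-curve argument the connecting faces $F_{p_l}$ on one side are forced either to coincide (yielding the prism) or to alternate sides as $l$ increases. The alternation gives a parity contradiction for odd $k$, and for even $k$ a single distance-$3$ pair $(F_{i_1},F_{i_4})$ finishes it. This is much more economical than attempting a simultaneous block selection, and it is what you should replace your ``Selection'' section with.
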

\begin{proof}
Let $\mathcal{B}=(F_{i_1},F_{i_2},\dots,F_{i_k})$ and $C_1$ and $C_2$ be components of its 
complement in $\partial P$. Let any pair of non-successive faces of $\mathcal{B}$ can be included into a $4$-belt.
Then there is a $4$-belt $(F_{i_1},F_{p_1},F_{i_3},F_{q_1})$. At least one of the faces $F_{p_1}$ and $F_{q_1}$, 
say $F_{p_1}$ is different from $F_{i_2}$, hence lies in $\overline{C_1}$ or $\overline{C_2}$, say $\overline{C_1}$. 
There is a $4$-belt $(F_{i_2},F_{p_2},F_{i_4},F_{q_2})$. 
As before, $F_{p_2}$ lies in $\overline{C_1}$ or $\overline{C_2}$. If $F_{p_2}$ lies in $\overline{C_1}$, 
then $F_{p_1}=F_{p_2}$, since no other faces in $\overline{C_1}$ different from $F_{p_1}$ can intersect both 
$F_{i_2}$ and $F_{i_4}$ (this follows from the Jordan curve theorem, since
the set $F_{i_2}\cap \overline{C_1}$ lies inside the  disk bounded by a simple piecewise linear curve lying inside 
$F_{i_1}\cup F_{i_2} \cup F_{i_3}\cup F_{p_1}$, and $F_{i_4}\cap \overline{C_1}$ 
lies outside this curve (see Fig. \ref{54belts}a)). Thus,
$F_{p_1}$ is adjacent to $F_{i_1}$, $F_{i_2}$, $F_{i_3}$, and $F_{i_4}$. 
Since $P$ is flag, we obtain configuration drawn on
Fig. \ref{54belts}b).  Then $F_{q_1},F_{q_2}\subset\overline{C_2}$. Thus, $F_{q_1}=F_{q_2}$ 
by the same agrument, and we obtain  the same picture as for $F_{p_1}$.  
There is a $4$-belt $(F_{i_3},F_{p_3},F_{i_5}, F_{q_3})$. The face 
$F_{i_3}$ intersects only faces $F_{i_2}$, $F_{i_4}$, $F_{p_1}$, and $F_{q_1}$. 
Since $F_{i_2}\cap F_{i_5}=\varnothing$, we have $\{F_{p_3}, F_{q_3}\}=\{F_{p_1},F_{q_1}\}$, 
and we can assume that $F_{p_3}=F_{p_1}$, and $F_{q_3}=F_{q_1}$.
Repeating this argument, we obtain that $F_{p_i}=F_{p_1}$, $F_{q_i}=F_{q_1}$ for all $i$, and $\mathcal{B}$ is the 
belt around the base of a $k$-gonal prism, which is a contradiction.

Thus, $F_{p_2}\subset \overline{C_2}$, $F_{q_2}\subset F_{i_3}\cup \overline{C_2}$, and 
$F_{q_1}\subset F_{i_2}\cup \overline{C_1}$. Moreover, if $F_{q_1}=F_{i_2}$, then $F_{p_1}\cap F_{i_2}=\varnothing$. 
Else $F_{p_1},F_{q_1}\subset \overline{C_1}$, and at least one of these faces do not intersect $F_{i_2}$. Without
loss of generality we can assume that $F_{p_1}\cap F_{i_2}=\varnothing$. Similarly $F_{p_2}\cap F_{i_3}=\varnothing$. 
Repeating the argument, we obtain $F_{p_3}\subset \overline{C_1}$, $F_{q_3}\subset F_{i_4}\cup \overline{C_1}$, 
$F_{p_3}\cap F_{i_4}=\varnothing$, and so on. If $k$ is odd,
then $F_{p_k}\subset \overline{C_1}$, $F_{q_k}\subset F_{i_{k+1}}\cup \overline{C_1}$, and 
$F_{p_1}\subset F_{i_2}\cup \overline{C_2}$, which is a contradiction. If $k$ is even, then $k\geqslant 6$. Consider the $4$-belt 
$(F_{i_1},F_s,F_{i_4},F_t)$. Both faces $F_s$ and $F_t$ do not belong to $\mathcal{B}$, since they intersect both $F_{i_1}$ and $F_{i_4}$. But in $\overline{C_1}$ there are no faces intersecting both $F_{i_1}$ and $F_{i_4}$, because 
$F_{p_3}\cap F_{i_4}=\varnothing$. Similarly in $\overline{C_2}$ 
there are no faces intersecting both $F_{i_1}$ and $F_{i_4}$, because 
$F_{p_k}\cap F_{i_1}=\varnothing$. A contradiction.
This finishes the proof.

\begin{figure}
\begin{center}
\includegraphics[height=4cm]{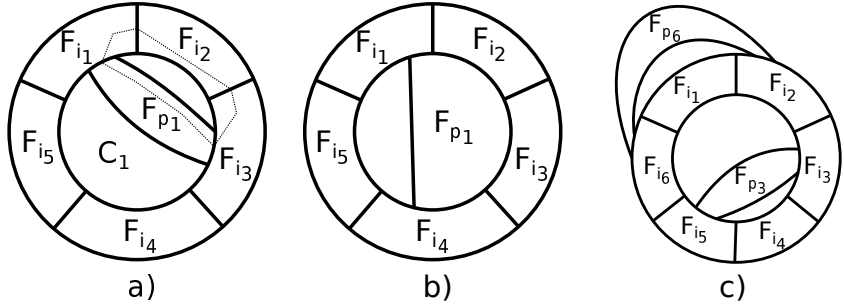}
\caption{Configuration of faces near the belt}\label{54belts}
\end{center}
\end{figure}
\end{proof}

\begin{corollary} In the ring 
$H^*(\mathcal{Z}_P)/\langle A_3\rangle$ all the elements $[\widetilde{\mathcal{B}_k}]$ corresponding
to $k$-belts, $k\geqslant 5$, are zero.
\end{corollary}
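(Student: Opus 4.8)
The plan is to show that for every $k$-belt $\mathcal B_k$ with $k\geqslant 5$ the element $\widetilde{\mathcal B_k}$ already lies in the ideal $\langle A_3\rangle$, so that its class in the quotient vanishes. Recall that $A_3$ is spanned by the elements $\widetilde\omega$ with $\omega\in N_2^0(P)$, that is, $\omega=\{a,b\}\in N_2(P)$ not contained in $\omega(\mathcal B_4)$ for any $4$-belt $\mathcal B_4$. Hence it suffices, for each such $\mathcal B_k$, to exhibit a pair $\{a,b\}\in N_2^0(P)$ of faces of $\mathcal B_k$ together with an element $\widetilde{\omega'}\in H^{k-1}(\mathcal Z_P)$ such that $\widetilde{\mathcal B_k}=\pm\,\widetilde{\{a,b\}}\cdot\widetilde{\omega'}$.

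First I would check that Lemma~\ref{52lemma} applies to $\mathcal B_k$ with no exceptional case: for the polytopes relevant in this section (flag $3$-polytopes, and in the applications almost Pogorelov polytopes different from $I^3$ and $M_5\times I$) no $k$-belt with $k\geqslant 5$ surrounds the base of a $k$-gonal prism, since in such a belt every face is adjacent to its two belt-neighbours and to the two prism bases, hence is a quadrangle, so two consecutive belt faces would be adjacent quadrangles. Thus Lemma~\ref{52lemma} provides two non-successive faces of $\mathcal B_k=(F_{i_1},\dots,F_{i_k})$ none of which two lies together in $\omega(\mathcal B_4)$ for a $4$-belt; after cyclically renumbering the belt we may take them to be $F_a=F_{i_1}$ and $F_b=F_{i_t}$ with $3\leqslant t\leqslant k-1$, so that $\{a,b\}\in N_2^0(P)$ and $\widetilde{\{a,b\}}\in A_3$.

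Next I would produce the factorization. Put $\omega'=\omega(\mathcal B_k)\setminus\{a,b\}$. Since $F_a$ and $F_b$ are non-successive, $\omega'$ is the disjoint union of the two nonempty arcs $\{i_2,\dots,i_{t-1}\}$ and $\{i_{t+1},\dots,i_k\}$; each arc is a chain of belt faces glued along single edges (non-consecutive belt faces being disjoint), hence a disk, so $P_{\omega'}$ is a disjoint union of two disks and $\widetilde H^0(P_{\omega'})\simeq\mathbb Z$. Let $\widetilde{\omega'}\in H^{k-1}(\mathcal Z_P)$ be a generator, represented by $[F_{i_2}\cup\dots\cup F_{i_{t-1}}]$. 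The product $\widetilde{\{a,b\}}\cdot\widetilde{\omega'}$ lies in the component $H^{-(k-2),2\omega(\mathcal B_k)}(\mathcal Z_P)\subset H^{k+2}(\mathcal Z_P)$, which is exactly where $\widetilde{\mathcal B_k}$ lives, and by the intersection description of the multiplication from Section~\ref{Cohsec} it is, up to sign, represented by the intersection of the disk $F_a$ with the disk $F_{i_2}\cup\dots\cup F_{i_{t-1}}$ taken inside the annulus $P_{\omega(\mathcal B_k)}$. That intersection is the single edge $F_a\cap F_{i_2}$. This edge is ``radial'' in the belt annulus, its two endpoints lying on the two different boundary cycles of $P_{\omega(\mathcal B_k)}$: since no three faces of the belt share a vertex, at each endpoint of $F_a\cap F_{i_2}$ a face outside the belt meets $F_a$ and $F_{i_2}$, so both endpoints lie on $\partial P_{\omega(\mathcal B_k)}$, and they cannot lie on the same boundary cycle, for otherwise $F_a\cap F_{i_2}$ would be a chord of $P_{\omega(\mathcal B_k)}$ cutting off a subdisk of $F_a$ or $F_{i_2}$. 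A radial edge represents a generator of $H_1(P_{\omega(\mathcal B_k)},\partial P_{\omega(\mathcal B_k)})\simeq\widetilde H^1(P_{\omega(\mathcal B_k)})\simeq\mathbb Z$; in particular the product is nonzero, and since $\widetilde{\mathcal B_k}$ is by definition a generator of the same rank-one group we get $\widetilde{\{a,b\}}\cdot\widetilde{\omega'}=\pm\widetilde{\mathcal B_k}$.

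Combining the two steps, $\widetilde{\mathcal B_k}\in A_3\cdot H^{k-1}(\mathcal Z_P)\subset\langle A_3\rangle$, hence $[\widetilde{\mathcal B_k}]=0$ in $H^*(\mathcal Z_P)/\langle A_3\rangle$. The only substantive point is the factorization in the third paragraph — namely that through any non-successive pair of faces of a belt the belt class $\widetilde{\mathcal B_k}$ factors as the product of the corresponding class in $H^3(\mathcal Z_P)$ with a generator of $\widetilde H^0$ of the complementary pair of arcs; this is a careful but direct application of the Poincar\'e--Lefschetz description of the product, the key geometric observation being that the shared edge of two consecutive belt faces is a radial arc of the belt annulus. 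Verifying that Lemma~\ref{52lemma} applies with no prism exception is routine.
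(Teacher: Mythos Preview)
Your proof is correct and follows exactly the route the paper intends: the paper states this corollary immediately after Lemma~\ref{52lemma} with no proof, relying on the fact (used throughout, and made explicit later in Corollary~\ref{Bdivcor}) that $\widetilde{\mathcal B_k}$ is divisible by $\widetilde\omega$ whenever $\omega\subset\omega(\mathcal B_k)$; Lemma~\ref{52lemma} supplies such an $\omega\in N_2^0(P)$. Your handling of the prism exception is a genuine point the paper glosses over --- the corollary as literally stated fails for $M_k\times I$ with $k\geqslant 5$ (there $A_3=0$), and your observation that the exceptional belt consists of adjacent quadrangles correctly pins down why the statement holds in the context where it is actually applied.
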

\begin{corollary}
Let $P$ be a flag $3$-polytope with $2\,\rk \boldsymbol{B}_4= \rk\boldsymbol{H}_3$. Then the image $A_7$ of the mapping
$$
A_3\otimes H^4(\mathcal{Z}_P)\to H^7(\mathcal{Z}_P)
$$
is a free abelian group $\boldsymbol{B}_5$ with the basis corresponding to elements $\widetilde{\mathcal{B}_5}$,  where $\mathcal{B}_5$ is a $5$-belt. 
\end{corollary}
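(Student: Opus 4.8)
The plan is to prove the two inclusions $A_7\subseteq\boldsymbol B_5$ and $\boldsymbol B_5\subseteq A_7$ separately. Throughout I will use that $A_3$ is generated by the elements $\widetilde\omega$ with $\omega\in N_2^0(P)$, that $H^4(\mathcal Z_P)=\bigoplus_{|\eta|=3}\widetilde H^0(P_\eta)$ is generated by the $\widetilde\eta$ with $P_\eta$ disconnected (here $\widetilde H^1(P_\eta)=0$ for $|\eta|\le 2$), and that a product $\widetilde\omega\cdot\widetilde\eta$ vanishes unless $\omega\cap\eta=\varnothing$, in which case it lies in $\widetilde H^1(P_{\omega\sqcup\eta})$ with $|\omega\sqcup\eta|=5$. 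By the description of $I_7$ obtained above (equivalently by Lemma~\ref{H7lemma}), for $|\mu|=5$ the group $\widetilde H^1(P_\mu)$ is nonzero only when $P_\mu$ is of one of the four types of Fig.~\ref{SetsH7}, and in each case $\widetilde H^1(P_\mu)\simeq\mathbb Z$; for type d) it equals $\mathbb Z\widetilde{\mathcal B_5}\subseteq\boldsymbol B_5$.

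For $A_7\subseteq\boldsymbol B_5$ it is enough to check that for $\omega\in N_2^0(P)$ every nonzero product $\widetilde\omega\cdot\widetilde\eta$ lies in $\boldsymbol B_5$. If $\widetilde\omega\cdot\widetilde\eta\neq0$, then $\omega$ is a disjoint pair (automatic since $\omega\in N_2(P)$), $P_\eta$ is disconnected, and $P_{\omega\sqcup\eta}$ is of type a), b), c) or d). In type d) we are done, so suppose $P_{\omega\sqcup\eta}$ is of type a)--c), say with underlying $4$-belt $\mathcal B_4=(F_{j_1},F_{j_2},F_{j_3},F_{j_4})$ and fifth face $F_r$. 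Running through the disjoint pairs of $\{j_1,j_2,j_3,j_4,r\}$ one sees that whenever $\omega$ contains $r$ the complementary triple is a path of three faces, so $P_\eta$ is connected, $\widetilde H^0(P_\eta)=0$, and the product vanishes; thus the only disjoint pairs giving a nonzero product are the opposite pairs $\{j_1,j_3\}$ and $\{j_2,j_4\}$ of $\mathcal B_4$. But these lie in $\omega(\mathcal B_4)$, hence are not in $N_2^0(P)$, contradicting $\omega\in N_2^0(P)$. So no element of $A_3$ contributes a type-a)--c) generator, and $A_7\subseteq\boldsymbol B_5$.

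For $\boldsymbol B_5\subseteq A_7$ fix a $5$-belt $\mathcal B_5=(F_{i_1},\dots,F_{i_5})$. First I would rule out that $\mathcal B_5$ surrounds the base of a $5$-gonal prism: in that case there are faces $F_p,F_q$, each adjacent to all five belt faces, and since a vertex of a simple $3$-polytope lies on exactly three faces one has $F_p\cap F_q=\varnothing$; then $(F_{i_a},F_p,F_{i_{a+2}},F_q)$ is a $4$-belt for every $a$, so the pair $\{p,q\}$ would be contained in five distinct sets $\omega(\mathcal B_4)$, contradicting $2\,\rk\boldsymbol B_4=\rk\boldsymbol H_3$ by Lemma~\ref{2unlemma}. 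Hence Lemma~\ref{52lemma} applies and produces a disjoint pair $\{i_a,i_{a+2}\}$ of the belt lying in $N_2^0(P)$, so $\widetilde{\{i_a,i_{a+2}\}}\in A_3$; its complement $\eta=\{i_{a+1},i_{a+3},i_{a+4}\}$ satisfies $P_\eta=(F_{i_{a+3}}\cup F_{i_{a+4}})\sqcup F_{i_{a+1}}$, which is disconnected, so $\widetilde\eta\in H^4(\mathcal Z_P)$. The product $\widetilde{\{i_a,i_{a+2}\}}\cdot\widetilde\eta$ lies in $\widetilde H^1(P_{\omega(\mathcal B_5)})=\mathbb Z\widetilde{\mathcal B_5}$; the only splittings of $\omega(\mathcal B_5)$ into two subsets both with nonzero $\widetilde H^0$ are exactly the five splittings of this shape, and since for a flag polytope the multiplication $\bigoplus\widetilde H^0(P_{\omega_1})\otimes\widetilde H^0(P_{\omega_2})\to\widetilde H^1(P_{\omega(\mathcal B_5)})$ is onto, while the cyclic symmetry of the subcomplex $P_{\omega(\mathcal B_5)}$ makes all five products equal up to sign, each of them is $\pm\widetilde{\mathcal B_5}$. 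Therefore $\widetilde{\mathcal B_5}\in A_7$, so $\boldsymbol B_5\subseteq A_7$; combining the two inclusions, $A_7=\boldsymbol B_5$, which is free with the asserted basis.

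The main obstacle is the inclusion $\boldsymbol B_5\subseteq A_7$: one must both exclude the prism-type $5$-belt (so that Lemma~\ref{52lemma} delivers a disjoint pair of the belt in $N_2^0(P)$) and verify that the resulting product is a \emph{generator} of $\widetilde H^1(P_{\omega(\mathcal B_5)})$ and not zero --- this is where surjectivity of the $\widetilde H^0$-multiplication for flag polytopes and the cyclic symmetry of a $5$-belt are used. By contrast, $A_7\subseteq\boldsymbol B_5$ is a routine case analysis over the configurations of Fig.~\ref{SetsH7}.
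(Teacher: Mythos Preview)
Your proof is correct and follows essentially the same route as the paper's. For $A_7\subseteq\boldsymbol B_5$ both you and the paper reduce to the observation that for a five-element $\mu$ of type a)--c) any $\omega\in N_2^0(P)$ with $\omega\subset\mu$ forces $P_{\mu\setminus\omega}$ to be connected; you make this explicit, while the paper states it in one line. For $\boldsymbol B_5\subseteq A_7$ the paper simply invokes Lemma~\ref{52lemma}, whereas you additionally (and correctly) rule out the pentagonal-prism case using the hypothesis $2\,\rk\boldsymbol B_4=\rk\boldsymbol H_3$ via Lemma~\ref{2unlemma}, and then verify that the resulting product really hits a generator of $\widetilde H^1(P_{\omega(\mathcal B_5)})$.

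One minor comment: your ``cyclic symmetry plus surjectivity'' argument for the last point is indirect. It is simpler to observe directly from the intersection description of the product that $[F_{i_a}]\cdot[F_{i_{a+1}}]$ equals (up to sign) the class of the edge $F_{i_a}\cap F_{i_{a+1}}$, which connects the two boundary circles of the annulus $|\mathcal B_5|$ and hence generates $H_1(P_{\omega(\mathcal B_5)},\partial P_{\omega(\mathcal B_5)})$. This avoids any appeal to symmetry of the subcomplex.
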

\begin{proof}
Indeed, Lemma \ref{52lemma} implies, that $\boldsymbol{B}_5\subset A_7$.
Let $x\in A_7$. Then 
$$
x=\left(\sum\limits_{\omega\in N_2^0(P)}\lambda_{\omega}\widetilde{\omega}\right)\cdot \left(\sum\limits_{\tau\subset[m]\colon |\tau|=3}y_{\tau}\right)=\sum\limits_{\eta\subset[m]\colon |\eta|=5}z_{\eta},
$$ 
where each nonzero $y_{\tau}$ corresponds to an element in $\widehat{H}_2(P_{\tau},\partial P_{\tau})\ne 0$, and each nonzero
$z_{\eta}$ corresponds to an element in $\widehat{H}_1(P_{\eta},\partial P_{\eta})\ne 0$ such that
for some $\omega\in N_2^0(P)$ we have $\omega\subset\eta$ and $\widehat{H}_2(P_{\eta\setminus\omega},\partial P_{\eta\setminus\omega})\ne 0$. By Lemma \ref{H7lemma} $P_{\eta}$ should have structure drawn on Fig. \ref{SetsH7}a)-d).
For cases a)-c) for any $\omega\in N_2^0(P)$, $\omega\subset \eta$ the set $P_{\eta\setminus\omega}$ is contractible.
Thus, each $\eta$ has the form $\omega(\mathcal{B}_5)$ for some $5$-belt, and $z_{\eta}=\mu_{\eta}\widetilde{\mathcal{B}_5}$.
Hence, $A_7\subset \boldsymbol{B}_5$, and this finishes the proof.
\end{proof}

Denote $\boldsymbol{I}_7=I_7/A_7$.
\begin{corollary}\label{kappacor}
Let $P$ be a flag simple $3$-polytope with $2\,\rk \boldsymbol{B}_4= \rk\boldsymbol{H}_3$. Then group $\boldsymbol{I}_7$ is a free abelian group of rank 
$$
(m-5)\#\{\text{trivial $4$-belts}\}+(m-4)\#\{\text{nontrivial $4$-belts}\}\geqslant (m-5)\rk\boldsymbol{B}_4.
$$
\end{corollary}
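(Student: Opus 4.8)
The plan is to read the answer off the two corollaries immediately preceding. First I would invoke the corollary exhibiting $I_7$ as a free abelian group with a distinguished basis: this basis consists of the elements $\widetilde{\mathcal B_5}$ indexed by the $5$-belts of $P$, together with one generator of each cyclic group $\widetilde H^1(P_\omega)\simeq\mathbb Z$ for a set $\omega$ with $P_\omega$ of one of the shapes a)--c) in Fig.~\ref{SetsH7}. Next I would use the corollary identifying $A_7$ with the subgroup $\boldsymbol B_5\subset H^7(\mathcal Z_P)$ spanned by $\{\widetilde{\mathcal B_5}\colon\mathcal B_5\text{ a }5\text{-belt}\}$. Since these $\widetilde{\mathcal B_5}$ form a subset of the distinguished basis of $I_7$, the subgroup $A_7$ is a direct summand of $I_7$; hence $\boldsymbol I_7=I_7/A_7$ is free abelian, with basis the images of the remaining basis vectors, namely the generators attached to the configurations a)--c).

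The rank count then follows by subtraction: $\rk\boldsymbol I_7=\rk I_7-\rk A_7$. By Corollary~\ref{rki7cor}, $\rk I_7=\#\{5\text{-belts}\}+(m-5)\#\{\text{trivial }4\text{-belts}\}+(m-4)\#\{\text{nontrivial }4\text{-belts}\}$, while $\rk A_7=\rk\boldsymbol B_5=\#\{5\text{-belts}\}$, so $\rk\boldsymbol I_7=(m-5)\#\{\text{trivial }4\text{-belts}\}+(m-4)\#\{\text{nontrivial }4\text{-belts}\}$, as claimed. (Alternatively one could count the configurations a)--c) directly, exactly as in the proof of Corollary~\ref{rki7cor}: each is of the form $\mathcal B\cup\{F_r\}$ for a $4$-belt $\mathcal B$ and a face $F_r\notin\mathcal B$ that $\mathcal B$ does not surround, giving $m-5$ choices per trivial $4$-belt and $m-4$ per nontrivial one, distinct pairs producing distinct index sets $\omega$.)

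For the inequality I would note that the elements $\widetilde{\mathcal B_4}$ over all $4$-belts form a basis of $\boldsymbol B_4$, so $\#\{\text{trivial }4\text{-belts}\}+\#\{\text{nontrivial }4\text{-belts}\}=\rk\boldsymbol B_4$; since $m-4\geqslant m-5$,
$$
(m-5)\#\{\text{trivial }4\text{-belts}\}+(m-4)\#\{\text{nontrivial }4\text{-belts}\}\geqslant(m-5)\rk\boldsymbol B_4 .
$$

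There is no genuinely hard step here: the content has already been done in the preceding corollaries, and Lemma~\ref{H7lemma} (classifying the $\omega$ with $\widetilde H_1(P_\omega)\ne0$ and $|\omega|=5$) is what makes those descriptions possible. The only point I would be careful about is that $A_7$ sits inside $I_7$ as a \emph{direct summand}, not merely a subgroup, so that the quotient is free abelian and its rank is the difference of the two ranks; this is immediate from the fact that the distinguished basis of $I_7$ contains a subset spanning $A_7=\boldsymbol B_5$.
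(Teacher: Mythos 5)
Your proof is correct and is essentially the argument the paper implicitly relies on: the two unnamed corollaries preceding Corollary~\ref{kappacor} give a distinguished basis of $I_7$ (the $\widetilde{\mathcal B_5}$ plus the generators attached to the configurations of Fig.~\ref{SetsH7} a)--c)) and identify $A_7=\boldsymbol B_5$ as the subgroup spanned by a subset of that basis, so $A_7$ is a direct summand and the quotient $\boldsymbol I_7$ is free of the stated rank by the count of Corollary~\ref{rki7cor}. You were right to flag explicitly that $A_7$ is a direct summand rather than merely a subgroup, since otherwise neither the freeness of $\boldsymbol I_7$ nor the rank-subtraction formula would be automatic; the paper takes this for granted without comment, and your observation that the $\widetilde{\mathcal B_5}$ sit inside the distinguished basis of $I_7$ is exactly the needed justification. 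The inequality at the end follows from $\#\{\text{trivial }4\text{-belts}\}+\#\{\text{nontrivial }4\text{-belts}\}=\rk\boldsymbol B_4$ and $m-4\geqslant m-5$, as you say.
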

\begin{corollary}\label{BaPog-cor}
A flag simple $3$-polytope $P$ belongs to $\mathcal{P}_{aPog}\setminus\{I^3,M_5\times I\}$ if and only if 
$$
2\,\rk \boldsymbol{B}_4= \rk\boldsymbol{H}_3, \text{ and }\rk \boldsymbol{I}_7=(m-5)\rk\boldsymbol{B}_4.
$$
\end{corollary}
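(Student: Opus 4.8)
The final statement to prove is Corollary \ref{BaPog-cor}, which characterizes $\mathcal{P}_{aPog}\setminus\{I^3,M_5\times I\}$ in terms of $2\,\rk \boldsymbol{B}_4 = \rk \boldsymbol{H}_3$ and $\rk \boldsymbol{I}_7 = (m-5)\rk \boldsymbol{B}_4$.

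The plan is to derive this almost immediately from the already-proven Theorem \ref{BaPog} together with Corollary \ref{kappacor}. Recall that $\boldsymbol{I}_7 = I_7/A_7$, where $A_7 = \boldsymbol{B}_5$ by the preceding corollary, so $\rk \boldsymbol{I}_7 = \rk I_7 - \rk \boldsymbol{B}_5$. Under the hypothesis $2\,\rk \boldsymbol{B}_4 = \rk \boldsymbol{H}_3$, Corollary \ref{rki7cor} gives $\rk I_7 = \#\{\text{5-belts}\} + (m-5)\#\{\text{trivial 4-belts}\} + (m-4)\#\{\text{nontrivial 4-belts}\}$, and since $\#\{\text{5-belts}\} = \rk \boldsymbol{B}_5$, subtracting yields $\rk \boldsymbol{I}_7 = (m-5)\#\{\text{trivial 4-belts}\} + (m-4)\#\{\text{nontrivial 4-belts}\}$, which is exactly the content of Corollary \ref{kappacor}. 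Thus the two formulations of the second condition agree: $\rk I_7 = \rk \boldsymbol{B}_5 + (m-5)\rk \boldsymbol{B}_4$ holds if and only if $\rk \boldsymbol{I}_7 = (m-5)\rk \boldsymbol{B}_4$, both being equivalent (given $2\,\rk \boldsymbol{B}_4 = \rk\boldsymbol{H}_3$) to the statement that every $4$-belt is trivial.

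Concretely I would argue as follows. First assume $P$ is flag with $2\,\rk \boldsymbol{B}_4 = \rk \boldsymbol{H}_3$ and $\rk \boldsymbol{I}_7 = (m-5)\rk \boldsymbol{B}_4$. Since $\rk \boldsymbol{B}_4 = \#\{\text{trivial 4-belts}\} + \#\{\text{nontrivial 4-belts}\}$, comparing with the formula of Corollary \ref{kappacor} forces $\#\{\text{nontrivial 4-belts}\} = 0$, i.e. every $4$-belt is trivial, so $P$ is an almost Pogorelov polytope; and by Lemma \ref{44lemma} it has no adjacent quadrangles, hence $P \neq I^3, M_5 \times I$. Conversely, if $P \in \mathcal{P}_{aPog}\setminus\{I^3,M_5\times I\}$, then by the argument in the proof of Theorem \ref{BaPog} we have $2\,\rk \boldsymbol{B}_4 = \rk \boldsymbol{H}_3$, and since all $4$-belts are trivial, Corollary \ref{kappacor} gives $\rk \boldsymbol{I}_7 = (m-5)\#\{\text{trivial 4-belts}\} = (m-5)\rk \boldsymbol{B}_4$.

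I do not anticipate a serious obstacle here: all the substantive geometric work (classifying $P_\omega$ for $|\omega| = 4, 5$ in Lemma \ref{H7lemma}, the belt-counting in Corollary \ref{rki7cor}, and the identification $A_7 = \boldsymbol{B}_5$ via Lemma \ref{52lemma}) has already been done in the preceding corollaries. The only mildly delicate point is bookkeeping: making sure the quotient $\boldsymbol{I}_7 = I_7/A_7$ is taken correctly and that $A_7$ is a direct summand so that $\rk \boldsymbol{I}_7 = \rk I_7 - \rk A_7$ holds on the nose — but this follows because $A_7 = \boldsymbol{B}_5$ is spanned by part of a basis of $I_7$ (the elements $\widetilde{\mathcal{B}_5}$), as the preceding two corollaries establish. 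So the proof is essentially a two-line deduction: subtract the $5$-belt count from both sides of the criterion in Theorem \ref{BaPog}.
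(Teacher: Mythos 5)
Your proof is correct and takes the route the paper intends: since this corollary is stated without proof, it is meant as an immediate consequence of Theorem~\ref{BaPog} (or equivalently Corollary~\ref{kappacor}), and your deduction — comparing $(m-5)\rk\boldsymbol{B}_4$ with the formula $(m-5)\#\{\text{trivial }4\text{-belts}\}+(m-4)\#\{\text{nontrivial }4\text{-belts}\}$ to force the nontrivial count to vanish, then using Lemma~\ref{44lemma} to exclude $I^3$ and $M_5\times I$, and running Corollary~\ref{kappacor} and the proof of Theorem~\ref{BaPog} in reverse for the converse — is exactly the intended argument. No gaps.
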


\begin{corollary}\label{BiaPog-cor}
A flag simple $3$-polytope $P$ is an ideal almost Pogorelov polytope 
if and only if 
$$
2\,\rk \boldsymbol{B}_4= \rk\boldsymbol{H}_3=m-2, \text{ and }\,\rk \boldsymbol{I}_7=\frac{(m-5)(m-2)}{2}.
$$
\end{corollary}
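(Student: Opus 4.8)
The plan is to follow the proof of Corollary \ref{BiaPog} almost verbatim, replacing Theorem \ref{BaPog} by its quotient-ring reformulation Corollary \ref{BaPog-cor}: the point is that the ``ideal'' condition adds to ``$P\in\mathcal{P}_{aPog}\setminus\{I^3,M_5\times I\}$'' exactly the one numerical constraint $\rk\boldsymbol{B}_4=\tfrac{m-2}{2}$, and, once Corollary \ref{BaPog-cor} is available, this constraint is exactly what is encoded by the two equalities $\rk\boldsymbol{H}_3=m-2$ and $\rk\boldsymbol{I}_7=\tfrac{(m-5)(m-2)}{2}$.

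First I would record, exactly as in the proof of Corollary \ref{BiaPog}, two combinatorial facts about a flag polytope $P\in\mathcal{P}_{aPog}\setminus\{I^3,M_5\times I\}$. (i) Every $4$-belt is trivial and surrounds a unique quadrangle, and distinct quadrangles are surrounded by distinct $4$-belts; hence $\rk\boldsymbol{B}_4=p_4$, the number of quadrangular faces. (ii) No vertex lies on two quadrangles: a vertex of a simple $3$-polytope meets exactly three faces along its three edges, so two quadrangles through a common vertex would have to share the remaining edge at that vertex, contradicting the absence of adjacent quadrangles. Combining (ii) with Euler's relation $f_0=2(m-2)$, the polytope $P$ is an ideal almost Pogorelov polytope if and only if every vertex lies on a (unique) quadrangle, i.e. $4p_4=f_0$, i.e. $p_4=\tfrac{m-2}{2}$.

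Then I would argue both directions. If $P$ is an ideal almost Pogorelov polytope, then $P\in\mathcal{P}_{aPog}$ and $P\notin\{I^3,M_5\times I\}$ (for the cube $4p_4=24\ne 8=f_0$, for the pentagonal prism $4p_4=20\ne 10=f_0$), so Corollary \ref{BaPog-cor} gives $2\rk\boldsymbol{B}_4=\rk\boldsymbol{H}_3$ and $\rk\boldsymbol{I}_7=(m-5)\rk\boldsymbol{B}_4$; plugging in $\rk\boldsymbol{B}_4=p_4=\tfrac{m-2}{2}$ yields the displayed equalities. Conversely, from $2\rk\boldsymbol{B}_4=\rk\boldsymbol{H}_3=m-2$ and $\rk\boldsymbol{I}_7=\tfrac{(m-5)(m-2)}{2}$ I obtain $\rk\boldsymbol{B}_4=\tfrac{m-2}{2}$, hence $\rk\boldsymbol{I}_7=(m-5)\rk\boldsymbol{B}_4$, so Corollary \ref{BaPog-cor} gives $P\in\mathcal{P}_{aPog}\setminus\{I^3,M_5\times I\}$; then by fact (i), $p_4=\rk\boldsymbol{B}_4=\tfrac{m-2}{2}$, so $4p_4=f_0$ and $P$ is ideal. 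Finally the displayed condition is $B$-rigid: $m$ is recovered from the top nonzero degree $m+3$ of $H^*(\mathcal{Z}_P)$, while $\rk\boldsymbol{B}_4$, $\rk\boldsymbol{H}_3$, $\rk\boldsymbol{I}_7$ are $B$-rigid because $\boldsymbol{B}_4$, $A_3$, $I_7$ and $A_7$ are.

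I do not expect a real obstacle. The only ingredient that is not pure bookkeeping is fact (ii), the ``no vertex on two quadrangles'' observation, and even that is elementary; everything else is transporting ranks through Corollary \ref{BaPog-cor}, exactly as Corollary \ref{BiaPog} is transported from Theorem \ref{BaPog}.
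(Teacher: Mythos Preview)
Your proposal is correct and follows exactly the intended route: the paper states Corollary~\ref{BiaPog-cor} without a separate proof precisely because it is obtained from Corollary~\ref{BaPog-cor} by the same substitution $\rk\boldsymbol{B}_4=p_4=\tfrac{m-2}{2}$ that derives Corollary~\ref{BiaPog} from Theorem~\ref{BaPog}. Your verification that $I^3$ and $M_5\times I$ are not ideal, that $\rk\boldsymbol{B}_4=p_4$ for $P\in\mathcal{P}_{aPog}\setminus\{I^3,M_5\times I\}$, and that no vertex lies on two quadrangles all match the paper's reasoning in the proof of Corollary~\ref{BiaPog}.
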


\section{Analog of the SCC for almost Pogorelov polytopes}\label{SCCsec}
In this section we generalize the SCC to almost Pogorelov polytopes. 

\begin{definition}
A \textit{$k$-path}, $k\geqslant 1$,
(or a  \textit{thick path}) is a sequence $(F_{i_1},\dots,F_{i_k})$ of faces such 
that successive faces are adjacent. A $k$-path is called \textit{simple}, if all its 
faces are pairwise distinct. By definition a~\textit{length} of a~$k$-path is equal to~$k$. For any two faces 
in~$P$ there is a thick path connecting them. The shortest thick path between 
faces is simple.

A \textit{$1$-loop} is a face. By a \textit{$k$-loop}, $k\geqslant 2$,
we call a closed $(k+1)$-path $(F_{i_1},\dots,F_{i_{k+1}})$:
$F_{i_{k+1}}=F_{i_1}$. We denote the~$k$-loop simply $(F_{i_1},\dots,F_{i_k})$. 
A $k$-loop is called \textit{simple},
if all its faces are distinct. For convenience we assume that the~parameter~$j$
in the~index~$i_j$ of a ~$k$-loop belongs to~$\mathbb Z_k=\mathbb Z/k\mathbb Z$, that is $i_k=i_0$,
$i_{k+1}=i_1$ and so on.
\end{definition}
For a thick path, a $k$-loop or a $k$-belt $\mathcal{L}$ denote by $|\mathcal{L}|$ the union of its faces. The following result generalize
the SCC, which is crucial for Pogorelov polytopes.
\begin{lemma}\label{APb-lemma}
Let $P$ be an almost Pogorelov polytope. Then for three pairwise different faces $\{F_i,F_j,F_k\}$ such that $F_i\cap F_j=\varnothing$ there exists an $l$-belt ($l\geqslant 4$) $\mathcal{B}_l$ such that $F_i,F_j\in \mathcal{B}_l$, $F_k\notin \mathcal{B}_l$, and $F_k$ does not intersect at least one of the two connected components of $|\mathcal{B}_l|\setminus(F_i\cup F_j)$ if and only if $F_k$ does not intersect quadrangles among the faces $F_i$ and $F_j$.
\end{lemma}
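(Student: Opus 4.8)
The plan is to prove the two implications separately, and for the forward direction (existence of the separating belt $\mathcal{B}_l$ forces $F_k$ to miss any quadrangle among $\{F_i,F_j\}$) to argue by contradiction using the structure of almost Pogorelov polytopes: every $4$-belt is trivial, so it surrounds a quadrangle, and there are no adjacent quadrangles (the cube and the pentagonal prism are excluded implicitly since they have adjacent quadrangles). First I would handle the easy direction. Suppose, say, $F_i$ is a quadrangle and $F_k$ intersects it. Take any $l$-belt $\mathcal{B}_l$ with $F_i,F_j\in\mathcal{B}_l$, $F_k\notin\mathcal{B}_l$. Since $F_i$ is a quadrangle belonging to $\mathcal{B}_l$, exactly two of its four facets are the neighbours of $F_i$ inside the belt, so $F_k$ (being adjacent to $F_i$ but not in the belt) must be one of the two remaining facets of $F_i$; these two lie, respectively, on the two sides of the belt. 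Hence $F_k$ meets both connected components of $|\mathcal{B}_l|\setminus(F_i\cup F_j)$ (each component, together with $F_i$ and $F_j$, accounts for one of the two "off-belt" facets of $F_i$), contradicting the separation property. A short Jordan-curve argument on $\partial P$ makes this precise: the belt $\mathcal{B}_l$ separates $\partial P$ into two disks, and cutting out $F_i\cup F_j$ from $|\mathcal{B}_l|$ leaves two arcs $C_1,C_2$, each of which borders one of the two off-belt facets of the quadrangle $F_i$.

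For the converse, assume $F_k$ intersects no quadrangle among $F_i,F_j$; I must build the belt. The natural approach is to first invoke Lemma \ref{SCClem}-type reasoning inside $P$ after understanding how quadrangles of $P$ sit in $4$-belts. Since $P$ is almost Pogorelov and $F_i\cap F_j=\varnothing$, there are two cases: either $\{F_i,F_j\}$ extends to a ($4$-)belt, or it does not. If $\{F_i,F_j\}$ lies in no $4$-belt, then locally around $F_i\cup F_j$ the polytope behaves like a Pogorelov polytope and the original SCC proof from \cite{FMW15} can be imitated to produce an $l$-belt with $l\geqslant 5$ separating off $F_k$; here the hypothesis that $F_k$ avoids quadrangles among $F_i, F_j$ is automatically compatible. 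If $\{F_i,F_j\}$ does lie in a $4$-belt $\mathcal{B}_4=(F_i,F_a,F_j,F_b)$, then $\mathcal{B}_4$ is trivial and surrounds a quadrangle $F_r$; I would analyze where $F_k$ can sit relative to the two disks bounded by $\mathcal{B}_4$. If $F_k$ is on the side away from $F_r$, or on the $F_r$ side but disjoint from the off-belt facets of $F_i$ and $F_j$ (which is where the no-quadrangle hypothesis is used: $F_r$ is a quadrangle, and if $F_k$ met one of $F_i,F_j$ "through" $F_r$'s side it would be forced to touch $F_r$ or the neighbouring quadrangle configuration), then $\mathcal{B}_4$ itself works with $l=4$. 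Otherwise I enlarge $\mathcal{B}_4$ by replacing the pair $(F_a,F_r,F_b)$ or pushing past $F_r$ to get a longer belt still containing $F_i,F_j$ and still separating $F_k$ to one side; the absence of adjacent quadrangles guarantees this enlargement does not collapse.

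The main obstacle I anticipate is the converse direction in the subcase where $\{F_i,F_j\}$ sits in a $4$-belt: one must carefully track, via the Jordan curve theorem on $\partial P$, the possible positions of $F_k$ and show that whenever $F_k$ avoids the quadrangles among $F_i,F_j$ one can always realize a belt separating it — either the trivial $4$-belt around the quadrangle it encloses, or, when $F_k$ lies on the "wrong" side, a suitably modified belt of length $\geqslant 4$. This is exactly the kind of case analysis carried out in \cite[Proposition F.1]{FMW20} in the dual simplicial language, and I would organize the proof to parallel it, using the ban on adjacent quadrangles at each branch to rule out the degenerate configurations (where the candidate belt would fail to be a belt or would fail to separate). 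The easy direction and the "no $4$-belt" subcase of the converse are routine given the tools already set up (triviality of $4$-belts, $P_\omega$ being a sphere with holes, the SCC from \cite{FMW15}); the genuine work is the $4$-belt subcase of the converse.
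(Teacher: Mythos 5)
Your easy direction is on the right track but as written does not quite close. You observe that $F_k$ is one of the two off-belt neighbours of the quadrangle $F_i$, and that these two off-belt neighbours lie in the two disk regions of $\partial P\setminus|\mathcal B_l|$; but that only puts $F_k$ on \emph{one} side, whereas the claim is that $F_k$ meets \emph{both} arcs of $|\mathcal B_l|\setminus(F_i\cup F_j)$. The missing observation is that the two belt-neighbours $F_a,F_b$ of $F_i$ inside $\mathcal B_l$ are \emph{opposite} faces of the $4$-belt around the quadrangle $F_i$ (since $\mathcal B_l$ is a belt, $F_a\cap F_i$ and $F_i\cap F_b$ cannot share a vertex), hence $F_k$, being the face between them in that $4$-belt, is adjacent to both $F_a$ and $F_b$; and $F_a,F_b$ sit in the two different arcs. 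This is precisely the one-line argument the paper uses. Once this is added your forward direction matches the paper's.

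The converse is where the real gap lies. Your dichotomy — does $\{F_i,F_j\}$ lie in a $4$-belt or not — is not the one the paper uses, and as stated it leaves a hole. In your Case A (no $4$-belt through $\{F_i,F_j\}$) you claim the Pogorelov SCC proof of \cite{FMW15} can be ``imitated.'' This is not justified: that proof makes global use of the absence of $4$-belts in the polytope, not merely the absence of a $4$-belt through the particular pair $\{F_i,F_j\}$; an almost Pogorelov polytope can have many $4$-belts elsewhere, and they interfere with the intermediate steps of that construction. Nothing you write indicates how to neutralize them, and this is exactly the technical content of the lemma. Your Case B is closer to a genuine argument (the observation that $F_k$ adjacent to both short arcs $F_a,F_b$ of the trivial $4$-belt would produce an illegal $4$-belt $(F_a,F_r,F_b,F_k)$ surrounding $F_i$ or $F_j$ is the right use of the quadrangle hypothesis and of flagness), but it is described rather than carried out, and the subcase $F_k=F_r$ (which you would push to a longer belt) is itself nontrivial. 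The paper avoids both of your cases: it takes an arbitrary flag-SCC belt $\mathcal B_1$ through $F_i,F_j$ missing $F_k$, splits according to whether $\mathcal B_1$ surrounds $F_k$ or not, and then performs a multi-stage surgery on the segments of $\mathcal B_1$ (replacing each maximal run of faces touching $F_k$ by a parallel run on the other side, then shortening to a belt), at each stage invoking the trivial-$4$-belt property together with the hypothesis that $F_k$ misses any quadrangle among $\{F_i,F_j\}$ to rule out the bad incidences. That construction — not a reduction to the Pogorelov case — is what you would need to supply.
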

\begin{remark}
The same result in the dual setting for simplicial polytopes was earlier proved in \cite[Proposition F.1]{FMW20}.
\end{remark}
\begin{proof}
If $F_i$ is a quadrangle adjacent to $F_k$, then any $l$-belt  $\mathcal{B}_l$ containing $F_i$ and not containing $F_k$ in the closure of each connected component of  $|\mathcal{B}_l|\setminus(F_i\cup F_j)$ contains a face from the $4$-belt around $F_i$ 
adjacent to $F_k$. Hence, $F_k$ intersects both connected components.

Now let $F_k$ do not intersect quadrangles among the faces $F_i$ and $F_j$.

Since $P$ is flag, there is an $s$-belt $\mathcal{B}_1=(F_i,F_{i_1},\dots,F_{i_p},F_j,F_{j_1},\dots,F_{j_q})$, with $F_k\notin \mathcal{B}_1$,  $s=p+q+2$, and $p,q\geqslant 1$. The complement $\partial P\setminus |\mathcal{B}_1|$  consists of two connected components $C_1$ and $C_2$, both homeomorphic to disks. Without loss of generality assume that $F_k\subset\overline{C_1}$. Then either $\overline{C_1}=F_k$, or $\partial C_1\cap F_k$ consists of finite set of disjoint edge-segments $\gamma_1$, $\dots$, $\gamma_d$. 
\begin{figure}
\begin{center}
\includegraphics[height=4cm]{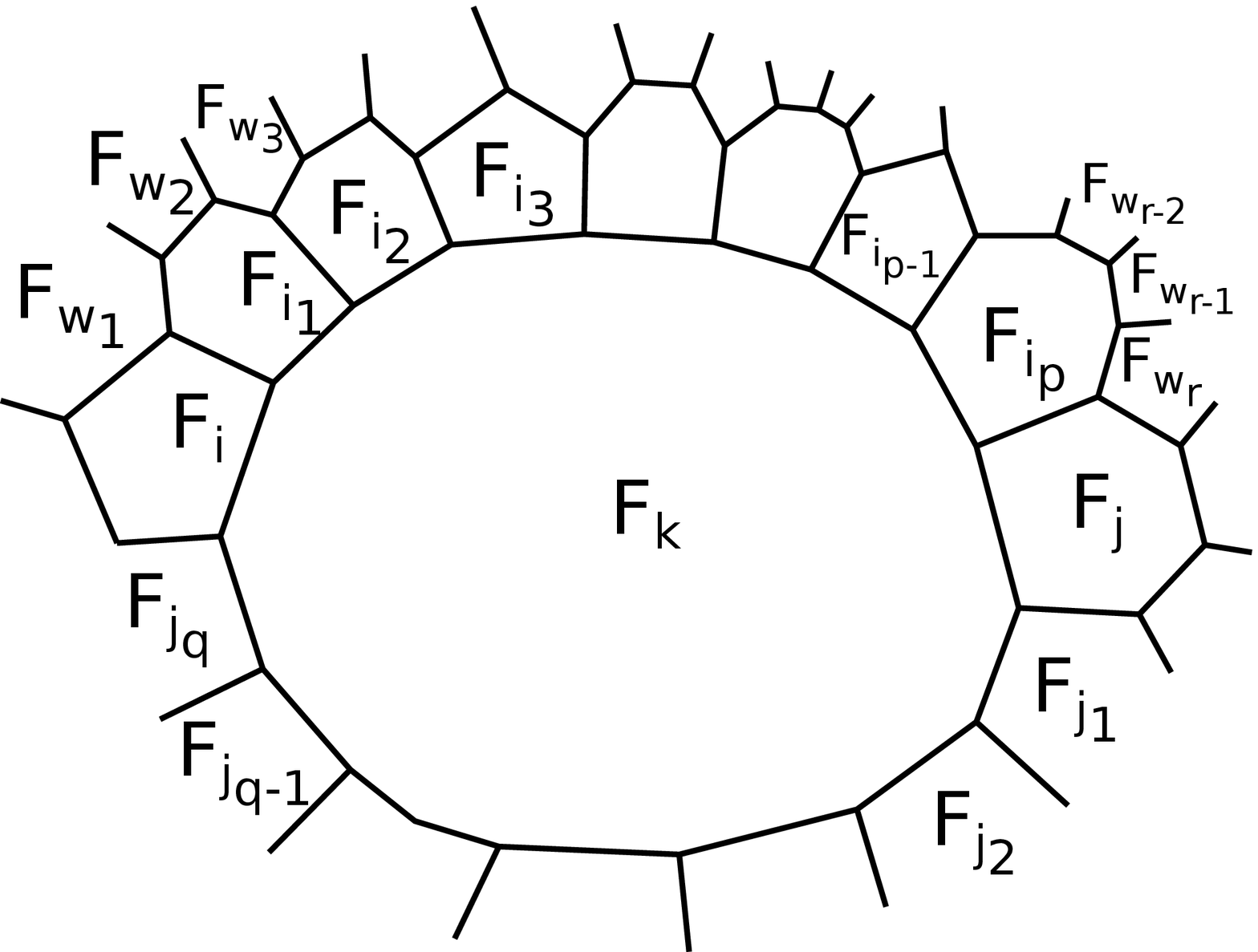}
\end{center}
\caption{Case 1}\label{bijk-1}
\end{figure}

In the first case $\mathcal{B}_1$ surrounds $F_k$, and $F_i$ and $F_j$ are adjacent to $F_k$. 
Consider all faces $\{F_{w_1},\dots, F_{w_r}\}$ in $\overline{C_2}$  adjacent to faces in $\{F_{i_1},\dots, F_{i_p}\}$ 
(see Fig. \ref{bijk-1}), in the order we meet them while walking round $\partial C_2$ from $\overline{C_2}\cap F_i$ to 
$\overline{C_2}\cap F_j$. Then $F_{w_a}\cap F_{j_b}=\varnothing$ for any $a,b$, for otherwise 
$(F_k,F_{j_b},F_{w_a},F_{i_c})$ is a $4$-belt for any $i_c$ with $F_{i_c}\cap F_{w_a}\ne\varnothing$, 
since $F_k\cap F_{w_a}=\varnothing$ and $F_{j_b}\cap F_{i_c}=\varnothing$. 
This belt should surround a quadrangle from $\mathcal{B}_1$ intersecting both $F_{j_b}$ and $F_{i_c}$. 
This quadrangle can be only $F_i$ or $F_j$, which is a contradiction.
We have a thick path $(F_i, F_{w_1},\dots, F_{w_r},F_j)$. 
Consider the shortest thick path of the form $(F_i,F_{w_{s_1}},\dots,F_{w_{s_t}},F_j)$. 
If two faces of this path intersect, then they are successive, else there is a shorter thick path. 
Thus we have the belt $\mathcal{B}_l=(F_i,F_{w_{s_1}},\dots, F_{w_{s_t}},F_j,F_{j_1},\dots, F_{j_q})$ with faces of 
the segment $(F_{w_{s_1}},\dots,F_{w_{s_t}})$ not intersecting $F_k$.

In the second case we can assume that $F_i\cap F_k=\varnothing$ or $F_j\cap F_k=\varnothing$, say $F_i\cap F_k=\varnothing$, for otherwise we can take the belt $\mathcal{B}_1'$ around $F_k$. 
Let $\gamma_a=(F_k\cap F_{u_{a,1}},\dots,F_k\cap F_{u_{a,l_a}})$.   Denote by $\mathcal{U}_a$ 
the thick path $(F_{u_{a,1}},\dots, F_{u_{a,l_a}})$, and by $\mathcal{S}_a$ the segment 
$(F_{s_{a,1}},\dots,F_{s_{a,t_a}})$ of $\mathcal{B}_1$ between $\mathcal{U}_a$ and $\mathcal{U}_{a+1}$. 
Then  $\mathcal{B}_1=(\mathcal{U}_1,\mathcal{S}_1,\mathcal{U}_2,\dots,\mathcal{U}_d,\mathcal{S}_d)$. 

Consider the thick path $\Pi_a=(F_{w_{a,1}},\dots,F_{w_{a,r_a}})\subset \overline{C_2}$ arising while walking round 
$\partial{C_2}$ along edges $\partial{C_2}\cap F_{u_{a,j}}$, $j=1,\dots,l_a$ (see Fig. \ref{bijk-2}). 
Then $\Pi_a\cap \Pi_b=\varnothing$ 
for $a\ne b$, else $(F_w,F_{u_{a,j_1}},F_k,F_{u_{b,j_2}})$ is a $4$-belt for any $F_w\in \Pi_a\cap \Pi_b$ such that 
$F_w\cap F_{u_{a,j_1}}\ne\varnothing$, $F_w\cap F_{u_{b,j_2}}\ne\varnothing$. 
This belt surrounds a quadrangle $F_x$ adjacent to $F_k$. If $F_x\subset \overline{C_1}$, 
then $F_x\cap F_w=\varnothing$, which is a contradiction. 
Otherwise, $F_x\in\mathcal{B}_1$, and the faces $F_{u_{a,j_1}}$ and $F_{u_{b,j_2}}$ are adjacent to it in the belt. 
Then $a=b$ (since $P$ is flag), which is a contradiction. 
Also $F_{w_{a,j_1}}\ne F_{w_{a,j_2}}$ for $j_1\ne j_2$. 
This is true for faces adjacent to the same face $F_{u_{a,i}}$. Let $F_{w_{a,j_1}}= F_{w_{a,j_2}}$. 
If the faces are adjacent to the successive faces $F_{u_{a,i}}$ and $F_{u_{a,i+1}}$, 
then the flagness condition implies that $j_1=j_2$ and $F_{w_{a,j_1}}$ is the face in $\overline{C_2}$ 
intersecting  $F_{u_{a,i}}\cap F_{u_{a,i+1}}$. If the faces are adjacent to non-successive faces 
$F_{u_{a,i}}$ and $F_{u_{a,j}}$, then $(F_{w_{a,j_1}},F_{u_{a,i}},F_k,F_{u_{a,j}})$ is a $4$-belt 
around a quadrangle $F_x$. If $F_x\subset \overline{C_1}$, 
then $F_x\cap F_{w_{a,j_1}}=\varnothing$. Otherwise, $F_x\in\mathcal{B}_1$, 
and the faces $F_{u_{a,i}}$ and $F_{u_{a,j}}$ are adjacent to it in the belt. 
Without loss of generality we can assume that $F_x=F_{u_{a,i+1}}$, and $j=i+2$. Then $j_1=j_2$, 
and $F_{w_{a,j_1}}$ is a unique face in $\overline{C_2}$ 
intersecting  $F_{u_{a,i+1}}$. A contradiction. 

Now consider the thick path $\mathcal{V}_b=(F_{v_{b,1}},\dots,F_{v_{b,c_b}})$ arising while walking round 
$\partial{C_1}$ along edges $\partial{C_1}\cap F_{s_{b,j}}$ (see Fig. \ref{bijk-2}). Then $|\mathcal{V}_a|\cap|\mathcal{V}_b|=\varnothing$ for $a\ne b$, and  $|\Pi_a|\cap |\mathcal{V}_b|=\varnothing$ for any $a,b$, since interiors of the corresponding faces lie in different connected components of $\partial P\setminus(|\mathcal{B}_1|\cup F_k)$. 
\begin{figure}
\begin{center}
\includegraphics[height=9cm]{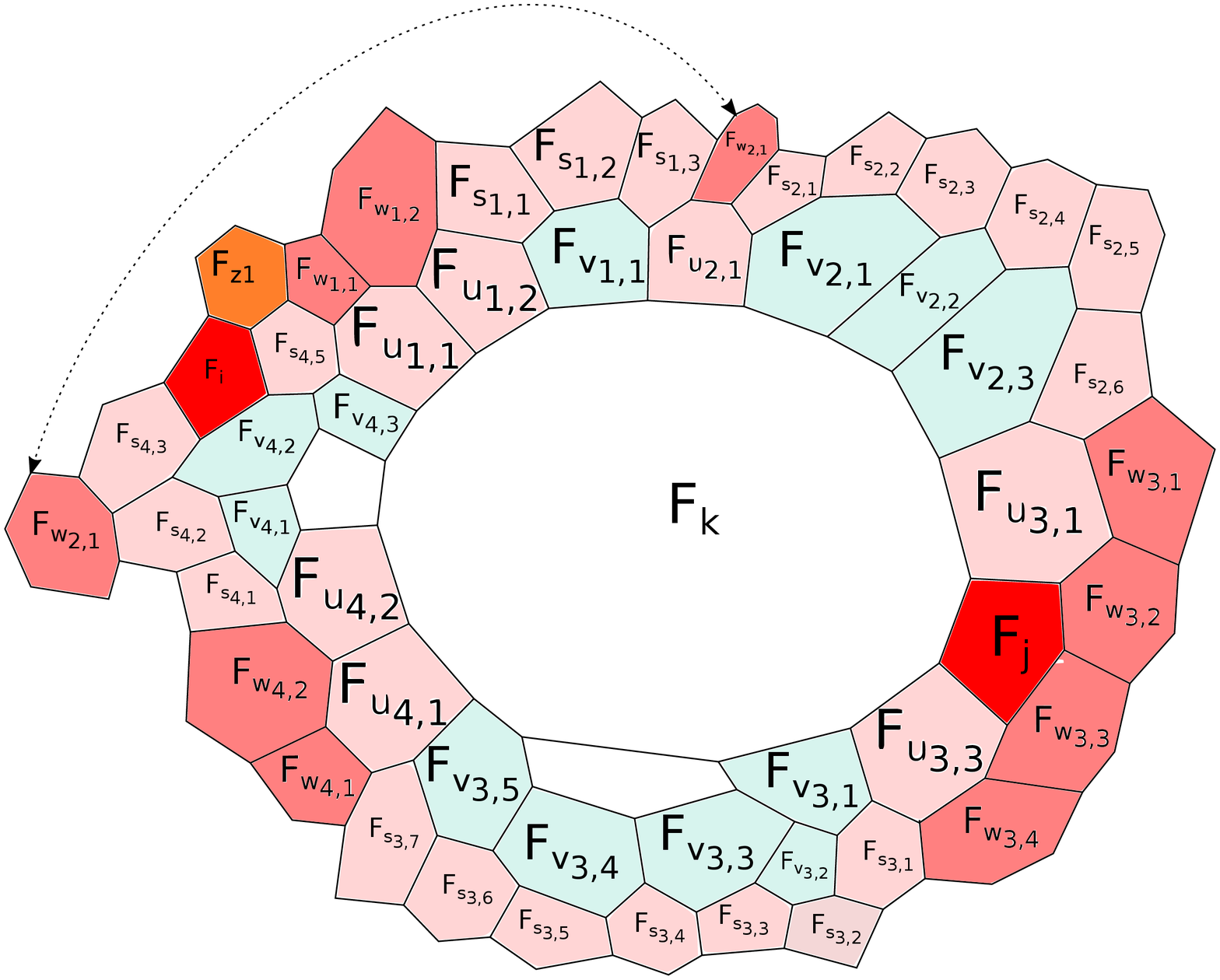}
\end{center}
\caption{Case 2}\label{bijk-2}
\end{figure}

Now we will deform the segments $\mathcal{I}=(F_{i_1},\dots,F_{i_p})$ and $\mathcal{J}=(F_{j_1},\dots,F_{j_q})$ of the belt $\mathcal{B}_1$ to obtain a new belt $(F_i,\mathcal{I}',F_j,\mathcal{J}')$ with $\mathcal{I}'$ not intersecting $F_k$. First substitute the thick path $\Pi_a$ for each segment $\mathcal{U}_a\subset \mathcal{I}$ and the thick path $\mathcal{V}_b$ for each segment $\mathcal{S}_b\subset \mathcal{J}$. Since $F_{s_{a,t_a}}\cap F_{w_{a+1,1}}\ne\varnothing$, $F_{w_{a,r_a}}\cap F_{s_{a,1}}\ne\varnothing$, $F_{v_{a,c_a}}\cap F_{u_{a+1,1}}\ne\varnothing$, and $F_{u_{a,l_a}}\cap F_{v_{a,1}}\ne\varnothing$ for any $a$ and $a+1$ considered $\mod d$, we obtain a loop $\mathcal{L}_1=(F_i,\mathcal{I}_1,F_j,\mathcal{J}_1)$ instead of $\mathcal{B}_1$.  

Since $F_i\cap F_k=\varnothing$, we have $F_i=F_{s_{a_i,f_i}}$ for some $a_i,f_i$. If $F_j=F_{s_{a_j,f_j}}$ for some $a_j,f_j$, then we can assume that $a_i\ne a_j$, else the faces in $\mathcal{I}$ or $\mathcal{J}$ already do not intersect $F_k$, and $\mathcal{B}_1$ is a belt we need. If $F_j=F_{u_{a_j,f_j}}$ for some $a_j$ and some $f_j>1$, then substitute the thick path $(F_{w_{a_j,1}},\dots, F_{w_{a_j,g_j}})$, where $g_j$ -- the first integer with $F_{w_{a_j,g_j}}\cap F_j\ne\varnothing$ (then $F_j\cap F_{u_{a_j,f_j-1}}\cap F_{w_{a_j,g_j}}$ is a vertex), for the segment $(F_{u_{a_j,1}},\dots, F_{u_{a_j,f_j-1}})$ to obtain a loop $\mathcal{L}_2=(F_i,\mathcal{I}_2,F_j,\mathcal{J}_1)$ (else set $\mathcal{I}_2=\mathcal{I}_1$ and $\mathcal{L}_2=\mathcal{L}_1$) with faces in $\mathcal{I}_2$ not intersecting $F_k$.  If $1<f_j<l_{a_j}$, then $F_{w_{a_j,g_j}}\cap F_{u_{a_j,f_j+1}}=\varnothing$, for otherwise $(F_k,F_{u_{a_j,f_j-1}},F_{w_{a_j,g_j}},F_{u_{a_j,f_j+1}})$ is a $4$-belt around the quadrangle $F_j$. Then $F_{w_{a,l}}\cap F_{u_{a_j,r}}=\varnothing$ for any $r\in \{f_j+1,\dots,l_{a_j}\}$ and $a,l$, such that either $a\ne a_j$, or $a=a_j$, and $l\in\{1,\dots,g_j\}$. Hence faces of the segment $(F_{u_{a_j,f_j+1}},\dots, F_{u_{a_j,l_{a_j}}})$ do not intersect faces in $\mathcal{I}_2$. 

Now a face $F_{i_a'}$ of  $\mathcal{I}_2$ can intersect a face $F_{j_b'}$ of $\mathcal{J}_1$ only if $F_{i_a'}=F_{w_{c,h}}$ for some $c,h$, and $F_{j_b'}=F_{s_{a_i,l}}$ for $l<f_i$, or $F_{j_b'}=F_{s_{a_j,l}}$ for $F_j=F_{s_{a_j,f_j}}$ and $l>f_j$.  
In the first case take the smallest  $l$ for all $c,h$, and the correspondent face $F_{w_{c,h}}$. Consider the face $F_{u_{b,g}}=F_{i_e}\in \mathcal{I}$ with $F_{u_{b,g}}\cap F_{w_{c,h}}\ne\varnothing$. Then $\mathcal{L}'=(F_{s_{a_i,l}},F_{s_{a_i,l+1}},\dots,F_i,F_{i_1},\dots,F_{i_e},F_{w_{c,h}})$ is a simple loop. If $f_i<t_{a_i}$, then consider the thick path $\mathcal{Z}_1=(F_{z_{1,1}},\dots,F_{z_{1,y_1}})$ arising while walking in $\overline{C_2}$ along $\partial C_2$ from the face $F_{z_{1,1}}$ intersecting $F_i\cap F_{i_1}$ by the vertex, to the face $F_{z_{1,y_1}}$ preceding $F_{w_{a_i+1,1}}$. Consider the thick path $\mathcal{X}_1=(F_{v_{a_i,1}},\dots, F_{v_{a_i,x_1}})$ with $x_1$ being the first integer with $F_{v_{a_i,x_1}}\cap F_i\ne\varnothing$. Consider the simple curve $\eta\subset\partial P$ consisting of segments connecting the midpoints of the successive edges of intersection of the successive faces of $\mathcal{L}'$.  It divides $\partial P$ into two connected components $\mathcal{E}_1$ and $\mathcal{E}_2$ with $\mathcal{J}_1\setminus(F_{s_{a_i,l}},\dots,F_{s_{a_i,f_i-1}})$ lying in one connected component, say $\mathcal{E}_1$, and $\mathcal{Z}_1$ -- in $\mathcal{E}_2\cup F_{w_{c,h}}$.
Now substitute $\mathcal{X}_1$ for the segment $(F_{s_{a_i,1}},\dots, F_{s_{a_i,f_i-1}})$ of $\mathcal{J}_1$. If $f_i<t_{a_i}$
substitute $\mathcal{Z}_1$ for the segment $(F_{s_{a_i,f_i+1}},\dots, F_{s_{a_i,t_{a_i}}})$ of $\mathcal{I}_2$ to obtain a new loop $(F_i,\mathcal{I}_3,F_j,\mathcal{J}_2)$ with faces in $\mathcal{I}_3$ not intersecting $F_k$. A face $F_{i_a''} $ in $\mathcal{I}_3$ can intersect a face $F_{j_b''}$ in $\mathcal{J}_2$ only if  $F_{i_a''}=F_{w_{c',h'}}$ for some $c',h'$, $F_j=F_{s_{a_j,f_j}}$, and $F_{j_b''}=F_{s_{a_j,l}}$ for $l>f_j$. 
The thick path $\mathcal{Z}_1$ lies in $\mathcal{E}_2\cup F_{w_{c,h}}$ and the segment $(F_j=F_{s_{a_j,f_j}},\dots, F_{s_{a_j,t_{a_j}}})$ lies in $\mathcal{E}_1$; hence intersections of faces in $\mathcal{I}_3$ with faces in $\mathcal{J}_2$ are also intersections of the same faces in $\mathcal{I}_2$ and $\mathcal{J}_1$, and $F_{w_{c',h'}}$ is either $F_{w_{c,h}}$, or lies in $\mathcal{E}_1$. We can apply the same argument for $\mathcal{S}_{a_j}$ as for $\mathcal{S}_{a_i}$ to obtain a new loop $\mathcal{L}_4=(F_i,\mathcal{I}_4,F_j,\mathcal{J}_3)$ with faces in $\mathcal{I}_4$ not intersecting $F_k$ and faces in $\mathcal{J}_3$. Then take the shortest thick path from $F_i$ to $F_j$ in $F_i\cup \mathcal{I}_4\cup F_j$ and the shortest thick path from $F_j$ to $F_i$ in $F_j\cup \mathcal{J}_3\cup F_i$ to obtain a belt we need. 

\begin{figure}[h]
\begin{center}
\includegraphics[height=9cm]{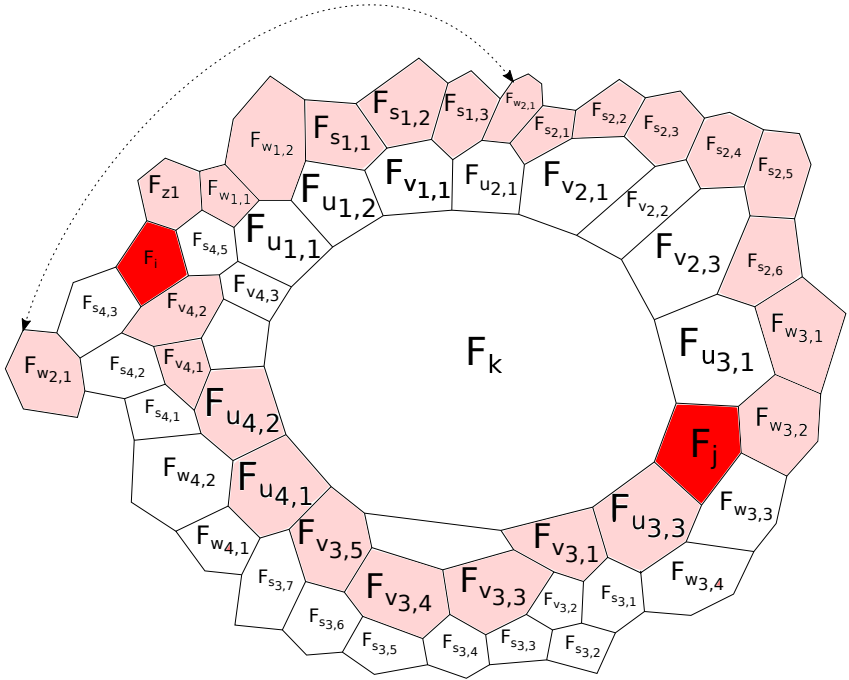}
\end{center}
\caption{Modified belt}\label{bijk-new}
\end{figure}

\end{proof}

\begin{figure}
\begin{center}
\begin{tabular}{cc}
\includegraphics[height=5cm]{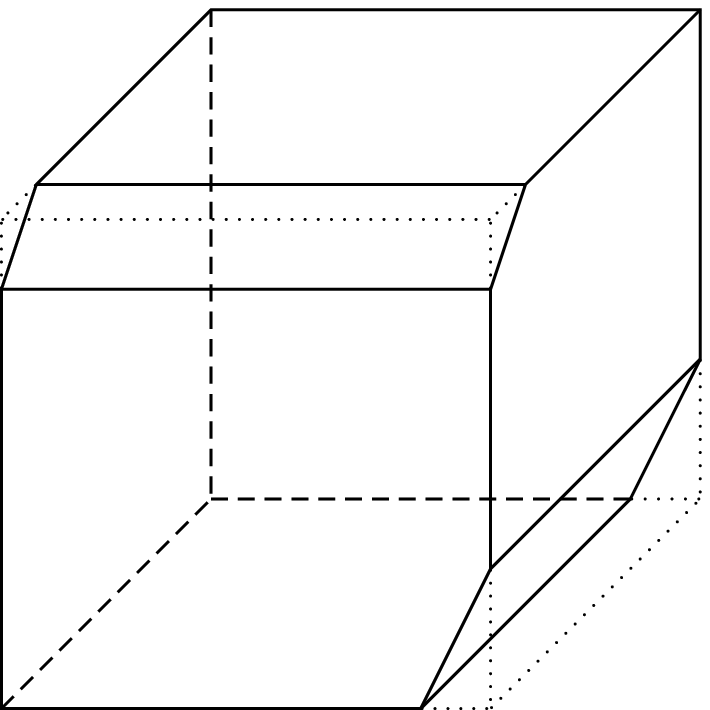}&\includegraphics[height=5cm]{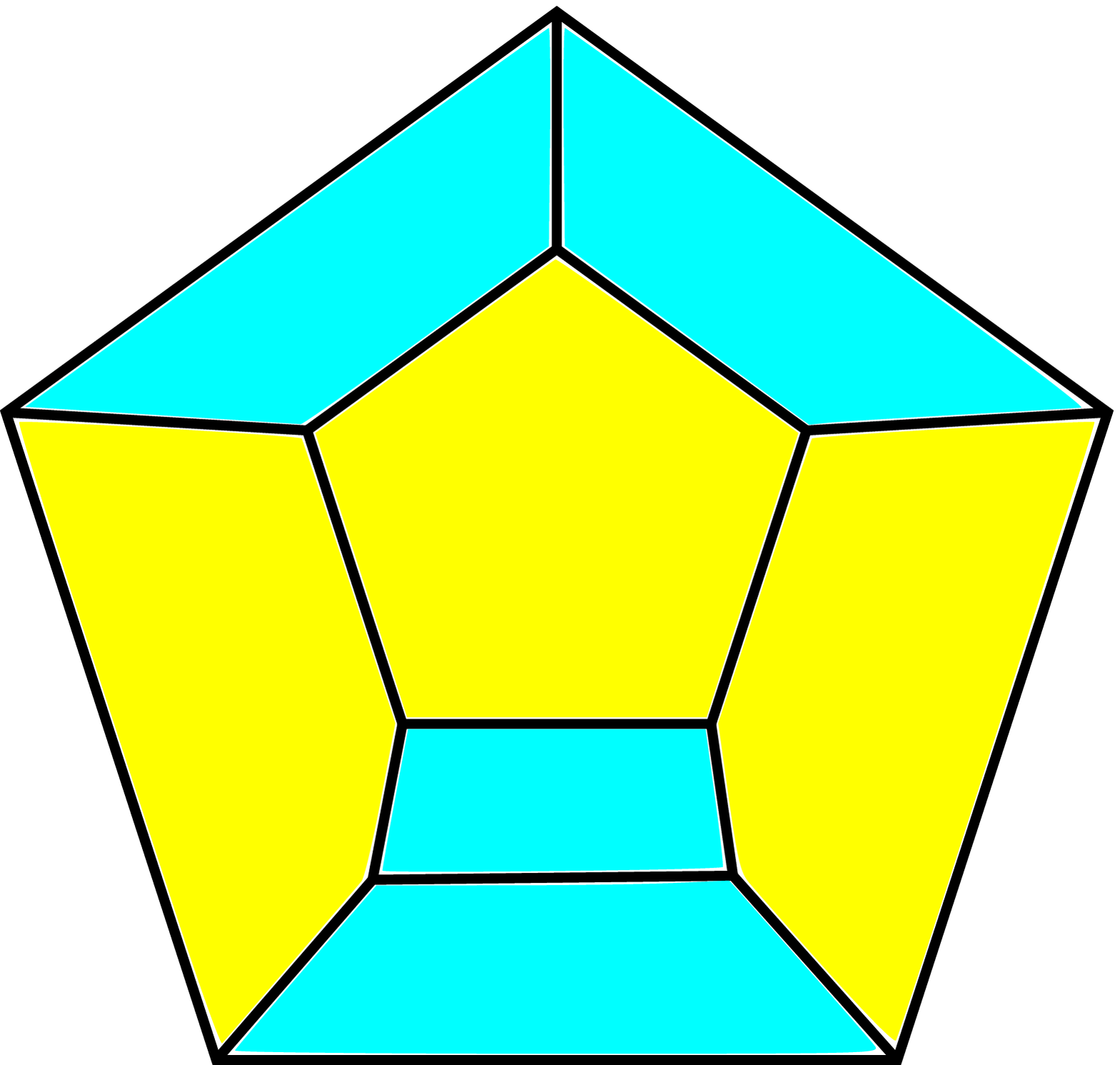}\\
a)&b)
\end{tabular}
\caption{a) A polytope $P_8$; b) its Schlegel diagram}
\label{P4455fig}
\end{center}
\end{figure}

Denote by $P_8$ a polytope obtained from the cube by cutting off two disjoint nonadjacent orthogonal 
edges (Fig. \ref{P4455fig}). It is easy to see, that the polytope $P_8$ is flag. 
\begin{remark}
It is not difficult to show that there
are only two flag $3$-polytopes with $m=8$: the polytope $P_8$, and the hexagonal prism $M_6\times I$. Moreover, $P_8$ 
has five $4$-belts (four trivial and one nontrivial), and $M_6\times I$ has nine $4$-belts (six trivial and three nontrivial). Hence, both of them are $B$-rigid.
\end{remark}

\begin{proposition}\label{apbprop} A flag polytope $P$ is an almost Pogorelov polytope or the polytope $P_8$ if and only if it satisfies 
the condition of Lemma \ref{APb-lemma}. 
\end{proposition}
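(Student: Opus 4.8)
The plan is to prove both implications, using Lemma \ref{APb-lemma} for the ``if'' direction and a structural analysis for the ``only if'' direction. For the easy direction, if $P$ is an almost Pogorelov polytope, then Lemma \ref{APb-lemma} gives exactly the stated condition; and for $P=P_8$ one checks directly (using the explicit $4$-belt structure described in the preceding remark: four trivial and one nontrivial $4$-belt) that for any triple $\{F_i,F_j,F_k\}$ with $F_i\cap F_j=\varnothing$, if $F_k$ meets no quadrangle among $F_i,F_j$ then a suitable belt exists, and conversely that when $F_k$ is adjacent to a quadrangle among $F_i,F_j$ no such belt exists. The point is that $P_8$ has a \emph{nontrivial} $4$-belt $\mathcal B$, and the two quadrangles obtained by cutting are disjoint; a careful case check shows the SCC-type condition nevertheless survives because the offending face $F_k$ in the nontrivial belt is always adjacent to one of the two quadrangles. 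So the substance is the converse.

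For the ``only if'' direction I would argue by contradiction: suppose $P$ is flag, satisfies the condition of Lemma \ref{APb-lemma}, but is neither an almost Pogorelov polytope nor $P_8$. Since an almost Pogorelov polytope is precisely a flag polytope all of whose $4$-belts are trivial, $P$ must have a \emph{nontrivial} $4$-belt $\mathcal B_4=(F_i,F_j,F_k,F_l)$; write $C_1,C_2$ for the two components of $\partial P\setminus|\mathcal B_4|$, both nonempty since $\mathcal B_4$ is nontrivial. First I would show that the condition of Lemma \ref{APb-lemma} forces strong restrictions near such a belt. Apply the condition to the triple $\{F_i,F_k,F_r\}$ for a face $F_r$ lying, say, in $\overline{C_1}$: if $F_r$ is not adjacent to $F_i$ (nor to $F_k$, and these are not quadrangles — which we can arrange, or handle the quadrangle case separately), there must be a belt $\mathcal B_l$ through $F_i,F_k$ missing $F_r$ in one component; but since $F_i\cap F_k=\varnothing$ and they sit in a $4$-belt whose two ``sides'' are $F_j$ and $F_l$, any belt through $F_i,F_k$ other than $\mathcal B_4$ itself must use $F_j$ and $F_l$ (by the Jordan curve argument already used in the proof of Lemma \ref{52lemma}), hence equals $\mathcal B_4$, forcing $F_r$ to be adjacent to $F_i$ or to $F_k$. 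Running this over all $F_r\subset\overline{C_1}$ and all $F_r\subset\overline{C_2}$, and using flagness, I would deduce that each component $\overline{C_i}$ is ``small'': every face of $\overline{C_1}$ is adjacent to $F_i$ or $F_k$, and similarly with $F_j,F_l$, and then symmetrizing over the four faces of $\mathcal B_4$. Combined with the absence of $3$-belts this pins down the possible configurations of $\overline{C_1}$ and $\overline{C_2}$ to a very short list.

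The main obstacle — and where I expect the real work to be — is turning ``each $\overline{C_i}$ is small'' into an explicit enumeration and then showing that the only flag possibility satisfying the Lemma \ref{APb-lemma} condition with a nontrivial $4$-belt is $P_8$ (and ruling out, e.g., the cube, the pentagonal prism, and the hexagonal prism $M_6\times I$, which do have nontrivial $4$-belts). For the cube and the prisms one exhibits an explicit triple $\{F_i,F_j,F_k\}$ violating the condition: take $F_i,F_j$ the two disjoint faces ``across'' a nontrivial $4$-belt and $F_k$ a face of that belt which is \emph{not} adjacent to any quadrangle (in the prism $M_6\times I$ there are no quadrangles among $F_i,F_j$ at all, yet every belt through $F_i,F_j$ meets $F_k$ in both components), and this contradicts the assumed condition. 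Once the cube and prisms are excluded, the smallness constraints plus flagness plus ``has a nontrivial $4$-belt'' plus ``no two adjacent quadrangles forced otherwise'' leave exactly $P_8$, which is flag with $m=8$; here I would invoke the remark preceding the proposition that the only flag $3$-polytopes with $m=8$ are $P_8$ and $M_6\times I$, which reduces the endgame to distinguishing these two by the belt test above. Assembling these pieces — the Jordan-curve rigidity of belts through a fixed disjoint pair, the smallness of the components, and the small-$m$ classification — completes the proof.
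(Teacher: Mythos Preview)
Your proposal has a genuine gap in the hard direction. The central claim --- that for a nontrivial $4$-belt $\mathcal{B}_4=(F_i,F_j,F_k,F_l)$, ``any belt through $F_i,F_k$ other than $\mathcal{B}_4$ itself must use $F_j$ and $F_l$, hence equals $\mathcal{B}_4$'' --- is false. The faces $F_i$ and $F_k$ each have neighbours in $\overline{C_1}$ and in $\overline{C_2}$, so a belt through $F_i$ and $F_k$ can perfectly well have one arc entirely in $\overline{C_1}$ and the other in $\overline{C_2}$, never touching $F_j$ or $F_l$. Nothing in the Jordan-curve argument of Lemma~\ref{52lemma} prevents this. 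Consequently your deduction that ``every face of $\overline{C_1}$ is adjacent to $F_i$ or $F_k$'' is unsupported, and in any case this conclusion is too weak to force the structure down to $P_8$; you never explain how ``smallness'' yields $m=8$. (Also, the cube and the pentagonal prism have only \emph{trivial} $4$-belts --- they are almost Pogorelov --- so they need no separate exclusion.)

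The paper's argument applies the condition the other way round: take the avoided face $F_k$ to be a face of the nontrivial $4$-belt (say $F_s$ in $\mathcal{B}_4=(F_s,F_t,F_u,F_v)$), and take the disjoint pair $\{F_i,F_j\}$ with $F_i\in\overline{C_1}$, $F_j\in\overline{C_2}$ both \emph{not adjacent} to $F_s$. Then any belt through $F_i$ and $F_j$ must cross $\mathcal{B}_4$ on each of its two arcs, and since it avoids $F_s$, each arc meets $\{F_t,F_v\}$; both components therefore intersect $F_s$, contradicting the assumed condition. Hence each face of $\mathcal{B}_4$ is adjacent to \emph{every} face of $\overline{C_1}$ or of $\overline{C_2}$. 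This is much stronger than your conclusion and, combined with flagness, forces $\overline{C_1}$ and $\overline{C_2}$ to be chains of quadrangles bounded by two opposite faces of $\mathcal{B}_4$; a second application of the condition then pins each chain to length $2$, giving $P_8$. The key move you are missing is choosing the disjoint pair \emph{across} the nontrivial belt rather than \emph{on} it.
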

\begin{proof}
Indeed, let a flag polytope $P$ satisfy the condition of Lemma \ref{APb-lemma}. 
Assume that $P$ has a nontrivial $4$-belt $\mathcal{B}_4=(F_s,F_t,F_u,F_v)$. If  $F_s$
is not adjacent to some faces $F_i\in \overline{C_1}$ and $F_j\in\overline{C_2}$, where  $C_1$ and $C_2$ are 
the connected components  of $\partial P\setminus |\mathcal{B}_4|$, then we can take $F_k=F_s$. 
Any belt $\mathcal{B}$ containing $F_i$ and $F_j$ and not containing $F_k$ should pass through $F_t$ and $F_v$, 
in particular, each component of $|\mathcal{B}|\setminus(F_i\cup F_j)$
intersects $F_k$. A contradiction. Thus, any face of $\mathcal{B}_4$ should be adjacent to all the faces either 
in $\overline{C_1}$, or in $\overline{C_2}$. Without loss of generality assume that $F_s$ is adjacent to all the 
faces in $\overline{C_1}$. Then $\overline{C_1}$ consists of the faces $F_{i_1}$, $\dots$, $F_{i_p}$, where
$(F_t, F_{i_1}, \dots, F_{i_p}, F_v)$ is a part of the belt around $F_s$. We have $p\geqslant 2$, since $\mathcal{B}_4$
is a nontrivial belt. In particular, $F_v$ does not intersect $F_{i_1}$, $\dots$, $F_{i_{p-1}}$, and $F_t$ does not
intersect $F_{i_2},\dots,F_{i_p}$. Then $F_t$ and $F_v$ are adjacent to all the faces in $\overline{C}_2$, and 
$F_u$  -- in $\overline{C}_1$. Since $P$ is flag, $F_t\cap F_{i_1}\cap F_u$, $F_{i_1}\cap F_{i_2}\cap F_u$,
$\dots$, $F_{i_{p-1}}\cap F_{i_p}\cap F_u$, $F_{i_p}\cap F_v\cap F_u$ are vertices. Then  
all the faces $F_{i_1}$, $\dots$, $F_{i_p}$ are quadrangles. The same argument works for $\overline{C_2}$.
Then $P$ has the structure drawn on Fig. \ref{P4Q4}. Let $p\geqslant 3$.
Consider the faces $F_i=F_{i_1}$, $F_j=F_{j_1}$, and  $F_k=F_{i_3}$. Any belt $\mathcal{B}$ containing $F_{i_1}$
contains either  the part $(F_v,F_{i_1},F_{i_2})$, or $(F_s,F_{i_1},F_u)$. In the first case this belt also should contain 
$F_k=F_{i_3}$. In the second case it intersects $F_k=F_{i_3}$ by both connected components of 
$|\mathcal{B}|\setminus(F_i\cup F_j)$. A contradiction. Hence, $p=2$. For the same reason $q=2$. Thus, $P=P_8$.
\begin{figure}
\begin{center}
\includegraphics[height=5cm]{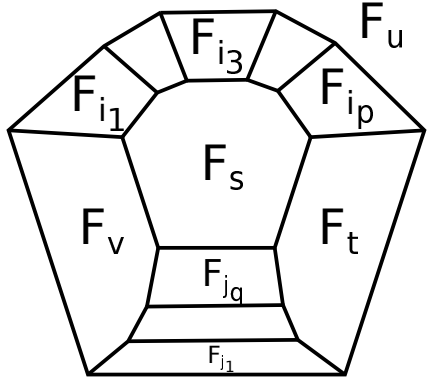}
\caption{A polytope $P$ assumed to satisfy the condition of Lemma \ref{APb-lemma}}
\label{P4Q4}
\end{center}
\end{figure}

At last, let us prove that $P_8$ satisfies the condition of Lemma \ref{APb-lemma}. Its group of combinatorial symmetries 
acts transitively on quadrangles and pentagons. Denote by $\mathcal{B}$ the $4$-belt of pentagons, and by $C_1$
and $C_2$ connected components of its complement in $\partial P$. Each component $\overline{C_1}$ and $\overline{C_2}$
consists of two quadrangles. 

If $F_i$ is a quadrangle adjacent to $F_k$, then any $l$-belt  $\mathcal{B}_l$ containing $F_i$ and not containing $F_k$ in the closure of each connected component of  $|\mathcal{B}_l|\setminus(F_i\cup F_j)$ contains a face from the $4$-belt around $F_i$ 
adjacent to $F_k$. Hence, $F_k$ intersects both connected components.

Now let $F_k$ do not intersect quadrangles among the faces $F_i$ and $F_j$.  
 
If $F_k$ is a quadrangle in $\overline{C_1}$, then
at least one of the faces $F_i$ and $F_j$, say $F_i$, should be a pentagon. 
Since $F_i\cap F_j=\varnothing$, $F_j$ is either the pentagon opposite to $F_i$ in $\mathcal{B}$, 
or a quadrangle in $\overline{C_2}$. In the first case, if $F_i$ intersects both quadrangles in $\overline{C_1}$,
then $\mathcal{B}$ is a belt we need. If $F_i$ intersects only one quadrangle
in $\overline{C}_1$, then  the belt around a quadrangle in $\overline{C_2}$ is a belt we need. 
In the second case $F_i$ is adjacent to both quadrangles in $\overline{C}_1$ and the belt around the quadrangle
in $\overline{C}_2$ different from $F_j$ is a belt we need. 

If $F_k$ is a pentagon,  then it is adjacent to three of four quadrangles of $P$. If both faces $F_i$ and $F_j$
are pentagons, then they are both adjacent to $F_k$, and the belt around the fourth quadrangle is a belt we need. If $F_i$
is the fourth quadrangle, then it is adjacent to all the pentagons different from $F_k$. Then $F_j$ is a quadrangle,
and $F_i$ and $F_j$ lie in different components $\overline{C_1}$ and $\overline{C_2}$. Then $F_j$ is a quadrangle adjacent to $F_k$, which is a contradiction. This finishes the proof.
\end{proof}
\begin{remark}
Thus, we have a characterization of the families of flag, almost Pogorelov and Pogorelov polytopes it terms of the existence of
the belt that passes through $F_i$ and $F_j$, where $F_i\cap F_j=\varnothing$, and does not pass through $F_k$.
It should be mentioned, that there are other characterizations of these families in terms of belts. 
Namely, a simple $3$-polytope is flag 
if and only if any its face is surrounded by a belt \cite{BE17I,BEMPP17}. 
It is almost Pogorelov or the polytope $P_8$ if and only if in addition the loop around any trivial belt
is simple \cite{E19}. A simple $3$-polytope is Pogorelov if and only if any pair of adjacent faces is surrounded 
by a belt \cite{BE17I,BEMPP17}.
Moreover, a simple $3$-polytope is strongly Pogorelov if and only if any its face is surrounded by two belts \cite{B1913}. 
\end{remark}

\section{Generalization of the annihilator lemma}\label{Annsec}
In this section we generalise another crucial tool from \cite{FMW15} -- the annihilator lemma. 
\begin{lemma}\label{h3lemma}
Let  $P$ be a simple $3$-polytope and $\alpha\in H$
$$
\alpha=\sum\limits_{\omega\in N_2(P)}r_{\omega}\widetilde\omega\quad\text{with }|\{\omega\colon r_{\omega}\ne 0\}|\geqslant 2.
$$ 
Then for any $\omega=\{p,q\}$ with  $r_{\omega}\ne 0$ we have 
\begin{enumerate}
\item $\dim {\rm Ann}_H (\alpha)\leqslant \dim {\rm Ann}_H (\widetilde \omega)$; 
\item $\dim {\rm Ann}_H (\alpha)< \dim {\rm Ann}_H (\widetilde \omega)$, if there is $\omega'=\{s,t\}$ 
with $r_{\omega'}\ne\varnothing$ admitting an $l$-belt $\mathcal{B}_l$ containing $F_s$ and $F_t$ such that  
either $F_p$ or $F_q$ does not belong to $\mathcal{B}_l$ and does not intersect at least one of the two connected components of $|\mathcal{B}_l|\setminus(F_s\cup F_t)$.
\end{enumerate}
\end{lemma}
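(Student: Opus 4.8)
The plan is to study the annihilator of $\alpha$ by pairing with the Poincaré-dual classes. Recall that for $\omega=\{p,q\}\in N_2(P)$ the class $\widetilde\omega\in H^3(\mathcal{Z}_P)$ sits in bidegree $(-1,2\omega)$, so $\widetilde\omega\cdot H^{-i,2\tau}=0$ unless $\tau\cap\omega=\varnothing$. Write $H^*=H^*(\mathcal{Z}_P,\mathbb Q)$ and decompose $H^*=\bigoplus_{\tau}H^{*,2\tau}$. For a fixed $\tau$ with $\tau\cap\omega=\varnothing$ multiplication by $\widetilde\omega$ sends $H^{-i,2\tau}$ into $H^{-(i+1),2(\tau\sqcup\omega)}$, and by multigraded Poincaré duality the image is annihilated (or not) according to how it pairs with $H^{-(m-4-i),2([m]\setminus(\tau\sqcup\omega))}$ against $[\mathcal{Z}_P]$. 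Thus $\dim\mathrm{Ann}_H(\widetilde\omega)$ can be computed $\tau$ by $\tau$ as $\dim H^* - \mathrm{rk}(\cdot\widetilde\omega)$, and similarly for $\alpha$. The key observation is that $\alpha$ and $\widetilde\omega$ have the \emph{same multidegrees available}: $\alpha=\sum r_\nu\widetilde\nu$ with each $\widetilde\nu$ in some $H^{-1,2\nu}$, so $\alpha\cdot H^{-i,2\tau}$ is nonzero only into degrees $H^{-(i+1),2(\tau\sqcup\nu)}$ for those $\nu$ with $r_\nu\ne0$ and $\nu\cap\tau=\varnothing$.

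For part (1): I would show the map $\cdot\,\alpha$ has rank at least that of $\cdot\,\widetilde\omega$ on each fixed source bidegree $H^{-i,2\tau}$, hence globally $\mathrm{rk}(\cdot\alpha)\ge\mathrm{rk}(\cdot\widetilde\omega)$, which is equivalent to $\dim\mathrm{Ann}_H(\alpha)\le\dim\mathrm{Ann}_H(\widetilde\omega)$. The point is that if $x\in H^{-i,2\tau}$ and $x\cdot\widetilde\omega\ne 0$, then because $\widetilde\omega$ appears in $\alpha$ with nonzero coefficient and it is the \emph{only} summand of $\alpha$ landing in the target bidegree $H^{-(i+1),2(\tau\sqcup\omega)}$, the component of $x\cdot\alpha$ in that target bidegree equals $r_\omega(x\cdot\widetilde\omega)\ne0$; hence $x\cdot\alpha\ne0$. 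This gives an injection $\mathrm{Ann}_H(\alpha)\cap H^{-i,2\tau}\hookrightarrow\mathrm{Ann}_H(\widetilde\omega)\cap H^{-i,2\tau}$ once we note that conversely $x\cdot\widetilde\omega=0$ does \emph{not} imply $x\cdot\alpha=0$; to make the rank comparison clean I would instead argue directly on ranks: fix $\tau$ disjoint from $\omega$; the projection of $\mathrm{Im}(\cdot\alpha|_{H^{-i,2\tau}})$ to the bidegree $H^{-(i+1),2(\tau\sqcup\omega)}$ equals $r_\omega\,\mathrm{Im}(\cdot\widetilde\omega|_{H^{-i,2\tau}})$, so the rank of $\cdot\alpha$ on $H^{-i,2\tau}$ is at least the rank of $\cdot\widetilde\omega$ there; summing over $(i,\tau)$ and noting that on source bidegrees $\tau$ meeting $\omega$ the map $\cdot\widetilde\omega$ is zero (contributing nothing) gives $\mathrm{rk}(\cdot\alpha)\ge\mathrm{rk}(\cdot\widetilde\omega)$.

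For part (2): I need a strict inequality, i.e. to produce an element $x\in H^*$ with $x\cdot\widetilde\omega=0$ but $x\cdot\alpha\ne0$. Suppose WLOG that $F_p$ does not belong to $\mathcal{B}_l$ and does not meet one component, say $C$, of $|\mathcal{B}_l|\setminus(F_s\cup F_t)$. Let $\omega(\mathcal{B}_l)=\{s,t\}\sqcup\sigma_1\sqcup\sigma_2$ where $\sigma_1,\sigma_2$ index the two arcs ($C$ corresponding to, say, $\sigma_1$). The belt $\mathcal{B}_l$ gives a class $\widetilde{\mathcal{B}_l}\in\widetilde H^1(P_{\omega(\mathcal{B}_l)})$ which, by the multiplication rules recalled in Section~\ref{Cohsec}, is detected by $\widetilde\omega'\cdot\widetilde\nu$ where $\nu$ indexes a path of faces across the belt joining the two components of $|\mathcal{B}_l|\setminus(F_s\cup F_t)$ — that is, $\nu$ can be chosen to consist of faces from one arc so that $\widetilde{\mathcal{B}_l}$ is the product $\widetilde{\omega'}\cdot\widetilde\nu$ with $\nu\subset\sigma_1$ (the arc corresponding to $C$) together with, if necessary, faces of the complementary region; the crucial point is that I can arrange $p\notin\nu$ and $q\notin\nu$ while $\nu\cap\{s,t\}=\varnothing$, because $F_p$ avoids the component $C$ and $F_p\notin\mathcal{B}_l$. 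Then consider the Poincaré-dual $x:=\widetilde{\mathcal{B}_l}^*$ of $\widetilde{\mathcal{B}_l}$ inside the $B$-rigid subgroup $\boldsymbol B_l$; more concretely, pick $x$ dual to $\widetilde\nu$ so that $x\cdot\widetilde{\omega'}=\pm[\mathcal{Z}_P]$ but $x\cdot\widetilde\omega=0$ for degree reasons if $\omega\ne\omega'$, or if $\omega=\omega'$ then $x\cdot\widetilde\omega$ lies in a bidegree where I can still arrange vanishing. Checking $x\cdot\widetilde\omega=0$: $x$ is supported in the complementary multidegree $[m]\setminus(\text{something containing }\nu)$ and a short case analysis on whether $p$ or $q$ lies in the support of $x$ — which it does not, by the choice above forced by $F_p$ missing $C$ — gives $x\cdot\widetilde\omega=0$. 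Meanwhile $x\cdot\alpha=\sum_\nu r_\nu(x\cdot\widetilde\nu)$ has a nonzero term $r_{\omega'}(x\cdot\widetilde{\omega'})\ne0$, and since that term sits in the top bidegree $(-(m-3),2[m])$ while the other terms $r_\nu(x\cdot\widetilde\nu)$ either vanish or land elsewhere, no cancellation occurs, so $x\cdot\alpha\ne0$. Combined with part (1) this yields the strict inequality.

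\textbf{Main obstacle.} The delicate point is step (2): constructing the witness class $x$ that simultaneously kills $\widetilde\omega$ and detects $\widetilde{\omega'}$ (equivalently $\widetilde{\mathcal{B}_l}$). This requires a careful combinatorial/geometric argument — exactly of the flavour of the intersection-of-cycles picture on Fig.~\ref{PO12} — to select the path $\nu$ realizing the belt class so that neither $F_p$ nor $F_q$ lies in the support of its Poincaré dual, using precisely the hypothesis that $F_p$ (or $F_q$) misses one component of $|\mathcal{B}_l|\setminus(F_s\cup F_t)$. The separation hypothesis is what allows routing the dual cycle through the \emph{other} component, away from $F_p$. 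I expect the bookkeeping of multidegrees and the verification that the top-degree term in $x\cdot\alpha$ survives (no cancellation from the other summands $r_\nu\widetilde\nu$) to be the part needing the most care.
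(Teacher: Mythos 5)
Your proposal follows the multigraded bookkeeping philosophy of the paper for part~(1), and the projection idea you gesture at there is essentially the right fix, but part~(2) has a genuine conceptual gap.

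For part~(1), the step ``summing over $(i,\tau)$ gives $\mathrm{rk}(\cdot\alpha)\ge\mathrm{rk}(\cdot\widetilde\omega)$'' is not justified as written. For $\cdot\widetilde\omega$ the global rank equals the sum of per-block ranks because the map is multigraded-homogeneous, but for $\cdot\alpha$ the global rank is only \emph{bounded above} by the sum of per-block ranks, not below. The correct form of your idea is to restrict $\cdot\alpha$ to $V=\bigoplus_{\tau\cap\omega=\varnothing}H^{*,2\tau}$ and compose with the projection $\pi_W$ onto multidegrees of the form $\tau\sqcup\omega$; then $\pi_W\circ(\cdot\alpha)|_V=r_\omega\cdot\widetilde\omega$, and $\mathrm{rk}(\cdot\alpha)\ge\mathrm{rk}(\pi_W\circ(\cdot\alpha)|_V)=\mathrm{rk}(\cdot\widetilde\omega)$. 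The paper's proof is equivalent but phrased dually: it builds a complement $C_\omega=\bigoplus_\tau C_{\omega,\tau}$ of $\mathrm{Ann}(\widetilde\omega)$ out of multidegree blocks and shows $C_\omega\cap\mathrm{Ann}(\alpha)=\{0\}$ by isolating the $(\tau_\beta\sqcup\omega)$-component of $\beta\alpha$.

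For part~(2), the gap is fundamental: producing a single element $x\in\mathrm{Ann}_H(\widetilde\omega)\setminus\mathrm{Ann}_H(\alpha)$ does \emph{not} yield $\dim\mathrm{Ann}_H(\alpha)<\dim\mathrm{Ann}_H(\widetilde\omega)$, because $\mathrm{Ann}_H(\alpha)$ need not be contained in $\mathrm{Ann}_H(\widetilde\omega)$. One must exhibit a complement $C_\omega$ of $\mathrm{Ann}_H(\widetilde\omega)$ together with a vector $\xi\in\mathrm{Ann}_H(\widetilde\omega)$ such that the whole space $\langle\xi\rangle\oplus C_\omega$ meets $\mathrm{Ann}_H(\alpha)$ trivially, and this is exactly where the hypothesis earns its keep. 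In fact your ``more concretely'' simplification, taking $x$ in multidegree $[m]\setminus\omega'$ dual to $\widetilde{\omega'}$, does produce an element of $\mathrm{Ann}_H(\widetilde\omega)\setminus\mathrm{Ann}_H(\alpha)$ for \emph{any} $\omega'\ne\omega$ with $r_{\omega'}\ne0$, making no use of the belt at all; since the paper shows (Proposition~\ref{anneq}) that the conclusion can fail without the belt hypothesis, this should have been a warning sign. Concretely, the paper's witness is not a top-degree class but the class $\xi\in\widehat H_2(P_\tau,\partial P_\tau)$ supported on the arc $B_1$ of the belt, with $\tau=\omega(\mathcal{B}_l)\setminus\{s,t\}$. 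The crucial feature of $\xi$ is that its multidegree misses $q$, and $F_q\cap B_1=\varnothing$, so for \emph{every} $\beta\in C_\omega$ the $(\tau_\beta\sqcup\omega)$-component of $\xi\alpha$ vanishes (any $\nu$ with $\tau\sqcup\nu=\tau_\beta\sqcup\omega$ must contain $q$, and then $\xi\widetilde\nu=\pm\xi\cdot[F_q]=0$). This is what lets $\langle\xi\rangle\oplus C_\omega$ intersect $\mathrm{Ann}_H(\alpha)$ trivially. Your top-degree choice $x$ has $x\alpha$ concentrated in multidegree $[m]=([m]\setminus\omega)\sqcup\omega$, which is precisely a degree where $\beta\alpha$ for $\beta\in C_\omega$ can also be nonzero, so the analogous cancellation argument breaks down. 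You would need to replace your $x$ by the arc class $\xi$ and then carry out the whole ``$\langle\xi\rangle\oplus C_\omega$ avoids $\mathrm{Ann}(\alpha)$'' verification; the intersection-of-cycles picture you cite is the right intuition, but the element you must dualize against is the arc, not a top-degree class.
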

\begin{proof} 
Choose a complementary subspace $C_{\omega}$ to ${\rm Ann}_H (\widetilde \omega)$ in $H$ as a direct sum of complements $C_{\omega,\tau}$ to ${\rm Ann}_H (\widetilde \omega)\cap \widehat{H}_*(P_{\tau},\partial P_{\tau},\mathbb Q)$ in $\widehat{H}_*(P_{\tau},\partial P_{\tau},\mathbb Q)$ for all $\tau\subset[m]\setminus\omega$. Then for any $\beta\in C_{\omega}\setminus\{0\}$ we have $\beta \widetilde{\omega}\ne0$, which is equivalent to the fact that $\beta=\sum\beta_{\tau}$, $\beta_{\tau}\in C_{\omega,\tau}$, $\tau\subset[m]\setminus\omega$, with $\beta_{\tau_\beta}\widetilde\omega\ne0$ for some $\tau_\beta\subset[m]\setminus\omega$. Moreover for any $\omega'\ne\omega$ with $r_{\omega'}\ne 0$ and $\tau\subset[m]\setminus\omega$, $\tau\ne\tau_\beta$, we have $\tau_\beta\sqcup\omega\notin\{\tau\cup \omega',\tau_\beta\cup\omega',\tau\sqcup\omega\}$; hence $(\beta\cdot\alpha)_{\tau_{\beta}\sqcup\omega}= r_{\omega}\beta_{\tau_\beta}\cdot\widetilde{\omega}\ne0$, and $\beta\alpha\ne 0$. Then $C_{\omega}$ forms a direct sum with ${\rm Ann}(\alpha)$, and $\dim {\rm Ann}_H (\alpha)\leqslant \dim {\rm Ann}_H (\widetilde \omega)$.

Now consider some $\omega'\ne\omega$, $|\omega'|=2$, $r_{\omega'}\ne 0$, 
satisfying additional condition of the lemma say for $F_q$. 
Then there is an $l$-belt $\mathcal{B}_l$ such that 
$F_s,F_t\in \mathcal{B}_l$,  $F_q\notin\mathcal{B}_l$, and $F_q$ does not intersect one of the two connected components 
$B_1$ and $B_2$ of $|\mathcal{B}_l|\setminus(F_s\cup F_t)$, say $B_1$. 
Take $\xi=[\sum_{i\colon F_i\subset B_1}F_i]\in \widehat{H}_2(P_{\tau},\partial P_{\tau},\mathbb Q),\,\tau=\{i\colon F_i\in \mathcal{B}_l\setminus\{F_s,F_t\}\}$, and $[F_s]\in \widehat{H}_2(P_{\omega'},\partial P_{\omega'},\mathbb Q)$. 
Then $\xi \cdot [F_s]$ is a generator in $H_1(\mathcal{B}_l,\partial\mathcal{B}_l,\mathbb Q)\simeq \mathbb Q$. 
On the other hand, take $[F_q]\in\widehat{H}_2(P_{\omega},\partial P_{\omega},\mathbb Q)$. 
Then either $F_p\in \mathcal{B}_l\setminus\{F_s,F_t\}$, and $\xi \cdot\widetilde\omega=0$, 
since $\tau\cap \omega\ne\varnothing$,  or $F_p\notin \mathcal{B}_l\setminus\{F_s,F_t\}$, 
and $\pm \xi\cdot\widetilde\omega=\xi\cdot [F_q]=0$, since $F_q$ does not intersect $B_1$. 
In both cases  $\xi\in{\rm Ann}(\widetilde\omega)$ and $\xi\cdot\widetilde{\omega'}\ne 0$. 
Then $\xi\cdot \alpha\ne 0$, since $\tau\sqcup\omega'\ne\tau\sqcup\omega_1$ for $\omega_1\ne\omega'$. 
Consider any $\beta=\sum_{\tau\subset[m]\setminus\omega}\beta_\tau\in C_{\omega}\setminus\{0\}$. 
We have  $(\beta\cdot\alpha)_{\tau_\beta\sqcup\omega}\ne 0$. If $(\xi\cdot\alpha)_{\tau_\beta\sqcup\omega}\ne 0$, 
then since $\xi$ is a homogeneous element, 
$(\xi\cdot\alpha)_{\tau_\beta\sqcup\omega}=r_{\omega_1}\xi\cdot\widetilde{\omega_1}$ 
for $\omega_1=(\tau_\beta\sqcup\omega)\setminus\tau=\{q,r\}$, $r\in[m]$. 
We have $\xi\cdot\widetilde{\omega_1}=\pm\xi\cdot[F_q]=0$, since $F_q$ does not intersect $B_1$. A contradiction. 
Thus, $((\xi+\beta)\cdot\alpha)_{\tau_\beta\sqcup\omega}=(\beta\cdot\alpha)_{\tau_\beta\sqcup\omega}\ne 0$; 
hence $(\xi+\beta)\cdot \alpha\ne0$, and the space $\langle\xi\rangle\oplus C_{\omega}$ 
forms a direct sum with ${\rm Ann}_H(\alpha)$. This finishes the proof. 
\end{proof}
\begin{remark}
In Proposition \ref{anneq} we show that for the $3$-dimensional associahedron, which is the simplest almost
Pogorelov polytope without adjacent quadrangles,  there is an example when $\dim {\rm Ann}_H (\alpha)=\dim {\rm Ann}_H (\widetilde \omega)$.
\end{remark}
\begin{corollary}
Ley $P$ be a simple $3$-polytope, and $\omega\in N_2(P)$ such that
$$ 
\dim {\rm Ann}_H (\alpha)< \dim {\rm Ann}_H (\widetilde \omega)
$$
for any $\alpha=\lambda_\omega\omega+\sum\limits_{\omega'\in N_2(P)\setminus\{\omega\}}\lambda_{\omega'}\widetilde{\omega'}$ with $\lambda_{\omega}\ne0$ and $\lambda_{\omega'}\ne 0$ at least for one $\omega'$, then
for any isomorphism of graded rings $\varphi\colon H^*(\mathcal{Z}_P)\to H^*(\mathcal{Z}_Q)$ for a simple $3$-polytope $Q$
we have 
$\varphi(\widetilde{\omega})=\pm \widetilde{\omega'}$ for some $\omega'\in N_2(Q)$.
\end{corollary}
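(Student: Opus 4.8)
The plan is to run the Fan--Ma--Wang scheme. A graded ring isomorphism $\varphi\colon H^*(\mathcal Z_P)\to H^*(\mathcal Z_Q)$ restricts to a $\mathbb Z$-module isomorphism of $H^3(\mathcal Z_P)$ onto $H^3(\mathcal Z_Q)$ (degrees are preserved), and by the discussion in Section \ref{Bsec} it induces an isomorphism of $\mathbb Q$-algebras $H^*(\mathcal Z_P,\mathbb Q)\to H^*(\mathcal Z_Q,\mathbb Q)$, so for every homogeneous $x$ we get $\varphi(\mathrm{Ann}_H(x))=\mathrm{Ann}_H(\varphi(x))$, and in particular $\dim\mathrm{Ann}_H(x)=\dim\mathrm{Ann}_H(\varphi(x))$ over $\mathbb Q$. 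Recall $H^3(\mathcal Z_P)=\bigoplus_{\omega\in N_2(P)}\mathbb Z\,\widetilde\omega$ and likewise for $Q$, so I may write $\varphi(\widetilde\omega)=\sum_{\omega'\in N_2(Q)}c_{\omega'}\widetilde{\omega'}$ with $c_{\omega'}\in\mathbb Z$. The argument will be by contradiction: assume $\beta:=\varphi(\widetilde\omega)$ has at least two nonzero coordinates $c_{\omega'}$.

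The first step is pure linear algebra over $GL(\mathbb Z)$. Put $\gamma_{\omega'}:=\varphi^{-1}(\widetilde{\omega'})$; these form a $\mathbb Z$-basis of $H^3(\mathcal Z_P)$ and $\widetilde\omega=\sum_{\omega'}c_{\omega'}\gamma_{\omega'}$. Since the matrix of $\varphi^{-1}$ on $H^3$ is in $GL(\mathbb Z)$, each column is primitive, i.e. each $\gamma_{\omega'}$ is a primitive integral vector in the basis $\{\widetilde{\omega''}\}$; also at most one $\gamma_{\omega'}$ can equal $\pm\widetilde\omega$. Writing $\langle\cdot\rangle_\omega$ for the $\widetilde\omega$-coordinate and comparing $\widetilde\omega$-coordinates in $\widetilde\omega=\sum c_{\omega'}\gamma_{\omega'}$ gives $\sum_{\omega'}c_{\omega'}\langle\gamma_{\omega'}\rangle_\omega=1$, so the set $S:=\{\omega'\colon c_{\omega'}\ne 0,\ \langle\gamma_{\omega'}\rangle_\omega\ne 0\}$ is nonempty. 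If every $\omega'\in S$ had $\gamma_{\omega'}=\pm\widetilde\omega$, then $|S|=1$ and $\varphi(\widetilde\omega)=\pm\widetilde{\omega'}$ for that single index, contradicting the standing assumption; hence there is $\omega^*\in S$ with $\gamma_{\omega^*}\ne\pm\widetilde\omega$. Since $\gamma_{\omega^*}$ is primitive with $\langle\gamma_{\omega^*}\rangle_\omega\ne 0$, it must also have a nonzero coordinate outside $\{\omega\}$ (otherwise $\gamma_{\omega^*}=\pm\widetilde\omega$). Thus $\alpha:=\gamma_{\omega^*}$ is a legitimate test element in the hypothesis imposed on $\omega$, and that hypothesis yields
\[
\dim\mathrm{Ann}_H(\gamma_{\omega^*})<\dim\mathrm{Ann}_H(\widetilde\omega)\qquad\text{in }H^*(\mathcal Z_P,\mathbb Q).
\]

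The second step closes the loop by applying Lemma \ref{h3lemma}(1) on the $Q$-side: $\beta=\sum c_{\omega'}\widetilde{\omega'}$ has at least two nonzero coordinates and $c_{\omega^*}\ne 0$, so Lemma \ref{h3lemma}(1) gives $\dim\mathrm{Ann}_H(\beta)\le\dim\mathrm{Ann}_H(\widetilde{\omega^*})$ in $H^*(\mathcal Z_Q,\mathbb Q)$. Transporting the two sides through $\varphi$ and $\varphi^{-1}$ respectively, $\dim\mathrm{Ann}_H(\beta)=\dim\mathrm{Ann}_H(\widetilde\omega)$ and $\dim\mathrm{Ann}_H(\widetilde{\omega^*})=\dim\mathrm{Ann}_H(\gamma_{\omega^*})$, hence $\dim\mathrm{Ann}_H(\widetilde\omega)\le\dim\mathrm{Ann}_H(\gamma_{\omega^*})$. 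Together with the strict inequality of the previous step this is a contradiction. Therefore $\beta=\varphi(\widetilde\omega)$ has exactly one nonzero coordinate; primitivity of $\varphi(\widetilde\omega)$ forces that coordinate to be $\pm1$, so $\varphi(\widetilde\omega)=\pm\widetilde{\omega'}$ for a (necessarily unique) $\omega'\in N_2(Q)$.

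I expect the main obstacle to be the linear-algebra step: one has to guarantee that the change of basis $\varphi^{-1}$ produces a basis vector $\gamma_{\omega^*}$ that is genuinely a nontrivial combination \emph{involving $\widetilde\omega$} (so that the hypothesis applies to it), which is where the primitivity of $GL(\mathbb Z)$-columns is used, and to keep careful track of which annihilator dimensions are computed in $H^*(\mathcal Z_P,\mathbb Q)$ and which in $H^*(\mathcal Z_Q,\mathbb Q)$. The hypothesis supplies a \emph{strict} inequality over $P$ while Lemma \ref{h3lemma}(1) supplies a \emph{non-strict} inequality over $Q$, and the whole proof is arranged so that these two, after transport by $\varphi$, point in opposite directions.
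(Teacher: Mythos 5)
Your proof is correct and takes essentially the same route as the paper: both arguments single out a basis element $\widetilde{\omega''}\in H^3(\mathcal Z_Q)$ that appears with nonzero coefficient in $\varphi(\widetilde\omega)$ and whose preimage $\varphi^{-1}(\widetilde{\omega''})$ has nonzero $\widetilde\omega$-coordinate, apply the hypothesis on the $P$-side to that preimage to get the strict inequality, apply Lemma \ref{h3lemma}(1) on the $Q$-side for the weak inequality, and transport annihilator dimensions through $\varphi$ to obtain a contradiction. You are merely more explicit about the primitivity of $GL(\mathbb Z)$-columns and the coordinate comparison that the paper leaves implicit.
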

\begin{proof}
Indeed, we have 
$$
\varphi(\widetilde{\omega})=\sum\limits_{\omega''\in N_2(Q)}\lambda_{\omega''}\widetilde{\omega''}.
$$
Then at least for one $\omega''$ with $\lambda_{\omega''}\ne0$ we have 
$$
\varphi^{-1}(\omega'')=\mu_{\omega}\widetilde{\omega}+\sum\limits_{\omega'\in N_2(P)\setminus\{\omega\}}\mu_{\omega'}
\widetilde{\omega'}\text{ with }\mu_{\omega}\ne 0.
$$
If $\mu_{\omega'}\ne 0$, then by Lemma \ref{h3lemma} we have in cohomology over $\mathbb Q$:
$$
\dim {\rm Ann}_{H(P)} (\widetilde \omega)>\dim {\rm Ann}_{H(P)} (\varphi^{-1}(\widetilde{\omega''}))=
\dim {\rm Ann}_{H(Q)} (\widetilde{\omega''})\geqslant \dim {\rm Ann}_{H(Q)} (\varphi(\widetilde{\omega})).
$$
A contradiction. Hence $\mu_{\omega'}=0$ for all $\omega'$, and $\varphi(\widetilde{\omega})=\pm \widetilde{\omega''}$.
\end{proof}
\begin{definition}\label{good-bad}
Let $P$ be a simple $3$-polytope. We will call an element $\omega'=\{s,t\}\in N_2(P)$ {\it good} for an element 
$\omega=\{p,q\}\in N_2(P)$, if theres is an $l$-belt $\mathcal{B}_l$ containing $F_s$ and $F_t$ such that  
either $F_p$ or $F_q$ does not belong to $\mathcal{B}_l$ and does not intersect at least one of the two connected components of $\mathcal{B}_l\setminus\{F_s,F_t\}$. Otherwise, we call $\omega'$ {\it bad} for $\omega$.
\end{definition}
\begin{corollary}\label{wcor}
Let $P$ be a simple $3$-polytope. If any $\omega'\in N_2(P)\setminus\{\omega\}$ is good for an element $\omega\in N_2(P)$, 
then for any isomorphism of graded rings $\varphi\colon H^*(\mathcal{Z}_P)\to H^*(\mathcal{Z}_Q)$ for a simple $3$-polytope $Q$
we have 
$\varphi(\widetilde{\omega})=\pm \widetilde{\omega'}$ for some $\omega'\in N_2(Q)$.
\end{corollary}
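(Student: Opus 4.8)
The plan is to reduce Corollary \ref{wcor} directly to the unnumbered corollary stated just before Definition \ref{good-bad}, exploiting the fact that the notion of being \emph{good} in Definition \ref{good-bad} is literally the extra hypothesis appearing in part (2) of Lemma \ref{h3lemma}. So the whole argument is a matter of checking that the hypothesis of that cited corollary is satisfied for $\omega$.

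First I would fix an arbitrary element
$$
\alpha=\lambda_\omega\widetilde\omega+\sum\limits_{\omega'\in N_2(P)\setminus\{\omega\}}\lambda_{\omega'}\widetilde{\omega'}
$$
with $\lambda_\omega\ne 0$ and $\lambda_{\omega'}\ne 0$ for at least one $\omega'$; here $r_{\omega}=\lambda_{\omega}$ in the notation of Lemma \ref{h3lemma}. Since then $|\{\omega''\colon\lambda_{\omega''}\ne0\}|\geqslant 2$, the hypotheses of Lemma \ref{h3lemma} hold, and to get a strict inequality of annihilator dimensions I would pick any $\omega'=\{s,t\}\ne\omega$ with $\lambda_{\omega'}\ne0$. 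By the assumption of the corollary, $\omega'$ is good for $\omega=\{p,q\}$, i.e.\ there is an $l$-belt $\mathcal{B}_l$ containing $F_s$ and $F_t$ such that one of $F_p,F_q$ lies outside $\mathcal{B}_l$ and misses at least one of the two connected components of $|\mathcal{B}_l|\setminus(F_s\cup F_t)$. This is exactly the situation required in Lemma \ref{h3lemma}(2) (with $r_{\omega'}=\lambda_{\omega'}\ne0$), so that lemma yields $\dim {\rm Ann}_H(\alpha)<\dim {\rm Ann}_H(\widetilde\omega)$.

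Having established this strict inequality for every such $\alpha$, I would then invoke the unnumbered corollary preceding Definition \ref{good-bad}: its hypothesis is precisely what was just verified, and its conclusion is exactly $\varphi(\widetilde\omega)=\pm\widetilde{\omega'}$ for some $\omega'\in N_2(Q)$, which is the assertion of Corollary \ref{wcor}.

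I do not expect a genuine obstacle here, since the statement is a formal consequence of the preceding results; the only point requiring a little care is matching the two phrasings of the ``good'' condition (that $|\mathcal{B}_l|\setminus(F_s\cup F_t)$ in Lemma \ref{h3lemma} and $\mathcal{B}_l\setminus\{F_s,F_t\}$ in Definition \ref{good-bad} pick out the same two connected face-components), and running the deduction for a fixed-but-arbitrary $\alpha$ so that the hypothesis of the cited corollary is met uniformly over all admissible $\alpha$.
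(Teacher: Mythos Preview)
Your proposal is correct and matches the paper's approach: Corollary \ref{wcor} is stated without proof in the paper, as an immediate consequence of the unnumbered corollary preceding Definition \ref{good-bad} once one observes that ``good'' is defined precisely to coincide with the hypothesis of Lemma \ref{h3lemma}(2). Your reduction via an arbitrary $\alpha$ with $\lambda_\omega\ne 0$ and some $\lambda_{\omega'}\ne 0$ is exactly the intended argument.
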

\begin{corollary}\label{gbcor}
Let $P$ be a simple $3$-polytope and $\omega\in N_2(P)$. Then for any isomorphism of graded rings $\varphi\colon H^*(\mathcal{Z}_P)\to H^*(\mathcal{Z}_Q)$ for a simple $3$-polytope $Q$
we have 
$\varphi(\widetilde{\omega})=\sum\limits_{\omega''\in N_2(Q)} \lambda_{\omega''}\widetilde{\omega''}$,
where there is $\omega''\in N_2(Q)$ such that $\lambda_{\omega''}\ne 0$ and all the other elements in $N_2(Q)$
with nonzero coefficients are bad for $\omega''$. Moreover, this is valid for any $\omega''\in N_2(Q)$ 
such that $\lambda_{\omega''}\ne 0$ and $\varphi^{-1}(\widetilde{\omega''})$ has a nonzero coefficient at  
$\widetilde{\omega}$. 
\end{corollary}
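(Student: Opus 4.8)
The plan is to reduce everything to Lemma \ref{h3lemma}, the generalized annihilator lemma, run entirely in cohomology over $\mathbb{Q}$: since $H^*(\mathcal{Z}_P)$ is torsion free, a graded ring isomorphism $\varphi\colon H^*(\mathcal{Z}_P)\to H^*(\mathcal{Z}_Q)$ induces one of $H(P):=H^*(\mathcal{Z}_P,\mathbb Q)$ with $H(Q):=H^*(\mathcal{Z}_Q,\mathbb Q)$, and I will work with that. First I would note that $\varphi$ restricts to an isomorphism $H^3(P)\to H^3(Q)$, and since $H^3$ has basis $\{\widetilde\sigma\colon\sigma\in N_2(\cdot)\}$ we may write $\varphi(\widetilde\omega)=\sum_{\omega''\in N_2(Q)}\lambda_{\omega''}\widetilde{\omega''}$ with not all $\lambda_{\omega''}$ zero, and for each $\omega''$ with $\lambda_{\omega''}\ne0$ write $\varphi^{-1}(\widetilde{\omega''})=\sum_{\sigma\in N_2(P)}\mu^{\omega''}_\sigma\widetilde\sigma$. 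Expanding $\widetilde\omega=\varphi^{-1}(\varphi(\widetilde\omega))$ and reading off the coefficient of $\widetilde\omega$ gives $\sum_{\omega''}\lambda_{\omega''}\mu^{\omega''}_\omega=1$, so there is at least one $\omega''$ with both $\lambda_{\omega''}\ne0$ and $\mu^{\omega''}_\omega\ne0$; the existence assertion of the corollary will therefore follow once the ``moreover'' part is established.

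Next I would fix any such $\omega''$ (that is, $\lambda_{\omega''}\ne0$ and $\varphi^{-1}(\widetilde{\omega''})$ has nonzero coefficient at $\widetilde\omega$) and argue by contradiction: suppose some $\omega_1''\in N_2(Q)\setminus\{\omega''\}$ has $\lambda_{\omega_1''}\ne0$ and is good for $\omega''$. Then $\alpha:=\varphi(\widetilde\omega)$ has at least two nonzero coefficients, and by Definition \ref{good-bad} the hypothesis of Lemma \ref{h3lemma}(2) holds for $\alpha$ with the distinguished index $\omega''$ playing the role of ``$\omega$'' and $\omega_1''$ playing the role of ``$\omega'$'', hence $\dim\mathrm{Ann}_{H(Q)}(\alpha)<\dim\mathrm{Ann}_{H(Q)}(\widetilde{\omega''})$. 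Now I would use that $\varphi$ is a ring isomorphism, so $\mathrm{Ann}_{H(Q)}(\alpha)=\varphi(\mathrm{Ann}_{H(P)}(\widetilde\omega))$ and $\mathrm{Ann}_{H(Q)}(\widetilde{\omega''})=\varphi(\mathrm{Ann}_{H(P)}(\varphi^{-1}(\widetilde{\omega''})))$, whence $\dim\mathrm{Ann}_{H(P)}(\widetilde\omega)<\dim\mathrm{Ann}_{H(P)}(\varphi^{-1}(\widetilde{\omega''}))$. Finally, since $\varphi^{-1}(\widetilde{\omega''})$ has a nonzero coefficient at $\widetilde\omega$, Lemma \ref{h3lemma}(1) gives $\dim\mathrm{Ann}_{H(P)}(\varphi^{-1}(\widetilde{\omega''}))\leqslant\dim\mathrm{Ann}_{H(P)}(\widetilde\omega)$ when $\varphi^{-1}(\widetilde{\omega''})$ involves at least two basis vectors (and trivially, with equality, when it is a scalar multiple of $\widetilde\omega$ — this case must be split off because Lemma \ref{h3lemma} presupposes at least two nonzero coefficients). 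Chaining the three relations yields $\dim\mathrm{Ann}_{H(P)}(\widetilde\omega)<\dim\mathrm{Ann}_{H(P)}(\widetilde\omega)$, a contradiction; thus every $\omega_1''\ne\omega''$ with $\lambda_{\omega_1''}\ne0$ is bad for $\omega''$.

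I do not expect a real obstacle here: all the combinatorial and belt-theoretic content has already been absorbed into Lemma \ref{h3lemma} and Definition \ref{good-bad}, so what remains is linear-algebra bookkeeping. The main things to be careful about are: keeping the roles of $P$ and $Q$ and of parts (1) and (2) of Lemma \ref{h3lemma} straight (part (2) is used on the $Q$ side for $\alpha=\varphi(\widetilde\omega)$, part (1) on the $P$ side for $\varphi^{-1}(\widetilde{\omega''})$); confirming that $\alpha$ genuinely has $\geqslant 2$ nonzero terms before invoking part (2); and handling separately the degenerate case where $\varphi^{-1}(\widetilde{\omega''})$ is a single basis element, where the inequality holds as an equality by scaling. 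The existence of at least one admissible $\omega''$, together with the ``moreover'' statement, then gives the full assertion of the corollary.
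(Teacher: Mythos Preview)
Your proposal is correct and follows essentially the same approach as the paper's own proof: both fix an $\omega''$ with $\lambda_{\omega''}\ne 0$ whose preimage involves $\widetilde\omega$, assume some $\omega_1''$ with nonzero coefficient is good for $\omega''$, and derive a contradiction by chaining Lemma~\ref{h3lemma}(2) on the $Q$ side with Lemma~\ref{h3lemma}(1) on the $P$ side through the isomorphism. You are in fact slightly more careful than the paper in singling out the degenerate case where $\varphi^{-1}(\widetilde{\omega''})$ is a scalar multiple of $\widetilde\omega$ (so that Lemma~\ref{h3lemma} does not directly apply but the needed inequality is trivial), and in explicitly justifying via $\varphi^{-1}\circ\varphi=\mathrm{id}$ that an admissible $\omega''$ exists.
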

\begin{proof}
Indeed, let 
$$
\varphi(\widetilde{\omega})=\sum\limits_{\omega''\in N_2(Q)}\lambda_{\omega''}\widetilde{\omega''}.
$$
Then at least for one $\omega''$ with $\lambda_{\omega''}\ne0$ we have 
$$
\varphi^{-1}(\omega'')=\mu_{\omega}\widetilde{\omega}+\sum\limits_{\omega'\in N_2(P)\setminus\{\omega\}}\mu_{\omega'}
\widetilde{\omega'}\text{ with }\mu_{\omega}\ne 0.
$$
If $\lambda_{\omega''_1}\ne 0$, and $\omega''_1$  is good for $\omega''$, then by Lemma \ref{h3lemma} we have in cohomology over $\mathbb Q$:
$$
\dim {\rm Ann}_{H(P)} (\widetilde \omega)\geqslant \dim {\rm Ann}_{H(P)} (\varphi^{-1}(\widetilde{\omega''}))=
\dim {\rm Ann}_{H(Q)} (\widetilde{\omega''})> \dim {\rm Ann}_{H(Q)} (\varphi(\widetilde{\omega})).
$$
A contradiction. Hence $\lambda_{\omega''_1}=0$ for all elements $\omega''_1\in N_2(Q)$, which are good for $\omega''$.
Moreover, by a similar argument, $\mu_{\omega'}=0$ for all elements, which are good for $\omega$.
\end{proof}

\begin{corollary}
Let $P$ be an almost Pogorelov polytope different from the cube and the pentagonal prism, and $\omega=\{p,q\}\in N_2(P)$. 
Then an element $\omega'=\{s,t\}\in N_2(P)\setminus\{\omega\}$ is bad for $\omega$ 
if and only if at least one of the faces $F_s$ and $F_t$ is a quadrangle, and either $F_p$ and $F_q$ are opposite 
faces of the $4$-belt around this face, or exactly one of the faces $F_p$ and $F_q$ belongs to this belt, and the other
face among $F_s$ and $F_t$ is either the other face among $F_p$ and $F_q$, or is a quadrangle adjacent to it.  In particular,
\begin{enumerate}
\item If $F_p$ and $F_q$ are not adjacent to quadrangles, then any element $\omega'\in N_2(P)\setminus\{\omega\}$ is good
for $\omega$. In particular, this is valid, if both $F_p$ and $F_q$ are quadrangles.
\item If $F_p$ is a quadrangle and $F_q$ is adjacent to quadrangles, then
$\omega'\in N_2(P)\setminus\{\omega'\}$ is bad for $\omega$ if and only if $\omega'=\{p,r\}$, 
where $F_r$ is a quadrangle adjacent to $F_q$.
\item If each of the faces $F_p$ and $F_q$ is adjacent to quadrangles,  and $F_p$ and $F_q$ do not belong to any $4$-belt, 
then  $\omega'=\{s,t\}\in N_2(P)\setminus\{\omega\}$ 
is bad for $\omega$ if and only if either one of the faces $F_s$ and $F_t$ is $F_p$ or $F_q$ 
and the other face is a quadrangle adjacent to the other face, or both $F_s$ and $F_t$ are quadrangles, 
and one of them is adjacent to exactly $F_p$, and the other -- to exactly $F_q$.  
\item If the faces $F_p$ and $F_q$  belong to a $4$-belt around the face $F_r$, then 
$\omega'=\{s,t\}\in N_2(P)\setminus\{\omega\}$ is bad for $\omega$ if and only if either $\omega'=\{r,s\}$, or
one of the faces $F_s$ and $F_t$ is $F_p$ or $F_q$ and the other face is a 
quadrangle adjacent to the other face,
or both $F_s$ and $F_t$ are quadrangles, and one of them is adjacent to exactly $F_p$, 
and the other -- to exactly $F_q$.  
\end{enumerate}
\end{corollary}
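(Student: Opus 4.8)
The plan is to turn ``good'' and ``bad'' into a purely combinatorial condition by invoking Lemma \ref{APb-lemma}, then to match that condition with the description in the statement through a short finite case check, and finally to read off items (1)--(4). Throughout I write $\mathcal{B}(F)$ for the $4$-belt surrounding a quadrangle $F$, and I call (a) the possibility ``$F_p,F_q$ are opposite faces of $\mathcal{B}(F_u)$ for a quadrangle $F_u\in\{F_s,F_t\}$'' and (b) the possibility ``exactly one of $F_p,F_q$ lies in $\mathcal{B}(F_u)$ and the remaining face of $\{F_s,F_t\}$ is the other of $\{F_p,F_q\}$ or a quadrangle adjacent to it'', so that the statement to be proved reads: $\omega'$ is bad for $\omega$ iff (a) or (b) holds.

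First I would do the reduction. By Definition \ref{good-bad}, $\omega'=\{s,t\}$ is good for $\omega=\{p,q\}$ exactly when some belt $\mathcal{B}_l$ contains $F_s,F_t$ while for $F_k$ equal to $F_p$ or to $F_q$ one has $F_k\notin\mathcal{B}_l$ and $F_k$ avoids one of the two components of $|\mathcal{B}_l|\setminus(F_s\cup F_t)$. Such an $F_k$ is automatically distinct from $F_s,F_t$, so Lemma \ref{APb-lemma} applied to the triple $(F_s,F_t,F_k)$ yields that a belt of this shape (for that $F_k$) exists if and only if $F_k$ is disjoint from every quadrangle among $F_s,F_t$. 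Consequently $\omega'$ is good for $\omega$ iff at least one of $F_p,F_q$ lies outside $\{F_s,F_t\}$ and is disjoint from every quadrangle among $F_s,F_t$; dually, $\omega'$ is bad for $\omega$ iff each of $F_p,F_q$ either belongs to $\{F_s,F_t\}$ or is adjacent to a quadrangle among $F_s,F_t$. Since $F_p\cap F_q=\varnothing=F_s\cap F_t$ and $\omega\ne\omega'$, the pairs $\{F_p,F_q\}$ and $\{F_s,F_t\}$ differ, so at least one of $F_p,F_q$ must be adjacent to a quadrangle among $F_s,F_t$; in particular such a quadrangle exists, which is the first assertion.

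Next I would run the case check on a bad $\omega'$. Let $F_u$ be a quadrangle among $\{F_s,F_t\}$ and $F_w$ the other face, so $F_u\cap F_w=\varnothing$ and hence $F_w\notin\mathcal{B}(F_u)$. If neither $F_p$ nor $F_q$ lies in $\{F_s,F_t\}$, then both are adjacent to quadrangles among $\{F_s,F_t\}$: either to a common one, whence (being disjoint) they are opposite faces of its $4$-belt, i.e.\ case (a); or $F_s,F_t$ are both quadrangles, $F_p$ adjacent to just one of them and $F_q$ to just the other, i.e.\ case (b) with the remaining face a quadrangle adjacent to the one among $F_p,F_q$ excluded from $\mathcal{B}(F_u)$. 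If instead, say, $F_p\in\{F_s,F_t\}$, then $F_q\notin\{F_s,F_t\}$ (otherwise $\omega=\omega'$) and $F_q$ is disjoint from $F_p$, so by the criterion it must be adjacent to the other face of $\{F_s,F_t\}$, which is therefore a quadrangle; taking that quadrangle as $F_u$ (so $F_w=F_p$ is the partner of $F_q$) puts us in case (b). Conversely, in either (a) or (b) each of $F_p,F_q$ lies in $\{F_s,F_t\}$ or is adjacent to the quadrangle $F_u\in\{F_s,F_t\}$, so the criterion of the previous step holds and $\omega'$ is bad; this proves the equivalence.

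Finally, for items (1)--(4) I would specialize the criterion, using additionally that $P\in\mathcal{P}_{aPog}\setminus\{I^3,M_5\times I\}$ has no two adjacent quadrangles and --- by the argument in the proof of Theorem \ref{BaPog} (equivalently Lemma \ref{2unlemma}) --- that any pair of nonadjacent faces lies in at most one $4$-belt. For (1), a quadrangle $F_u$ occurring in (a) or (b) would force $F_p$ or $F_q$ to be adjacent to a quadrangle, which is impossible; and if $F_p,F_q$ are quadrangles they are adjacent to none, so (1) holds. For (2) and (3) one runs through (a), (b), discarding the branches in which the quadrangle $F_p$ would be adjacent to a quadrangle, and (in (3)) those placing $F_p,F_q$ in a common $4$-belt; the surviving branches are exactly those listed. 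For (4), in case (a) the belt $\mathcal{B}(F_u)$ contains the nonadjacent pair $\{F_p,F_q\}$, hence by uniqueness equals $\mathcal{B}(F_r)$ and $F_u=F_r$, giving the first alternative; in case (b) the same uniqueness both rules out the quadrangle $F_u$ being adjacent to $F_p$ and $F_q$ simultaneously and rules out $F_u=F_r$, and the remaining shapes are precisely the second and third alternatives of (4) (with the degenerate subcase $F_w=F_r$ absorbed by the first). The only genuinely nontrivial input is the ``only if'' half of the reduction --- that once $F_k$ meets a quadrangle among $F_s,F_t$ no admissible belt survives --- which is exactly Lemma \ref{APb-lemma}; everything after the reduction is bookkeeping, the recurring subtlety being the ``at most one $4$-belt'' property used for (4).
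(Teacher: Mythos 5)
Your proof is correct and follows the same route as the paper: the core step is exactly the paper's reduction, namely translating ``good/bad'' through Lemma~\ref{APb-lemma} into the condition that some $F_k\in\{F_p,F_q\}\setminus\{F_s,F_t\}$ avoids every quadrangle among $F_s,F_t$, after which everything is a finite case check. Your write-up is more detailed than the paper's terse four-sentence argument (which also leaves items (1)--(4) entirely to the reader), but the underlying idea and the auxiliary inputs --- no adjacent quadrangles, and uniqueness of the $4$-belt through a nonadjacent pair --- are identical.
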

\begin{proof}
Without loss of generality assume that $q\notin \omega'$. If $\omega'$ is bad for $\omega$, then by Lemma \ref{APb-lemma} one of the faces $F_s$ and 
$F_t$, say $F_s$, is a quadrangle adjacent to $F_q$. Then $F_p\ne F_s$. If $F_p\ne F_t$, then either $F_p$ is adjacent to $F_s$, or $F_t$ is a quadrangle adjacent to $F_p$. 

On the other hand, let $F_s$ be a quadrangle adjacent to $F_q$. Then $F_p\ne F_s$. If either $F_p=F_t$, or  
$F_p\ne F_t$ and $F_p$ is adjacent to $F_s$, or $F_t$ is a quadrangle adjacent to  $F_p$, then
$\omega'$ is bad for $\omega$. 
\end{proof}

\begin{corollary}\label{n4cor}
Let $P$ be an almost Pogorelov polytope different from the cube and the pentagonal prism, and $\omega=\{p,q\}\in N_2(P)$
such that $F_p$ and $F_q$ are not adjacent to quadrangles. Then for any isomorphism of graded rings $\varphi\colon H^*(\mathcal{Z}_P)\to H^*(\mathcal{Z}_Q)$ for a simple $3$-polytope $Q$
we have 
$\varphi(\widetilde{\omega})=\pm \widetilde{\omega'}$ for some $\omega'\in N_2(Q)$
\end{corollary}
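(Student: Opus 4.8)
The plan is to obtain this as an immediate consequence of the preceding corollary (the explicit description of bad pairs for almost Pogorelov polytopes) together with Corollary \ref{wcor}. First I would invoke part (1) of that preceding corollary: since $P$ is an almost Pogorelov polytope different from the cube and the pentagonal prism, and neither $F_p$ nor $F_q$ is adjacent to a quadrangle, every element $\omega'\in N_2(P)\setminus\{\omega\}$ is good for $\omega$ in the sense of Definition \ref{good-bad}. This is the only step where the hypotheses enter — that $P$ is almost Pogorelov and is not one of the two exceptional polytopes is exactly what makes the classification of bad pairs (ultimately resting on Lemma \ref{APb-lemma}) available.

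Once every $\omega'\in N_2(P)\setminus\{\omega\}$ is known to be good for $\omega$, I would simply apply Corollary \ref{wcor}, which asserts precisely that under this condition any graded ring isomorphism $\varphi\colon H^*(\mathcal{Z}_P)\to H^*(\mathcal{Z}_Q)$ sends $\widetilde\omega$ to $\pm\widetilde{\omega'}$ for some $\omega'\in N_2(Q)$. Composing the two steps yields the assertion. No additional combinatorial or cohomological input is needed, since the substantive work — the annihilator-dimension inequalities of Lemma \ref{h3lemma}, the SCC analog of Lemma \ref{APb-lemma}, and the classification of good and bad pairs — has already been carried out.

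Accordingly, I do not anticipate a genuine obstacle here: the corollary is a one-line deduction. The only point worth noting is that the annihilator estimates in Lemma \ref{h3lemma} and hence Corollary \ref{wcor} are phrased over $\mathbb{Q}$, so one works with $H^*(\mathcal{Z}_P,\mathbb{Q})$; but since $H^*(\mathcal{Z}_P,\mathbb{Z})$ is torsion-free, $\varphi$ is an isomorphism over $\mathbb{Z}$, and each $\widetilde\omega$ is a primitive element belonging to a basis of the free abelian group $H^3(\mathcal{Z}_P,\mathbb{Z})$, the conclusion $\varphi(\widetilde\omega)=\pm\widetilde{\omega'}$ holds integrally, as already recorded in Corollary \ref{wcor}.
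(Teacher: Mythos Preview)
Your proposal is correct and matches the paper's own proof, which simply states ``This follows directly from Corollary \ref{wcor}'' --- implicitly using part (1) of the preceding corollary exactly as you do. Your additional remark about the passage from $\mathbb{Q}$ to $\mathbb{Z}$ is a helpful clarification, though the paper already absorbs this into the statement of Corollary \ref{wcor}.
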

\begin{proof}
This follows directly from Corollary \ref{wcor}.
\end{proof}

\begin{corollary}\label{Bcor}
Let $P$ be an almost Pogorelov polytope different from the cube and the pentagonal prism, and $\mathcal{B}_k$ be a
$k$-belt such that the set $|\mathcal{B}_k|$ does not have common points with quadrangles. 
Then for any isomorphism of graded rings $\varphi\colon H^*(\mathcal{Z}_P)\to H^*(\mathcal{Z}_Q)$ 
for a simple $3$-polytope $Q$ we have 
$\varphi(\widetilde{\mathcal{B}_k})=\pm \widetilde{\mathcal{B}_k'}$ for some $k$-belt $\mathcal{B}'$ of $Q$.
\end{corollary}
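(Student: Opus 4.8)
The plan is to reduce the statement about the element $\widetilde{\mathcal{B}_k}\in\widetilde{H}^1(P_{\omega(\mathcal{B}_k)})$ to the already-established behaviour of the degree-$3$ generators $\widetilde\omega$, $\omega\in N_2(P)$, under $\varphi$. First I would invoke the hypothesis that $|\mathcal{B}_k|$ is disjoint from all quadrangles: this means every face $F_s$ of $\mathcal{B}_k$ is not adjacent to a quadrangle, and consequently every pair $\{s,t\}\subset\omega(\mathcal{B}_k)$ of non-adjacent faces of $\mathcal{B}_k$ lies in $N_2(P)$ with $F_s,F_t$ not adjacent to quadrangles, so Corollary \ref{n4cor} applies to each such $\omega$. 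Thus $\varphi(\widetilde\omega)=\pm\widetilde{\omega'}$ for some $\omega'\in N_2(Q)$, for every such pair. Writing $\widetilde{\mathcal{B}_k}$ as a sum of products $\widetilde{\omega_1}\cdot\widetilde{\omega_2}$ of pairs of opposite (hence disjoint) faces of $\mathcal{B}_k$ — which is possible because $P$ is flag, by the decomposition $\widetilde H^1(P_\omega)=\bigoplus_{\omega_1\sqcup\omega_2}\widetilde H^0(P_{\omega_1})\cdot\widetilde H^0(P_{\omega_2})$ quoted in Section \ref{Bsec} — and applying $\varphi$, I get that $\varphi(\widetilde{\mathcal{B}_k})$ lies in the image of the degree-$3$ multiplication map for $Q$, and in fact is a $\mathbb Z$-linear combination of products $\widetilde{\omega_1'}\cdot\widetilde{\omega_2'}$ with $\omega_i'\in N_2(Q)$, i.e. a combination of belt classes $\widetilde{\mathcal{B}}$ of $Q$ (possibly $4$-belt or higher).

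Next I would pin down which belt classes of $Q$ can occur. Since $\varphi$ is a graded ring isomorphism and $\boldsymbol{B}_k\subset H^{k+2}(\mathcal{Z}_Q)$ is $B$-rigid for each $k$ (the subgroup spanned by all $k$-belt classes, as recalled in Section \ref{Bsec}), $\varphi$ maps the span of the $k$-belt classes of $P$ isomorphically onto the span of the $k$-belt classes of $Q$; hence $\varphi(\widetilde{\mathcal{B}_k})$ is a $\mathbb Z$-combination of $k$-belt classes $\widetilde{\mathcal{B}'}$ of $Q$, with the right degree automatically enforced. It remains to show this combination is a single $\pm\widetilde{\mathcal{B}'}$. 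For this I would use that $\varphi^{-1}$ has the same structure, so it suffices to show that $\widetilde{\mathcal{B}_k}$ is, up to sign, the \emph{only} $k$-belt class in the image of $\varphi^{-1}$ of any $\widetilde{\mathcal{B}'}$ appearing with nonzero coefficient. The key point is that, by Corollary \ref{n4cor} applied to each opposite pair, $\varphi$ sends the \emph{specific} generators $\widetilde{\omega_i}$ (for $\omega_i$ a pair of opposite faces of $\mathcal{B}_k$) to \emph{individual} signed generators $\pm\widetilde{\omega_i'}$, not to linear combinations; so $\varphi(\widetilde{\mathcal{B}_k})=\pm\widetilde{\omega_1'}\cdot\widetilde{\omega_2'}$ for one fixed choice of decomposition, which is $\pm\widetilde{\mathcal{B}'}$ for the belt $\mathcal{B}'$ of $Q$ with $\omega(\mathcal{B}')=\omega_1'\sqcup\omega_2'$ — provided this product is nonzero, which it is because $\varphi$ is an isomorphism and $\widetilde{\mathcal{B}_k}\ne 0$. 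One must check that $\omega_1'\sqcup\omega_2'$ is indeed the index set of a genuine $4$-… no, a genuine $k$-belt: here I would note that a nonzero product of two distinct $\widetilde\omega'$-classes lands in $\widetilde H^1(P_{\omega_1'\sqcup\omega_2'})$, and being nonzero of rank at least one in degree $k+2$, together with the $B$-rigid identification of $\boldsymbol{B}_k$, forces $|\omega_1'\sqcup\omega_2'|=k$ and this subset to support a $k$-belt class, which by the classification in Lemma \ref{H7lemma}-type arguments (or directly, since for flag polytopes a rank-one $\widetilde H^1$ on a $k$-element set that is a belt union) is $\pm\widetilde{\mathcal{B}'}$.

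The main obstacle I anticipate is the last step: ruling out that $\varphi(\widetilde{\mathcal{B}_k})$ is a nontrivial combination of several $k$-belt classes of $Q$, i.e. genuinely exploiting that the \emph{particular} product decomposition is preserved rather than merely that $\widetilde{\mathcal{B}_k}$ lies in the correct graded piece. The clean way is: fix one opposite pair $\{\omega_1,\omega_2\}$ of $\mathcal{B}_k$, so $\widetilde{\mathcal{B}_k}=\pm\widetilde{\omega_1}\widetilde{\omega_2}$ in $P$; apply $\varphi$ to get $\varphi(\widetilde{\mathcal{B}_k})=\pm\varphi(\widetilde{\omega_1})\varphi(\widetilde{\omega_2})=\pm\widetilde{\omega_1'}\widetilde{\omega_2'}$ by Corollary \ref{n4cor}; this product is automatically of the form $\pm\widetilde{\mathcal{B}'}$ if it is nonzero, by the multiplication rules in Section \ref{Cohsec} (a nonzero product of two $\widetilde H^0$-generators associated to disjoint faces is, in a flag polytope, a generator of $\widetilde H^1$ of the union, which is the class of the belt it forms); and nonvanishing is guaranteed since $\varphi$ is injective and $\widetilde{\mathcal{B}_k}\neq 0$. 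This bypasses any delicate rank-counting and makes the proof short. I would close by remarking that the same argument with $\varphi^{-1}$ gives that $\mathcal{B}'$ is a genuine $k$-belt of $Q$ (a priori one needs that $\omega_1'\sqcup\omega_2'$ supports a belt, but nonvanishing of the product together with the flag structure of $Q$ — which holds because $Q$, being such that $H^*(\mathcal{Z}_Q)\cong H^*(\mathcal{Z}_P)$ with $P$ flag, is flag by $B$-rigidity of flagness — forces exactly this).
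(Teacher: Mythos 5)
Your plan goes wrong at the crucial final step. You propose to write $\widetilde{\mathcal{B}_k}$ as a product $\pm\widetilde{\omega_1}\cdot\widetilde{\omega_2}$ of two degree-$3$ generators corresponding to pairs of opposite faces. But $\widetilde{\omega_i}$ lives in $H^3(\mathcal{Z}_P)$, so $\widetilde{\omega_1}\cdot\widetilde{\omega_2}$ has degree $6$, while $\widetilde{\mathcal{B}_k}\in H^{k+2}(\mathcal{Z}_P)$. Your product decomposition is therefore only available for $k=4$; for $k>4$ the belt class cannot be expressed as (or as a sum of) products of two classes of the form $\widetilde{\omega}$, $\omega\in N_2(P)$ --- the bidegrees do not match. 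The decomposition $\widetilde{H}^1(P_\omega)=\bigoplus_{\omega_1\sqcup\omega_2}\widetilde{H}^0(P_{\omega_1})\cdot\widetilde{H}^0(P_{\omega_2})$ ranges over \emph{all} partitions, and for a $k$-belt with $k>4$ the factors $\omega_i$ that appear are not both $2$-element sets, so Corollary \ref{n4cor} does not apply to them.

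What the paper actually does (and what is needed to handle all $k$) is a \emph{divisibility} argument rather than a product-decomposition argument. Starting from the same observation $\varphi(\widetilde{\mathcal{B}_k})=\sum_j\mu_j\widetilde{\mathcal{B}_{k,j}'}$ (which you also invoke, via $B$-rigidity of $\boldsymbol{B}_k$), it establishes Lemma \ref{Hdivlemma}/Corollary \ref{Bdivcor}: a linear combination of $k$-belt classes is divisible by $\widetilde{\omega}$, $\omega=\{p,q\}\in N_2(P)$, precisely when every belt appearing with nonzero coefficient contains both $F_p$ and $F_q$. Then it takes \emph{every} $\omega\in N_2(P)$ with $\omega\subset\omega(\mathcal{B}_k)$, uses Corollary \ref{n4cor} to get $\varphi(\widetilde{\omega})=\pm\widetilde{\omega'}$, and observes that $\widetilde{\mathcal{B}_k}$ is divisible by each such $\widetilde{\omega}$; hence each $\mathcal{B}_{k,j}'$ with $\mu_j\ne0$ contains the corresponding faces. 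Since the collection of $N_2$-pairs inside a belt determines the belt uniquely, all $\mathcal{B}_{k,j}'$ with nonzero coefficient coincide, forcing a single term $\pm\widetilde{\mathcal{B}_k'}$. Your first paragraph is on the right track (and your use of Corollary \ref{n4cor} on the individual degree-$3$ generators is correct); the gap is in the ``clean way'' you propose to close the argument, which only makes sense when $k=4$.
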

\begin{proof}
We know from Section \ref{Bsec} that 
$\varphi(\widetilde{\mathcal{B}_k})=\sum_j\mu_j\widetilde{\mathcal{B}_{k,j}'}$ for $k$-belts $\mathcal{B}_{k,j}'$ of $Q$. 

\begin{lemma}\label{Hdivlemma}
Let $P$ be a simple $3$-polytope and $\omega=\{p,q\}\in N_2(P)$. Then an element
$$
x=\sum_\eta \alpha_{\eta}+\sum_\tau \beta_{\tau}\in \bigoplus\limits_{\eta}\widetilde{H}^0(P_{\eta})\bigoplus\limits_{\tau}
\widetilde{H}^1(P_{\tau})
$$ 
is divisible by $\widetilde{\omega}$ if and only if $\alpha_\eta=0$ for all $\eta$, and each nonzero $\beta_{\tau}$ is
divisible by $\widetilde{\omega}$ (in particular, $\omega\subset\tau$ and $\widetilde{H}^0(P_{\tau\setminus\omega})\ne 0$).
\end{lemma}
\begin{proof}
If $x$ is divisible by $\widetilde{\omega}$, then $x=\widetilde{\omega}\sum_{\zeta}\xi_{\zeta}$ for $\xi_{\zeta}\in \widetilde{H}^0(P_{\zeta})$. Then $x=\sum\limits_{\omega\cap \zeta=\varnothing}\widetilde{\omega}\xi_{\zeta}$, where 
$\widetilde{\omega}\xi_{\zeta}\in \widetilde{H}^1(P_{\omega\sqcup\zeta})$, and $\omega\sqcup \zeta\ne\omega\sqcup\zeta'$ for $\zeta\ne\zeta'$. Thus, each nonzero summand $\beta_\tau$ has the form $\widetilde{\omega}\xi_{\zeta}$, in particular, $\omega\subset \tau$, and $\widetilde{H}^0(P_{\tau\setminus\omega})=\widetilde{H}^0(P_{\zeta})\ne 0$.
\end{proof}

\begin{corollary}\label{Bdivcor}
Let $P$ be a simple $3$-polytope and $\omega=\{p,q\}\in N_2(P)$. Then an element
$x=\sum_j\mu_j\widetilde{\mathcal{B}_{k,j}}\in\boldsymbol{B}_k$ is divisible by $\widetilde{\omega}$
if and only if each $\widetilde{\mathcal{B}_{k,j}}$ with $\mu_j\ne 0$ is divisible by $\widetilde{\omega}$,
and if and only if each belt $\mathcal{B}_{k,j}$ with $\mu_j\ne 0$ contains $F_p$ and $F_q$.
\end{corollary}
\begin{proof}
This follows directly from Lemma \ref{Hdivlemma}, since the element $\widetilde{\mathcal{B}_{k,j}}$ is divisible by $\widetilde{\omega}$ if and only if $\omega\subset \omega(\mathcal{B}_{k,j})$, that is $\mathcal{B}_{k,j}$ contains $F_p$ and $F_q$.
\end{proof}
Take any $\omega=\{p,q\}\in N_2(P)$, $\omega\subset \omega(\mathcal{B}_k)$.  
Since both faces $F_p$ and $F_q$ do not intersect quadrangles, Corollary \ref{n4cor} implies that 
$\varphi(\widetilde{\omega})=\pm\widetilde{\omega'}$ for some
$\omega'\in N_2(Q)$. Then  $\omega'\subset\omega(\mathcal{B}_{k,j}')$ for each $\mu_j\ne 0$.
Thus, the isomorphism $\varphi$ maps the set $\{\pm \widetilde{\omega}\colon\omega\subset\omega(\mathcal{B}_k)\}$ bijectively to the corresponding set of any $\mathcal{B}_{k,j}'$ with $\mu_j\ne 0$.  But such a set defines uniquely the $k$-belt; hence we have only one nonzero $\mu_j$, which should be equal to $\pm 1$. This finishes the proof.
\end{proof}

\section{$B$-rigid subsets for almost Pogorelov polytopes}\label{Brssec}
In this section we study $B$-rigid subsets for almost Pogorelov polytopes.
\begin{lemma}\label{2dimlemma}
The set of subgroups $G(\mathcal{B}_4)\subset \boldsymbol{H}_3$ of rank $2$ generated by elements $[\widetilde{\omega}]=\widetilde{\omega}+A_3$ and $[\widetilde{\omega'}]=\widetilde{\omega'}+A_3$, where $\omega\sqcup \omega'=\omega(\mathcal{B}_4)$ is $B$-rigid in the class of flag $3$-polytopes with $2\,\rk \boldsymbol{B}_4= \rk\boldsymbol{H}_3$.
\end{lemma}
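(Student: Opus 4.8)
The plan is to characterize the subgroups $G(\mathcal{B}_4)$ intrinsically, purely in terms of the ring structure of $H^*(\mathcal{Z}_P)$ restricted to the $B$-rigid data we have already built, so that any graded ring isomorphism $\varphi\colon H^*(\mathcal{Z}_P)\to H^*(\mathcal{Z}_Q)$ must carry the collection $\{G(\mathcal{B}_4)\}$ to $\{G(\mathcal{B}_4')\}$. We already know several relevant facts are $B$-rigid in the class of flag $3$-polytopes with $2\,\rk\boldsymbol{B}_4=\rk\boldsymbol{H}_3$: the subgroup $A_3\subset H^3(\mathcal{Z}_P)$ is $B$-rigid, hence so is the quotient $\boldsymbol{H}_3$ together with the induced map $\boldsymbol{H}_3\otimes\boldsymbol{H}_3\to H^6(\mathcal{Z}_P)$; the subgroup $\boldsymbol{B}_4\subset H^6(\mathcal{Z}_P)$ spanned by the $\widetilde{\mathcal{B}_4}$ is $B$-rigid (Section \ref{Bsec}); and by Lemma \ref{2unlemma} the equality $2\,\rk\boldsymbol{B}_4=\rk\boldsymbol{H}_3$ forces each $\omega\in N_2(P)$ that lies in \emph{some} $\omega(\mathcal{B}_4)$ to lie in a \emph{unique} one. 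This uniqueness is the structural feature I would exploit.

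First I would identify, inside $\boldsymbol{H}_3$, the rank-one subgroups of the form $\langle[\widetilde{\omega}]\rangle$ for $\omega\in N_2(P)\setminus N_2^0(P)$: these are precisely the subgroups generated by a primitive element $[\widetilde{\omega}]$, and by the uniqueness statement each such $[\widetilde{\omega}]$ multiplies (in $\boldsymbol{H}_3\otimes\boldsymbol{H}_3\to H^6$) to a nonzero multiple of exactly one basis element $\widetilde{\mathcal{B}_4}$ of $\boldsymbol{B}_4$, namely the belt whose $\omega$-set contains $\omega$. Then $G(\mathcal{B}_4)$ is characterized as follows: it is the rank-$2$ subgroup of $\boldsymbol{H}_3$ spanned by two primitive ``$\omega$-type'' elements $[\widetilde{\omega}],[\widetilde{\omega'}]$ whose product $[\widetilde{\omega}]\cdot[\widetilde{\omega'}]$ is (up to sign) the chosen generator $\widetilde{\mathcal{B}_4}$ of the corresponding cyclic direct summand of $\boldsymbol{B}_4$ — equivalently, $G(\mathcal{B}_4)=\{x\in\boldsymbol{H}_3\colon x\cdot\boldsymbol{H}_3\subseteq\langle\widetilde{\mathcal{B}_4}\rangle\}$ if one can show this ``annihilator-of-the-complement'' description picks out exactly the two-generator subgroup. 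I would verify this last description by a direct computation: for a $4$-belt $\mathcal{B}_4=(F_i,F_j,F_k,F_l)$ the only products of $\omega$-type elements landing in $\langle\widetilde{\mathcal{B}_4}\rangle$ are $\widetilde{\{i,k\}}\cdot\widetilde{\{j,l\}}$, using again that no other $\omega\in N_2(P)$ lies in $\omega(\mathcal{B}_4)$ under the hypothesis $2\,\rk\boldsymbol{B}_4=\rk\boldsymbol{H}_3$, and that a mixed element picking up an off-belt $\widetilde{\omega}$ would produce a nonzero component outside $\langle\widetilde{\mathcal{B}_4}\rangle$.

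Since $A_3$, $\boldsymbol{H}_3$, $\boldsymbol{B}_4$, the multiplication $\boldsymbol{H}_3\otimes\boldsymbol{H}_3\to H^6$, and the internal direct-sum decomposition of $\boldsymbol{B}_4$ into rank-one pieces (the latter because $\varphi$ maps basis ``belt'' elements to $\pm$ basis belt elements by the result recalled in Section \ref{Bsec}) are all preserved by any graded ring isomorphism $\varphi$ between two such polytopes, the intrinsic description above is preserved, and hence each $G(\mathcal{B}_4)$ is mapped bijectively onto some $G(\mathcal{B}_4')$; this is exactly $B$-rigidity in the sense of Definition \ref{Bdef}. The main obstacle I anticipate is the middle step: showing that the two-generator subgroup is genuinely recoverable from the multiplicative data, i.e. that the hypothesis $2\,\rk\boldsymbol{B}_4=\rk\boldsymbol{H}_3$ really does prevent any ``accidental'' third primitive $\omega$-type element, or any non-$\omega$-type primitive element, from multiplying into $\langle\widetilde{\mathcal{B}_4}\rangle$ — this is where I would lean on Lemma \ref{2unlemma} and on the description of $\boldsymbol{H}_3$ as spanned by classes $[\widetilde{\omega}]$ with $\omega\notin N_2^0(P)$, and where the flagness together with $P$ having no adjacent quadrangles (Lemma \ref{44lemma}) is used to rule out the belt $\mathcal{B}_4$ surrounding a face in a way that would create extra $\omega$-sets.
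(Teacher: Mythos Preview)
Your characterization $G(\mathcal{B}_4)=\{x\in\boldsymbol{H}_3\colon x\cdot\boldsymbol{H}_3\subseteq\langle\widetilde{\mathcal{B}_4}\rangle\}$ is correct and is in fact equivalent to what the paper does. The problem is in how you argue this characterization is $B$-rigid: you invoke that ``$\varphi$ maps basis `belt' elements to $\pm$ basis belt elements by the result recalled in Section~\ref{Bsec}'', i.e.\ that the rank-one summands $\langle\widetilde{\mathcal{B}_4}\rangle$ of $\boldsymbol{B}_4$ are individually preserved. But Section~\ref{Bsec} only records this for \emph{Pogorelov} polytopes; for the class at hand (flag with $2\,\rk\boldsymbol{B}_4=\rk\boldsymbol{H}_3$) the statement that $\varphi(\widetilde{\mathcal{B}_4})=\pm\widetilde{\mathcal{B}_4'}$ is Corollary~\ref{4bbr}, which is \emph{derived from} Lemma~\ref{2dimlemma}. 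So your argument is circular as written: you use the conclusion of Corollary~\ref{4bbr} to prove the lemma on which it rests.

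The paper closes this gap by characterizing the collection $\{G(\mathcal{B}_4)\}$ without ever singling out the individual $\langle\widetilde{\mathcal{B}_4}\rangle$. Working over $\mathbb{Q}$, one defines for $\alpha\in\boldsymbol{H}_3\otimes\mathbb{Q}$ the subspace $A(\alpha)=\{\beta\colon\alpha\cdot\beta=0\}$, and the explicit product formula $\alpha\cdot\beta=\sum_i(\lambda_i\mu_i'-\lambda_i'\mu_i)\widetilde{\mathcal{B}_{4,i}}$ (in the basis $\{[\widetilde{\omega_i}],[\widetilde{\omega_i'}]\}$) shows that $\dim A(\alpha)=\dim\boldsymbol{H}_3\otimes\mathbb{Q}-l$, where $l$ is the number of belts in which $\alpha$ has a nonzero component. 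Hence $\dim A(\alpha)$ is maximal precisely when $\alpha$ lies in a single $G(\mathcal{B}_{4,i})\otimes\mathbb{Q}$, and the $G(\mathcal{B}_{4,i})\otimes\mathbb{Q}$ are recovered as the $2$-planes making up the locus of such $\alpha$. This is intrinsic to the multiplication on $\boldsymbol{H}_3$, so it is $B$-rigid, and only \emph{then} does one deduce Corollary~\ref{4bbr}. Your computation is essentially the same (your condition $x\cdot\boldsymbol{H}_3\subseteq\langle\widetilde{\mathcal{B}_4}\rangle$ is exactly $\operatorname{codim} A(x)\le 1$); you just need to replace the appeal to Corollary~\ref{4bbr} by the observation that ``the maximal subgroups $G\subset\boldsymbol{H}_3$ with $\rk(G\cdot\boldsymbol{H}_3)\le 1$'' is already an intrinsic description, independent of any prior identification of the rank-one pieces of $\boldsymbol{B}_4$.
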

\begin{proof} 
The elements $\{[\widetilde{\omega}]\}$, where $\omega\in N_2(P)$ are subsets of $\omega(\mathcal{B}_4)$ for $4$-belts, form a basis in $\boldsymbol{H}_3$. Take an element 
$$
\alpha=\lambda_1[\widetilde{\omega_1}]+\lambda_1'[\widetilde{\omega_1'}]+\dots+\lambda_k[\widetilde{\omega_k}]+\lambda_k'[\widetilde{\omega_k'}],
$$
where $\mathcal{B}_{4,1}$,\dots, $\mathcal{B}_{4,k}$ are all $4$-belts, and 
$\omega_i\sqcup \omega'_i=\omega(\mathcal{B}_{4,i})$. We have $[\widetilde{\omega_i}]\cdot[\widetilde{\omega_i'}]=\widetilde{\mathcal{B}_{4,i}}$.

As mentioned above we have an embedding 
$H^*(\mathcal{Z}_P)\subset H^*(\mathcal{Z}_P)\otimes \mathbb Q=H^*(\mathcal{Z}_P,\mathbb Q)$.
Images of these subgroups are $2$-dimensional linear subspaces in $\boldsymbol{H}_3\otimes\mathbb Q$.
For the element $\alpha\in \boldsymbol{H}_3\otimes\mathbb Q$ define a linear subspace
$$
A(\alpha)=\{\beta\in \boldsymbol{H}_3\otimes\mathbb Q \colon \alpha\cdot\beta=0\} 
$$
We have
$$
\beta=\mu_1[\widetilde{\omega_1}]+\mu_1'[\widetilde{\omega_1'}]+\dots+\mu_k[\widetilde{\omega_k}]+\mu_k'[\widetilde{\omega_k'}],
$$
and 
$$
\alpha\cdot\beta=\sum\limits_{i=1}^k(\lambda_i\mu_i'-\lambda_i'\mu_i)\widetilde{\mathcal{B}_{4,i}}.
$$
Then $\beta\in A(\alpha)$ if and only if $\lambda_i\mu_i'-\lambda_i'\mu_i=0$ for all $i=1,\dots, k$.
Thus, $\dim A(\alpha)=\dim \boldsymbol{H}_3\otimes \mathbb Q-l$, where $l$ is the number of $i$
with $(\lambda_i,\lambda_i')\ne (0,0)$. Hence, maximal dimension is for $\alpha\in\langle[\widetilde{\omega_i}],[\widetilde{\omega_i'}]\rangle$ for some 
$i$. Therefore, the collection of these subspaces is $B$-rigid over $\mathbb Q$, and the collection of 
the corresponding subgroups is $B$-rigid.
\end{proof}
\begin{corollary}\label{4bbr}
The set of elements
$$
\{\pm \widetilde{\mathcal{B}_4}\colon \mathcal{B}_4\text{ is a $4$-belt}\}\subset H^6(\mathcal{Z}_P)
$$
is $B$-rigid in the class of flag $3$-polytopes with $2\,\rk \boldsymbol{B}_4= \rk\boldsymbol{H}_3$.
\end{corollary}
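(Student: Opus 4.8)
The plan is to read the statement off from Lemma~\ref{2dimlemma}, using that the multiplication $H^3(\mathcal{Z}_P)\otimes H^3(\mathcal{Z}_P)\to H^6(\mathcal{Z}_P)$ factors through $\boldsymbol{H}_3\otimes\boldsymbol{H}_3$ (since $A_3$ kills $H^3(\mathcal{Z}_P)$ by its very definition) and that on each rank-$2$ subgroup $G(\mathcal{B}_4)$ the generator $\widetilde{\mathcal{B}_4}$ is recovered, up to sign, as the product of a $\mathbb Z$-basis.

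Let $\varphi\colon H^*(\mathcal{Z}_P)\to H^*(\mathcal{Z}_Q)$ be an isomorphism of graded rings with $P,Q$ flag and $2\,\rk\boldsymbol{B}_4=\rk\boldsymbol{H}_3$. Since $A_3=\{x\in H^3\colon xy=0\ \forall y\in H^3\}$ is defined intrinsically in the ring, $\varphi(A_3)=A_3$, so $\varphi$ induces an isomorphism $\bar\varphi\colon\boldsymbol{H}_3(P)\to\boldsymbol{H}_3(Q)$ with $\bar\varphi(x)\cdot\bar\varphi(y)=\varphi(x\cdot y)$ for $x,y\in\boldsymbol{H}_3$. By Lemma~\ref{2dimlemma}, $\bar\varphi$ carries each rank-$2$ subgroup $G(\mathcal{B}_4)$ of $P$ onto a subgroup $G(\mathcal{B}_4')$ attached to some $4$-belt $\mathcal{B}_4'$ of $Q$.

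Now fix a $4$-belt $\mathcal{B}_4$ of $P$ and write $\omega\sqcup\omega'=\omega(\mathcal{B}_4)$ for the two pairs of opposite faces. As noted in the proof of Lemma~\ref{2dimlemma}, $[\widetilde\omega]$ and $[\widetilde{\omega'}]$ are two distinct members of the chosen basis of $\boldsymbol{H}_3$, hence a $\mathbb Z$-basis of $G(\mathcal{B}_4)$, and $[\widetilde\omega]\cdot[\widetilde{\omega'}]=\pm\widetilde{\mathcal{B}_4}$. The pairing $G(\mathcal{B}_4)\times G(\mathcal{B}_4)\to H^6(\mathcal{Z}_P)$ induced by multiplication is alternating (graded-commutativity together with the absence of torsion for simple $3$-polytopes), so for any $\mathbb Z$-basis $\{u,v\}$ of $G(\mathcal{B}_4)$ one has $u\cdot v=(\det A)\,[\widetilde\omega]\cdot[\widetilde{\omega'}]=\pm\widetilde{\mathcal{B}_4}$, where $A\in GL_2(\mathbb Z)$ is the change-of-basis matrix. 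Thus $\pm\widetilde{\mathcal{B}_4}$ is exactly the product of an arbitrary $\mathbb Z$-basis of $G(\mathcal{B}_4)$.

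Applying $\varphi$ now gives $\varphi(\widetilde{\mathcal{B}_4})=\pm\bar\varphi([\widetilde\omega])\cdot\bar\varphi([\widetilde{\omega'}])$, and $\{\bar\varphi([\widetilde\omega]),\bar\varphi([\widetilde{\omega'}])\}$ is a $\mathbb Z$-basis of $G(\mathcal{B}_4')$, so by the previous paragraph $\varphi(\widetilde{\mathcal{B}_4})=\pm\widetilde{\mathcal{B}_4'}$. Hence $\varphi$ maps $\{\pm\widetilde{\mathcal{B}_4}\colon\mathcal{B}_4\text{ a $4$-belt of }P\}$ into the analogous set for $Q$; running the same argument for $\varphi^{-1}$ gives the opposite inclusion. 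Finally $\mathcal{B}_4\mapsto\omega(\mathcal{B}_4)$ is injective (a $4$-belt is determined by its set of faces, the opposite pairs being the non-adjacent ones), so distinct $4$-belts yield distinct elements up to sign and the correspondence is a bijection. The only non-formal ingredient is the identity $[\widetilde\omega]\cdot[\widetilde{\omega'}]=\pm\widetilde{\mathcal{B}_4}$, i.e.\ that this product is a \emph{generator} of $\widetilde H^1(P_{\omega(\mathcal{B}_4)})\simeq\mathbb Z$ rather than a proper multiple, which rests on the description of $H^*(\mathcal{Z}_P)$ for simple $3$-polytopes recalled in Section~\ref{Cohsec}; everything else is elementary linear algebra over $\mathbb Z$, so this corollary is essentially an immediate consequence of Lemma~\ref{2dimlemma}.
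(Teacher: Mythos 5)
Your argument is correct and is essentially the proof the paper gives: both deduce the corollary from Lemma~\ref{2dimlemma} by observing that $\widetilde{\mathcal{B}_4}$ is, up to sign, the generator of the image of the bilinear pairing $G(\mathcal{B}_4)\times G(\mathcal{B}_4)\to H^6(\mathcal{Z}_P)$, which is preserved by any ring isomorphism. The paper states this in one line; you simply spell out the underlying linear algebra (the pairing on a rank-$2$ free $\mathbb Z$-module is alternating, so the product of any $\mathbb Z$-basis differs from $[\widetilde\omega]\cdot[\widetilde{\omega'}]$ by $\det A=\pm1$), which is the same content made explicit.
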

\begin{proof}
Indeed, $\widetilde{\mathcal{B}_{4,i}}$ is a generator of the image of the bilinear mapping 
$$
\langle[\widetilde{\omega_i}],[\widetilde{\omega_i'}]\rangle\times \langle[\widetilde{\omega_i}],[\widetilde{\omega_i'}]\rangle\to H^6(\mathcal{Z}_P).
$$
\end{proof}

\begin{corollary}\label{bijcor}
Let $P,Q\in \mathcal{P}_{aPog}\setminus\{I^3,M_5\times I\}$. Then any isomorphism of graded rings 
$\varphi\colon H^*(\mathcal{Z}_P)\to H^*(\mathcal{Z}_Q)$ induces a bijection $\varphi_4$ between the sets of quadrangles of 
$P$ and $Q$ by the rule
$$
\varphi_4(F_i)=F_{i'}', \text{ where $\varphi(\widetilde{\mathcal{B}_i})=\pm \widetilde{\mathcal{B}_{i'}'}$ for 
$4$-belts  $\mathcal{B}_i$ and $\mathcal{B}_{i'}'$ around $F_i$ and $F_{i'}'$.}
$$
\end{corollary}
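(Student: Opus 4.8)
The plan is to deduce this from the $B$-rigidity of the set of $4$-belt classes (Corollary \ref{4bbr}) together with the combinatorial dictionary between quadrangles and $4$-belts that is available in the class $\mathcal{P}_{aPog}\setminus\{I^3,M_5\times I\}$.

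First I would pin down that dictionary. If $P\in\mathcal{P}_{aPog}\setminus\{I^3,M_5\times I\}$, then $P$ is flag, so every face is surrounded by a belt; the belt around a quadrangle is a $4$-belt, and it is the unique $4$-belt having that quadrangle as one of its two complementary components (if a $4$-belt has a single face $F$ in one of the complementary disks, then its four faces are exactly those adjacent to the four edges of $F$, and in particular $F$ is a quadrangle). Conversely, $P$ is $c^*4$-connected, so every $4$-belt of $P$ is trivial, i.e.\ it surrounds a face, and that face has four edges, hence is a quadrangle; moreover it surrounds only one quadrangle, since otherwise both complementary disks would consist of a single face and $P$ would be the cube $I^3$, which is excluded. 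Thus the assignment sending a quadrangle of $P$ to the $4$-belt around it is a bijection from the quadrangles of $P$ onto the $4$-belts of $P$, and the same holds for $Q$.

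Next, by Theorem \ref{BaPog} both $P$ and $Q$ are flag $3$-polytopes with $2\,\rk\boldsymbol{B}_4=\rk\boldsymbol{H}_3$, so Corollary \ref{4bbr} applies to each: the isomorphism $\varphi$ maps the set $\{\pm\widetilde{\mathcal{B}_4}\}$ of $4$-belt classes of $P$ bijectively onto the set of $4$-belt classes of $Q$. Since $\{\widetilde{\mathcal{B}_4}\colon\mathcal{B}_4\ \text{a}\ 4\text{-belt}\}$ is a basis of $\boldsymbol{B}_4$, the classes attached to distinct $4$-belts are linearly independent, hence pairwise distinct and not sign-opposite; therefore $\varphi$ induces an honest bijection $\mathcal{B}_i\mapsto\mathcal{B}_{i'}'$ between the $4$-belts of $P$ and those of $Q$, characterised by $\varphi(\widetilde{\mathcal{B}_i})=\pm\widetilde{\mathcal{B}_{i'}'}$. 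Composing the bijection ``quadrangle $\mapsto$ the $4$-belt around it'' for $P$, this bijection on $4$-belts, and the bijection ``$4$-belt $\mapsto$ the quadrangle it surrounds'' for $Q$, I obtain a bijection $\varphi_4$ from the quadrangles of $P$ onto those of $Q$ satisfying exactly the stated rule; feeding $\varphi^{-1}$ into the same recipe produces $\varphi_4^{-1}$, confirming bijectivity.

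There is no essential difficulty here: the statement is a formal consequence of Corollary \ref{4bbr} once the quadrangle--$4$-belt correspondence is in place. The only point that genuinely requires care is the well-definedness of ``the quadrangle surrounded by a $4$-belt'', and this is precisely where the exclusion of $I^3$ (and, via Lemma \ref{44lemma}, the hypothesis $2\,\rk\boldsymbol{B}_4=\rk\boldsymbol{H}_3$, which also rules out $M_5\times I$) is used.
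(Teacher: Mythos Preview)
Your proof is correct and follows essentially the same approach as the paper: invoke Corollary~\ref{4bbr} to get a bijection between $4$-belts via $\varphi$, and observe that in $\mathcal{P}_{aPog}\setminus\{I^3,M_5\times I\}$ each $4$-belt surrounds exactly one quadrangle (the exclusion of $I^3$ being precisely what makes this well-defined). The paper's proof is a one-line version of the same argument.
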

\begin{proof}
Indeed, any $4$-belt around a quadrangle does not surround a quadrangle on the other side, for otherwise $P=I^3$.
\end{proof}
\begin{notation}\label{f4not}
In what follows we will use notations from Corollary \ref{bijcor}, namely  $\varphi_4$ for the isomorphism
between the sets of quadrangles of polytopes $P,Q\in \mathcal{P}_{aPog}\setminus\{I^3,M_5\times I\}$ induced by
an isomorphism of graded rings 
$\varphi\colon H^*(\mathcal{Z}_P)\to H^*(\mathcal{Z}_Q)$, and
$\mathcal{B}_i$ and $\mathcal{B}_{i'}'$ for the $4$-belts around quadrangles $F_i$ and $F_{i'}'$ such that 
$\varphi_4(F_i)=F_{i'}'$.
\end{notation}

\begin{lemma}\label{wb4lemma}
The set of elements:
$$
\{\pm[\widetilde{\omega}]\colon \omega\in N_2(P),\omega\subset \omega(\mathcal{B}_4)\text{ for some $4$-belt } \mathcal{B}_4\}\subset \boldsymbol{H}_3
$$
is $B$-rigid in the class of flag $3$-polytopes with $2\,\rk \boldsymbol{B}_4= \rk\boldsymbol{H}_3$.
\end{lemma}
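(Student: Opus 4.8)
The plan is to characterize the set $S_P = \{\pm[\widetilde{\omega}]\colon \omega\in N_2(P),\ \omega\subset\omega(\mathcal{B}_4)\text{ for some }4\text{-belt}\}$ intrinsically inside the graded ring, using the already-established $B$-rigid data: the subgroups $G(\mathcal{B}_4)\subset\boldsymbol{H}_3$ of rank $2$ (Lemma \ref{2dimlemma}) and the set of elements $\{\pm\widetilde{\mathcal{B}_4}\}\subset H^6(\mathcal{Z}_P)$ (Corollary \ref{4bbr}). Since these are $B$-rigid in the class of flag $3$-polytopes with $2\,\rk\boldsymbol{B}_4=\rk\boldsymbol{H}_3$, any isomorphism $\varphi$ respects them; hence it suffices to show $S_P$ is determined by the collection $\{G(\mathcal{B}_4)\}$ together with the multiplication $\boldsymbol{H}_3\otimes\boldsymbol{H}_3\to H^6(\mathcal{Z}_P)$. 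First I would observe that $S_P$ is contained in the union $\bigcup_{\mathcal{B}_4}G(\mathcal{B}_4)$, and that within a fixed $G(\mathcal{B}_4)=\langle[\widetilde\omega],[\widetilde{\omega'}]\rangle$ (where $\omega\sqcup\omega'=\omega(\mathcal{B}_4)$) the four elements $\pm[\widetilde\omega],\pm[\widetilde{\omega'}]$ are exactly those primitive vectors $x\in G(\mathcal{B}_4)$ for which there is $y\in G(\mathcal{B}_4)$ with $x\cdot y=\pm\widetilde{\mathcal{B}_4}$ a generator — equivalently, $x$ is part of a basis $\{x,y\}$ of $G(\mathcal{B}_4)$ realizing the generator of the image of the pairing on $G(\mathcal{B}_4)$.

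More precisely, for $\alpha=\lambda[\widetilde\omega]+\lambda'[\widetilde{\omega'}]\in G(\mathcal{B}_4)$ and $\beta=\mu[\widetilde\omega]+\mu'[\widetilde{\omega'}]$, the computation in Lemma \ref{2dimlemma} gives $\alpha\cdot\beta=(\lambda\mu'-\lambda'\mu)\widetilde{\mathcal{B}_4}$. So the set of $\alpha\in G(\mathcal{B}_4)$ such that $\alpha$ can be completed to a basis $\{\alpha,\beta\}$ of $G(\mathcal{B}_4)$ with $\alpha\cdot\beta=\pm\widetilde{\mathcal{B}_4}$ is precisely the set of primitive vectors of $G(\mathcal{B}_4)$; this is too large. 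The distinguishing feature of $[\widetilde\omega]$ and $[\widetilde{\omega'}]$ among primitive vectors must therefore come from the global structure: $[\widetilde\omega]$ is the reduction of an element $\widetilde\omega\in H^3(\mathcal{Z}_P)$ that lies in the $B$-rigid basis $\{\widetilde\omega\colon\omega\in N_2(P)\}$, and in particular it sits in possibly several subgroups $G(\mathcal{B}_4)$ (exactly when $\omega$ is contained in several $\omega(\mathcal{B}_4)$). Under the hypothesis $2\,\rk\boldsymbol{B}_4=\rk\boldsymbol{H}_3$, Lemma \ref{2unlemma} tells us each $\omega\in N_2(P)$ lies in \emph{at most one} $\omega(\mathcal{B}_4)$; so that particular test is vacuous, and I would instead exploit that the $G(\mathcal{B}_4)$ for distinct $4$-belts, when two belts share a common face, intersect in a rank-$1$ subgroup spanned by exactly such a distinguished element $\pm[\widetilde\omega]$ (namely $\omega$ = the common disjoint pair), whereas a generic primitive vector of $G(\mathcal{B}_4)$ lies in no other $G(\mathcal{B}_4')$.

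The cleanest route, then, is: declare $x\in\boldsymbol{H}_3$ to be \emph{special} if $x$ is a primitive element of some $G(\mathcal{B}_4)$ and, for every $G(\mathcal{B}_4')\ni x$, $x$ generates $G(\mathcal{B}_4)\cap G(\mathcal{B}_4')$ over $\mathbb{Q}$ — more robustly, characterize $\pm[\widetilde\omega]$ as the finitely many primitive vectors $x\in\bigcup G(\mathcal{B}_4)$ such that the $\mathbb{Q}$-span of $\{G(\mathcal{B}_4)\colon x\in G(\mathcal{B}_4)\otimes\mathbb{Q}\}$ has the right dimension; I would verify on the combinatorial side, using that $P$ has no adjacent quadrangles (Lemma \ref{44lemma}), exactly which elements of $G(\mathcal{B}_4)$ are shared with neighboring $G(\mathcal{B}_4')$. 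The main obstacle I anticipate is pinning down a purely ring-theoretic predicate that isolates $\pm[\widetilde\omega]$ from \emph{all} primitive vectors of $G(\mathcal{B}_4)$ in the (generic) case where the $4$-belt $\mathcal{B}_4$ shares no face with any other $4$-belt — in that situation $G(\mathcal{B}_4)$ is isolated and one cannot distinguish $[\widetilde\omega]$ from, say, $[\widetilde\omega]+[\widetilde{\omega'}]$ using only $\boldsymbol{H}_3$, its pairing, and the $G(\mathcal{B}_4)$'s. To resolve this I would go one degree up: use the $B$-rigid subgroup $\boldsymbol{B}_5\subset H^7(\mathcal{Z}_P)$ and the images of $[\widetilde\omega]\cdot H^4$, since Corollary \ref{kappacor}/Corollary \ref{rki7cor} show the products of a genuine $\widetilde\omega$ (with $\omega\subset\omega(\mathcal{B}_4)$) against $H^4(\mathcal{Z}_P)$ produce the belt-around-a-face elements $\widetilde{\mathcal{B}_5}$ of Fig.\ \ref{SetsH7}(a)--(c) and contribute the full $(m-5)$ to $\rk\boldsymbol{I}_7$, whereas a mixed element like $[\widetilde\omega]+[\widetilde{\omega'}]$ will have a strictly smaller annihilator in that degree by Lemma \ref{h3lemma} (the two summands are good for each other). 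Thus the final characterization is: $\pm[\widetilde\omega]\in S_P$ are exactly the primitive $x\in\bigcup_{\mathcal{B}_4}G(\mathcal{B}_4)$ minimizing (equivalently: not strictly below the generic value of) $\dim\mathrm{Ann}(x)$ in the relevant degree among primitive vectors of the $G(\mathcal{B}_4)$ containing them — a condition manifestly preserved by $\varphi$, giving $\varphi(S_P)=S_Q$.
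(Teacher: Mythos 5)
Your strategy is essentially the paper's: characterize the distinguished elements $\pm[\widetilde\omega]$ inside the $B$-rigid rank-$2$ subgroups $G(\mathcal{B}_4)$ by annihilator dimensions via Lemma~\ref{h3lemma}. But there are two genuine gaps.

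First, the key combinatorial input is asserted rather than proved. You write ``the two summands are good for each other'' as if it were immediate, but this is precisely what one must check: for $\omega_i=\{p,q\}$ and $\omega_i'=\{s,t\}$ with $\omega_i\sqcup\omega_i'=\omega(\mathcal{B}_{4,i})$, one needs an $l$-belt $\mathcal{B}_l$ containing $F_s,F_t$ such that $F_q$ does not meet one of the two components of $|\mathcal{B}_l|\setminus(F_s\cup F_t)$. The paper's proof opens with exactly this argument: take the belt around $F_p$ (which contains $F_s$ and $F_t$), observe that its two arcs lie on opposite sides of $\mathcal{B}_{4,i}$, and use the hypothesis $2\,\rk\boldsymbol{B}_4=\rk\boldsymbol{H}_3$ (via Lemma~\ref{2unlemma}) to rule out $F_q$ meeting both arcs — otherwise a second $4$-belt through $F_p,F_q$ would appear. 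Without this verification, Lemma~\ref{h3lemma}(2) cannot be invoked, and your claim that mixed elements have strictly smaller annihilator is unsupported.

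Second, your final characterization is both backwards in sign and imprecise about where it lives. Lemma~\ref{h3lemma}(2) gives $\dim\mathrm{Ann}(\alpha)<\dim\mathrm{Ann}(\widetilde\omega)$ for mixed $\alpha$, so the distinguished elements have \emph{maximal}, not minimal, annihilator — your ``minimizing'' contradicts your own parenthetical. More importantly, $\dim\mathrm{Ann}$ as used in Lemma~\ref{h3lemma} is an invariant of a concrete element of $H^3(\mathcal{Z}_P,\mathbb{Q})$, not of a coset in $\boldsymbol{H}_3=H^3/A_3$: two lifts of the same $[\widetilde\omega]$ can have different annihilators, so ``$\dim\mathrm{Ann}(x)$ for primitive $x\in G(\mathcal{B}_4)$'' is not well defined. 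The paper sidesteps this by never trying to characterize cosets intrinsically: it takes the specific element $\varphi^{-1}(\widetilde\omega_j)\in H^3(\mathcal{Z}_P)$, applies Lemma~\ref{h3lemma} to it directly, and closes the chain $\dim\mathrm{Ann}_{H(P)}(\widetilde\omega_i)>\dim\mathrm{Ann}_{H(P)}(\varphi^{-1}(\widetilde\omega_j))=\dim\mathrm{Ann}_{H(Q)}(\widetilde\omega_j)\geqslant\dim\mathrm{Ann}_{H(Q)}(\varphi(\widetilde\omega_i))=\dim\mathrm{Ann}_{H(P)}(\widetilde\omega_i)$ to a contradiction. That ``preimage of a basis element'' trick is what makes the argument work, and your proposal lacks it.
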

\begin{proof}
Consider the $4$-belt $\mathcal{B}_{4,i}$ corresponding to the pair $\omega_i,\omega_i'\in N_2(P)$.
Let $\omega_i=\{p,q\}$, $\omega_i'=\{s,t\}$. Consider the belt $\mathcal{B}_l$ around the face $F_p$. It contains both
$F_s$ and $F_t$. Let $B_1$ and $B_2$ be the connected components of $\mathcal{B}_l\setminus\{F_s,F_t\}$. They lie 
in different connected components of $\partial P\setminus\{\mathcal{B}_{4,i}\}$. 
Let  $F_q$ intersect  some face $F_a\in B_1$  and some face $F_b\in B_2$. Then $(F_p,F_a,F_q,F_b)$ is a $4$-belt. It
is different from $\mathcal{B}_{4,i}$ and contains $F_p$ and $F_q$. A contradiction to  Lemma \ref{2unlemma}.

Now let $\varphi \colon H^*(\mathcal{Z}_P)\to H^*(\mathcal{Z}_Q)$ be an isomorphism of graded rings for 
flag polytopes $P$ and $Q$ with $2\,\rk \boldsymbol{B}_4= \rk\boldsymbol{H}_3$. 
By Lemma \ref{2dimlemma} 
$$
\varphi(\widetilde{\omega_i})=\lambda_j\widetilde{\omega_j}+\lambda_j'\widetilde{\omega_j'}+\sum\limits_{\widetilde{\omega}\in A_3(Q)}\mu_{\omega}\widetilde{\omega}
$$
for some $j\in\{1,\dots,k\}$. Since $\varphi^{-1}(A_3(Q))\subset A_3(P)$, for at least one of the elements  $\widetilde{\omega_j}$ and $\widetilde{\omega_j'}$ with nonzero coefficient, say for $\widetilde{\omega_j}$, its preimage contains $\widetilde{\omega_i}$ with a  nonzero coefficient: 
$$
\varphi^{-1}(\widetilde{\omega_j})=\tau_i\widetilde{\omega_i}+\tau_i'\widetilde{\omega_i'}+\sum\limits_{\widetilde{\omega}\in A_3(P)}\eta_{\omega}\widetilde{\omega}, \qquad \tau_i\ne0.
$$
If $\tau_i'\ne 0$, then by Lemma \ref{h3lemma} we have in cohomology over $\mathbb Q$:
$$
\dim {\rm Ann}_{H(P)} (\widetilde \omega_i)>\dim {\rm Ann}_{H(P)} (\varphi^{-1}(\widetilde{\omega_j}))=
\dim {\rm Ann}_{H(Q)} (\widetilde{\omega_j})\geqslant \dim {\rm Ann}_{H(Q)} (\varphi(\widetilde{\omega_i})).
$$
A contradiction. Hence $\tau_i'=0$ and  for the mapping $\widehat{\varphi}\colon \boldsymbol{H}_3(P)\to \boldsymbol{H}_3(Q)$
we have: $\widehat{\varphi}^{-1}([\widetilde\omega_j])=\tau_i[\widetilde\omega_i]$.
Then $\tau_i=\pm 1$ and  $\widehat{\varphi}([\widetilde\omega_i])=\pm[\widetilde\omega_j]$. This finishes the proof.
\end{proof}

\begin{lemma}
The sets of elements:
$$
\{\pm[\widetilde{\omega}]\colon \omega\in N_2(P),\omega\subset \omega(\mathcal{B}_4)\text{ for a trivial $4$-belt } \mathcal{B}_4\}\subset \boldsymbol{H}_3
$$
and
$$
\{\pm \widetilde{\mathcal{B}_4}\colon \mathcal{B}_4\text{ is a trivial $4$-belt}\}\subset H^6(\mathcal{Z}_P)
$$
are $B$-rigid in the class of flag $3$-polytopes with $2\,\rk \boldsymbol{B}_4= \rk\boldsymbol{H}_3$.
\end{lemma}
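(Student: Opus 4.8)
The statement to prove asserts $B$-rigidity (in the class of flag $3$-polytopes with $2\,\rk\boldsymbol{B}_4=\rk\boldsymbol{H}_3$) of two sets: the set of cosets $[\widetilde{\omega}]$ for $\omega$ contained in $\omega(\mathcal{B}_4)$ of a \emph{trivial} $4$-belt, and the set of elements $\pm\widetilde{\mathcal{B}_4}$ for trivial $4$-belts. My plan is to reduce the trivial-versus-nontrivial distinction to a purely ring-theoretic invariant already available from the previous lemmas, namely the image $I_7$ of $H^3\otimes H^4\to H^7$ (or the quotient $\boldsymbol{I}_7$) analyzed in Corollaries \ref{rki7cor} and \ref{kappacor}.

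First I would recall from Lemma \ref{2dimlemma} and Corollary \ref{4bbr} that the whole set $\{\pm\widetilde{\mathcal{B}_4}\}$ and the whole set $\{\pm[\widetilde\omega]\colon\omega\subset\omega(\mathcal{B}_4)\}$ are already $B$-rigid in this class, and that by Lemma \ref{wb4lemma} an isomorphism $\varphi$ carries $[\widetilde{\omega_i}]$ to $\pm[\widetilde{\omega_j}]$ and hence each subgroup $G(\mathcal{B}_{4,i})$ to some $G(\mathcal{B}_{4,j}')$, inducing a bijection on the set of $4$-belts which is compatible with the generators $\widetilde{\mathcal{B}_4}\mapsto\pm\widetilde{\mathcal{B}_4'}$. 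So the only thing left to show is that this bijection sends trivial $4$-belts to trivial $4$-belts. The key step is to characterize, for a fixed $4$-belt $\mathcal{B}_4$ with associated element $b=\widetilde{\mathcal{B}_4}$ and the rank-two subgroup $G(\mathcal{B}_4)=\langle[\widetilde\omega_i],[\widetilde\omega_i']\rangle\subset\boldsymbol{H}_3$, the triviality of $\mathcal{B}_4$ by the dimension of a suitable annihilator or image space computed purely from $b$, $G(\mathcal{B}_4)$, and the multiplication $\boldsymbol{H}_3\otimes H^4\to H^7$. Concretely, following the count in Corollary \ref{rki7cor}, the contribution of $\mathcal{B}_4$ to $I_7$ (equivalently to $\boldsymbol{I}_7$) is spanned by generators of $\widetilde H^1(P_{\omega(\mathcal{B}_4)\cup\{r\}})$ for the faces $F_r\notin\mathcal{B}_4$ that intersect at most two (successive) faces of $\mathcal{B}_4$: there are $m-5$ such faces for a trivial belt and $m-4$ for a nontrivial belt. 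So one defines, for each $4$-belt, the subspace
$$
W(\mathcal{B}_4)=\{\beta\in H^4(\mathcal{Z}_P)\colon \widetilde\omega_i\cdot\beta\in\langle\widetilde{\mathcal{B}_4}\rangle\text{ and }\widetilde\omega_i'\cdot\beta\in\langle\widetilde{\mathcal{B}_4}\rangle\},
$$
or rather the image of the map it produces in $H^7$, and one shows its rank detects triviality: it equals $m-5$ (resp. $m-4$) according to Corollary \ref{rki7cor}'s bookkeeping. Since $m$, the sets $\{\pm\widetilde{\mathcal{B}_4}\}$, and the subgroups $G(\mathcal{B}_4)$ are all $B$-rigid, this rank is $B$-rigid, hence triviality of $4$-belts is $B$-rigid. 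That gives $B$-rigidity of the second set, $\{\pm\widetilde{\mathcal{B}_4}\colon\mathcal{B}_4\text{ trivial}\}$, and then the first set follows immediately because $[\widetilde\omega]$ for $\omega\subset\omega(\mathcal{B}_4)$ is a factor in the product realizing $\widetilde{\mathcal{B}_4}$ (exactly as Corollary \ref{4bbr} deduces the belt set from Lemma \ref{2dimlemma}).

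The main obstacle I anticipate is the precise identification of the face contributions in $H^7$ with the multiplicative data at the level of $H^3$ and $\boldsymbol{H}_3$, i.e. verifying that the subspace $W(\mathcal{B}_4)$ defined above really has rank $m-5$ versus $m-4$ and nothing extra sneaks in from $5$-belts or from other $4$-belts sharing a pair of faces; but under the hypothesis $2\,\rk\boldsymbol{B}_4=\rk\boldsymbol{H}_3$ each pair $\omega\in N_2(P)$ lies in at most one $\omega(\mathcal{B}_4)$ (Lemma \ref{2unlemma}), which kills the cross-terms, and Lemma \ref{H7lemma} pins down exactly the five-element $\omega$ with $\widetilde H^1(P_\omega)\ne 0$, so this is a finite bookkeeping once the structure of $P_{\omega(\mathcal{B}_4)\cup\{r\}}$ (Fig. \ref{SetsH7}a)--c)) is invoked. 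A secondary point to handle carefully is that working over $\mathbb Q$ (and then transferring back to $\mathbb Z$, using that the relevant subgroups are direct summands and the extremal elements are determined up to sign) is needed to make the dimension-count argument rigorous; this is the same passage already used in Lemma \ref{2dimlemma}. Once triviality is shown $B$-rigid, the statement of the lemma is a direct consequence.
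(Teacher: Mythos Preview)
Your overall strategy matches the paper's: use the $m-5$ versus $m-4$ count from Corollary~\ref{rki7cor} (equivalently Corollary~\ref{kappacor}) to separate trivial from nontrivial $4$-belts by a ring-theoretic rank, after already knowing from Lemmas~\ref{2dimlemma}, \ref{wb4lemma} and Corollary~\ref{4bbr} that the full set of $[\widetilde\omega]$'s and $\widetilde{\mathcal{B}_4}$'s is $B$-rigid.

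Two points of comparison. First, your definition of $W(\mathcal{B}_4)$ is not well-formed: for $\beta\in H^4$ the product $\widetilde\omega_i\cdot\beta$ lives in $H^7$, while $\widetilde{\mathcal{B}_4}\in H^6$, so the condition ``$\widetilde\omega_i\cdot\beta\in\langle\widetilde{\mathcal{B}_4}\rangle$'' is a degree mismatch. You seem to sense this and fall back on ``the image of the map it produces in $H^7$''. The paper makes this precise in one line: since the generator of $\widetilde H^1(P_\tau)$ for the configurations of Fig.~\ref{SetsH7}a)--c) is divisible among the $\widetilde\omega$'s by exactly the two pairs of the underlying $4$-belt, the subgroup $[\widetilde\omega]\cdot H^4(\mathcal{Z}_P)\subset\boldsymbol{I}_7$ has rank $m-5$ if the (unique) $4$-belt containing $\omega$ is trivial and $m-4$ otherwise. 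This is defined from the coset $[\widetilde\omega]\in\boldsymbol{H}_3$, which is what Lemma~\ref{wb4lemma} shows is $B$-rigid, so the invariant is manifestly preserved. Second, the paper orders the two conclusions the other way around: it first characterizes the trivial $[\widetilde\omega]$'s by this rank criterion, and then says $\widetilde{\mathcal{B}_4}$ is trivial iff it is divisible by such a coset; you go belt-first then coset. Either order works once the rank criterion is in place.
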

\begin{proof}
Indeed, any generator of $\widetilde{H}^1(P_{\omega})=\mathbb Z$ for $P_{\omega}$ from Fig. \ref{SetsH7} a)-c) is divisible
by exactly $\widetilde\omega$ and $\widetilde{\omega'}$ for the corresponding belt $\mathcal{B}_4$ with
$\widetilde\omega\cdot\widetilde\omega'=\widetilde{\mathcal{B}_4}$ among all the elements in $H^3(\mathcal{Z}_P)$
corresponding to sets in $N_2(P)$. Therefore, 
the set $\omega\in N_2(P)\setminus N_2^0(P)$ corresponds to a trivial $4$-belt if and only if
the free abelian subgroup $[\widetilde{\omega}]\cdot H^4(\mathcal{Z}_P)\subset\boldsymbol{I}_7$ has rank $(m-5)$,
and the element $\widetilde{\mathcal{B}_4}$ corresponds to a trivial belt if and only if it is divisible by such an element
$[\widetilde{\omega}]\in\boldsymbol{H}_3$. 
\end{proof}

\begin{corollary}\label{4wBcor}
Let $P,Q\in \mathcal{P}_{aPog}\setminus\{I^3,M_5\times I\}$, and let $\mathcal{B}_k$ be a $k$-belt 
passing through a quadrangle $F_i$ and its adjacent faces $F_p$ and $F_q$. 
Then for any isomorphism of graded rings $\varphi\colon H^*(\mathcal{Z}_P)\to H^*(\mathcal{Z}_Q)$ we have
$$
\varphi(\widetilde{\mathcal{B}_k})=\sum_j\mu_j\widetilde{\mathcal{B}_{k,j}'}
$$ 
for $k$-belts $\mathcal{B}_{k,j}'$ of $Q$ such that  for any $\mu_j\ne 0$ the belt $\widetilde{\mathcal{B}_{k,j}'}$ 
passes through the nonadjacent faces $F_{p'}'$ and $F_{q'}'$ of $Q$,
where $\varphi(\widetilde{\{p,q\}}+A_3(P))=\pm \widetilde{\{p',q'\}}+A_3(Q)$. Moreover, $F_{p'}'$ and $F_{q'}'$ are adjacent to 
the quadrangle $F_{i'}'=\varphi_4(F_i)$ of $Q$ (see Notation \ref{f4not}).
\end{corollary}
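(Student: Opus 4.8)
The plan is to combine the divisibility characterization of belt elements (Lemma \ref{Hdivlemma} and Corollary \ref{Bdivcor}) with the already-established $B$-rigidity of the coset $[\widetilde{\{p,q\}}]=\widetilde{\{p,q\}}+A_3(P)\in\boldsymbol{H}_3$ (Lemma \ref{wb4lemma}) and with the bijection $\varphi_4$ on quadrangles from Corollary \ref{bijcor}. First I would recall from Section \ref{Bsec} that the subgroup $\boldsymbol{B}_k$ is $B$-rigid, so that $\varphi(\widetilde{\mathcal{B}_k})$ is automatically a $\mathbb Z$-linear combination $\sum_j\mu_j\widetilde{\mathcal{B}_{k,j}'}$ of elements corresponding to genuine $k$-belts of $Q$; the content of the corollary is to pin down \emph{which} belts can occur. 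The key observation is that for the $k$-belt $\mathcal{B}_k$ through the quadrangle $F_i$ and its adjacent faces $F_p,F_q$, the pair $\{p,q\}\in N_2(P)$ lies in $\omega(\mathcal{B}_k)$ (since $F_p\cap F_q=\varnothing$ because both are adjacent to the quadrangle $F_i$ inside a belt), so by Corollary \ref{Bdivcor} the element $\widetilde{\mathcal{B}_k}$ is divisible by $\widetilde{\{p,q\}}$. Applying $\varphi$ and using that $\varphi$ is a ring isomorphism, $\varphi(\widetilde{\mathcal{B}_k})$ is divisible by $\varphi(\widetilde{\{p,q\}})$.

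Next I would pass to the quotient ring $H^*(\mathcal{Z}_Q)/\langle A_3(Q)\rangle$, or rather work modulo $A_3$ as in Lemma \ref{wb4lemma}: since $\omega=\{p,q\}\subset\omega(\mathcal{B}_k)$ for a belt through a quadrangle, $\omega$ is contained in $\omega(\mathcal{B}_i)$ for the $4$-belt $\mathcal{B}_i$ around $F_i$, so $\omega\in N_2(P)\setminus N_2^0(P)$, i.e.\ $\widetilde\omega\notin A_3(P)$, and $[\widetilde\omega]$ is one of the basis elements in $\boldsymbol{H}_3$ treated in Lemma \ref{wb4lemma}. That lemma gives $\widehat\varphi([\widetilde{\{p,q\}}])=\pm[\widetilde{\omega_j}]$ for one of the two generators $[\widetilde{\omega_j}],[\widetilde{\omega_j'}]$ of a rank-$2$ subgroup $G(\mathcal{B}_{4,j}')$ with $\omega_j\sqcup\omega_j'=\omega(\mathcal{B}_{4,j}')$, and $\mathcal{B}_{4,j}'$ is the $4$-belt around the quadrangle $F_{i'}'=\varphi_4(F_i)$ by Corollary \ref{bijcor} and Notation \ref{f4not}. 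Writing $\{p',q'\}=\omega_j$, this identifies the pair: $\varphi(\widetilde{\{p,q\}}+A_3(P))=\pm\widetilde{\{p',q'\}}+A_3(Q)$, and $F_{p'}',F_{q'}'$ are two nonadjacent faces both adjacent to $F_{i'}'$ (they are opposite faces of $\mathcal{B}_{4,j}'$).

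Then I would combine divisibility with this coset identification. By Corollary \ref{Bdivcor}, each belt $\mathcal{B}_{k,j}'$ with $\mu_j\ne0$ must contain $F_{p'}'$ and $F_{q'}'$, provided $\widetilde{\mathcal{B}_{k}'}$-summands are divisible by the \emph{honest} element $\widetilde{\{p',q'\}}$, not just its coset. To upgrade from the coset statement to the element statement I would argue as in the proof of Lemma \ref{wb4lemma} (using the annihilator dimension inequality of Lemma \ref{h3lemma}, part (2)): the preimage under $\varphi$ of the genuine generator $\widetilde{\{p',q'\}}$ has the form $\tau\widetilde{\{p,q\}}+\tau'\widetilde{\{p,q\}'}+(\text{terms in }A_3(P))$ with $\tau\ne0$, and if $\tau'\ne0$ the annihilator inequality is violated; hence the preimage is $\pm\widetilde{\{p,q\}}$ modulo $A_3(P)$, but since $\widetilde{\mathcal{B}_k}$ lives in $\boldsymbol{B}_k$ and divisibility in $\boldsymbol{B}_k$ by Corollary \ref{Bdivcor} only sees $\omega(\mathcal{B}_{k,j}')$, it suffices to know $\{p',q'\}\subset\omega(\mathcal{B}_{k,j}')$, which is exactly divisibility of $\widetilde{\mathcal{B}_{k,j}'}$ by $\widetilde{\{p',q'\}}$. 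So every $\mathcal{B}_{k,j}'$ with $\mu_j\ne0$ passes through $F_{p'}'$ and $F_{q'}'$; and since these are two nonadjacent faces adjacent to $F_{i'}'$, a $k$-belt containing both of them automatically passes through $F_{i'}'$ as well (a belt through two adjacent-to-$F_{i'}'$ faces on the same side contains the intervening arc, which in a belt through a quadrangle is forced through $F_{i'}'$).

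The main obstacle I anticipate is the last sentence — showing that any belt $\mathcal{B}_{k,j}'$ of $Q$ containing the nonadjacent faces $F_{p'}'$ and $F_{q'}'$ must pass through the quadrangle $F_{i'}'=\varphi_4(F_i)$, i.e.\ the purely combinatorial claim that in an almost Pogorelov polytope distinct from $I^3$ and $M_5\times I$, if two faces are the opposite faces of the (unique, trivial) $4$-belt around a quadrangle $F$, then any belt through both of them contains $F$. This should follow because $F_{p'}'$ and $F_{q'}'$ are separated by $F_{i'}'$ on one side and by the opposite face $F_{r'}'$ of $\mathcal{B}_{4,j}'$ on the other; a belt through $F_{p'}'$ and $F_{q'}'$ enters and leaves each of them through one of these two separating faces, and since there are no adjacent quadrangles and the $4$-belt around $F_{i'}'$ is trivial, one checks the belt is forced through $F_{i'}'$ rather than looping around the other component. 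I would make this precise using the Jordan-curve / disk arguments already deployed in Lemmas \ref{H7lemma} and \ref{52lemma}, together with the fact that $\{p',q'\}$ can be included in exactly one $4$-belt (consequence of $2\,\rk\boldsymbol{B}_4=\rk\boldsymbol{H}_3$, Lemma \ref{2unlemma}), so the only "short" way to connect $F_{p'}'$ to $F_{q'}'$ on that side is through $F_{i'}'$.
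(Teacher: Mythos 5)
Your general strategy matches the paper's in outline: start from the fact (Section \ref{Bsec}) that $\varphi(\widetilde{\mathcal{B}_k})$ is a linear combination of $k$-belt classes $\widetilde{\mathcal{B}_{k,j}'}$, and then use the divisibility criterion of Corollary \ref{Bdivcor} together with the coset $B$-rigidity of Lemma \ref{wb4lemma} and the bijection $\varphi_4$ of Corollary \ref{bijcor} to constrain which belts can appear. You also correctly identify the central obstacle: to apply Corollary \ref{Bdivcor} one needs divisibility of $\varphi(\widetilde{\mathcal{B}_k})$ by the \emph{honest} element $\widetilde{\{p',q'\}}$, while Lemma \ref{wb4lemma} only controls the image of the coset.

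The step you sketch to close this gap does not actually do so. Knowing that $\widetilde{\mathcal{B}_k}$ is divisible by $\widetilde{\{p,q\}}$ and that $\varphi^{-1}(\widetilde{\{p',q'\}})\equiv\pm\widetilde{\{p,q\}}\pmod{A_3(P)}$ does not imply that $\widetilde{\mathcal{B}_k}$ is divisible by the specific representative $\varphi^{-1}(\widetilde{\{p',q'\}})=\pm\widetilde{\{p,q\}}+\eta$ with $\eta\in A_3(P)$: if $\widetilde{\mathcal{B}_k}=\widetilde{\{p,q\}}\cdot\xi$, one would also need $\eta\cdot\xi=0$ for that particular $\xi$, and this is not automatic. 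Your concluding sentence in that paragraph, saying it suffices to know $\{p',q'\}\subset\omega(\mathcal{B}_{k,j}')$, simply restates the goal. The paper fills exactly this gap with a dedicated lemma (Lemma \ref{cosetlemma}): choosing $\xi$ to be the class of the connected component $F_i$ of $P_{\omega(\mathcal{B}_k)\setminus\{p,q\}}$, one checks that $\xi\cdot\widetilde{\{s,t\}}=0$ for every $\{s,t\}\in N_2^0(P)$ (since at least one of $F_s,F_t$ lies outside the $4$-belt around $F_i$ and so either equals $F_i$ or is disjoint from it) and that $\xi\cdot\widetilde{\{p,q\}}=\pm\widetilde{\mathcal{B}_k}$. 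Hence $\widetilde{\mathcal{B}_k}$ is divisible by \emph{every} element of the coset $\widetilde{\{p,q\}}+A_3(P)$, in particular by $\varphi^{-1}(\widetilde{\{p',q'\}})$, and then Corollary \ref{Bdivcor} applies. This is the idea missing from your argument.

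On the last paragraph: you set out to show that each $\mathcal{B}_{k,j}'$ passes through $F_{i'}'$, but the corollary only asserts that $F_{p'}'$ and $F_{q'}'$ are adjacent to $F_{i'}'$, which you already obtain in your second paragraph (they are opposite faces of the $4$-belt $\mathcal{B}_{i'}'$). The paper gets this "moreover" by the same divisibility argument applied once more, now to $\mathcal{B}_i$: since $\varphi(\widetilde{\mathcal{B}_i})=\pm\widetilde{\mathcal{B}_{i'}'}$ is divisible by $\widetilde{\{p',q'\}}$, the $4$-belt $\mathcal{B}_{i'}'$ contains $F_{p'}'$ and $F_{q'}'$, and all faces in the belt around $F_{i'}'$ are by definition adjacent to $F_{i'}'$. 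The extra combinatorial claim you try to prove, that any $k$-belt through $F_{p'}'$ and $F_{q'}'$ is forced through $F_{i'}'$, is not needed here and would cost more than the corollary itself.
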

\begin{proof}
We know from Section \ref{Bsec} that 
$\varphi(\widetilde{\mathcal{B}_k})=\sum_j\mu_j\widetilde{\mathcal{B}_{k,j}'}$ for $k$-belts $\mathcal{B}_{k,j}'$ of $Q$.
\begin{lemma}\label{cosetlemma}
Let $P\in \mathcal{P}_{aPog}\setminus\{I^3,M_5\times I\}$, and let $\mathcal{B}_k$ be a $k$-belt passing through a quadrangle 
$F_i$ and its adjacent faces $F_p$ an $F_q$. Let $x$ be a generator of $\widetilde{H}^1(P_{\tau})=\mathbb Z$
for $\tau=\omega(\mathcal{B}_k)$ (in this case $x=\pm\widetilde{\mathcal{B}_k}$), or $\tau=\omega(\mathcal{B}_k)\sqcup \{r\}$,
where $F_r$ either is not adjacent to faces in $\mathcal{B}_k$, or is adjacent to exactly one face in $\mathcal{B}_k$.
Then $x$ is divisible by any element in the coset $\widetilde{\{p,q\}}+A_3(P)$.
\end{lemma}
\begin{proof}
Indeed, consider the set $P_{\tau\setminus\{p,q\}}$. One of its connected components is $F_i$, since $F_i$ can not be
adjacent to $F_r$ (for otherwise, $F_r$ is also adjacent to both $F_p$ and $F_q$, which is a contradiction). Then
for $[F_i]\in \widehat{H}_2(P_{\tau\setminus\{p,q\}},\partial P_{\tau\setminus\{p,q\}})$ and 
$[F_p]\in \widehat{H}_2(P_{\{p,q\}},\partial P_{\{p,q\}})$ we have $[F_i\cap F_p]$ is a generator of 
$H_1(P_{\tau},\partial P_{\tau})\simeq \widetilde{H}^1(P_{\tau})$.
On the other hand, for any $\{s,t\}\in N_2^0(P)$ at least one of the faces $F_s$ and $F_t$ does
not belong to the belt around $F_i$,
say $F_s$. Then either $F_s=F_i$, or $F_s\cap F_i=\varnothing$. Therefore $\xi\cdot\widetilde{\{s,t\}}=0$, 
where $\xi$ is the element in $H^*(\mathcal{Z}_P)$ corresponding to 
$[F_i]\in \widehat{H}_2(P_{\tau\setminus\{p,q\}},\partial P_{\tau\setminus\{p,q\}})$. Thus, 
$
\pm \xi\cdot(\widetilde{\{p,q\}}+\sum\limits_{\omega\in N_2^0(P)}\lambda_{\omega}\widetilde{\omega})=x.
$
\end{proof}

Then $\varphi(\widetilde{\mathcal{B}_k})$ is divisible by any element in the coset $\pm\widetilde{\{p',q'\}}+A_3(Q)$, 
in particular, by $\widetilde{\{p',q'\}}$. By Corollary \ref{Bdivcor}  each belt $\mathcal{B}_{k,j}'$ passes through $F_{p'}'$ 
and $F_{q'}'$. The second part of the Corollary follows from the fact that $\widetilde{\mathcal{B}_{i'}'}$ is also divisible by any 
element in the coset $\pm\widetilde{\{p',q'\}}+A_3(Q)$ and passes through $F_{p'}'$ and $F_{q'}'$.
\end{proof}

\begin{proposition}\label{BwProp}
Let $P,Q\in \mathcal{P}_{aPog}\setminus\{I^3,M_5\times I\}$, and let $F_p$ be a quadrangle of $P$ not adjacent to a face $F_q$.
Assume that for an isomorphism of graded rings $\varphi\colon H^*(\mathcal{Z}_P)\to H^*(\mathcal{Z}_Q)$ we have  
$\varphi(\widetilde{\{p,q\}})=\pm \widetilde{\{s,t\}}$ for some $\{s,t\}\in N_2(Q)$. Then $p'\in \{s,t\}$ for
$F_{p'}'=\varphi_4(F_p)$ (see Notation \ref{f4not}). In particular, $\varphi(\widetilde{\{p,q\}})=\pm \widetilde{\{p',q'\}}$ for quadrangles $F_p$ and $F_q$, and the set 
$$
\{\pm\widetilde{\{p,q\}}\colon \text{ $F_p$ and $F_q$ are quadrangles}\}\subset H^3(\mathcal{Z}_P)
$$
is $B$-rigid in the class $\mathcal{P}_{aPog}\setminus\{I^3,M_5\times I\}$.
\end{proposition}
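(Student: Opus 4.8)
The plan is to prove this by combining the annihilator-type results from Section \ref{Annsec} with the divisibility machinery developed in Corollary \ref{4wBcor}. The crux is this: we are given that $\varphi(\widetilde{\{p,q\}})=\pm\widetilde{\{s,t\}}$ is a single ``basis'' element $\widetilde{\{s,t\}}$ with $\{s,t\}\in N_2(Q)$, and we must show that one of $F_s,F_t$ equals the quadrangle $F_{p'}'=\varphi_4(F_p)$.

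\medskip\noindent\textbf{Step 1: set up the divisibility obstruction.} Since $F_p$ is a quadrangle, the $4$-belt $\mathcal{B}_p$ around $F_p$ is trivial. Pick the two faces $F_u,F_v\in\mathcal{B}_p$ that are adjacent to both of the remaining faces of the quadrangle's belt; more usefully, note that any face $F_q$ not adjacent to $F_p$ can be joined by a belt $\mathcal{B}_k$ passing through the quadrangle $F_p$ and two of its adjacent faces $F_a,F_b$, together with $F_q$ (using flagness and Lemma \ref{APb-lemma}). Actually the cleanest route: consider the element $\widetilde{\mathcal{B}_p}\in\boldsymbol{B}_4$ corresponding to the trivial $4$-belt around $F_p$. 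By Corollary \ref{4bbr} (and Corollary \ref{bijcor}) we already know $\varphi(\widetilde{\mathcal{B}_p})=\pm\widetilde{\mathcal{B}_{p'}'}$ for the trivial $4$-belt $\mathcal{B}_{p'}'$ around $F_{p'}'=\varphi_4(F_p)$. Now I want to relate $\widetilde{\{p,q\}}$ to $\widetilde{\mathcal{B}_p}$ via a common multiple in $H^*(\mathcal{Z}_P)$.

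\medskip\noindent\textbf{Step 2: exhibit a generator divisible by both $\widetilde{\{p,q\}}$ and (a coset representative of) $\widetilde{\mathcal{B}_p}$.} By the construction in Lemma \ref{cosetlemma}, take a $k$-belt $\mathcal{B}_k$ through $F_p$, its adjacent faces $F_a,F_b$, and running out to include $F_q$ (such a belt exists by flagness plus Lemma \ref{APb-lemma}, since $F_q$ need not intersect the quadrangle $F_p$ if we choose $F_q$ disjoint from $F_p$, which is our hypothesis). The generator $x=\widetilde{\mathcal{B}_k}$ is divisible by every element of the coset $\widetilde{\{a,b\}}+A_3(P)$, hence by $\widetilde{\{a,b\}}$; but I can instead pass the quadrangle $F_p$ through a belt so that $x$ is divisible by $\widetilde{\{p,q\}}$ itself — concretely, one of the connected components of $P_{\omega(\mathcal{B}_k)\setminus\{p,q\}}$ is a single face and the argument of Lemma \ref{cosetlemma} applies with the roles of the pair $\{p,q\}$. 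Under $\varphi$, $x$ maps to a sum $\sum_j\mu_j\widetilde{\mathcal{B}'_{k,j}}$, and by Corollary \ref{Bdivcor} each such belt contains $F_s$ and $F_t$ (because $\varphi(\widetilde{\{p,q\}})=\pm\widetilde{\{s,t\}}$, so $\varphi(x)$ is divisible by $\widetilde{\{s,t\}}$). On the other hand $x$ is also divisible by a coset representative of $\widetilde{\mathcal{B}_p}$'s associated pair — more precisely $\widetilde{\mathcal{B}_k}$ is divisible by the trivial-belt element attached to $F_p$ in the sense of Corollary \ref{4wBcor}, so $\varphi(\widetilde{\mathcal{B}_k})$ passes through the two faces adjacent to the quadrangle $F_{p'}'$. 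Since each belt $\mathcal{B}'_{k,j}$ with $\mu_j\ne0$ then passes through $F_s$, $F_t$, and through faces adjacent to $F_{p'}'$, and these belts are genuine $k$-belts containing a trivial $4$-sub-belt around $F_{p'}'$, a face of $\{s,t\}$ must coincide with $F_{p'}'$ — a face in a belt that also contains the two neighbours of $F_{p'}'$ on that belt and is disjoint from the fourth neighbour forces that face to be the quadrangle itself (here one uses that $P_{aPog}\setminus\{I^3,M_5\times I\}$ has no adjacent quadrangles, so $F_s,F_t$ cannot both be the other neighbours).

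\medskip\noindent\textbf{Step 3: conclude $B$-rigidity of the quadrangle-pair set.} Once we know $p'\in\{s,t\}$, apply the same argument with $p$ and $q$ interchanged (valid precisely when $F_q$ is also a quadrangle): then $q'\in\{s,t\}$ as well, whence $\{s,t\}=\{p',q'\}$, i.e. $\varphi(\widetilde{\{p,q\}})=\pm\widetilde{\{p',q'\}}$. To get the stated $B$-rigidity of the whole set $\{\pm\widetilde{\{p,q\}}:F_p,F_q\text{ quadrangles}\}$, one first observes that for a pair of quadrangles $F_p,F_q$ the element $\widetilde{\{p,q\}}$ need not a priori be sent to a single basis vector; but by Corollary \ref{n4cor}, if $F_p$ and $F_q$ are quadrangles they are in particular not adjacent to any quadrangle other than possibly each other (no adjacent quadrangles!), so in fact Corollary \ref{n4cor} applies verbatim: $\varphi(\widetilde{\{p,q\}})=\pm\widetilde{\omega'}$ for some $\omega'\in N_2(Q)$, which is exactly the hypothesis of the Proposition, and then Step 2 identifies $\omega'=\{p',q'\}$ with both coordinates quadrangles (since $\varphi_4$ is a bijection onto quadrangles). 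Hence the image of the quadrangle-pair set is contained in the analogous set for $Q$; applying the argument to $\varphi^{-1}$ gives the reverse inclusion, so the set is $B$-rigid.

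\medskip\noindent\textbf{Main obstacle.} The delicate point is Step 2: producing a single cohomology generator $x$ that is simultaneously divisible by $\widetilde{\{p,q\}}$ and ``sees'' the trivial $4$-belt around the quadrangle $F_p$ in a way that survives $\varphi$ and pins down which coordinate of $\{s,t\}$ is $F_{p'}'$. This requires carefully choosing the belt $\mathcal{B}_k$ (via flagness and Lemma \ref{APb-lemma}) so that $F_p$, being a quadrangle, contributes its whole trivial $4$-belt structure, and then invoking the combinatorial rigidity that a face of a belt disjoint from one of its belt-neighbours while adjacent to the other two, in a polytope without adjacent quadrangles, must be that quadrangle. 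Verifying that the relevant $P_\omega$ has the exact homotopy type needed (so the divisibility statements of Lemma \ref{Hdivlemma} and Corollary \ref{Bdivcor} apply) is where the routine but somewhat intricate case-checking lives.
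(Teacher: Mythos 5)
The direction of your argument is where the gap is. You try to go forward: fix a belt $\mathcal{B}_k$ in $P$ through $F_p$ and $F_q$, observe that $\varphi(\widetilde{\mathcal{B}_k})$ is a combination of belts $\mathcal{B}'_{k,j}$ in $Q$ that each contain $F_s',F_t'$ (by Corollary~\ref{Bdivcor}) and a pair of opposite faces of the $4$-belt $\mathcal{B}_{p'}'$ around $F_{p'}'=\varphi_4(F_p)$ (by Corollary~\ref{4wBcor} and Lemma~\ref{cosetlemma}), and then claim this forces one of $F_s',F_t'$ to equal $F_{p'}'$. That last step is simply not justified. A $k$-belt passing through an opposite pair $F_{a'}',F_{b'}'$ of $\mathcal{B}_{p'}'$ need not pass through $F_{p'}'$ at all — it can route around the other side — and $F_s',F_t'$ can be any two of its faces, possibly far from $F_{p'}'$. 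Your stated ``combinatorial rigidity'' principle (``a face in a belt that also contains the two neighbours of $F_{p'}'$ on that belt and is disjoint from the fourth neighbour forces that face to be the quadrangle itself'') is false as a general claim: the faces adjacent to both $F_{a'}'$ and $F_{b'}'$ include at least $F_{p'}'$ and both remaining faces $F_{c'}',F_{d'}'$ of $\mathcal{B}_{p'}'$, and you never establish any disjointness of $F_s'$ or $F_t'$ from $F_{c'}'$ or $F_{d'}'$ in the first place.

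The paper's proof resolves exactly this difficulty by arguing in the opposite direction and by contradiction. Assume $p'\notin\{s,t\}$. First, $F_s'$ and $F_t'$ cannot both be adjacent to a common quadrangle (otherwise $\{s,t\}\notin N_2^0(Q)$, whence $\{p,q\}\notin N_2^0(P)$, forcing $F_p$ to be a quadrangle adjacent to another quadrangle — impossible). The missing ingredient you did not supply is Lemma~\ref{ij4lemma}, which produces an $l$-belt $\mathcal{B}_l'$ in $Q$ containing $F_s'$ and $F_t'$, avoiding $F_{p'}'$, and avoiding \emph{both} opposite pairs of faces of $\mathcal{B}_{p'}'$. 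Its proof in turn requires Lemma~\ref{ijaPoglemma} (the simple-loop characterization around a pair of adjacent faces) and the flag-polytope surgery argument; none of this is a consequence of Lemma~\ref{APb-lemma} as you suggest. With $\mathcal{B}_l'$ in hand, one writes $\varphi^{-1}(\widetilde{\mathcal{B}_l'})=\sum_s\lambda_s\widetilde{\mathcal{B}_{l,s}}$; by Corollary~\ref{Bdivcor} each $\mathcal{B}_{l,s}$ passes through $F_p$ and $F_q$, hence through a pair of opposite faces of $\mathcal{B}_p$; pushing forward with Corollary~\ref{4wBcor} shows every belt appearing in $\varphi(\widetilde{\mathcal{B}_{l,s}})$ passes through a pair of opposite faces of $\mathcal{B}_{p'}'$. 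But $\mathcal{B}_l'$ is one of these belts, contradicting its construction. This forward-then-back-then-forward tracing, keyed to the avoidance property furnished by Lemma~\ref{ij4lemma}, is what makes the argument close; your version lacks both the avoidance construction and the by-contradiction structure that makes the divisibility bookkeeping decisive.

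Your Step~3 (deducing $\{s,t\}=\{p',q'\}$ when both $F_p,F_q$ are quadrangles, via symmetry and Corollary~\ref{n4cor}, and then concluding $B$-rigidity by applying the argument to $\varphi$ and $\varphi^{-1}$) is fine and matches the paper once the first claim is established.
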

\begin{proof}
We start with technical results. 
\begin{lemma}\label{ijaPoglemma}
Let $F_i$ and $F_j$ be two adjacent faces of a polytope $\mathcal{P}_{aPog}\setminus\{I^3,M_5\times I\}$,
and $F_u$, $F_v$ be the faces intersecting $F_i\cap F_j$ by vertices.
Then the loop $\mathcal{L}=(F_u, F_{i_1}, \dots, F_{i_k}, F_v, F_{j_1}, \dots, F_{j_r})$ around $F_i$ and $F_j$,
where $F_{i_l}\cap F_i$ are successive edges of $F_i$, and $F_{j_l}\cap F_j$ are successive edges of $F_j$
is simple. It is a belt if and only if both $F_u$ and $F_v$ are not quadrangles. Moreover, if we delete from  $\mathcal{L}$
the quadrangles among $F_u$ and $F_v$, we obtain a belt. 
\end{lemma}
\begin{proof}
Any face of a flag polytope is surrounded by a belt (see, for example, \cite[Proposition 2.9.2.]{BE17I}). Hence, 
$\mathcal{B}_1=(F_u, F_{i_1}, \dots, F_{i_k}, F_v, F_j)$ and $\mathcal{B}_2=(F_v, F_{j_1}, \dots, F_{j_r}, F_u, F_i)$ are belts. 
Then $\mathcal{L}$ is not simple if and only if $F_{i_p}=F_{j_q}$ for some $p$, $q$. 
But then $(F_i,F_j,F_{i_p})$ is a $3$-belt, which  is a contradiction. Thus, $\mathcal{L}$ is a simple loop. 
If it is not a belt, then two non-successive faces are adjacent.
This can be possible only if they do not belong to $\mathcal{B}_1$ and $\mathcal{B}_2$, that is one face is $F_{i_p}$, 
and the other is $F_{j_q}$. Then $(F_i,F_j,F_{i_p}, F_{j_q})$ is a $4$-belt. It should surround a quadrangle 
adjacent to both $F_i$ and $F_j$. It can be only $F_u$ and $F_v$. If both of these faces are not
quadrangles, then $\mathcal{L}$ is a belt. Else delete these faces to obtain a new simple loop $\mathcal{L}'$. 
If two non-successive faces of this loop are adjacent, they have the form $F_{i_p}$ and $F_{j_q}$. By the previous argument 
these faces should be adjacent to a quadrangle $F_u$ or $F_v$. Then $(i_p,j_q)\in\{(i_k,j_1),(i_1,j_r)\}$ and $F_{i_p}$ and 
$F_{j_q}$ are successive in $\mathcal{L}'$. A contradiction.
\end{proof}
\begin{lemma}\label{ij4lemma}
Let $P\in \mathcal{P}_{aPog}\setminus\{I^3,M_5\times I\}$. 
Then for any three pairwise different faces $\{F_i,F_j,F_k\}$ such that $F_i\cap F_j=\varnothing$, 
$F_k$ is a quadrangle, and at least one of the faces $F_i$ and $F_j$ is not adjacent to
$F_k$, there exists an $l$-belt ($l\geqslant 4$) $\mathcal{B}_l$ such that $F_i,F_j\in \mathcal{B}_l$, 
$F_k\notin \mathcal{B}_l$,  and $\mathcal{B}_l$ does not contain any of the two pairs of opposite faces of 
the $4$-belt around $F_k$.
\end{lemma}
\begin{proof}
Let $(F_{i_1},F_{i_2},F_{i_3},F_{i_4})$ be the $4$-belt around $F_k$, and $F_{j_1}$, $F_{j_2}$, $F_{j_3}$, $F_{j_4}$
be the faces intersecting the edges $F_{i_l}\cap F_{i_{l+1}}$ by vertices different from $F_{i_l}\cap F_{i_{l+1}}\cap F_k$ 
(for convenience we consider $l\in \mathbb Z_4=\mathbb Z/4\mathbb Z$ for $i_l$ and $j_l$). By Lemma \ref{ijaPoglemma}
for any $l$ we have $F_{j_l}\ne F_{j_{l+1}}$  and $F_{j_l}\ne F_{j_{l+2}}$, since  $F_{i_{l+1}}$ and 
$F_{i_{l+2}}$ are not quadrangles. Since there are at most one face among $F_i$ and $F_j$ in the $4$-belt
around $F_k$, there are at least two pairs $(F_{i_l},F_{i_{l+1}})$ not containing both $F_i$ and $F_j$. For them 
there is at least one $F_{j_l}\notin \{F_i,F_j\}$. Thus, we obtain a triple $\{F_{i_l},F_{i_{l+1}},F_{j_l}\}$ not
containing faces in $\{F_i,F_j\}$. By Lemma \ref{ijaPoglemma} there is a belt corresponding to $(F_{i_l},F_{i_{l+1}})$.
It contains $F_{i_{l+2}}$, $F_{i_{l+3}}$, and contains $F_{j_l}$ if and only if it is not a quadrangle. 
Connect the midpoints of edges of intersection of successive faces of this belt to obtain a piecewise linear closed
cycle containing in one component of the complement $F_{i_l}$ and $F_{i_{l+1}}$. Substitute a disk for this component
to obtain a spherical graph. By \cite[Lemma 5.1.1]{BE17I} (also \cite[Lemma 3.11]{BE17S}) this graph corresponds to a flag simple polytope.
By the analog of SCC for flag polytopes (see Section \ref{Bsec}) there is a belt in this polytope, which contains 
the faces arisen from $F_i$ and $F_j$ and does not contain the face corresponding to the disk. In $P$ this belt corresponds
to a belt $\mathcal{B}_l$ we need, since subdivision of the disk does not add or remove
edges of intersection of faces of the belt. The belt $\mathcal{B}_l$ contains $F_i$ and $F_j$, but does not contain $F_{i_l}$ and $F_{i_{l+1}}$. This finishes the proof.
\end{proof}

Now we are ready to proof Proposition \ref{BwProp}.  Let $p'\notin \{s,t\}$. 
The faces $F_s'$ and $F_t'$ are not adjacent simultaneously
to any quadrangle, for otherwise $\{s,t\}\notin N_2^0(Q)$, $\{p,q\}\notin N_2^0(P)$, and $F_p$
is adjacent to a quadrangle. In particular, they are not adjacent simultaneously to $F_{p'}'$.
By Lemma \ref{ij4lemma} there is a belt $\mathcal{B}_l'$ in $Q$, which contains $F_s'$ and $F_t'$,
and does not contain any of the pairs of opposite faces of the belt $\mathcal{B}_{p'}$ around $F_{p'}'$. 
Consider the preimage
$$
\varphi^{-1}(\widetilde{\mathcal{B}_l'})=\sum_s\lambda_s\widetilde{\mathcal{B}_{l,s}}.
$$
By Corollary \ref{Bdivcor} any of the belts with $\lambda_s\ne0$ contains $F_p$ and $F_q$. In particular,
it passes through a pair of opposite faces in the belt $\mathcal{B}_p$ around $F_p$. 
By Corollary \ref{4wBcor}
$$
\widetilde{\mathcal{B}_l'}=\varphi\left(\sum_s\lambda_s\widetilde{\mathcal{B}_{l,s}}\right)=
\sum_s\lambda_s\varphi(\widetilde{\mathcal{B}_{l,s}})=\sum_s\lambda_s\sum_j\mu_{s,j}\widetilde{\mathcal{B}_{l,j}'}=
\sum_j\left(\sum_s\lambda_s\mu_{s,j}\right)\widetilde{\mathcal{B}_{l,j}'}
$$
where each belt $\mathcal{B}_{l,j}'$, such that $\lambda_s\ne0\ne\mu_{s,j}$ for some $s$, 
contains a pair of opposite faces in the belt
$\mathcal{B}_{p'}$. But $\mathcal{B}_l'$ is equal to one of this belts,
hence also contains a pair of opposite faces of the belt $\mathcal{B}_{p'}$ around $F_{p'}'$. A contradiction. Thus, $p'\in \{s,t\}$.

If $F_p$ and $F_q$ are both quadrangles, then Corollary \ref{n4cor} implies
that $\varphi(\widetilde{\{p,q\}})=\pm \widetilde{\{s,t\}}$ for some 
$\{s,t\}\in N_2^0(Q)$. From the above argument $q'\in \{s,t\}$ for $F_{q'}'=\varphi_4(F_q)$. Thus, $\{s,t\}=\{p',q'\}$.
This finishes the proof.

\end{proof}

\begin{lemma}\label{2k4klemma}
Let $P,Q\in \mathcal{P}_{aPog}\setminus\{I^3,M_5\times I\}$, and let $\mathcal{B}_{2k}$ be a $(2k)$-belt of $P$ containing 
$k$ quadrangles. Then for any isomorphism of graded rings $\varphi\colon H^*(\mathcal{Z}_P)\to H^*(\mathcal{Z}_Q)$  
we have  $\varphi(\widetilde{\mathcal{B}_{2k}})=\pm \widetilde{\mathcal{B}_{2k}'}$ for a $(2k)$-belt $\mathcal{B}_{2k}'$ in $Q$
containing $k$ quadrangles. In particular, the set of elements 
$$
\{\pm \widetilde{\mathcal{B}_{2k}}\colon \mathcal{B}_{2k} - \text{a $(2k)$-belt containing $k$ quadrangles}\}\subset H^{2k+2}(\mathcal{Z}_P)
$$ 
is $B$-rigid in the class $\mathcal{P}_{aPog}\setminus\{I^3,M_5\times I\}$.  Moreover, $\varphi_4$ (see Notation \ref{f4not})
induces a bijection between quadrangles of $\mathcal{B}_{2k}$ and quadrangles of $\mathcal{B}_{2k}'$, 
and  for any two faces of $F_p$ and $F_q$ 
of $\mathcal{B}_{2k}$ adjacent to a quadrangle $F_i$ in $\mathcal{B}_{2k}$ we have 
$$
\varphi(\widetilde{\{p,q\}}+A_3(P))=\pm \widetilde{\{p',q'\}}+A_3(Q),
$$ 
where the faces $F_{p'}'$ and $F_{q'}'$ are adjacent to $F_{i'}'=\varphi_4(F_i)$.
\end{lemma}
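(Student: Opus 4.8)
The plan is to run the standard divisibility argument used for Pogorelov polytopes, using Proposition \ref{BwProp} to pin down where the quadrangles of $\mathcal{B}_{2k}$ are sent and Corollary \ref{4wBcor} to pin down where the intermediate faces go. First I would record the shape of $\mathcal{B}_{2k}$: since $P$ has no two adjacent quadrangles, the $k$ quadrangles of $\mathcal{B}_{2k}$ occupy pairwise non-successive positions of a $(2k)$-cycle, which forces the belt to alternate, so we may write it cyclically as $\mathcal{B}_{2k}=(F_{i_1},G_1,\dots,F_{i_k},G_k)$ with the $F_{i_l}$ quadrangles and the $G_l$ not. The belt enters and leaves each $F_{i_l}$ through non-adjacent edges (otherwise three of its faces would share a vertex), so $\{G_{l-1},G_l\}$ is the opposite pair of the $4$-belt around $F_{i_l}$; in particular $\{G_{l-1},G_l\}\in N_2(P)$ lies in $\omega(\mathcal{B}_4)$ for that $4$-belt, and $\{i_l,i_{l'}\}\in N_2(P)$ for all $l\ne l'$ since quadrangles of $P$ are pairwise disjoint.

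Next, by $B$-rigidity of $\boldsymbol{B}_{2k}$ (Section \ref{Bsec}) write $\varphi(\widetilde{\mathcal{B}_{2k}})=\sum_j\mu_j\widetilde{\mathcal{B}_{2k,j}'}$ with $\mathcal{B}_{2k,j}'$ a $(2k)$-belt of $Q$. For $l\ne l'$ the element $\widetilde{\mathcal{B}_{2k}}$ is divisible by $\widetilde{\{i_l,i_{l'}\}}$ by Corollary \ref{Bdivcor}, and since $F_{i_l},F_{i_{l'}}$ are quadrangles of $P$ they are adjacent to no quadrangle, so Corollary \ref{n4cor} together with Proposition \ref{BwProp} gives $\varphi(\widetilde{\{i_l,i_{l'}\}})=\pm\widetilde{\{i_l',i_{l'}'\}}$ with $F_{i_l'}'=\varphi_4(F_{i_l})$. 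Hence $\varphi(\widetilde{\mathcal{B}_{2k}})$ is divisible by $\widetilde{\{i_l',i_{l'}'\}}$, so by Corollary \ref{Bdivcor} applied in $Q$ every $\mathcal{B}_{2k,j}'$ with $\mu_j\ne0$ contains all of the distinct quadrangles $F_{i_1}',\dots,F_{i_k}'$. Applying Corollary \ref{4wBcor} to $\mathcal{B}_{2k}$ relative to each $F_{i_l}$ and its belt-neighbours $G_{l-1},G_l$, each such $\mathcal{B}_{2k,j}'$ also contains faces $F_{g_{l-1}}',F_{g_l}'$ adjacent to $F_{i_l}'$, with $\varphi(\widetilde{\{G_{l-1},G_l\}}+A_3(P))=\pm\widetilde{\{g_{l-1},g_l\}}+A_3(Q)$.

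Now I would show that each $\mathcal{B}_{2k,j}'$ with $\mu_j\ne0$ equals the $2k$-element set $\{F_{i_1}',\dots,F_{i_k}'\}\cup\{F_{g_1}',\dots,F_{g_k}'\}$. Since $\mathcal{B}_{2k,j}'$ contains the $k$ quadrangles $F_{i_l}'$ and a $(2k)$-belt of a polytope without adjacent quadrangles cannot contain more than $k$ of them, it alternates between these $k$ quadrangles and $k$ non-quadrangle faces $L_1,\dots,L_k$, which are distinct, being faces of a belt. Each $F_{g_l}'$ is adjacent to the two quadrangles $F_{i_l}'$ and $F_{i_{l+1}}'$ of $Q$, hence is not a quadrangle (no adjacent quadrangles in $Q$), so $F_{g_l}'\in\{L_1,\dots,L_k\}$; and because in a belt adjacency coincides with successivity, $\{F_{g_{l-1}}',F_{g_l}'\}$ is forced to be precisely the pair of $L$'s flanking $F_{i_l}'$ in $\mathcal{B}_{2k,j}'$. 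As $l$ varies these are the $k$ distinct edges of the $k$-cycle $(L_1,\dots,L_k)$, so $l\mapsto F_{g_l}'$ is a bijection onto $\{L_1,\dots,L_k\}$ and $\mathcal{B}_{2k,j}'$ is exactly the claimed $2k$-element set, for every $j$ with $\mu_j\ne0$. Therefore all these $\widetilde{\mathcal{B}_{2k,j}'}$ agree up to sign with a single $\widetilde{\mathcal{B}_{2k}'}$, and since $\varphi$ carries a basis of $\boldsymbol{B}_{2k}(P)$ to a basis of $\boldsymbol{B}_{2k}(Q)$ we conclude $\varphi(\widetilde{\mathcal{B}_{2k}})=\pm\widetilde{\mathcal{B}_{2k}'}$. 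The belt $\mathcal{B}_{2k}'$ has exactly the $k$ quadrangles $F_{i_l}'=\varphi_4(F_{i_l})$, so $\varphi_4$ induces the asserted bijection of quadrangles, the coset identity for the belt-neighbours of each quadrangle is the one already delivered by Corollary \ref{4wBcor}, and $B$-rigidity of the whole set $\{\pm\widetilde{\mathcal{B}_{2k}}\}$ follows by symmetry in $P$ and $Q$.

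The step I expect to need the most care is the third paragraph: matching the faces $F_{g_l}'$ produced by Corollary \ref{4wBcor} with the flanking non-quadrangle faces $L_m$ of $\mathcal{B}_{2k,j}'$ and deducing that they exhaust the non-quadrangles of that belt, since this is exactly what upgrades ``$\mathcal{B}_{2k,j}'$ contains these $2k$ faces'' to ``$\mathcal{B}_{2k,j}'$ equals them''; the hypothesis $Q\notin\{I^3,M_5\times I\}$ (no adjacent quadrangles) is used essentially there, both to force the alternating pattern and to show $F_{g_l}'\ne F_{i_{l'}}'$. Finally one should note the degenerate case $k=2$, in which $\mathcal{B}_{2k}$ is a $4$-belt with two opposite quadrangles: the same argument applies verbatim, with the $B$-rigid set of Corollary \ref{4bbr} playing the role of $\boldsymbol{B}_{2k}$.
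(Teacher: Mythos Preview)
Your argument is essentially the paper's own, and is correct in substance. Two small corrections: the case $k=2$ is vacuous, not a degenerate case to handle separately, since a $4$-belt in $P\in\mathcal{P}_{aPog}\setminus\{I^3,M_5\times I\}$ surrounds a quadrangle and hence cannot contain one (any quadrangle in it would be adjacent to the surrounded quadrangle); the paper accordingly notes $k\ge 3$ at the outset. Second, the sentence ``each $F_{g_l}'$ is adjacent to the two quadrangles $F_{i_l}'$ and $F_{i_{l+1}}'$'' tacitly assumes that the index $g_l$ coming from Corollary~\ref{4wBcor} applied to $F_{i_l}$ coincides with the one coming from the application to $F_{i_{l+1}}$; this is not yet established (indeed the paper remarks that $\varphi_4$ is not known to preserve the cyclic order of quadrangles). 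The claim is unnecessary, however: you only need that each unordered pair $\{F_{g_{l-1}}',F_{g_l}'\}$ is the pair of non-quadrangle neighbours of $F_{i_l}'$ in $\mathcal{B}_{2k,j}'$, which you do prove, and since every $L_m$ flanks some $F_{i_l}'$ the union of these pairs is all of $\{L_1,\dots,L_k\}$, fixing the face-set of $\mathcal{B}_{2k,j}'$ independently of $j$.
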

\begin{proof}
Since $P$ has no adjacent quadrangles, 
$\mathcal{B}_{2k}=(F_{i_1},F_{j_1},F_{i_2},F_{j_2},\dots,F_{i_k},F_{j_k})$, where 
$F_{i_1}$, $\dots$, $F_{i_k}$ are quadrangles, and $F_{j_1}$, $\dots$, $F_{j_k}$ are not.
Also $k\geqslant 3$, since each $4$-belt surrounds a quadrangle and for this 
reason can not contain quadrangles.

We have
$$
\varphi(\widetilde{\mathcal{B}_{2k}})=\sum_s\mu_s\widetilde{\mathcal{B}_{2k,s}'},
$$
where by Corollary \ref{Bdivcor} and Proposition \ref{BwProp} 
for each $\mu_s\ne0$ the belt $\mathcal{B}_{2k,s}'$ contains all the pairs of 
quadrangles $\{F_{i_a'}'=\varphi_4(F_{i_a}),F_{i_b'}'=\varphi_4(F_{i_b})\}$, $a\ne b$. 
There are $\frac{k(k-1)}{2}$ such pairs. The belt $\mathcal{B}_{2k,s}'$
may contain at most this number of pairs of quadrangles, and it is achieved if and only if
$\mathcal{B}_{2k,s}'$ contains exactly $k$ quadrangles, and
$$
\mathcal{B}_{2k,s}'=(F_{u_1}',F_{v_1}',F_{u_2}',F_{v_2}',\dots,F_{u_k}',F_{v_k}')
$$
for some quadrangles $F_{u_1}'$, $\dots$, $F_{u_k}'$. Then  $\varphi_4$ induces a bijection between the sets 
$\{F_{i_1},\dots, F_{i_k}\}$ and $\{F_{u_1}', \dots, F_{u_k}'\}$: $F_{i_a}\to F_{u_{a'}}'=\varphi_4(F_{i_a})$. 
At this moment we do not know if this bijection preserves the cyclic order. 

For each pair $\{j_{a-1},j_a\}$ (where we assume that $a,a-1\in \mathbb Z_k=\mathbb Z/k\mathbb Z$) we have
$\varphi(\widetilde{\{j_{a-1},j_a\}}+A_3(P))=\pm \widetilde{\{z,w\}}+A_3(Q)$ for faces $F_z$ and $F_w$ adjacent to
$F_{u_{a'}}$. Moreover,  by Corollary \ref{4wBcor} the belt $\mathcal{B}_{2k,s}'$ passes through $F_z$ and $F_w$. 
Thus, $\{z,w\}=\{v_{a'-1},v_{a'}\}$.  Since the belt $\mathcal{B}_{2k,s}'$ is covered by segments $(F_{v_{a'-1}}',F_{u_{a'}}',F_{v_{a'}}')$, there is only one belt $\mathcal{B}_{2k,s}'$ with $\mu_s\ne0$, and $\varphi(\widetilde{\mathcal{B}_{2k}})=\pm\widetilde{\mathcal{B}_{2k,s}'}$. 
\end{proof}

\begin{lemma}\label{2k4ktrivlemma}
Let $P,Q\in \mathcal{P}_{aPog}\setminus\{I^3,M_5\times I\}$, and let $\mathcal{B}_{2k}$ be a trivial $(2k)$-belt of $P$ containing 
$k$ quadrangles. Then for any isomorphism of graded rings $\varphi\colon H^*(\mathcal{Z}_P)\to H^*(\mathcal{Z}_Q)$  
we have  $\varphi(\widetilde{\mathcal{B}_{2k}})=\pm \widetilde{\mathcal{B}_{2k}'}$ for a trivial $(2k)$-belt 
$\mathcal{B}_{2k}'$ in $Q$ containing $k$ quadrangles. In particular, the set of elements 
$$
\{\pm \widetilde{\mathcal{B}_{2k}}\colon \mathcal{B}_{2k} - \text{a trivial $(2k)$-belt containing $k$ quadrangles}\}\subset H^{2k+2}(\mathcal{Z}_P)
$$ 
is $B$-rigid in the class $\mathcal{P}_{aPog}\setminus\{I^3,M_5\times I\}$.
\end{lemma}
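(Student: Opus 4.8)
The plan is to reduce to Lemma~\ref{2k4klemma} and then to recognise triviality of a $(2k)$-belt with $k$ quadrangles by a geometric condition that survives the isomorphism $\varphi$. By Lemma~\ref{2k4klemma} we already have $\varphi(\widetilde{\mathcal{B}_{2k}})=\pm\widetilde{\mathcal{B}_{2k}'}$ for a $(2k)$-belt $\mathcal{B}_{2k}'$ of $Q$ with $k$ quadrangles; moreover $\varphi_4$ carries the quadrangles $F_{i_1},\dots,F_{i_k}$ of $\mathcal{B}_{2k}$ bijectively onto those of $\mathcal{B}_{2k}'$, the cosets $\widetilde{\{j_{a-1},j_a\}}+A_3$ of the belt-neighbour pairs are matched with those of the corresponding quadrangles of $\mathcal{B}_{2k}'$, and $\varphi(\widetilde{\mathcal{B}_4(F_{i_a})})=\pm\widetilde{\mathcal{B}_4(\varphi_4(F_{i_a}))}$ by Corollary~\ref{bijcor} and Notation~\ref{f4not}. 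Since the statement is symmetric in $P$ and $Q$ (apply everything to $\varphi^{-1}$), it is enough to prove that $\mathcal{B}_{2k}$ trivial implies $\mathcal{B}_{2k}'$ trivial; the $B$-rigidity of the set of elements then follows.

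The next step is a combinatorial criterion. Let $\mathcal{D}=(F_{i_1},F_{j_1},\dots,F_{i_k},F_{j_k})$ be a $(2k)$-belt of a polytope in $\mathcal{P}_{aPog}\setminus\{I^3,M_5\times I\}$ with the $F_{i_a}$ quadrangles (so $k\geqslant3$). The $4$-belt $\mathcal{C}_a$ around $F_{i_a}$ consists of the four neighbours of $F_{i_a}$; two of them, $F_{j_{a-1}}$ and $F_{j_a}$, lie in $\mathcal{D}$, and the other two form the pair $\delta_a\in N_2(P)$ opposite in $\mathcal{C}_a$ to $\{j_{a-1},j_a\}$, with one face of $\delta_a$ in each disk component $\overline{C_1},\overline{C_2}$ of $\partial P\setminus|\mathcal{D}|$; write $\delta_a=\{c_1^a,c_2^a\}$, $F_{c_1^a}\subset\overline{C_1}$, $F_{c_2^a}\subset\overline{C_2}$. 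A face $F_r\notin\mathcal{D}$ is adjacent to all $2k$ faces of $\mathcal{D}$ if and only if $F_r$ belongs to every $\mathcal{C}_a$, i.e. $F_r\in\delta_a$ for all $a$; and, by flagness and the absence of $3$-belts (as in Lemma~\ref{ijaPoglemma}), such an $F_r$ exists exactly when $\mathcal{D}$ is trivial, $F_r$ being then the surrounded $2k$-gon. Since faces of $\overline{C_1}$ are disjoint from faces of $\overline{C_2}$, $\delta_a\cap\delta_b\ne\varnothing$ forces $F_{c_1^a}=F_{c_1^b}$ or $F_{c_2^a}=F_{c_2^b}$, and a short case check then gives that if $\delta_1,\dots,\delta_k$ are pairwise intersecting then either all $F_{c_1^a}$ coincide or all $F_{c_2^a}$ coincide. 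Hence $\mathcal{D}$ is trivial if and only if the pairs $\delta_1,\dots,\delta_k$ are pairwise intersecting.

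Finally I would transfer this criterion. Because $\varphi$ maps $\widetilde{\mathcal{B}_4(F_{i_a})}$ to $\pm\widetilde{\mathcal{B}_4(\varphi_4(F_{i_a}))}$, it maps the rank-$2$ group $G(\mathcal{C}_a)\subset\boldsymbol{H}_3$ onto $G(\mathcal{B}_4(\varphi_4(F_{i_a})))$ (Lemma~\ref{2dimlemma}); it sends the class $[\widetilde{\{j_{a-1},j_a\}}]$ of the belt-neighbour pair to $\pm$ that of the belt-neighbour pair of $\varphi_4(F_{i_a})$ in $\mathcal{B}_{2k}'$ (Lemma~\ref{2k4klemma}, Corollary~\ref{4wBcor}); and it respects the $B$-rigid set $\{\pm[\widetilde\omega]:\omega\subset\omega(\mathcal{B}_4)\text{ for some }4\text{-belt}\}$ (Lemma~\ref{wb4lemma}), inside which the elements with support in $\omega(\mathcal{C}_a)$ are exactly $\pm[\widetilde{\{j_{a-1},j_a\}}]$ and $\pm[\widetilde{\delta_a}]$. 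Combining these, $\varphi([\widetilde{\delta_a}])=\pm[\widetilde{\delta_a'}]$, where $\delta_a'$ is the pair of the $4$-belt around $\varphi_4(F_{i_a})$ not lying in $\mathcal{B}_{2k}'$. The point still to be proved --- and the main obstacle --- is that ``$\delta_a\cap\delta_b\ne\varnothing$'' is detected by the graded ring in a way respected by $\varphi$. This cannot be done through products: $\widetilde{\delta_a}\cdot\widetilde{\delta_b}$ vanishes for all $a,b$ (when the supports meet, trivially; when they are disjoint, because $\delta_a\sqcup\delta_b$ is never a $4$-belt, the two complementary components of $\mathcal{D}$ being disjoint), and likewise the products and annihilators of the top-dimensional classes $\widetilde{\mathcal{B}_4(F_{i_a})}$, $\widetilde{\mathcal{B}_{2k}}$ degenerate for degree reasons. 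Instead one must read intersection of the $\delta_a$ off divisibility relations involving $\widetilde{\mathcal{B}_{2k}}$, $\widetilde{\mathcal{B}_4(F_{i_a})}$ and the $[\widetilde{\delta_a}]$ in $\boldsymbol{H}_3$ and in $H^*(\mathcal{Z}_P)/\langle A_3\rangle$, refining the rank computation ($\rk I_7$, with its $m-5$ versus $m-4$ dichotomy) behind the trivial-$4$-belt lemma. Granting that, pairwise intersection of $\delta_1,\dots,\delta_k$ passes to $\delta_1',\dots,\delta_k'$, so $\mathcal{B}_{2k}'$ is trivial by the criterion applied in $Q$, and the ``in particular'' is immediate.
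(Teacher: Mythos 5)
Your proposal contains a genuine, explicitly acknowledged gap, and the way you gesture at filling it does not match what actually works.

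Up to the point where you establish the combinatorial criterion ``$\mathcal{B}_{2k}$ is trivial if and only if the pairs $\delta_1,\dots,\delta_k$ are pairwise intersecting,'' your argument is correct: by Lemma~\ref{2k4klemma} you get $\varphi(\widetilde{\mathcal{B}_{2k}})=\pm\widetilde{\mathcal{B}_{2k}'}$ with $k$ quadrangles, you correctly note that $\delta_a$ has one face in each disk component of $\partial P\setminus|\mathcal{B}_{2k}|$, and your case-check that pairwise intersection forces a common face is sound. You also correctly diagnose why $\widetilde{\delta_a}\cdot\widetilde{\delta_b}=0$ in all cases, so products cannot detect the intersection pattern of the $\delta_a$'s. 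But then you write ``Granting that'' --- you have not produced any ring-theoretic invariant that sees whether $\delta_a\cap\delta_b\ne\varnothing$, and that is the entire content of the lemma.

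The paper's proof abandons the $\delta_a$-criterion altogether. It defines a subgroup $G(\mathcal{B}_{2k})\subset H^{2k+3}(\mathcal{Z}_P)$ as the set of all elements divisible simultaneously by every $\widetilde{\{i_a,i_b\}}$ $(a\ne b)$ and by every element of every coset $\widetilde{\{j_{a-1},j_a\}}+A_3(P)$. Using Lemma~\ref{Hdivlemma} and Lemma~\ref{cosetlemma}, it classifies which faces $F_r$ contribute a nonzero $\widetilde{H}^1(P_{\omega(\mathcal{B}_{2k})\sqcup\{r\}})$ lying in $G(\mathcal{B}_{2k})$ and arrives at the counting: $\rk G(\mathcal{B}_{2k})=m-3k-1$ when the belt is trivial, and $\rk G(\mathcal{B}_{2k}')=m-4k$ when it is nontrivial. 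Since by Proposition~\ref{BwProp} and Corollary~\ref{4wBcor} the isomorphism $\varphi$ sends $G(\mathcal{B}_{2k})$ onto $G(\mathcal{B}_{2k}')$, the inequality $m-4k<m-3k-1$ (valid because $k\geqslant 3$) yields a contradiction. This is a rank comparison of a concretely defined divisibility subgroup, not a reformulation of ``the $\delta_a$ pairwise intersect.'' Your closing remark that one should ``refine the rank computation ($\rk I_7$, with its $m-5$ versus $m-4$ dichotomy)'' is in the right spirit --- a trivial/nontrivial rank dichotomy is indeed the mechanism --- but you have neither identified the correct subgroup, nor placed it in degree $2k+3$ rather than $7$, nor carried out the classification of contributing faces $F_r$ that produces the two different ranks. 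As written, the proof is incomplete.
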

\begin{proof}
As before, we see that $k\geqslant 3$, since each $4$-belt surrounds a quadrangle and for this 
reason can not contain quadrangles.  

By Lemma \ref{2k4ktrivlemma} we have $\varphi(\widetilde{\mathcal{B}_{2k}})=\pm \widetilde{\mathcal{B}_{2k}'}$ for a 
$(2k)$-belt  $\mathcal{B}_{2k}'$ in $Q$ containing $k$ quadrangles. We will use notations from the proof of this lemma.

Assume that the belt $\mathcal{B}_{2k}$ surrounds a face $F_i$, and the belt $\mathcal{B}_{2k}'$ is not trivial.

Consider a subgroup  $G(\mathcal{B}_{2k})\subset H^{2k+3}(\mathcal{Z}_P)$ 
consisting of all the elements divisible by $\widetilde{\{i_a,i_b\}}$ for all $a\ne b$, and by each member of a coset
$\widetilde{\{j_{a-1},j_a\}}+A_3(P)$ for all $a$. We have $2k+3\leqslant m$, since outside the belt there are at least two faces, for otherwise  $P$ should be a prism and have adjacent quadrangles. 

Since $\omega(\mathcal{B}_{2k})=\bigcup_{a\ne b}\{i_a,i_b\}\bigcup_a\{j_{a-1},j_a\}$ Lemma \ref{Hdivlemma} implies that 
any element in $G(\mathcal{B}_{2k})$ has the form
$$
\sum\limits_{\tau\supset\omega(\mathcal{B}_l)}\beta_\tau,\text{ where }\beta_\tau\in \widetilde{H}^1(P_{\tau}), |\tau|=l+1,
$$
and each nonzero $\beta_\tau$ is divisible by all $\widetilde{\{i_a,i_b\}}$ and  $\widetilde{\{j_{a-1},j_a\}}$.
Then $\tau=\omega(\mathcal{B}_{2k})\sqcup\{r\}$. We have $F_r\ne F_i$, for otherwise $P_{\tau}$ is contractible. 
Let $F_r$ be adjacent to two non-successive faces of $\mathcal{B}_{2k}$, say $F_p$ and $F_q$.
Then $(F_r,F_p,F_i,F_q)$ is a $4$-belt surrounding a quadrangle $F_{i_a}$ in $\mathcal{B}_{2k}$, and 
$\{F_p,F_q\}= \{F_{j_{a-1}}, F_{j_a}\}$. There are $k$ such quadrangles.
If $F_r$ is adjacent to some face in $\mathcal{B}_{2k}$ different from $F_{j_{a-1}}$, $F_{i_a}$, and $F_{j_a}$, then 
this face is not successive  with $F_{i_a}$, and by the previous argument $F_{i_a}$ belongs
a $4$-belt around a quadrangle, which is a contradiction. Thus, $F_r$ is not adjacent to other faces in
$\mathcal{B}_{2k}$. There are $k$ faces of this type. For them 
$P_{\tau\setminus\{i_b\}}$ is connected for any $i_b\ne i_a$, hence $\beta_\tau$ is not divisible by
$\widetilde{\{i_a,i_b\}}$. If $F_r$ is adjacent to two successive faces of $\mathcal{B}_{2k}$, then one of this faces is
a quadrangle, and $F_r$ is of previous type. If $F_r$ is adjacent to exactly one face of $\mathcal{B}_{2k}$, then this
face is not a quadrangle, and $P_{\tau}$ is homeomorphic to $|\mathcal{B}_{2k}|$. If $F_r$ is not adjacent to faces in 
$\mathcal{B}_{2k}$, then $P_{\tau}$ is a disjoint union of $|\mathcal{B}_{2k}|$ and $F_r$. In both cases 
$\widetilde{H}^1(P_{\tau})=\mathbb Z$, and its generator is divisible by all the elements 
$\widetilde{\{i_a,i_b\}}$ and  $\widetilde{\{j_{a-1},j_a\}}$. Moreover, by Lemma \ref{cosetlemma} this generator is also
divisible by any element in the coset $\widetilde{\{j_{a-1},j_a\}}+A_3(P)$. 
Thus, $G(\mathcal{B}_{2k})$ is a free abelian group of rank $m-2k-1-k=m-3k-1$.

Now consider the belt $\mathcal{B}_{2k}'$. Since $\varphi(\widetilde{\{i_a,i_b\}})=\pm\widetilde{\{u_{a'},u_{b'}\}}$ 
for all $a\ne b$, and 
$\varphi(\widetilde{\{j_{a-1},j_a\}}+A_3(P))=\pm \widetilde{\{v_{a'-1},v_{a'}\}}+A_3(Q)$ for all $a$, we have 
$\varphi(G(\mathcal{B}_{2k}))=G(\mathcal{B}_{2k}')$. 
By the above argument each element in $G(\mathcal{B}_{2k}')$ has the form 
$$
\sum\limits_{\tau\supset\omega(\mathcal{B}_{2k}')}\beta_\tau',\text{ where }\beta_{\tau}'\in \widetilde{H}^1(Q_{\tau}), |\tau|=l+1,
$$
and each nonzero $\beta_{\tau}'$ is divisible by all $\widetilde{\{u_a,u_b\}}$ and  $\widetilde{\{v_{a-1},v_a\}}$. Also
$\tau=\omega(\mathcal{B}_{2k}')\sqcup r$. Consider a face $F_r'$ of $Q$. Let $F_r'$ be adjacent to two non-successive 
faces of $\mathcal{B}_{2k}'$, say $F_p'$ and $F_q'$. Since $\mathcal{B}_{2k}'$ is a nontrivial belt and
$Q$ is a flag polytope, $F_r'$ is not adjacent to some
face of $\mathcal{B}_{2k}'$. We can assume that for the segment $T=(F_p',F_{w_1}',\dots,F_{w_t}',F_q')$ of $\mathcal{B}_{2k}'$
the face $F_r'$ is not adjacent to  $F_{w_1}'$, $\dots$, $F_{w_t}'$. Then $F_p'$ and $F_q'$ are not quadrangles, 
for otherwise either $F_{w_1}'$ or $F_{w_t}'$ is adjacent to $F_r'$. Hence,  
$\mathcal{B}_{t+3}'=(F_r',F_p',F_{w_1}',\dots,F_{w_t}',F_q')$
is a $(t+3)$-belt, where $4\leqslant t+3\leqslant 2k$, and $t+3=2k$ only if $F_p'$ and $F_q'$ are faces adjacent to one face 
$F_v'$ in $\mathcal{B}_{2k}'$. This face $F_v'$ is a quadrangle, since $F_p'$ and $F_q'$ are not quadrangles.
Then $F_v'\cap F_r'\ne\varnothing$, for otherwise $(F_v',F_p',F_r', F_q')$ is a $4$-belt around a quadrangle adjacent to 
the quadrangle $F_v'$. Thus, if $t+3=2k$, then $F_r'$ is adjacent only to the three faces $F_{v_{a-1}}'$, $F_{u_a}'$, 
and $F_{v_a}'$ in $\mathcal{B}_{2k}'$ for some $a$.

Now assume that $t+3<2k$. If $t+3=4$, then $\mathcal{B}_{t+3}'$ surrounds a quadrangle adjacent to three successive faces of 
$\mathcal{B}_{2k}'$, which is a contradiction. Thus, $t+3\geqslant 5$. Moreover, $t+3\geqslant 6$ is an even number, 
for otherwise either $F_p'$ or $F_q'$ is a quadrangle. In particular, there are at least two quadrangles in $T$. Consider the preimage 
$$
\varphi^{-1}(\widetilde{\mathcal{B}_{t+3}'})=\sum_s\lambda_s \widetilde{\mathcal{B}_{t+3,s}}
$$
Each belt $\mathcal{B}_{t+3,s}$ is divisible by any $\widetilde{\{i_a,i_b\}}$ for 
$\{u_{a'},u_{b'}\}\subset \omega(\mathcal{B}_{t+3}')$. Corollary \ref{4wBcor}  implies that it is also divisible 
by any $\widetilde{\{j_{a-1},j_{a}\}}$ with  $\{v_{a'-1},v_{a'}\}\subset \omega(\mathcal{B}_{t+3}')$ 
(since in this case $F_{u_{a'}}\in T$). In particular,
it contains $F_{j_{a-1}}$, $F_{i_a}$, $F_{j_a}$, $F_{j_{b-1}}$, $F_{i_b}$, and $F_{j_b}$.
The mapping $\varphi_4^{-1}$ sends the set of quadrangles in $T$ injectively to some set of quadrangles in  
$\mathcal{B}_{t+3,s}\cap \mathcal{B}_{2k}$. Moreover, the image of each quadrangle in contained in 
$\mathcal{B}_{t+3,s}\cap \mathcal{B}_{2k}$ with two adjacent faces. Denote by 
$M\subset \mathcal{B}_{t+3,s}\cap \mathcal{B}_{2k}$ the union of the images of quadrangles and the faces in 
$\mathcal{B}_{t+3,s}\cap \mathcal{B}_{2k}$ adjacent to this images.  
The set $M$ is a disjoint union of segments of $\mathcal{B}_{2k}$. Enumerate them in the order they appear in the belt. If 
we identify the last face of each segment with the first face of the next segment (both of them are not quadrangles), 
we obtain one segment consisting of $|T|=t+2$ faces. If $M$ consists of more than one connected component, then
it has more than $t+2$ faces. But  $M$ has at most $t+3$ faces, since 
$M\subset \mathcal{B}_{t+3,s}$. Therefore,  $M=\mathcal{B}_{t+3,s}\subset \mathcal{B}_{2k}$.
This is impossible, since $t+3<2k$. Thus, $M$ is a segment of $\mathcal{B}_{2k}$ consisting of $t+2$ faces.
Then the $(t+3)$-th face in $\mathcal{B}_{t+3,s}$ is some face $F_x\notin \mathcal{B}_{2k}$.
Since $\mathcal{B}_{t+3,s}$ is a belt, $F_x$ is different from $F_i$. 
Let $F_{j_a}$ and $F_{j_b}$ be the first and the last faces of $M$ (we know that these faces are not quadrangles, 
in particular, they are not adjacent). Then $(F_{j_a},F_i,F_{j_b},F_x)$ is a $4$-belt surrounding some quadrangle 
$F_{i_c}\in\mathcal{B}_{2k}$.  Then $\{F_{j_a},F_{j_b}\}=\{F_{j_{c-1}},F_{j_c}\}$, and $F_x$ is adjacent to $F_{i_c}$.  
Therefore $M=\mathcal{B}_{2k}\setminus\{F_{i_c}\}$, and  $t+2=2k-1$, which is a contradiction.
Hence, the case $t+3<2k$ is impossible.  

Thus, if the face $F_r'$ is adjacent in $\mathcal{B}_{2k'}$
to non-successive faces, then it is adjacent exactly to some quadrangle $F_{u_a}'$ 
and its adjacent faces $F_{v_{a-1}}',F_{v_{a}}'$. There are $2k$ faces of this type. If $F_r'$ 
is adjacent to two successive faces, then one
of them is a quadrangle, and we come to the previous case. Also $F_r'$ can be adjacent to exactly one face $F_{v_a}'$,
which is not a quadrangle, or to no faces in $\mathcal{B}_{2k}'$.
In each of these cases $\widetilde{H}^1(Q_{\tau})=\mathbb Z$. In the first case the generator is not divisible by 
$\widetilde{\{u_a,u_b\}}$ for any $b\ne a$, since $Q_{\tau\setminus\{u_a,u_b\}}$ is connected. In 
the second and the third cases the generator is divisible  by all the elements $\widetilde{\{u_a,u_b\}}$ and  
$\widetilde{\{v_{a-1},v_a\}}$. Moreover, by Lemma \ref{cosetlemma} it is also
divisible by any element in the coset $\widetilde{\{v_{a-1},v_a\}}+A_3(Q)$. 
Thus, $G(\mathcal{B}_{2k}')$ is a free abelian group of rank $m-2k-2k=m-4k$.
Since $m-4k<m-3k-1$ for $k>1$ we obtain a contradiction. Thus,  $\mathcal{B}_{2k}'$ is a trivial belt.
This finishes the proof.
\end{proof}

\begin{lemma}\label{42klemma}
Let $P\in \mathcal{P}_{aPog}\setminus\{I^3,M_5\times I\}$, $F_i$ be its quadrangle, and let 
$\mathcal{B}_{2k}$ be the $(2k)$-belt containing $k$ quadrangles and surrounding a face $F_j$. 
Then $F_i$ and $F_j$ are adjacent if and only if the element $\widetilde{\mathcal{B}_{2k}}$ is divisible by any 
element in the coset $\widetilde{\{p,q\}}+A_3(P)$ for one of the two pairs of opposite  faces $\{F_p, F_q\}$ of the $4$-belt
$\mathcal{B}_i$ around $F_i$. In particular,  if $F_i$ and $F_j$ are adjacent, then for any isomorphism of graded rings 
$\varphi\colon H^*(\mathcal{Z}_P)\to H^*(\mathcal{Z}_Q)$  for a polytope $Q\in \mathcal{P}_{aPog}\setminus\{I^3,M_5\times I\}$ 
we have  $\varphi(\widetilde{\mathcal{B}_{2k}})=\pm \widetilde{\mathcal{B}_{2k}'}$ for a $(2k)$-belt 
$\mathcal{B}_{2k}'$ containing $k$ quadrangles and surrounding a face $F_{j'}'$ adjacent to 
$F_{i'}'=\varphi_4(F_i)$.
\end{lemma}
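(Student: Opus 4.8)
The plan is to first prove the divisibility criterion (the displayed ``if and only if'') and then read off the assertion about $\varphi$ from it together with Lemmas~\ref{2k4klemma} and~\ref{2k4ktrivlemma}. Throughout I will use that $P$ has no adjacent quadrangles, so the $2k$ faces of $\mathcal{B}_{2k}$ alternate as $(F_{i_1},F_{j_1},\dots,F_{i_k},F_{j_k})$ with the $F_{i_a}$ the quadrangles, and that $k\geqslant 3$ (any $4$-belt of $P$ surrounds a quadrangle, hence contains no quadrangle).

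For the ``only if'' direction, assume $F_i\cap F_j\ne\varnothing$. Then $F_i$ is adjacent to the face $F_j$ surrounded by $\mathcal{B}_{2k}$, so $F_i\in\mathcal{B}_{2k}$, and being a quadrangle it is one of the $F_{i_a}$. Its two neighbours $F_{j_{a-1}},F_{j_a}$ in $\mathcal{B}_{2k}$ are non-successive faces of the belt, hence disjoint, hence form one of the two pairs of opposite faces of the $4$-belt $\mathcal{B}_i$ around $F_i$. Lemma~\ref{cosetlemma}, applied with $\mathcal{B}_k=\mathcal{B}_{2k}$, the quadrangle $F_i\in\mathcal{B}_{2k}$, its adjacent belt faces $F_{j_{a-1}},F_{j_a}$, and $\tau=\omega(\mathcal{B}_{2k})$, then shows that $\widetilde{\mathcal{B}_{2k}}$ is divisible by every element of the coset $\widetilde{\{j_{a-1},j_a\}}+A_3(P)$. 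For the converse, suppose $\widetilde{\mathcal{B}_{2k}}$ is divisible by every element of $\widetilde{\{p,q\}}+A_3(P)$ for a pair $\{F_p,F_q\}$ of opposite faces of $\mathcal{B}_i$; in particular it is divisible by $\widetilde{\{p,q\}}$ itself, so by Lemma~\ref{Hdivlemma} both $F_p$ and $F_q$ lie in $\mathcal{B}_{2k}$. I claim $F_i\in\mathcal{B}_{2k}$. If not, then $F_i$ lies in one of the two components of $\partial P\setminus|\mathcal{B}_{2k}|$; it cannot lie in the one whose closure is $F_j$, since $F_i\ne F_j$ ($F_i$ is a quadrangle while $F_j$ has at least $2k\geqslant 6$ edges), so $F_i$ lies in the other component and $F_i\cap F_j=\varnothing$. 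Then $(F_i,F_p,F_j,F_q)$ is a $4$-belt: the four faces are distinct, consecutive ones are adjacent, $F_i\cap F_j=F_p\cap F_q=\varnothing$, and no three of them share a vertex, since any such triple would contain $\{F_i,F_j\}$ or $\{F_p,F_q\}$ and a vertex of a simple $3$-polytope lying in all three faces of the triple would then lie in two disjoint faces. Since $P$ is almost Pogorelov this $4$-belt is trivial, hence surrounds a quadrangle adjacent to all four of its faces, in particular to $F_i$, contradicting the absence of adjacent quadrangles. Therefore $F_i\in\mathcal{B}_{2k}$, and as a face of the trivial belt $\mathcal{B}_{2k}$ it is adjacent to $F_j$.

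For the final statement, let $F_i$ be adjacent to $F_j$ and let $\varphi\colon H^*(\mathcal{Z}_P)\to H^*(\mathcal{Z}_Q)$ be an isomorphism of graded rings. The belt $\mathcal{B}_{2k}$ is trivial, so Lemmas~\ref{2k4klemma} and~\ref{2k4ktrivlemma} give $\varphi(\widetilde{\mathcal{B}_{2k}})=\pm\widetilde{\mathcal{B}_{2k}'}$ for a trivial $(2k)$-belt $\mathcal{B}_{2k}'$ of $Q$ containing $k$ quadrangles and surrounding a face $F_{j'}'$, with $\varphi_4$ (see Notation~\ref{f4not}) restricting to a bijection between the quadrangles of $\mathcal{B}_{2k}$ and those of $\mathcal{B}_{2k}'$. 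In particular $F_{i'}'=\varphi_4(F_i)$ is a quadrangle of $\mathcal{B}_{2k}'$ and hence adjacent to the face $F_{j'}'$ it surrounds, which is exactly the claim. (Alternatively one transports the coset divisibility of the ``only if'' part through $\varphi$, using that $\varphi(\widetilde{\{j_{a-1},j_a\}}+A_3(P))=\pm\widetilde{\{p',q'\}}+A_3(Q)$ with $F_{p'}',F_{q'}'$ a pair of opposite faces of the $4$-belt around $F_{i'}'$ by Lemma~\ref{2k4klemma}, and then applies the ``if'' part in $Q$.) The only genuinely delicate step is the converse in the divisibility criterion: checking that $(F_i,F_p,F_j,F_q)$ is a bona fide belt and that the quadrangle it surrounds is adjacent to $F_i$; the remainder is bookkeeping with Lemmas~\ref{cosetlemma}, \ref{Hdivlemma}, \ref{2k4klemma} and~\ref{2k4ktrivlemma} and the known behaviour of $\varphi_4$.
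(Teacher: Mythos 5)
Your proof is correct and follows the same overall line as the paper's. Both arguments use Lemma~\ref{cosetlemma} for the ``only if'' direction, and both argue the converse by observing that if $F_i\notin\mathcal{B}_{2k}$ then $(F_i,F_p,F_j,F_q)$ is a $4$-belt whose surrounded quadrangle would be adjacent to the quadrangle $F_i$, a contradiction; you give more detail than the paper in verifying that this quadruple really is a belt, which is a welcome addition. For the final deduction the paper transports coset divisibility via Lemma~\ref{2k4klemma} and applies the ``if'' direction in $Q$ (the alternative you mention in parentheses); your primary route instead uses directly that $\varphi_4$ restricts to a bijection between the quadrangles of $\mathcal{B}_{2k}$ and $\mathcal{B}_{2k}'$, so $\varphi_4(F_i)$ is a quadrangle of the trivial belt $\mathcal{B}_{2k}'$ and hence adjacent to the face it surrounds. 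That is slightly more direct but relies on exactly the same lemmas, so the two are essentially equivalent.
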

\begin{proof}
If $F_j$ is adjacent to $F_i$, then Lemma \ref{cosetlemma} implies
that $\widetilde{\mathcal{B}_2}$ is divisible by any element in the 
coset $\widetilde{\{p,q\}}+A_3(P)$ for faces $F_p$, $F_q$ adjacent to $F_i$ and lying in $\mathcal{B}_{2k}$. 
On the other hand, if the element $\widetilde{\mathcal{B}_{2k}}$ is divisible by any element in the 
coset $\widetilde{\{p,q\}}+A_3(P)$ for faces $F_p$, $F_q$ adjacent to $F_i$, then it is divisible by $\widetilde{\{p,q\}}$. 
Hence, $\mathcal{B}_{2k}$ contains $F_p$ and $F_q$. If it does not contain $F_i$, then $(F_i,F_p,F_j,F_q)$ is a $4$-belt
around some quadrangle adjacent to $F_i$. A contradiction. 

By Lemma \ref{2k4ktrivlemma} we have $\varphi(\widetilde{\mathcal{B}_{2k}})=\pm\widetilde{\mathcal{B}_{2k}'}$
for a belt  $\mathcal{B}_{2k}'$ containing $k$ quadrangles and surrounding a unique face $F_{j'}'$.
The element $\widetilde{\mathcal{B}_{2k}'}$ is divisible by any element in the 
coset $\widetilde{\{p',q'\}}+A_3(Q)$ for faces $F_{p'}'$, $F_{q'}'$ adjacent to $F_{i'}'$. Hence, 
the face $F_{j'}'$ is adjacent to $F_{i'}'$.
\end{proof}

\begin{lemma}\label{2k2klemma}
Let $P\in \mathcal{P}_{aPog}\setminus\{I^3,M_5\times I\}$, and let 
$\mathcal{B}_{2k}$ be the $(2k)$-belt containing $k$ quadrangles and surrounding a face $F_i$, and
$\mathcal{B}_{2l}$ be the $(2l)$-belt containing $l$ quadrangles and surrounding a face $F_j$. 
Then $F_i$ and $F_j$ are adjacent if and only if the elements $\widetilde{\mathcal{B}_{2k}}$  and
$\widetilde{\mathcal{B}_{2l}}$ have exactly one common divisor among elements
$\widetilde{\{p,q\}}$ corresponding to pairs of quadrangles. 
In particular,  if $F_i$ and $F_j$ are adjacent, then for any isomorphism of graded rings 
$\varphi\colon H^*(\mathcal{Z}_P)\to H^*(\mathcal{Z}_Q)$  for a polytope $Q\in \mathcal{P}_{aPog}\setminus\{I^3,M_5\times I\}$ 
we have  $\varphi(\widetilde{\mathcal{B}_{2k}})=\pm \widetilde{\mathcal{B}_{2k}'}$ for a $(2k)$-belt 
$\mathcal{B}_{2k}'$ containing $k$ quadrangles and surrounding a face $F_{i'}'$,  and 
$\varphi(\widetilde{\mathcal{B}_{2l}})=\pm \widetilde{\mathcal{B}_{2l}'}$ for a $(2l)$-belt 
$\mathcal{B}_{2l}'$ containing $l$ quadrangles and surrounding a face $F_{j'}'$ 
adjacent to $F_{i'}'$.
\end{lemma}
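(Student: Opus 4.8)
The statement is a "combinatorial detection of adjacency" result for the two trivial $(2k)$- and $(2l)$-belts carrying the maximal possible number of quadrangles, together with the transfer of this detection across a graded ring isomorphism. The strategy parallels Lemma \ref{42klemma}, but now neither surrounding face need be a quadrangle, so we cannot use divisibility by a coset $\widetilde{\{p,q\}}+A_3(P)$ for a $4$-belt around a quadrangle; instead we must use a common divisor among the honest elements $\widetilde{\{p,q\}}$ with both $F_p,F_q$ quadrangles. First I would fix $P\in\mathcal{P}_{aPog}\setminus\{I^3,M_5\times I\}$ and the two belts $\mathcal{B}_{2k}=(F_{i_1},F_{j_1},\dots,F_{i_k},F_{j_k})$ (quadrangles at even positions) around $F_i$, and $\mathcal{B}_{2l}=(F_{u_1},F_{v_1},\dots,F_{u_l},F_{v_l})$ around $F_j$.

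\textbf{The geometric heart.} The key claim is: $F_i\cap F_j\neq\varnothing$ if and only if the sets of quadrangles of $\mathcal{B}_{2k}$ and of $\mathcal{B}_{2l}$ meet in exactly one face, which is equivalent (via Corollary \ref{Bdivcor} / Lemma \ref{Hdivlemma}) to $\widetilde{\mathcal{B}_{2k}}$ and $\widetilde{\mathcal{B}_{2l}}$ having exactly one common divisor $\widetilde{\{p,q\}}$ with $F_p,F_q$ quadrangles (here a $\widetilde{\{p,q\}}$ divides $\widetilde{\mathcal{B}_{2k}}$ precisely when $F_p,F_q$ are both quadrangles of $\mathcal{B}_{2k}$, by Corollary \ref{Bdivcor}, since quadrangles of the belt are exactly the opposite-at-distance-$2$ pairs, and more generally any two non-adjacent faces of the belt). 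If $F_i$ and $F_j$ are adjacent, then $\mathcal{B}_i=\mathcal{B}_{2k}$ and $\mathcal{B}_j=\mathcal{B}_{2l}$ are the belts around $F_i$ and $F_j$; the faces $F_u,F_v$ of $P$ meeting $F_i\cap F_j$ in vertices are each adjacent to both $F_i$ and $F_j$, hence lie in $\mathcal{B}_{2k}\cap\mathcal{B}_{2l}$; since $P$ has no adjacent quadrangles at most one of $F_u,F_v$ is a quadrangle, and I would check directly (by walking around the common edge, exactly as in the proof of Lemma \ref{ijaPoglemma}) that $\mathcal{B}_{2k}\cap\mathcal{B}_{2l}$ consists of $F_u,F_v$ together with exactly one common quadrangle — namely the unique quadrangle in $\mathcal{B}_{2k}$ surrounding (by a $4$-belt) is impossible, so rather: the quadrangle of $\mathcal{B}_{2k}$ adjacent to both $F_u$ and $F_v$. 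Conversely, if $F_i\cap F_j=\varnothing$, I would argue that either there is no common quadrangle at all (the generic case), or if there is one, say $F_w$, then $F_w$ is adjacent to faces of both belts, and using flagness plus the absence of adjacent quadrangles and nontrivial $4$-belts, one produces a \emph{second} common quadrangle, contradicting uniqueness. This is the step I expect to be the \textbf{main obstacle}: enumerating the configurations in which $\mathcal{B}_{2k}$ and $\mathcal{B}_{2l}$ share exactly one quadrangle while their surrounded faces are disjoint, and ruling each out. The Jordan-curve / disk-replacement technique from Lemma \ref{52lemma} and the SCC analog (Lemma \ref{APb-lemma}) should suffice, but the bookkeeping is delicate because the two belts may overlap in a long segment.

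\textbf{Transfer across $\varphi$.} Granting the geometric claim, the ring-theoretic half is routine. By Lemma \ref{2k4klemma} (and its trivial refinement Lemma \ref{2k4ktrivlemma}) we already know $\varphi(\widetilde{\mathcal{B}_{2k}})=\pm\widetilde{\mathcal{B}_{2k}'}$ and $\varphi(\widetilde{\mathcal{B}_{2l}})=\pm\widetilde{\mathcal{B}_{2l}'}$ for trivial $(2k)$- and $(2l)$-belts of $Q$ containing the maximal number of quadrangles, surrounding faces $F_{i'}'$ and $F_{j'}'$. By Proposition \ref{BwProp}, $\varphi$ carries the set $\{\pm\widetilde{\{p,q\}}:F_p,F_q\text{ quadrangles}\}$ of $P$ bijectively onto the corresponding set of $Q$; combined with the fact that divisibility by such elements is preserved under $\varphi$ (it is an algebraic condition), the number of common divisors of this type is an invariant. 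Hence $\widetilde{\mathcal{B}_{2k}'}$ and $\widetilde{\mathcal{B}_{2l}'}$ have exactly one common quadrangle-divisor, so by the geometric claim applied to $Q$ the faces $F_{i'}'$ and $F_{j'}'$ are adjacent. Finally the identity $\varphi_4(F_i)=F_{i'}'$ (and $\varphi_4(F_j)=F_{j'}'$) follows from Corollary \ref{bijcor} together with Lemma \ref{2k4klemma}, which already records that $\varphi_4$ induces the bijection between quadrangles of $\mathcal{B}_{2k}$ and of $\mathcal{B}_{2k}'$, and in particular sends the quadrangle surrounded by no $4$-belt correctly; I would spell out only that the unique common quadrangle of $\mathcal{B}_{2k},\mathcal{B}_{2l}$ maps under $\varphi_4$ to the unique common quadrangle of $\mathcal{B}_{2k}',\mathcal{B}_{2l}'$, which is adjacent to both $F_{i'}'$ and $F_{j'}'$ by the adjacency just established.
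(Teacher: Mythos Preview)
Your transfer-across-$\varphi$ paragraph is fine and matches the paper. The geometric core, however, is off in a way that would make the argument fail.

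You misread the criterion. A common divisor $\widetilde{\{p,q\}}$ ``corresponding to a pair of quadrangles'' requires \emph{both} $F_p$ and $F_q$ to be quadrangles lying in \emph{both} belts (Corollary~\ref{Bdivcor}). Thus ``exactly one common divisor'' means the two belts share exactly \emph{two} quadrangles forming one non-adjacent pair, not ``exactly one common quadrangle face'' as you write. This matters because your subsequent analysis is built on the wrong count. In particular, when $F_i$ and $F_j$ are adjacent, the two faces $F_u,F_v$ meeting the edge $F_i\cap F_j$ at its vertices are the \emph{only} faces adjacent to both $F_i$ and $F_j$ (by flagness, three pairwise adjacent faces share a vertex), so $\mathcal{B}_{2k}\cap\mathcal{B}_{2l}=\{F_u,F_v\}$. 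Both of them are quadrangles: $F_j$ itself lies in $\mathcal{B}_{2k}$ and has $2l\geqslant 6$ sides, hence is a non-quadrangle in the alternating pattern of $\mathcal{B}_{2k}$, so its two neighbours $F_u,F_v$ in that belt are quadrangles. They are non-adjacent since $k\geqslant 3$. Your claim ``at most one of $F_u,F_v$ is a quadrangle'' is therefore false (they are not adjacent to each other, so no-adjacent-quadrangles is not violated), and there is no further ``common quadrangle adjacent to both $F_u$ and $F_v$'' to find.

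You also vastly overestimate the difficulty of the converse. If $F_i\cap F_j=\varnothing$ and $\{u,v\}\subset\omega(\mathcal{B}_{2k})\cap\omega(\mathcal{B}_{2l})$ with $F_u,F_v$ quadrangles and $F_u\cap F_v=\varnothing$, then $F_u,F_v$ are adjacent to both $F_i$ and $F_j$, so $(F_u,F_i,F_v,F_j)$ is a $4$-belt; it surrounds a quadrangle, which is then adjacent to the quadrangle $F_u$ --- contradiction. No ``second common quadrangle'', no Jordan-curve bookkeeping, no long-overlap case analysis. Finally, the degenerate case $F_i=F_j$ gives $\binom{k}{2}\geqslant 3$ common divisors, ruling out ``exactly one''. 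That is the whole argument.
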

\begin{proof}
We have $k,l\geqslant3$, since a $4$-belt surrounds a quadrangle and can not contain quadrangles.
If $F_i$ and $F_j$ are adjacent, then, since $P$ is flag, 
$\mathcal{B}_1\cap \mathcal{B}_2$ consists of two non-adjacent faces intersecting the edge $F_i\cap F_j$ by vertices, 
and both of them are quadrangles.  On the other hand, let $F_i\cap F_j=\varnothing$, 
and  $\{u,v\}\subset\omega(\mathcal{B}_1)\cap\omega(\mathcal{B}_2)$ with $F_u\cap F_v=\varnothing$. 
Then $(F_u,F_i,F_v,F_j)$ is a $4$-belt around a quadrangle. If $F_u$ and $F_v$ are quadrangles, 
we have a contradiction. If $F_i=F_j$, then the elements $\widetilde{\mathcal{B}_{2k}}$ and $\widetilde{\mathcal{B}_{2l}}$
have $\frac{k(k-1)}{2}\geqslant 3$ common divisors corresponding to pairs of quadrangles.
The second part of the lemma follows from Lemma \ref{2k4ktrivlemma}.
\end{proof}

\begin{remark}\label{BIR-rem}
Lemmas \ref{2k4ktrivlemma}, \ref{42klemma}, and \ref{2k2klemma} imply that any ideal almost Pogorelov polytope is $B$-rigid.
The details see in \cite{E20b}.
\end{remark}

\section{Cohomology ring $H^*(\mathcal{Z}_{As^3})$}\label{Exas}
\begin{figure}
\begin{center}
\begin{tabular}{ccc}
\includegraphics[height=5cm]{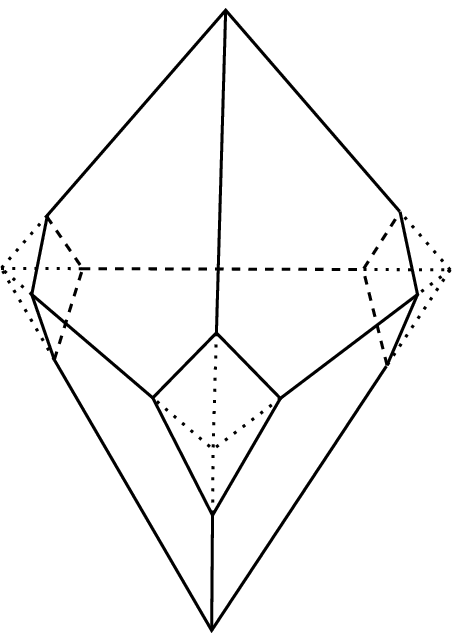}&\includegraphics[height=5cm]{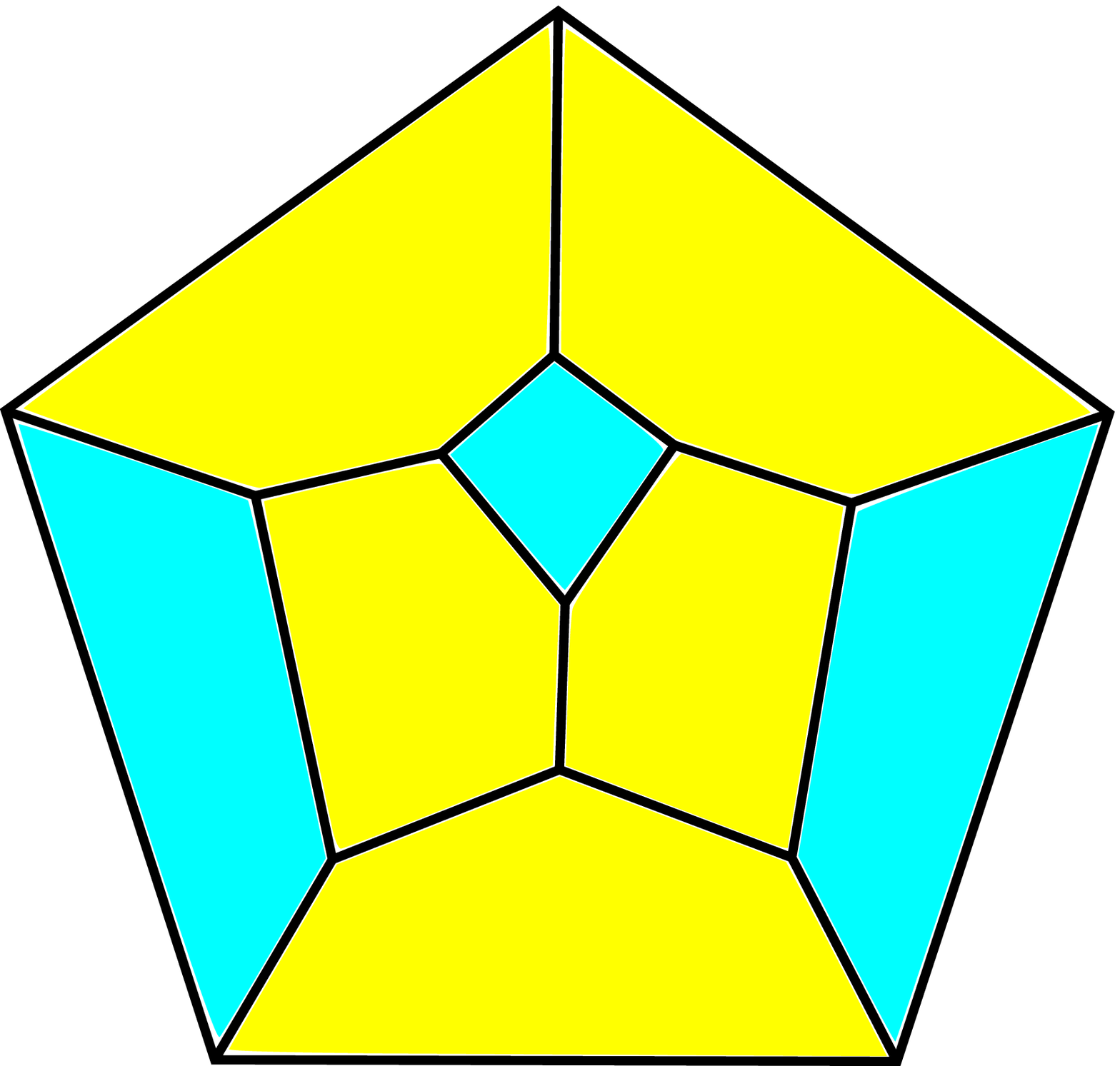}&\includegraphics[height=5cm]{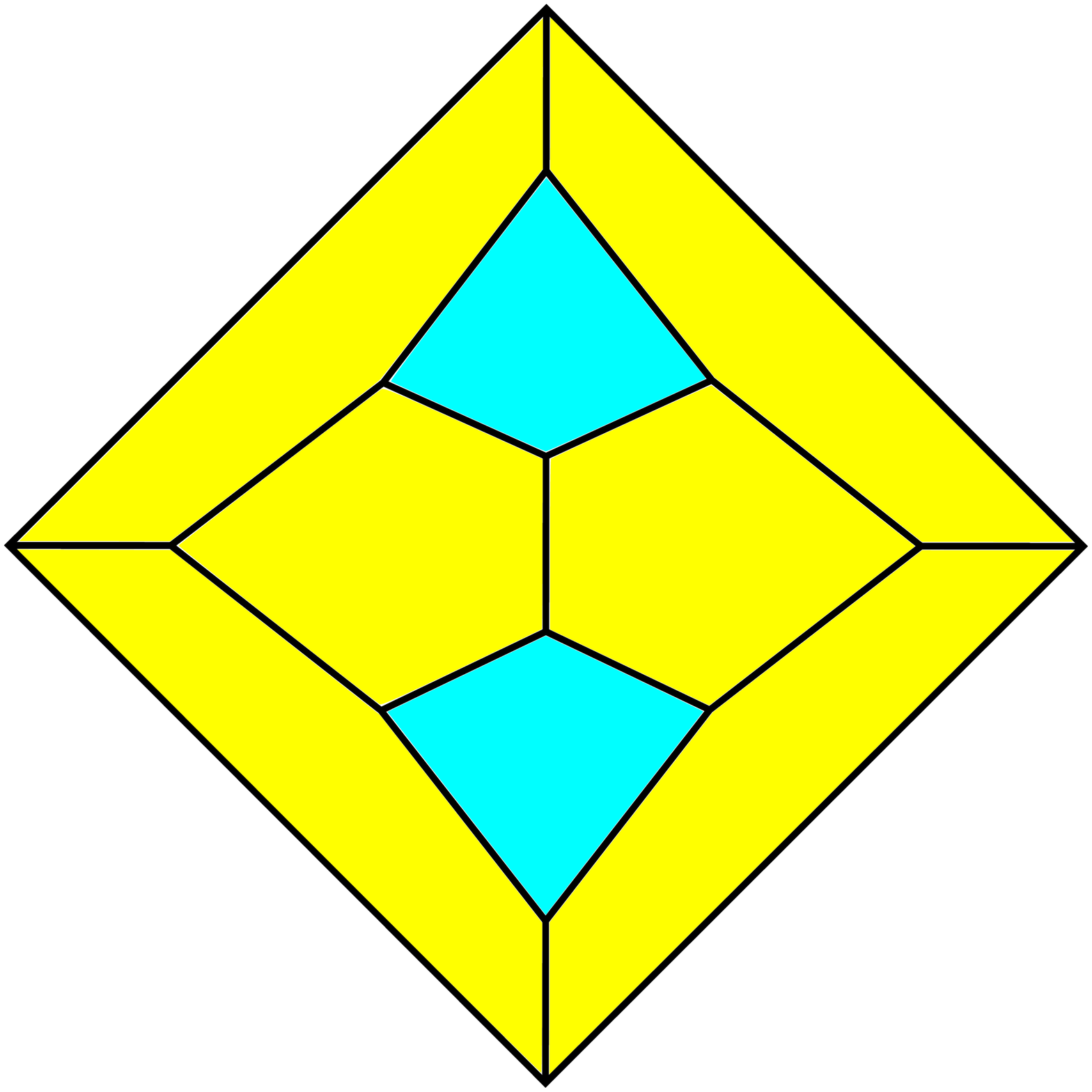}\\
a)&b)&c)
\end{tabular}
\caption{a) a $3$-dimensional associahedron  (Stasheff polytope) $As^3$ as a truncated bipyramid; b) its Schlegel diagram based at a pentagon; c) its Schlegel diagram based at a quadrangle}\label{As-sh}
\end{center}
\end{figure}

For the associahedron $As^3$ we have the following picture.
The group of symmetries of $As^3$ acts transitively on quadrangles ($3$ quadrangles) and pentagons ($6$ pentagons).

There are three equivalence classes of edges:
\begin{enumerate}
\item a common edge of a quadrangle and a pentagon ($12$ edges);
\item a common edge of pentagons connecting two vertices of quadrangles ($3$ edges);
\item a common edge of pentagons containing a common vertex of $3$ pentagons ($6$ edges).
\end{enumerate}
There are two equivalence classed of vertices:
\begin{enumerate}
\item vertices of quadrangles ($12$ vertices);
\item common vertices of $3$ pentagons ($2$ vertices).
\end{enumerate}

We have the following subsets $P_{\omega}$ of faces with $\widetilde{H}^0(P_{\omega})\ne0$ (see Fig. \ref{Cycles}). The complementary subsets $P_{[m]\setminus\omega}$ are exactly subsets with $\widetilde{H}^1(P_{[m]\setminus\omega})\ne0$. It should be $|\omega|\geqslant 2$.
\begin{enumerate}
\item For $|\omega|=2$ the sets $\omega=\{i,j\}$ correspond to pairs of nonadjacent faces $F_i$, $F_j$, 
$F_i\cap F_j=\varnothing$.  We have $3$ types of pairs encoded by numbers $k_1$, $k_2$, of edges of faces: $4-4$ 
($3$ pairs, all equivalent); $4-5$ ($6$ pairs, all equivalent); $5-5$ ($6$ pairs, all equivalent); In total we have 
$\mathbb Z^{15}=H^3(\mathcal{Z}_P)$. By the Poincare duality $H^9(\mathcal{Z}_P)\simeq \mathbb Z^{15}$. 
The generators correspond to generators in $\widetilde{H}^1(P_{[m]\setminus\omega})$ for the complementary subsets.
\item For $|\omega|=3$ the sets $\omega=\{i,j,k\}$ correspond to either triples of disjoint faces or to pairs of disjoint sets: 
a face  and a pair of adjacent faces; For the first type we have only one set $4-4-4$. This corresponds to 
$\mathbb Z^2\subset H^4(\mathcal{Z}_P)$. For the second type we have cases: $4-45$ 
(for each quadrangle $4$ pairs $45$, in total $12$ pairs, all equivalent); 
$4-55$ (for each quadrangle one pair, in total $3$ pairs, all equivalent); 
$5-45$ (for each pentagon two pairs, in total $12$ pairs, all equivalent); 
$5-55$ (for each pentagon one pair, in total $6$ pairs, all equivalent). 
This corresponds to $\mathbb Z^{33}\subset H^4(\mathcal{Z}_P)$. 
In total, since no two faces can give nonzero contribution to $H^4(\mathcal{Z}_P)$, we have 
$H^4(\mathcal{Z}_P)=\mathbb Z^2\oplus\mathbb Z^{33}=\mathbb Z^{35}$.  
By the Poincare duality $H^8(\mathcal{Z}_P)\simeq \mathbb Z^2\oplus\mathbb Z^{33}=\mathbb Z^{35}$. 
Here the summand $\mathbb Z^2$ corresponds to the complement to three disjoint quadrangles, 
which is a sphere with two holes. The generators in $\mathbb Z^{33}$ correspond to generators in 
$\widetilde{H}^1(P_{[m]\setminus\omega})$ for the complementary subsets. 
\item Let $|\omega|=4$. Four can be expressed as a sum of positive integers in the following ways: 
$4=1+1+1+1$; $4=1+1+1+2$; $4=1+3$; $4=2+2$. First let us mention that if one of the connected components of 
$P_{\omega}$ consists of one face, then either there are only two components, or there are three components 
each consisting of a single quadrangle (it can be extracted from Fig. \ref{As-sh}).
Thus, we can have only $1+3$ or $2+2$. For the first case we may have 
$4-445$ (two for each quadrangle, all equivalent, in total $6$), 
$4-455$ (two for each quadrangle, all equivalent, in total $6$), or 
$5-455$ (one for each pentagon, all equivalent, in total $6$). 
For the second case we may have only $45-45$ (all equivalent, in total $6$). 
Since $P$ has no $3$-belts, we have $H^5(\mathcal{Z}_P)\simeq \mathbb Z^{24}$. 
By the Poincare duality $H^7(\mathcal{Z}_P)\simeq \mathbb Z^{24}$. 
The generators correspond to generators in $\widetilde{H}^1(P_{[m]\setminus\omega})$ for the complementary subsets.
\item For $\omega=5$ we have $5=1+4=2+3$. For the first case we have 
$4-4455$ (one for each quadrangle, all equivalent, in total $3$). 
The case $2+3$ is impossible, since for any pair of adjacent faces the set of faces not adjacent 
to both of them has cardinality at most $2$.  We have $\mathbb Z^3\subset H^6(\mathcal{Z}_P)$. 
The complementary subsets are $4$-belts and correspond to generators in $\widetilde{H}^1(P_{[m]\setminus\omega})$, 
and we obtain a complementary summand $\mathbb Z^3\subset H^6(\mathcal{Z}_P)$ such that 
$H^6(\mathcal{Z}_P)=\mathbb Z^3\oplus\mathbb Z^3$. 
\end{enumerate}

\begin{figure}
\begin{center}
\includegraphics[height=10cm]{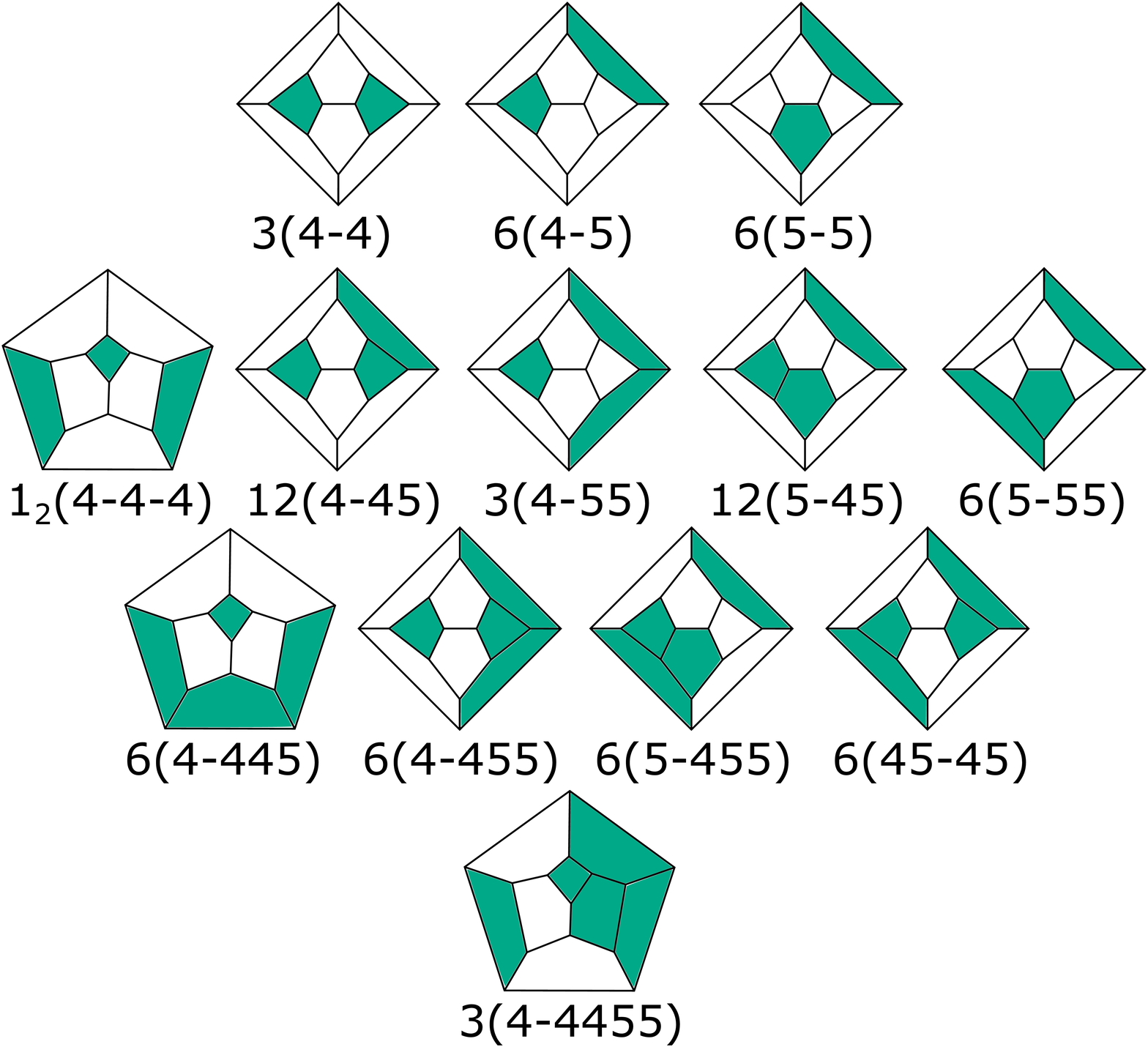}
\caption{Collections $P_{\omega}$ giving classes in $\widetilde{H}^0(P_{\omega})$}\label{Cycles}
\end{center}
\end{figure}

Additive structure of $H^*(\mathcal{Z}_P)$ is represented in Table \ref{As-T}.
\begin{table}
\setlength{\tabcolsep}{4pt}
\centering
\caption{Additive structure of $H^*(\mathcal{Z}_{As^3})$}\label{As-T}
\label{tab1}
\begin{tabular}{|c|c|c|c|c|c|c|c|c|c|c|c|c|c|}
\hline
$k$&0&1&2&3&4&5&6&7&8&9&10&11&12\\
\hline
$H^k(\mathcal{Z}_P)$&$\mathbb Z$&0&0&$\mathbb Z^{15}$&$\mathbb Z^{35}$&$\mathbb Z^{24}$&$\mathbb Z^6$&
$\mathbb Z^{24}$&$\mathbb Z^{35}$&$\mathbb Z^{15}$&0&0&$\mathbb Z$\\
\hline
\end{tabular}
\end{table}

Choose in each group $\widetilde{H}^0(P_{\omega})=\mathbb Z$ a generator $\beta_{\omega}$. We will call these generators 
{\it canonical generators}.  There is only one set $P_{\omega}$ with $\rk \widetilde{H}^0(P_{\omega})>1$. This is the union of three
quadrangles with $\widetilde{H}^0(P_{\omega})=\mathbb Z^2$. 
Choose in this group two generators $\beta_{\omega,1}$ and $\beta_{\omega,2}$ corresponding to two quadrangles. 
We will also call these generators {\it additional}. Canonical and additional generators together form the set of 
multiplicative generators $\{\beta\}$ of $H^*(\mathcal{Z}_{As^3})$, since $As^3$ is a flag polytope. For short
we will call them {\it generators}. 

To describe the multiplication in $H^*(\mathcal{Z}_P)$ we need to describe how the product of each two
generators can be expressed in terms of the elements $\{\beta^*\}$ dual to generators, where $\beta\cdot \beta^*=[\mathcal{Z}_P]$
-- a fundamental class in cohomology, and $\beta\cdot (\beta')^*=0$ for $\beta\ne\beta'$. 

For each set $P_{\omega}$ with $\widetilde{H}^0(P_{\omega})\ne 0$ on Fig. \ref{Cycles-D-mono} we describe all the ways how
$P_{[m]\setminus\omega}$ up to combinatorial symmetries 
can be represented as a union of $P_{\omega_1}\cup P_{\omega_2}$ with 
$[m]\setminus\omega=\omega_1\sqcup\omega_2$ and $\widetilde{H}^0(P_{\omega_1})\ne 0$, 
$\widetilde{H}^0(P_{\omega_2})\ne 0$. Also we give the number of combinatorially symmetric decompositions. 

\begin{figure}
\begin{center}
\includegraphics[height=12cm]{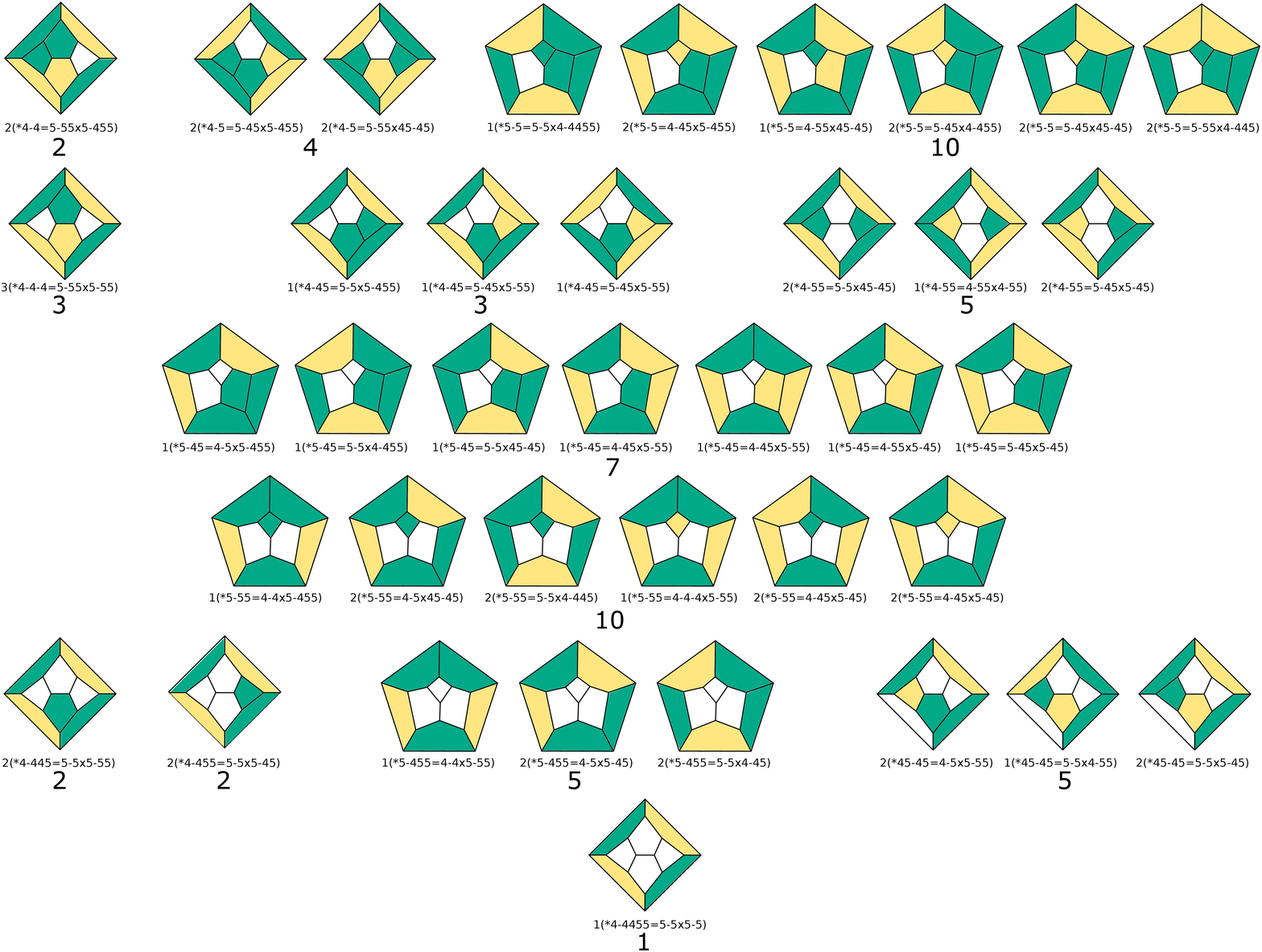}
\caption{Decompositions of the sets $P_{[m]\setminus\omega}$ with $\widetilde{H}^0(P_{\omega})\ne 0$}\label{Cycles-D-mono}
\end{center}
\end{figure}

On Fig. \ref{Cycles-D-Rel} we describe all the ways 
how the boundary $\partial P$ of the polytope $P$ up to combinatorial symmetries can be decomposed 
as a union of three sets $P_{\omega_1}$, $P_{\omega_2}$, and $P_{\omega_3}$ with 
$[m]=\omega_1\sqcup\omega_2\sqcup\omega_3$, and $\widetilde{H}^0(P_{\omega_i})\ne 0$ for $i=1,2,3$.
We present the number of combinatorially symmetric decompositions and for each set we give
the number of decompositions where it arises.

\begin{figure}
\begin{center}
\includegraphics[height=11cm]{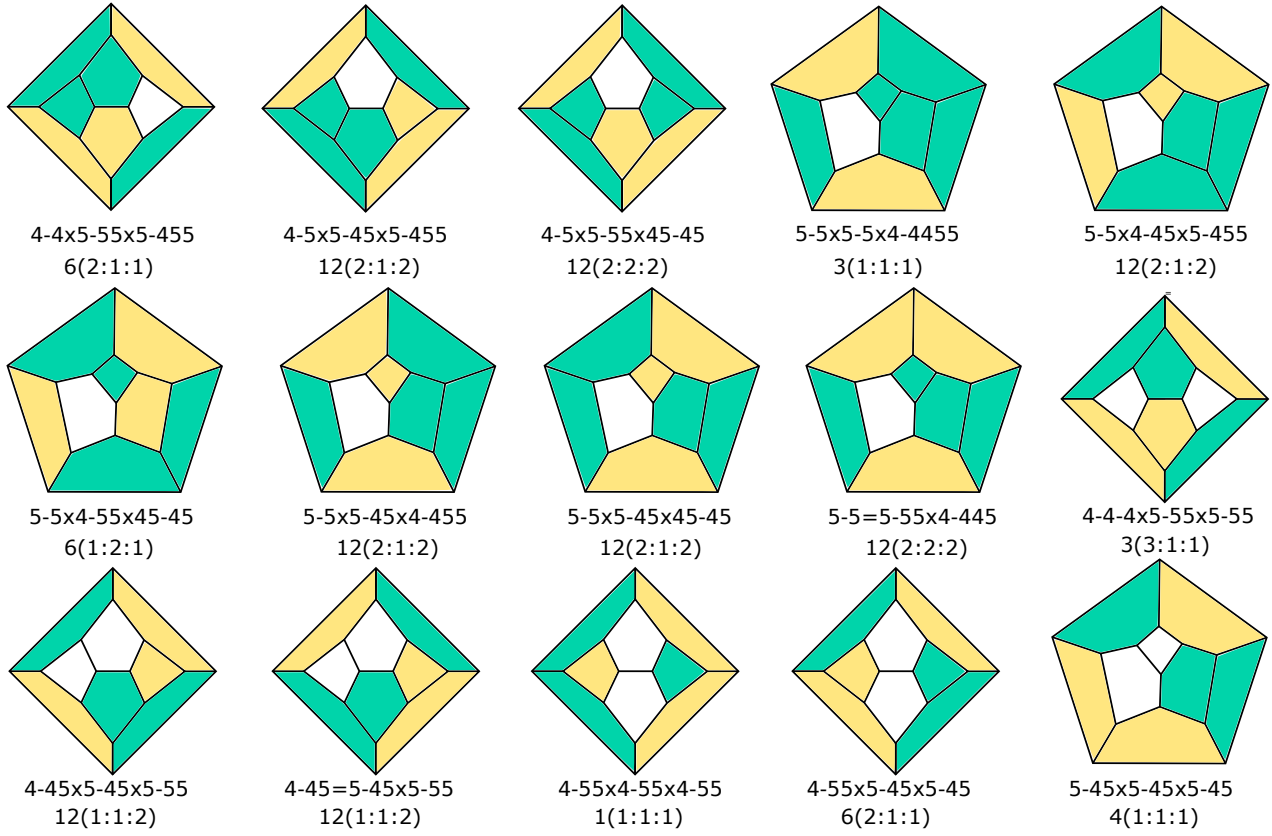}
\caption{Decompositions $\partial P=P_{\omega_1}\cup P_{\omega_2}\cup P_{\omega_3}$ with 
$[m]=\omega_1\sqcup\omega_2\sqcup\omega_3$, and $\widetilde{H}^0(P_{\omega_i})\ne 0$ for $i=1,2,3$}\label{Cycles-D-Rel}
\end{center}
\end{figure}

For generators $\beta_1\in \widetilde{H}^0(P_{\omega_1})$, $\beta_2\in \widetilde{H}^0(P_{\omega_2})$ their 
product $\beta_1\cdot\beta_2$ can be expressed uniquely in terms of elements 
$\beta^*\in \widetilde{H}^1(P_{\omega_1\sqcup\omega_2})$ dual to generators 
$\beta\in\widetilde{H}^0(P_{\omega_3})$, $\omega_3=[m]\setminus(\omega_1\sqcup\omega_2)$.
If $\widetilde{H}^0(P_{\omega_3})=\mathbb Z$, then  $\beta_1\cdot\beta_2$ is necessarily $\pm \beta_{\omega_3}$.
If $\widetilde{H}^0(P_{\omega_3})=\mathbb Z^2$, then $P_{\omega_3}$ is the set of three disjoint quadrangles, and 
due to Fig. \ref{Cycles-D-mono} $P_{\omega_1}$ and $P_{\omega_2}$ both are disjoint unions of a pentagon and two adjacent pentagons.  There are $3$ such decompositions. The dual classes $\beta_{\omega_3,1}^*$ and $\beta_{\omega_3,2}^*$ in 
$H_1(P_{[m]\setminus\omega_3},\partial P_{[m]\setminus\omega_3})$ are represented by the edges connecting 
a vertex of the quadrangle corresponding to the generator to a vertex of the third quadrangle. The edge connecting 
vertices of the two quadrangles corresponding to generators
gives the element $\pm \beta_{\omega_3,1}^*\pm \beta_{\omega_3,2}^*$. Thus, the three
products $\beta_1\cdot\beta_2$ are equal to $\pm \beta_{\omega_3,1}^*$, $\pm\beta_{\omega_3,2}^*$, and 
$\pm \beta_{\omega_3,1}^*\pm \beta_{\omega_3,2}^*$. 

Up to signs this gives a full description of the ring $H^*(\mathcal{Z}_{As^3},\mathbb Z)$.

For an element $\widetilde{\omega}$, $\omega=\{F_p,F_q\}$, $F_p\cap F_q=\varnothing$,
the complementary subspace in $H^*(\mathcal{Z}_P,\mathbb Q)$ to ${\rm Ann}_H(\widetilde\omega)$ is generated 
by the element  $\widetilde\omega^*\in H^m(\mathcal{Z}_P,\mathbb Q)$ dual to $\widetilde\omega$, and the canonical generators dividing 
$\widetilde\omega^*$. In particular, 
\begin{enumerate}
\item If $F_p$ and $F_q$ are both quadrangles, then $\codim {\rm Ann}_H(\widetilde\omega)=5$;
\item If $F_p$ is a quadrangle, and $F_q$ is a pentagon, then  $\codim {\rm Ann}_H(\widetilde\omega)=9$;
\item If $F_p$ and $F_q$ are both pentagons, then  $\codim {\rm Ann}_H(\widetilde\omega)=21$.
\end{enumerate}   
Also 
\begin{enumerate}
\item If $F_p$ and $F_q$ are both quadrangles, then any element $\omega'\in N_2(P)\setminus\{\omega\}$ is good for $\omega$;
\item If $F_p$ is a quadrangle, and $F_q$ is a pentagon, then an element $\omega'\in N_2(P)\setminus\{\omega\}$ is bad for 
$\omega$ if and only if $\omega'=\{p,r\}$, where $F_r$ is a quadrangle adjacent to $F_q$. There are two such pairs;
\item If $F_p$ and $F_q$ are both pentagons, then they are both adjacent to a unique quadrangle $F_r$. Then 
an element $\omega'=\{s,t\}\in N_2(P)\setminus\{\omega\}$ is bad for $\omega$ if and only if either
$\omega'=\{F_r,F_u\}$, where $F_u\cap F_r=\varnothing$, or $\omega'=\{p,v\}$, where $F_v\ne F_r$ is a quadrangle
adjacent to $F_q$, or $\omega'=\{q,w\}$, where $F_w\ne F_r$ is a quadrangle adjacent to $F_p$,
or $\omega'=\{v,w\}$. There are $7$ bad elements in total.
\end{enumerate}   

\begin{corollary}
The set of elements $\{\pm \widetilde\omega\}$ corresponding to pairs of quadrangles in $As^3$ is mapped
to itself under each automorphism of a graded ring $H^*(\mathcal{Z}_P)$.
\end{corollary}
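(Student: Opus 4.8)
The plan is to use the fact that a graded ring automorphism $\varphi\colon H^*(\mathcal{Z}_{As^3})\to H^*(\mathcal{Z}_{As^3})$ must carry $H^3(\mathcal{Z}_{As^3})$ onto itself, since this is the only nonzero cohomology group in degree $3$ (see Table \ref{As-T}), and that it preserves the dimension of the annihilator of any homogeneous element, because ${\rm Ann}_H(\varphi(x))=\varphi({\rm Ann}_H(x))$. Passing to coefficients in $\mathbb Q$, recall from the computations above that the three types of pairs $\omega\in N_2(As^3)$, namely quadrangle--quadrangle, quadrangle--pentagon, and pentagon--pentagon, have pairwise distinct annihilator codimensions $5$, $9$, and $21$. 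So once I know that $\varphi$ sends an element $\widetilde\omega$ attached to a pair of quadrangles to $\pm\widetilde{\omega'}$ for some $\omega'\in N_2(As^3)$, the invariance of $\codim {\rm Ann}_H$ forces $\codim {\rm Ann}_H(\widetilde{\omega'})=5$, which by the list of codimensions happens only when $\omega'$ is again a pair of quadrangles.

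For the first point I would invoke Corollary \ref{wcor}. When $F_p$ and $F_q$ are both quadrangles, every element $\omega'\in N_2(As^3)\setminus\{\omega\}$ is good for $\omega=\{p,q\}$ --- this is precisely item (1) in the good/bad list for $As^3$ established above. Hence Corollary \ref{wcor}, applied to the isomorphism of graded rings $\varphi\colon H^*(\mathcal{Z}_{As^3})\to H^*(\mathcal{Z}_{As^3})$ (taking $Q=P=As^3$), gives $\varphi(\widetilde\omega)=\pm\widetilde{\omega'}$ for some $\omega'\in N_2(As^3)$. Combining this with the annihilator argument of the previous paragraph shows that $\omega'$ is a pair of quadrangles.

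It then remains to observe that the set $S=\{\pm\widetilde\omega\colon\omega\text{ a pair of quadrangles}\}$ is finite (it has exactly six elements, corresponding to the three pairs of quadrangles), and that we have just shown $\varphi(S)\subseteq S$; since $\varphi$ is an injective linear map, it restricts to a bijection of $S$, which is the assertion of the corollary. I do not anticipate any real obstacle: the only ingredients are the already proved Corollary \ref{wcor}, the elementary invariance of annihilator dimension under a ring automorphism, and the explicit codimension and good/bad data for $As^3$ collected earlier in this section. The one point requiring a moment's care is verifying the hypothesis of Corollary \ref{wcor}, i.e. that \emph{all} $\omega'\ne\omega$ are good for a quadrangle--quadrangle pair $\omega$; but this is exactly what the computation above records.
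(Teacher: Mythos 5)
Your proof is correct and follows exactly the route the paper sets up: the enumerated codimensions ($5$, $9$, $21$) together with the observation that every $\omega'\ne\omega$ is good when $\omega$ is a quadrangle--quadrangle pair are precisely the data recorded in Section~\ref{Exas} immediately before this corollary, and combining Corollary~\ref{wcor} with the invariance of $\codim{\rm Ann}_H$ is the intended argument. (The paper also offers the alternative of simply citing Proposition~\ref{BwProp}, which already establishes $B$-rigidity of this set for all of $\mathcal{P}_{aPog}\setminus\{I^3,M_5\times I\}$, but your annihilator-codimension route is the one the local computations make transparent.)
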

\begin{corollary}
Under each automorphism of a graded ring $H^*(\mathcal{Z}_P)$ for $P=As^3$ each element $\widetilde\omega$, where $\omega=\{p,q\}$, $F_p$ is a quadrangle, and $F_q$ is a pentagon, is mapped
to the element $\pm\widetilde\omega'+\lambda_1\widetilde\omega_1'+\lambda_2\widetilde\omega_2'$, where $\omega'$ has the same type as $\omega$, and $\omega_1'$, $\omega_2'$ are bad for $\omega'$.
\end{corollary}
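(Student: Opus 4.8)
The plan is to leverage the good/bad analysis already carried out for $As^3$ together with Corollary~\ref{gbcor}. Fix $P = As^3$ and an automorphism $\varphi$ of the graded ring $H^*(\mathcal{Z}_P)$. First I would observe that the previous corollary already establishes that $\varphi$ permutes the three elements $\widetilde\omega$ (up to sign) corresponding to pairs of quadrangles; equivalently, $\varphi$ preserves the subgroup of $H^3(\mathcal{Z}_P)$ spanned by these three elements, and hence also preserves its complementary direct summand spanned by the twelve elements $\widetilde\omega$ of type $4$--$5$ and the six of type $5$--$5$. The key numerical input is the computation of $\codim \mathrm{Ann}_H(\widetilde\omega)$ over $\mathbb{Q}$: this codimension equals $5$, $9$, or $21$ according as $\omega$ is of type $4$--$4$, $4$--$5$, or $5$--$5$. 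Since an automorphism preserves annihilator dimensions, $\varphi$ must send a type $4$--$5$ element $\widetilde\omega$ to an element whose annihilator has codimension $9$.

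Next I would apply Corollary~\ref{gbcor}: writing $\varphi(\widetilde\omega) = \sum_{\omega''\in N_2(Q)}\lambda_{\omega''}\widetilde{\omega''}$ with $Q = P$, there is some $\omega''$ with $\lambda_{\omega''}\ne 0$ such that every other $\omega''_1$ appearing with nonzero coefficient is \emph{bad} for $\omega''$. The structure of bad pairs in $As^3$, recorded just before the corollary, shows that a $4$--$4$ element has no bad partners at all, a $4$--$5$ element $\omega'' = \{p,q\}$ has exactly two bad partners, both of the form $\{p,r\}$ with $F_r$ a quadrangle adjacent to $F_q$ (hence of type $4$--$5$ as well), and a $5$--$5$ element has seven bad partners of mixed types. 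I would rule out the case $\omega''$ of type $4$--$4$ using the annihilator-codimension bound from Lemma~\ref{h3lemma}: if $\varphi(\widetilde\omega)$ were a combination $\lambda_{\omega''}\widetilde{\omega''} + (\text{bad terms})$ with $\omega''$ of type $4$--$4$, then since there are no bad partners the sum would be $\pm\widetilde{\omega''}$ itself, forcing $\mathrm{codim}\,\mathrm{Ann}_H(\varphi(\widetilde\omega)) = 5 \ne 9$, a contradiction; the case $\omega''$ of type $5$--$5$ is excluded because the coefficient vector would then have to be supported on $\{\omega''\}\cup(\text{bad partners of }\omega'')$, a set of elements of mixed type, and one checks directly (again via the codimension computation together with the explicit list of bad partners, all of which involve quadrangles in a way incompatible with $\mathrm{codim} = 9$) that no such combination has annihilator codimension $9$. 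Hence $\omega''$ is of type $4$--$5$, and $\varphi(\widetilde\omega)$ is a $\mathbb{Z}$-combination of $\widetilde{\omega''}$ and its two bad partners $\widetilde{\omega''_1},\widetilde{\omega''_2}$.

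Finally I would pin down the coefficient of $\widetilde{\omega''}$ to be $\pm 1$. Apply the same reasoning to $\varphi^{-1}$: by Corollary~\ref{gbcor} again, $\varphi^{-1}(\widetilde{\omega''})$ has a nonzero coefficient at some $\widetilde\omega$ of type $4$--$5$ with all other contributions bad for that $\omega$. Combining the two expansions and using that $\varphi$ is an isomorphism of free abelian groups that preserves the $B$-rigid subgroup $\boldsymbol{H}_3$ (and the direct-sum decomposition of $H^3$ into the $4$--$4$ part and the rest), one gets that on the rank-$18$ summand spanned by the $4$--$5$ and $5$--$5$ elements, $\varphi$ restricts to an automorphism compatible with the bad-partner structure; comparing with $\varphi^{-1}$ forces the "diagonal" coefficient to be a unit in $\mathbb{Z}$, i.e. $\pm 1$. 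I expect the main obstacle to be the bookkeeping in this last step: one must verify carefully that a bad partner of $\omega$ cannot itself have $\varphi$-image whose leading term is $\widetilde{\omega''}$, so that the coefficient matrix of $\varphi$ restricted to the relevant small block is unitriangular up to sign — this is where the explicit "two bad partners $\{p,r\}$ sharing the quadrangle $F_p$" description is essential, since it shows the bad-partner relation on $4$--$5$ elements is governed by the shared quadrangle, and $\varphi_4$ (Notation~\ref{f4not}, via Corollary~\ref{bijcor}) already controls how quadrangles are permuted. Everything else is a direct application of Lemma~\ref{h3lemma}, Corollary~\ref{gbcor}, and the tables of codimensions and bad partners compiled above.
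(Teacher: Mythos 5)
Your overall plan --- rule out types $4$--$4$ and $5$--$5$ for the leading term via annihilator codimensions, then use $\varphi^{-1}$ to force the leading coefficient to be $\pm 1$ --- is the right route, and the paper (which states this corollary without proof) clearly intends exactly this combination of Lemma~\ref{h3lemma}, Corollary~\ref{gbcor}, the codimension table, and the bad-partner list. But you misidentify the type of the bad partners of a $4$--$5$ element, and this undermines both your opening structural claim and your final bookkeeping. If $\omega=\{p,q\}$ with $F_p$ a quadrangle and $F_q$ a pentagon, the bad elements are $\omega'=\{p,r\}$ with $F_r$ a quadrangle adjacent to $F_q$; since $F_p$ and $F_r$ are both quadrangles, $\omega'$ is of type $4$--$4$, not ``of type $4$--$5$ as well'' as you write. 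That is precisely the content of the corollary: the error terms $\widetilde{\omega_1'},\widetilde{\omega_2'}$ lie in the span $V_{44}$ of the three $4$--$4$ generators. Your consequent assertion that $\varphi$ ``also preserves its complementary direct summand spanned by the twelve elements $\widetilde\omega$ of type $4$--$5$ and the six of type $5$--$5$'' is false (there are only six $4$--$5$ pairs, so that summand has rank $12$, not $18$): an automorphism preserving the direct summand $V_{44}$ need not preserve any fixed complement, and the corollary says exactly that $\varphi(\widetilde\omega)$ can have a nonzero $V_{44}$-component.

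Once the types are corrected, the rest collapses and none of the unitriangular-matrix analysis or the appeal to $\varphi_4$ you anticipate is needed. Excluding a leading $\omega''$ of type $5$--$5$ is a one-line application of Lemma~\ref{h3lemma}(1): if any $5$--$5$ element carries a nonzero coefficient then $\codim\,{\rm Ann}_H(\varphi(\widetilde\omega))\geqslant 21>9$, a contradiction. With $\omega''$ of type $4$--$5$, Corollary~\ref{gbcor} gives $\varphi(\widetilde\omega)=\lambda_{\omega''}\widetilde{\omega''}+v$ with $v\in V_{44}$, and (applied to $\varphi^{-1}$, with the same $\omega$ and $\omega''$) $\varphi^{-1}(\widetilde{\omega''})=\mu_\omega\widetilde\omega+w$ with $\mu_\omega\neq 0$ and $w\in V_{44}$, since the bad partners of $\omega$ are again of type $4$--$4$. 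Applying $\varphi$ to the latter identity and reducing modulo $V_{44}$ --- legitimate because $\varphi(V_{44})=V_{44}$ by the preceding corollary --- yields $[\widetilde{\omega''}]=\mu_\omega\lambda_{\omega''}[\widetilde{\omega''}]$ in the free abelian group $H^3(\mathcal{Z}_P)/V_{44}$, whence $\mu_\omega\lambda_{\omega''}=1$ and $\lambda_{\omega''}=\pm 1$.
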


\begin{proposition} \label{anneq}
Let $\omega=\{p,q\}$, where $F_p$ is a quadrangle, and $F_q$ is a pentagon, and let $F_u$ and $F_v$ be quadrangles different from $F_p$. Then  for $\omega_1=\{q,u\}$, $\omega_2=\{q,v\}$ and any $\lambda_1$, $\lambda_2$ we have
$$
\dim{\rm Ann}_H(\widetilde\omega)=\dim {\rm Ann}_H(\widetilde\omega+\lambda_1\widetilde\omega_1+\lambda_2\widetilde\omega_2).
$$
\begin{proof}
Denote $4\text{-}5=\widetilde{\omega}$, $4\text{-}4_1=\widetilde{\omega_1}$, $4\text{-}4_2=\widetilde{\omega_2}$, 
$\alpha=4\text{-}5+\lambda_1 4\text{-}4_1+\lambda_2 4\text{-}4_2$.
Then due to Fig. \ref{Cycles-D-mono} 
\begin{enumerate}
\item $4\text{-}4_1^*$ is divided by $5\text{-}55_{11}$, $5\text{-}55_{12}$, $5\text{-}455_{11}$, and $5\text{-}455_{12}$.
Moreover, choosing appropriate signs of generators, we can assume that
$$
4\text{-}4_1^*=5\text{-}55_{11}\cdot 5\text{-}455_{11}=5\text{-}55_{12}\cdot 5\text{-}455_{12}.
$$
\item $4\text{-}4_2^*$ is divided by $5\text{-}55_{21}$, $5\text{-}55_{22}$, $5\text{-}455_{21}$, and $5\text{-}455_{22}$. 
Here all the elements listed above are different. 
Moreover, we can assume that 
$$
4\text{-}4_2^*=5\text{-}55_{21}\cdot 5\text{-}455_{21}=5\text{-}55_{22}\cdot 5\text{-}455_{22}.
$$

\item $4\text{-}5^*$ is divided by $5\text{-}45_1$, $5\text{-}45_2$, $5\text{-}455_{11}$, $5\text{-}455_{21}$, 
$5\text{-}55_{11}$, $5\text{-}55_{21}$, $45\text{-}45_1$ and $45\text{-}45_2$. 
Moreover, choosing appropriate signs of generators, we have 
\begin{gather*}
4\text{-}5^*=5\text{-}45_1\cdot 5\text{-}455_{11}=5\text{-}45_2\cdot 5\text{-}455_{21}=5\text{-}55_{12}\cdot45\text{-}45_1=
5\text{-}55_{22}\cdot45\text{-}45_2.
\end{gather*}
\end{enumerate}
An element belongs to ${\rm Ann}_H(\widetilde\omega)$ if and only if in its expression in terms of the canonical generators and dual elements coefficients at $4\text{-}5^*$ and $5\text{-}45_1$, $5\text{-}45_2$, $5\text{-}455_{11}$, $5\text{-}455_{21}$, $5\text{-}55_{12}$, $5\text{-}55_{22}$, $45\text{-}45_1$ and $45\text{-}45_2$ are zero.

Let $x=\sum_\beta(\varphi(\beta)\beta+\varphi(\beta^*) \beta^*)$ be the expression of an element  $x\in H^*(\mathcal{Z}_P,\mathbb Q)$ in terms of generators and dual elements. We have
\begin{multline*}
x=x_1+\varphi(4\text{-}4_1^*)4\text{-}4_1^*+\varphi(5\text{-}55_{11})5\text{-}55_{11}
+\varphi(5\text{-}55_{12})5\text{-}55_{12}
+\varphi(5\text{-}455_{11})5\text{-}455_{11}+\varphi(5\text{-}455_{12})5\text{-}455_{12}+\\
+\varphi(4\text{-}4_2^*)4\text{-}4_2^*+\varphi(5\text{-}55_{21})5\text{-}55_{21}+\varphi(5\text{-}55_{22})5\text{-}55_{22}+
\varphi(5\text{-}455_{21})5\text{-}455_{21}+\varphi(5\text{-}455_{22})5\text{-}455_{22}+\\
+\varphi(4\text{-}5^*)4\text{-}5^*+\varphi(5\text{-}45_1)5\text{-}45_1+\varphi(5\text{-}45_2)5\text{-}45_2
+\varphi(45\text{-}45_1)45\text{-}45_1+\varphi(45\text{-}45_2)45\text{-}45_2,
\end{multline*}
where $x_1$ is a linear combination of generators and dual elements having zero product with 
$4\text{-}4_1$, $4\text{-}4_2$, and $4\text{-}5$. Also $\alpha=4\text{-}5+\lambda_1 4\text{-}4_1+\lambda_2 4\text{-}4_2$.
We have 
\begin{gather*}
5\text{-}455\cdot 4\text{-}4=45\text{-}45\cdot 4\text{-}5=-5\text{-}55^*;\\
5\text{-}55\cdot 4\text{-}4=5\text{-}45\cdot4\text{-}5=-5\text{-}455^*;\\
5\text{-}455\cdot 4\text{-}5=-5\text{-}45^*;\\
5\text{-}55\cdot 4\text{-}5=-45\text{-}45^*.
\end{gather*}
Therefore, 
\begin{multline*}
-x\cdot\alpha =\left[\varphi(4\text{-}5^*)+\varphi(4\text{-}4_1^*)\lambda_1+\varphi(4\text{-}4_2^*)\lambda_2\right][\mathcal{Z}_P]+\\
+\varphi(5\text{-}455_{11})\lambda_15\text{-}55_{11}^*+
\left[\varphi(5\text{-}455_{12})\lambda_1+\varphi(45\text{-}45_1)\right]5\text{-}55_{12}^*+\\
+\left[\varphi(5\text{-}55_{11})\lambda_1+\varphi(5\text{-}45_1)\right]5\text{-}455_{11}^*+
\varphi(5\text{-}55_{12})\lambda_15\text{-}455_{12}^*+\\
+\varphi(5\text{-}455_{21})\lambda_25\text{-}55_{21}^*+
\left[\varphi(5\text{-}455_{22})\lambda_2+\varphi(45\text{-}45_2)\right]5\text{-}55_{22}^*+\\
+\left[\varphi(5\text{-}55_{21})\lambda_2+\varphi(5\text{-}45_2)\right]5\text{-}455_{21}^*+
\varphi(5\text{-}55_{22})\lambda_25\text{-}455_{22}^*+\\
+\varphi(5\text{-}455_{11})5\text{-}45_1^*+
\varphi(5\text{-}455_{21})5\text{-}45_2^*+
\varphi(5\text{-}55_{12})45\text{-}45_1^*+
\varphi(5\text{-}55_{22})45\text{-}45_2^*.
\end{multline*}
Thus,  $x\cdot\alpha=0$ if and only if 
\begin{multline*}
0=\varphi(5\text{-}455_{11})=\varphi(5\text{-}455_{21})=\varphi(5\text{-}55_{12})=\varphi(5\text{-}55_{22})=\\
=\varphi(5\text{-}55_{12})\lambda_1=\varphi(5\text{-}455_{11})\lambda_1=\varphi(5\text{-}55_{22})\lambda_2=
\varphi(5\text{-}455_{21})\lambda_2=\\
=\varphi(5\text{-}455_{12})\lambda_1+\varphi(45\text{-}45_1)=
\varphi(5\text{-}55_{11})\lambda_1+\varphi(5\text{-}45_1)=\\
=\varphi(5\text{-}455_{22})\lambda_2+\varphi(45\text{-}45_2)=
\varphi(5\text{-}55_{21})\lambda_2+\varphi(5\text{-}45_2)=\\
=\varphi(4\text{-}5^*)+\varphi(4\text{-}4_1^*)\lambda_1+\varphi(4\text{-}4_2^*)\lambda_2.
\end{multline*}
This is equivalent to $9$ linearly independent equations:
\begin{gather*}
\varphi(5\text{-}455_{11})=0;\quad\varphi(5\text{-}455_{21})=0;\quad\varphi(5\text{-}55_{12})=0;\quad\varphi(5\text{-}55_{22})=0;\\
\varphi(45\text{-}45_1)=-\varphi(5\text{-}455_{12})\lambda_1;\quad \varphi(5\text{-}45_1)=-\varphi(5\text{-}55_{11})\lambda_1;\\
\varphi(45\text{-}45_2)=-\varphi(5\text{-}455_{22})\lambda_2;\quad\varphi(5\text{-}45_2)=-\varphi(5\text{-}55_{21})\lambda_2;\\
\varphi(4\text{-}5^*)+\varphi(4\text{-}4_1^*)\lambda_1+\varphi(4\text{-}4_2^*)\lambda_2=0.
\end{gather*}
Thus, $\codim {\rm Ann}_H(\alpha)=9=\codim {\rm Ann}_H(\widetilde\omega)$.
\end{proof}
\end{proposition}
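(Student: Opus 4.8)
The plan is to compute the codimension of $\mathrm{Ann}_H(\alpha)$ directly, where $H = H^*(\mathcal{Z}_{As^3},\mathbb{Q})$, by using the explicit multiplicative description of the ring developed earlier in this section (Figures \ref{Cycles}, \ref{Cycles-D-mono}, \ref{Cycles-D-Rel}). Since $H$ has no torsion, it suffices to work over $\mathbb{Q}$; a general element $x\in H$ is written in the basis of canonical/additional generators $\{\beta\}$ together with their Poincar\'e duals $\{\beta^*\}$, say $x=\sum_\beta(\varphi(\beta)\beta+\varphi(\beta^*)\beta^*)$. Because $\widetilde\omega$, $\widetilde\omega_1$, $\widetilde\omega_2$ all live in $H^3$, and their products with $x$ land in the top group modulo the $H^m$ duals, the condition $x\cdot\alpha=0$ is a \emph{linear} condition on the coefficients $\varphi(\cdot)$ of $x$; so $\mathrm{Ann}_H(\alpha)$ is the kernel of a linear map whose rank is exactly $\mathrm{codim}\,\mathrm{Ann}_H(\alpha)$, and likewise for $\widetilde\omega$. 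The strategy is to exhibit this linear system explicitly for $\alpha$ and check its rank equals $9=\mathrm{codim}\,\mathrm{Ann}_H(\widetilde\omega)$ (the latter already computed above for a $4$--$5$ pair).

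First I would record which dual elements $4\text{-}5^*$, $4\text{-}4_1^*$, $4\text{-}4_2^*$ are divisible by, i.e.\ for each of the three generators $\widetilde\omega,\widetilde\omega_1,\widetilde\omega_2$ list the complementary decompositions $P_{[m]\setminus\omega} = P_{\omega'}\cup P_{\omega''}$ read off from Figure \ref{Cycles-D-mono}. For $4\text{-}4_1$ one gets the factorizations through the pentagon-type sets $5\text{-}55_{1i}$ and $5\text{-}455_{1i}$; similarly for $4\text{-}4_2$ with a disjoint set of labels; and $4\text{-}5^*$ factors through $5\text{-}45_i$, $5\text{-}455_{i1}$, $5\text{-}55_{i1}$, $45\text{-}45_i$. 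Fixing signs of the generators appropriately (which the earlier "up to signs" description permits), I obtain the product formulas $5\text{-}455\cdot 4\text{-}4 = 45\text{-}45\cdot 4\text{-}5 = -5\text{-}55^*$, $5\text{-}55\cdot 4\text{-}4 = 5\text{-}45\cdot 4\text{-}5 = -5\text{-}455^*$, $5\text{-}455\cdot 4\text{-}5 = -5\text{-}45^*$, $5\text{-}55\cdot 4\text{-}5 = -45\text{-}45^*$. Substituting $\alpha = 4\text{-}5 + \lambda_1\,4\text{-}4_1+\lambda_2\,4\text{-}4_2$ and collecting the coefficient of each dual basis element in $-x\cdot\alpha$ produces a system of equations in the thirteen or so relevant coefficients $\varphi(5\text{-}455_{\bullet}),\varphi(5\text{-}55_{\bullet}),\varphi(5\text{-}45_\bullet),\varphi(45\text{-}45_\bullet),\varphi(4\text{-}5^*),\varphi(4\text{-}4_i^*)$.

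The final step is linear algebra: I would show that, regardless of the values of $\lambda_1,\lambda_2$, exactly nine of these equations are independent. Four of them force $\varphi(5\text{-}455_{11})=\varphi(5\text{-}455_{21})=\varphi(5\text{-}55_{12})=\varphi(5\text{-}55_{22})=0$ (note the companion equations with a factor $\lambda_i$ are then automatically satisfied and contribute nothing new — this is precisely why the answer does not depend on the $\lambda_i$); four more solve $\varphi(45\text{-}45_1),\varphi(5\text{-}45_1),\varphi(45\text{-}45_2),\varphi(5\text{-}45_2)$ in terms of the remaining free coefficients; and the last, $\varphi(4\text{-}5^*)+\lambda_1\varphi(4\text{-}4_1^*)+\lambda_2\varphi(4\text{-}4_2^*)=0$, is the ninth independent condition. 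Hence $\mathrm{codim}\,\mathrm{Ann}_H(\alpha)=9=\mathrm{codim}\,\mathrm{Ann}_H(\widetilde\omega)$, giving equality of dimensions. The main obstacle is purely bookkeeping: making a consistent global sign choice for the generators so that all the product identities above hold simultaneously, and being careful that the "$\lambda_i$-weighted" copies of the four vanishing equations are genuinely redundant rather than imposing $\lambda_i\neq 0$-dependent constraints; once the sign normalization is pinned down the rank count is immediate.
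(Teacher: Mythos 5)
Your proposal is correct and follows essentially the same approach as the paper: express a general element $x$ in the basis of generators and dual elements, read the factorizations of $4\text{-}5^*$, $4\text{-}4_1^*$, $4\text{-}4_2^*$ from Figure~\ref{Cycles-D-mono}, use the four product identities to write $x\cdot\alpha$ in the dual basis, and then count nine independent linear conditions regardless of $\lambda_1,\lambda_2$. The key observation you flag — that the $\lambda_i$-weighted copies of the vanishing equations are redundant — is exactly what the paper's computation verifies.
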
 

\section{Acknowledgements}
The  author is grateful to Victor Buchstaber for his encouraging support and attention to this work, 
and Donald Stanley for a fruitful discussion. 
He is grateful to the Fields Institute for Research in Mathematical Sciences (Canada) for providing excellent research 
conditions and support while working on this paper at the Thematic Program on Toric Topology and Polyhedral Products.

\end{document}